\tikzstyle{paire}=[fill=white, draw=none, shape=rectangle]
\tikzstyle{text_box}=[fill=white, draw=black, shape=rectangle]
\tikzstyle{decision}=[rectangle, minimum width=3cm, minimum height=1cm, text centered, draw=black]
\tikzstyle{arrow_flou}=[fill=none, draw={rgb,255: red,191; green,191; blue,191}, -Stealth, dashed]
\tikzstyle{clear_text}=[fill=white, draw=none, shape=rectangle, text={rgb,255: red,150;green,150; blue,150}]
\tikzstyle{start_point}=[fill=black, draw=black, shape=circle, radius=1pt]
\tikzstyle{ldn_text}=[fill=none, draw=black, shape=rectangle]
\tikzstyle{label}=[fill=white, draw=none, shape=circle]
\tikzstyle{simx}=[draw={rgb,255: red,146; green,179; blue,255}, ->, very thick]
\tikzstyle{simy}=[draw={rgb,255: red,255; green,92; blue,92}, ->, very thick]
\tikzstyle{usimx}=[draw={rgb,255: red,146; green,179; blue,255}, -, very thick]
\tikzstyle{usimy}=[draw={rgb,255: red,255; green,92; blue,92}, -, very thick]
\tikzstyle{light_grid}=[-, draw={rgb,255: red,210; green,210; blue,210}, thin, dashed]
\tikzstyle{walk_step}=[-, draw={rgb,255: red,162; green,43; blue,209}, -Stealth, very thick]
\tikzstyle{limits}=[-, pattern={Lines[angle=45,distance={5pt}]}, pattern color={rgb,255: red,180; green,180; blue,180}, draw=none]
\tikzstyle{arrow}=[thick, ->, >=Stealth]
\tikzstyle{fill_y_0}=[-, fill opacity=0.1, tikzit fill=none, fill={rgb,255: red,0; green,0; blue,255}]
\tikzstyle{fill_x_0}=[-, fill opacity=0.1, tikzit fill=none, fill={rgb,255: red,255; green,0; blue,0}]
\pgfplotsset{compat=1.13} 
\definecolor{liens}{rgb}{1,0,0}
\newtheorem{thm}{Theorem}[section]
\newtheorem{cor}[thm]{Corollary}
\newtheorem{lem}[thm]{Lemma}
\newtheorem{prop}[thm]{Proposition}
\newtheorem{conj}[thm]{Conjecture}
\theoremstyle{definition}
\newtheorem{defi}[thm]{Definition} 
\numberwithin{figure}{section}
\numberwithin{table}{section}
\newtheorem{rem}[thm]{Remark}
\newtheorem{exa}[thm]{Example}
\newtheorem{assumption}[thm]{Assumption}
\renewcommand{\thmcontinues}[1]{continued}
\numberwithin{equation}{section}
\def\imag{\rm{Im}}
\def\Qb{\overline{\mathbb{Q}}}
\def\Cmul{\C_{\text{mul}}}
\def\C{\mathbb{C}}
\def\kinv{k_{\text{inv}}}
\def\Ktld{\widetilde{K}}
\def\id{\text{id}}
\def\invS{1/S(x,y)}
\def\eqdef{\overset{\mathrm{def}}{=}}
\def\levX{\mathcal{X}}
\def\levY{\mathcal{Y}}
\def\Gmod{\calG_{\lambda}}
\def\st{\,|\,}          
\def\sts{\,/\,}         
\def\id{\mathrm{id}}
\def\wgammax{\widetilde{\gamma_x}}
\def\wgammay{\widetilde{\gamma_y}}
\def\sinv{\omega}
\def\Res{\operatorname{Res}}
\def\cO{\mathcal{O}}
\def\Z{\mathbb{Z}}
\def\C{\mathbb{C}}
\def\Q{\mathbb{Q}}
\def\N{{\mathbb N}}
\def\K{{\mathbb K}}
\def\P1{\mathbb{P}^{1}}
\def\Gal{\mathrm{Gal}}
\def\Aut{\mathrm{Aut}}
\def\beq{\begin{equation}}
\def\eeq{\end{equation}}
\def\Et{E_t}
\def\P2{\mathbb{P}^{2}}
\def\GCD{\operatorname{gcd}}
\def\K{\mathbb{K}}
\def\P1{\mathbb{P}^{1}}
\def\calC{{\mathcal{C}}}
\def\calS{\mathcal{S}}
\def\calW{\mathcal{W}}
\def\calH{{\mathcal{H}}}
\def\calG{{\mathcal{G}}}
\def\calO{{\mathcal{O}}}
\def\calP{{\mathcal{P}}}
\def\Gal{\mathrm{Gal}}
\def\Gal{{\rm Gal}}
\def\Ker{{\rm{Ker}}}
\def\P{\mathbb{P}}
\newcommand\xqed[1]{%
  \leavevmode\unskip\penalty9999 \hbox{}\nobreak\hfill
  \quad\hbox{#1}}
\newcommand\exqed{\xqed{$\square$}}
\title[Galoisian structure of large steps walks in the quadrant]{Galoisian structure of large steps walks \\ in the  quadrant}
\author{Pierre Bonnet}
\address{Laboratoire Bordelais de Recherche en Informatique, 351, cours de la
Libération F-33405 Talence, France}
\email{pierre.bonnet@u-bordeaux.fr}
\author{Charlotte Hardouin}
\address{Institut de Mathématiques de Toulouse, Université Paul Sabatier, 118, route de Narbonne, 31062 Toulouse, France}
\email{hardouin@math.univ-toulouse.fr}
\begin{document}

\maketitle
\begin{abstract}
The enumeration of walks in the quarter plane confined in the first
quadrant has attracted a lot of attention over the past fifteenth
years. The generating functions associated to small steps models
satisfy a functional equation in two catalytic variables.  For such
models, Bousquet-Mélou and Mishna defined a group called
\emph{the group of the walk} which turned out to be central in the
classification of small steps models. In particular, its action on the
catalytic variables yields a set of change of variables compatible
with the structure of the functional equation. This particular set
called the \emph{orbit} has been generalized to models with arbitrary
large steps by Bostan, Bousquet-Mélou and Melczer.  However, the orbit
had till now no underlying group.

  In this article, we endow the orbit with the action of a Galois
group, which extends the group of the walk to models with large
steps. Within this Galoisian framework, we generalized the notions of
\emph{invariants} and \emph{decoupling}. This enable us to develop a
general strategy to prove the algebraicity of models with small
backward steps. Our constructions lead to the first proofs of
algebraicity of weighted models with large steps, proving in
particular a conjecture of Bostan, Bousquet-Mélou and Melczer, and
allowing us to find new algebraic models with large steps.
\end{abstract}

\section{Introduction} \label{sect:walks}

We consider $2$-dimensional lattice weighted walks confined to the
quadrant $\N^2$ as illustrated in Figure~\ref{fig:g2_mod}. In recent
years, the enumeration of such walks has attracted a lot of attention
involving many new methods and tools.  This question is ubiquitous
since lattice walks encode several classes of mathematical objects in
discrete mathematics (permutations, trees, planar maps, \dots), in
statistical physics (magnetism, polymers, \dots), in probability
theory (branching processes, games of chance \dots), in operations
research (birth-death processes, queueing theory).

\begin{figure}[ht]
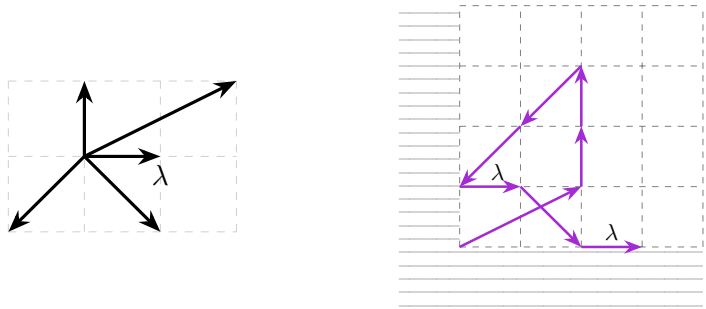
 
    \centering
    \begin{subfigure}[c]{0.3\textwidth}
      \centering
      \scalebox{1}{\tikzfig{g2_mod}}
    \end{subfigure}
    \begin{subfigure}[c]{0.4\textwidth}
      \centering
      \scalebox{0.8}{\tikzfig{ex_walk_g2}}
      \label{fig:walk2D}
    \end{subfigure}
 \caption{The weighted model $\Gmod$ along with an example of a walk
of size $8$, total weight $\lambda^2$ and ending at
$(3,0)$}\label{fig:g2_mod}
\end{figure}

Given a finite set $\calS$ of allowed steps in $\Z^2$ and a family of
$\calW=(w_{s})_{s \in \calS}$ of non-zero weights, the combinatorial
question consists in enumerating the weighted lattice walks in $\N^2$
with steps in $\calS$. A weighted lattice walk or path of length $n$
consists of $n+1$ points whose associated translation vectors belong
to $\mathcal{S}$. Its weight is the product of the weights of all
translation vectors encountered walking the path.  To enumerate these
objects, we study the generating function \[
  Q(X,Y,t)= \sum_{i,j,n} q^{(i,j)}_n X^i Y^j t^n
\]
where $q^{(i,j)}_n$ is the sum of the weights of all walks in $\N^2$
of $n$ steps taken in $\calS$ that start at $(0,0)$ and end at
$(i,j)$.  One natural question for this class of walks is to decide
where $Q(X,Y,t)$ fits in the classical hierarchy of power series:
\[ \mbox{ algebraic } \subset D\mbox{-finite} \subset
  D\mbox{-algebraic}. \]
Here, one says that the series $Q(X,Y,t)$ is $D$-finite if it
satisfies a linear differential equation in each variable $X$,$Y$,$t$,
over $\Q(X,Y,t)$ and $D$-algebraic if it satisfies a polynomial
differential equation in each of the variables $X,Y,t$ over
$\Q(X,Y,t)$.\\

\noindent \textbf{Walks with small steps.}
For \emph{unweighted small steps} walks (that is $\calS \subset
\{-1,0,1\}^2$ and weights all equal to $1$), the 
classification of the generating function is now complete. It
required almost ten years of research and the contribution of many
mathematicians, combining a large variety of tools: elementary power
series algebra \cite{BMM}, computer algebra \cite{KauersBostan},
probability theory \cite{DenisovWachtel}, complex uniformization
\cite{KurkRasch}, Tutte invariants \cite{BBMR16} as well as
differential Galois theory \cite{DHRS}.

In \cite{BMM}, Bousquet-Mélou and Mishna associated with a model
$\calW$ a certain group $G$ of birational transformations which plays
a crucial role in the nature of $Q(X,Y,t)$. Indeed, the series
$Q(X,Y,t)$ is $D$-finite if and only if $G$, called here the
\emph{classical group of the walk}, is a finite group (see \cite{BMM,
KauersBostan, KurkRasch, MR09, DreyfusHardouinRoquesSingerGenuszero}).

When the group $G$ is finite, the algebraic nature of the generating
function is intrinsically related to the existence of certain rational
functions in $X,Y,t$ called in this paper \emph{Galois invariants} and
\emph{Galois decoupling pairs}. These notions were introduced in
\cite{BBMR16} where the authors proved that the finiteness of the
group $G$ is equivalent to the existence of non-trivial Galois
invariants (see \cite[Theorem~4.6]{BBMR16}) and found that the algebraicity
of the model is equivalent to the existence of Galois invariants and
decoupling pairs for the fraction $XY$ (see \cite[Section 4]{BBMR16}).

\begin{figure}[ht]
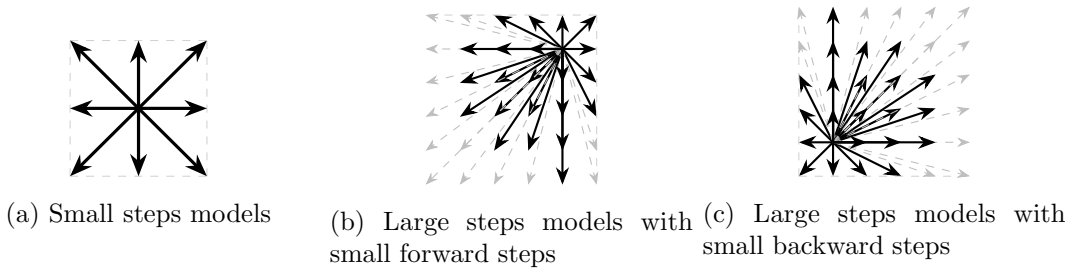

    \centering
    \begin{subfigure}[t]{0.3\textwidth}
        \ctikzfig{small_step}
        \caption{Small steps models}
        \label{fig:small_step_models}
    \end{subfigure}
    \begin{subfigure}[t]{0.3\textwidth}
        \ctikzfig{lsteprev}
        \caption{Large steps models with small forward steps}
        \label{fig:large_step_bo}
    \end{subfigure}
    \begin{subfigure}[t]{0.3\textwidth}
        \ctikzfig{lstep}
        \caption{Large steps models with small backward steps}
        \label{fig:large_step_bo_rev}
    \end{subfigure}
    \caption{Models of walks}
\end{figure}

\noindent \textbf{Walks with arbitrarily large steps.}
Compared to the case of small steps walks,
the classification of walks with arbitrarily large steps is still at its infancy.
In \cite{bostan2018counting}, Bostan, Bousquet-Mélou and Melczer lay
the foundation of the study of large steps walks. To this purpose,
they attach to any model with large steps, a graph called
\emph{the orbit of the walk} whose edges are pairs of algebraic elements over
$\Q(x,y)$.  When all the steps of the walk are small, the
\emph{ orbit of the walk} coincides with the orbit of $(x,y)$ under the action of
the group $G$ of birational transformations introduced in \cite{BMM}.

  Bostan, Bousquet-Mélou and Melczer started a thorough classification
of the $13 110$ nonequivalent models with steps in $\{1, 0, -1,
-2\}^2$ (which are instances of walks with \emph{small forward steps},
see Figure~\ref{fig:large_step_bo}). They ended up with a partial
classification of the differential nature of the associated generating
functions (see \cite[Figure~7]{bostan2018counting}).  Among the $240$
models with finite orbit, they were able to prove $D$-finitness for
all but $9$ models via orbit sums constructions or Hadamard
products. For the $12 870$ models with an infinite orbit, they were
able to prove non-$D$-finitness for all but $16$ models by exhibiting
some wild asymptotics for the associated generating functions. \\

\noindent \textbf{Content of the paper.}
 When the steps set contains at least one large step, the authors of
\cite{bostan2018counting} deplored that, within their study, the group
of the walk `` is lost, but the associated orbit survives''.  In this
paper, we show that one can generalize the notion of \emph{group of
the walk} to models with large steps as well as many objects and
results related to the small steps framework.  The novelty of our
approach lies in the use of tools from graph theory, in particular
graph homology and their combination with a Galois theoretic
approach. We list below our contributions.

\begin{itemize}
\item We attach to any model $\calW$ a group $G$, which we call the
\emph{group of the walk}. This group generalizes the `` classic''
\emph{group of the walk} in many ways.  First, $G$ is the group of
automorphisms of a certain field extension. It is generated by Galois
automorphisms and extends thereby the definition of the
\emph{classical group of the walk} as in \cite[Section 2.4]{FIM} (see
Theorem~\ref{thm:ko_gal} below). Moreover, we also prove that the
\emph{orbit of the walk} is the orbit of $(x,y)$ under the faithful
action of $G$ viewed as a group of graph automorphisms (see
Theorem~\ref{thm:group_trans}). Finally, Section~\ref{ap:geometry}
studies the geometric interpretation of the group
$G$ as group of birational transformations of a certain algebraic
curve.

\item The Galoisian structure of the \emph{group of the walk} enables
us to characterize algebraically the existence of \emph{Galois
invariants}.  To any model $\calW$, one can attach a \emph{kernel
polynomial} $\Ktld(X,Y)$ in $\C[X,Y,t]$. A pair of \emph{Galois
invariants} consists in a pair of rational fractions $(F(X),G(Y))$ in
$\C(X,t) \times \C(Y,t)$ such that
\[\Ktld(X,Y) R(X,Y) = F(X) -G(Y),\] where $R$ is a rational fraction
in $\C(X,Y,t)$ whose denominator is not divisible by $\Ktld$.  We
prove that the existence of non-trivial \emph{Galois invariants} is
equivalent to the finiteness of the group $G$, itself equivalent to
the finiteness of the orbit (see Theorem~\ref{thm:fried}). This
extends to any model of walk the result of \cite{BBMR16} for small
steps walks.  Finally, we give an explicit way of obtaining a
non-trivial pair of Galois invariants out of the data of a finite
orbit (see Section~\ref{sect:effectiveinvariants}).  We give here a geometric and Galoisian interpretation  of the 
 question of separating variables in principal bivariate
polynomial ideals, as studied in~\cite{BuchacherKauersPogudin}.

\item This Galoisian setting also sheds a new light on the notion
  of decoupling.  Given a rational fraction $H(X,Y)$
in $\C(X,Y,t)$, a \emph{Galois decoupling} for $H$ is a pair
$(F(X),G(Y))$ in $\C(X,t) \times \C(Y,t)$ such that
\[\Ktld(X,Y) R(X,Y) = H(X,Y)-F(X) -G(Y),\] where $R$ is a rational
fraction in $\C(X,Y,t)$ whose denominator is not divisible by $\Ktld$.

When the orbit of the model $\calW$ is finite, we
give a general criterion to characterize the existence of the Galois
decoupling of any rational fraction $H$, and an explicit
expression of such decoupling when it exists.
This amounts to evaluate $H$
on some well chosen linear combination of pairs of the orbit
(Theorem~\ref{thm:decoupling_orb}), obtained
  using the Galoisian structure of the orbit and graph homology.
This combination is obtained explicitely, making the procedure
constructive. Moreover, it admits an efficient implementation under a small
assumption depending only on the graph structure of the orbit (see
Section~\ref{subsect:effective_decoupling}). We checked
  this assumption on all the finite orbits for models with steps in
  in $\{-1,0,1,2\}^2$ classified in~\cite{bostan2018counting},
  and other known families of finite orbits.
This construction generalizes \cite[Theorem~4.11]{BBMR16} to the
large steps case.

As an application, we study the existence of Galois decoupling for the
function $XY$ for weighted models with steps in $\{-1,0,1,2 \}^2$. The
finite \emph{orbit-types} (which correspond to the graph structures of
the \emph{orbit of the walk} of these models) have been classified in
\cite{bostan2018counting}. For these orbit-types, we are able to give
an efficient procedure to test the existence of the Galois decoupling
of any given rational fraction, and construct it when it exists. We
applied these procedures to $XY$ for every unweighted model with steps
in $\{-1,0,1,2 \}^2$ and finite orbit (see
Proposition~\ref{prop:XYdecouple}). We also exhibit a new family of
models $\calH_n$ with large steps for which we were able to find
multiple $(a,b)$ such that $X^aY^b$ admits a Galois decoupling. This
corresponds to the counting problem for walks starting at $(a-1,b-1)$
(see Appendix~\ref{sect:otheralgmodels}).

\item Generating functions associated to models with small backward
steps (see Figure~\ref{fig:large_step_bo_rev}) satisfy a functional
equation in two catalytic variables of the form
\[\Ktld(X,Y)Q(X,Y,t) = XY +F(X) +G(Y) ,\] where $F(X)$ (resp. $G(Y)$)
involves only the section $Q(X,0,t)$ (resp. $Q(0,Y,t)$) of the
generating function.  In \cite{BBMR16} for small steps walks and
\cite{BousquetMelouThreequadrant} for walks confined in the
three-quadrant, the authors develop a strategy to prove (when it
holds) the algebraicity of the generating function. When $XY$ admits a
Galois decoupling pair and when there exist nontrivial Galoisian
invariants, they were able to obtain from the functional equation
above two functional equations in one catalytic variable each, whose
solutions are respectively the sections $Q(X,0,t)$ and
$Q(0,Y,t)$. Since solutions of polynomial equations in one catalytic
variable are known to be algebraic by \cite{BMJ}, one concludes to the
algebraicity of the generating function $Q(X,Y,t)$. Thanks to our
systematic approach to Galoisian invariants and decoupling, we apply
their strategy to prove the algebraicity of the generating function
$Q(X,Y,t)$ of the model $\Gmod$ for general $\lambda$.  In particular,
we prove that the excursion generating function $Q(0,0,t)$ of $\Gmod$
is algebraic of degree $32$ over $\Q(\lambda)(t)$. By a specialization
argument, we then conclude to the algebraicity of the excursion
generating functions of $\calG_0$ ($\lambda=0$) and $\calG_1$
($\lambda=1$). Since these excursion generating functions coincide
with those of the reversed models of $\calG_0$ and $\calG_1$, we prove
a conjecture on two models with small forward steps by Bostan,
Bousquet-Mélou and Melczer on \cite[Page 57]{bostan2018counting}.

\end{itemize}

The paper is organized as follows.
In Section \ref{subsect:alg_strat}, we present a strategy illustrated on the example of $\Gmod$ to prove
the algebraicity of models with small backward steps based on the
Galois theoretic tools developed later on in the paper.
In Section \ref{sect:orbit}, we recall the construction of the \emph{orbit of the
walk} and define the group of the walk as a group of field
automorphisms.  Section \ref{sect:ratinv} is concerned with the notion
of \emph{pairs of Galois invariants} and their properties.
In Section \ref{sect:decoupling}, we define the notion of \emph{Galois decoupling
of the pair $(x,y)$} in the orbit and prove the unconditional
existence of such a decoupling when the orbit is finite. This yields a
criterion to test the decoupling of any rational fraction, including
$XY$. We also study the implementation of our decoupling test via the
notion of \emph{level lines} of the graph of the orbit, allowing a
more effective computation. Section~\ref{ap:geometry} is for the
geometry inclined reader since it presents the Riemannian geometry
behind the large steps models.

Note that, in this paper, we consider a weighted model $\calW$ which
is entirely determined by a set of directions $\calS$ together with a
set of weights $(w_s)_{s \in \mathcal{S}}$. The weights are always
non-zero and they belong to a certain field extension of $\Q$  which is not
  necessarily algebraic, allowing the choice of  indeterminate weights. Without
loss of generality, one can assume that $\Q\left(w_s,s \in
\calS\right) \subset \C$. For ease of presentation, we consider
polynomials, rational fractions with coefficients in $\C$. However,
the reader must keep in mind that our results are valid if one replace
$\C$ by the algebraic closure of $\Q\left(w_s,s \in
\mathcal{S}\right)$.

\section{A step by step proof of algebraicity} \label{subsect:alg_strat}

In this section, we fix a weighted model $\calW$ with small backward
steps.  We explain how one can combine the approach of Bousquet-Mélou
and Jehanne on equations with one catalytic variables \cite{BMJ} and
the notion of Galois decoupling and invariants of a model to study the
algebraicity of the generating functions for models with small
backwards steps. This strategy is not yet entirely algorithmic and
follows the one developed in the small steps case in \cite[Section 5]
{BBMR16} and in \cite{BousquetMelouThreequadrant} for walks in the
three-quadrant.  We summarize its main steps in
Figure~\ref{fig:alg_strat}. In subsection
\ref{subsect:algebraicitystrategy}, we apply this strategy to prove
that the generating function of the weighted model $\Gmod$ defined in
Example \ref{ex:gessel2_alg_1} is algebraic.  Therefore, the same
holds for its excursion series. Since excursion series are preserved
under central symmetry, the excursion series of the reversed model of
$\Gmod$ is also algebraic. Thereby, we prove two of the four
conjectures of Bostan, Bousquet-Mélou and Melczer on \cite[Section
8.4.2]{bostan2018counting}. More precisely, we prove that the
excursion series $Q(0,0,t)$ of two models which are obtained by
reversing the step sets of $ \mathcal{G}_0$ and $ \mathcal{G}_1$ are
algebraic. In Appendix~\ref{sect:otheralgmodels}, we apply this
strategy to a new family of models $\calH_n$ and prove that the
generating functions counting walks starting at $(a,b)$ are algebraic
for various starting points $(a,b)$.

\subsection{Walks and functional equation in two catalytic variables}\label{subsect:walkfuncequ}

Recall that we do not  only study the number of walks of size $n$ that corresponds
to the series $Q(1,1)$.  We  record in the enumeration
the coordinates
  where these walks end, encoded in the generating function as the exponents associated
  with the variables $X$ and $Y$.
The variables $X$ and $Y$ in $Q(X,Y,t)$ are called
\emph{catalytic}, as they provide an easy way to write a functional
equation for $Q(X,Y,t)$ from the recursive description of walks: either
a walk is the trivial walk (with no steps), either one adds a step to
an existing walk, provided the new walk does not leave the quarter
plane. This is that boundary constraint which forces to consider the
final coordinates $(i,j)$ of the walk to form a functional equation.
This inductive description  yields a functional equation for the
generating function $Q(X,Y,t)$.

Thereby, we encode the model $\calW$ in two Laurent polynomials which
are the \emph{step polynomial} of the model $S(X,Y) = \sum_{(i,j) \in
\calS} w_{i,j} X^i Y^j$ and the \emph{kernel polynomial} $K(X,Y,t) = 1
- tS(X,Y)$. This Laurent polynomial can be normalized into a
polynomial $\Ktld(X,Y,t) = X^{m_x} Y^{m_y} K(X,Y,t)$ where $-m_x,-m_y$
are the smallest moves of the walk in the $X$ and $Y$-direction. By an
abuse of terminology, we also call $\Ktld$ the kernel polynomial. We
shall sometimes write $Q(X,Y),K(X,Y)$ and $ \Ktld(X,Y)$ instead of
$Q(X,Y,t), K(X,Y,t), \Ktld(X,Y,t)$ in order to lighten the
notation. We now illustrate the construction of the functional
equation for the model $\Gmod$.

\begin{exa}[The model $\Gmod$] \label{ex:gessel2_alg_1}
  Consider the weighted model
  \[
    \Gmod = \{(-1,-1), (0,1),
    (1,-1), (2,1), ((1,0), \lambda)\}
  \]
  together with its  step polynomial
  $S(X,Y) = \tfrac{1}{XY} + Y + \tfrac{X}{Y} + X^2Y + \lambda X$,
  and kernel polynomial $\Ktld(X,Y,t) = XY - t(1 + XY^2+X^2+X^3Y^2+\lambda X^2Y)$. The weight
  $\lambda$ is a nonzero complex number.

  Now, to form a functional equation,
  observe that the steps $(1,0)$, $(0,1)$ and $(2,1)$ can be concatenated to any existing walk, whereas the step $(1,-1)$ can only be concatenated to a walk
  that does not terminate on the $X$-axis, and the step
  $(-1,-1)$ can only be concatenated to a walk that does not
  terminate on the $X$-axis or the $Y$-axis.
  These conditions translate directly into the following functional equation:
  \begin{align}\label{eq:gessel2_alg_1}
    Q(X,Y) &= 1 + t Y Q(X,Y) + tX^2YQ(X,Y) + \lambda tXQ(X,Y) \nonumber \\
           &+ t \tfrac{X}{Y} \left(Q(X,Y) - Q(X,0)\right) \nonumber \\
           &+ t \tfrac{1}{XY} \left(Q(X,Y) - Q(X,0) - Q(0,Y) + Q(0,0) \right).
  \end{align}

   Note  that  we  can express
   the generating function for walks ending on
    the $X$-axis, the $Y$-axis or at $(0,0)$ as specializations of  the  generating function
    $Q(X,Y)$. For instance, the series
    $Q(X,Y)- Q(X,0)$ counts the  walks
    that do not   end on the $X$-axis.

    Grouping terms in $Q(X,Y)$ to the left-hand side and multiplying
    by $XY$ to have polynomial coefficients, we finally obtain the following
    equation for $Q(X,Y)$:
    \[
        \Ktld(X,Y)Q(X,Y) = XY - t(X^2+1)Q(X,0) - t Q(0,Y) + t Q(0,0).
    \]
\end{exa}

The general form of the functional equation satisfied by the
generating function of a weighted model might be quite complicated
\cite[Equation (11)]{bostan2018counting}.
For models with small backward steps, the functional equation satisfied by
$Q(X,Y)$ simplifies as follows:
\begin{equation} \label{eq:small_steps_backwards}
    \Ktld(X,Y) Q(X,Y) = XY + A(X) + B(Y),
  \end{equation}
where $A(X)=\Ktld(X,0)Q(X,0) +t \epsilon Q(0,0)$ and
$B(Y)=\Ktld(0,Y)Q(0,Y)$ where $\epsilon$ is $1$ if $(-1,-1)$ belongs
to $\mathcal{S}$ and $0$ otherwise.  Thus,
\eqref{eq:small_steps_backwards} only involves the sections $Q(X,0)$
and $Q(0,Y)$ which makes it easier to study.

\begin{rem}
  One may ask whether there exists a weighting of the steps set of the
model $\Gmod$ which would still yield an algebraic generating
function.
  Consider the model \[
    \calS = \{((-1,-1),\mu),((0,1),\mu),(2,1),((1,0),\lambda),(1,-1)\}\]
  which consists in adding a nonzero weight $\mu$ to its two leftmost steps.
  Consider a lattice walk on this model taking
  $a$ times the step $(-1,-1)$, $b$ times the step $(0,1)$, $c$ times
  the step $(2,1)$, $d$ times the step $(1,0)$ and $e$ times
  the step $(1,-1)$. This lattice path contributes to the generating function  $Q(X,Y,t)$ via the monomial \[
    \lambda^d\, \mu^{a+b} \, X^{2c+d+e-a} \, Y^{b+c-a-e} \, t^{a+b+c+d+e}.
  \]
  One then remarks that the knowledge of the exponents of $\lambda$, $X$, $Y$ and $t$
  completely determines the exponent of $\mu$, for \[
    a+b = -\tfrac{1}{4} d - \tfrac{1}{2} \left(2c+d+e-a\right)
    + \tfrac{1}{4} \left(b+c-a-e\right) + \tfrac{3}{4} \left(a+b+c+d+e\right).
  \]
  Thus, the series $Q(X,Y,t)$ for $\Gmod$ can be expressed
  as $Q'(X \mu^{-\tfrac{1}{2}}, Y \mu^{\tfrac{1}{4}}, t \mu^{\tfrac{3}{4}})$,
  where $Q'$ is the generating function for walks using the steps
  $\{(-1,-1),(0,1),(1,-1),(2,1),((1,0),\lambda \mu^{-\tfrac{1}{4}})\}$
  (that is $\calG_{\lambda \mu^{-\frac{1}{4}}}$).
  Thus, the weight $\mu$ is   combinatorially redundant
  when considering the full generating series $Q(X,Y)$ or the excursion
  generating series $Q(0,0)$, and this also implies that the nature
  of the model $\calS$  is only  determined the nature of $\Gmod$.
  A similar redundancy occurs when one weights with the same nonzero weight $\mu$
    the two steps going upwards in $\Gmod$, or with the same nonzero weight $\mu$
    the two steps going downwards in $\Gmod$.
  Apart from these three redundant generalizations of $\Gmod$, we were
not able to find a weighting of the steps set of $\Gmod$ leading to a
finite orbit and thereby to a potential algebraic generating function.
\end{rem}

\subsection{Algebraicity strategy}\label{subsect:algebraicitystrategy}

In \cite{BMJ}, Bousquet-Mélou and Jehanne proved the algebraicity of
power series solution of \emph{well founded} polynomial equations in
one catalytic variable. Their method has been further extended
recently to the case of systems of discrete differential equations by
Notarantonio and Yurkevich in \cite{NotaYurk}. These algebraicity
results are in fact particular cases of an older result in commutative
algebra of Popescu \cite{Popescu} but the strength of the strategy
developed in \cite{BMJ, NotaYurk} lies in the   effectiveness of their
approach.

In this subsection, we recall the algebraicity strategy developped in
\cite[Section 4]{BMJ} to deduce two polynomial equations in one
catalytic variable from the data of a polynomial equation in two
catalytic variables a decoupling pair and a pair of invariants.  We
illustrate this strategy on the model $\Gmod$. Since we alternate
general discussions and their illustration on our running example
$\Gmod$, we use $\square$ in this subsection to notify the end of the
examples.

Let $\mathbb{L}$ be a field of characteristic zero. For an unknown
bivariate function $F(u,t)$ denoted for short $F(u)$, we consider the
functional equation
\begin{equation}\label{eq:bousquetmeloujehanne} F(u) = F_0(u) + t\,
Q\left(F(u), \Delta F(u), \Delta^{(2)} F(u), \dots, \Delta^{(k)} F(u),
t, u\right),
\end{equation} where $F_0(u) \in \mathbb{L}[u]$ is given explicitly
and $\Delta$ is the \emph{discrete derivative}: $\Delta
F(u)=\frac{F(u)-F(0)}{u}$. One can easily show that the equation
\eqref{eq:bousquetmeloujehanne} has a unique solution $F(u,t)$ in
$\mathbb{L}[u][[t]]$, the ring of formal power series in $t$ with
coefficients in the ring $\mathbb{L}[u]$. Such an equation is called
\emph{well-founded}. Here is one of the main results of \cite{BMJ}.
\begin{thm}[Theorem~3 in \cite{BMJ}] \label{thm:alg_one_variable} The
formal power series $F(u,t)$ defined by
\eqref{eq:bousquetmeloujehanne} is algebraic over $\mathbb{L}(u,t)$.
\end{thm}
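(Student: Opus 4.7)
The plan is to follow the kernel-and-elimination strategy of Bousquet-Mélou and Jehanne. First I would convert \eqref{eq:bousquetmeloujehanne} into a polynomial equation whose unknowns are $F(u,t)$ together with the finitely many auxiliary series $a_j(t) := \Delta^{(j)} F(0,t) \in \mathbb{L}[[t]]$ for $0 \leq j < k$. Indeed, using
\[
\Delta^{(i)} F(u,t) \;=\; \frac{1}{u^i}\Bigl(F(u,t) - \sum_{j=0}^{i-1} a_j(t)\, u^j\Bigr),
\]
and multiplying \eqref{eq:bousquetmeloujehanne} by $u^k$, one obtains a polynomial relation
\[
P\bigl(F(u,t),\, a_0(t), \ldots, a_{k-1}(t),\, t,\, u\bigr) \;=\; 0
\]
for some $P \in \mathbb{L}[Z, Y_0, \ldots, Y_{k-1}, t, u]$. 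Well-foundedness guarantees that $F \in \mathbb{L}[u][[t]]$ is the unique solution obtained by iterating the right-hand side as a contraction in the $t$-adic topology, so the $a_j \in \mathbb{L}[[t]]$ are uniquely determined as well.

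Next I would differentiate $P = 0$ with respect to $u$, producing the identity $\partial_u P + (\partial_Z P) \cdot \partial_u F = 0$. The idea is then to locate \emph{kernel roots}: algebraic series $U(t)$ over $\mathbb{L}(t)$ with $U(0) = 0$ such that $\partial_Z P$ vanishes after the substitution $(Z,u) \mapsto (F(U(t),t), U(t))$. The vanishing condition $U(0) = 0$ ensures $F(U(t),t)$ makes sense as an element of $\overline{\mathbb{L}(t)}[[t]]$, and at each such $U$ the differentiated identity degenerates and forces the extra relation $\partial_u P = 0$ at $U$. Thus each kernel root supplies two polynomial relations between the unknowns $a_0, \ldots, a_{k-1}$ and the new algebraic quantity $F(U(t),t)$.

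The technical heart of the proof, and what I expect to be the main obstacle, is producing enough kernel roots and executing the resulting elimination. A Newton-polygon analysis of $\partial_Z P$, viewed as a polynomial in $Z$ over $\mathbb{L}[[t]][u]$, should furnish algebraic branches $U_1(t), \ldots, U_m(t)$ vanishing at $t=0$; a careful degree count must then show that the $2m$ relations $P(F(U_i),\ldots) = 0 = \partial_u P(F(U_i),\ldots)$, viewed as polynomial equations in the $m + k$ unknowns $F(U_1),\ldots,F(U_m), a_0,\ldots,a_{k-1}$ over $\overline{\mathbb{L}(t)}$, are generically independent. Granting this nondegeneracy, which ultimately rests on the well-founded structure of the original equation, iterated resultants that eliminate the $F(U_i)$'s produce a nonzero polynomial over $\mathbb{L}(t)$ annihilating each $a_j$, so that every $a_j(t)$ is algebraic over $\mathbb{L}(t)$. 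Feeding these algebraic series back into $P=0$ then exhibits $F(u,t)$ as a root of a polynomial in $\mathbb{L}(u,t)[Z]$, which is precisely the desired algebraicity.
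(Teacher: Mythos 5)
A preliminary remark: the paper gives no proof of Theorem~\ref{thm:alg_one_variable} at all. It is quoted from \cite{BMJ} and explicitly used as a black box, and Appendix~\ref{sect:bmj} merely runs the elimination machinery of \cite{BMJ} on the particular equation \eqref{eq:polyeqonecatavar}. So your proposal must be measured against the original argument of Bousquet-Mélou and Jehanne, whose strategy you are indeed reconstructing: polynomialize \eqref{eq:bousquetmeloujehanne} with the $k$ unknown series $a_j(t)=\Delta^{(j)}F(0,t)$, differentiate with respect to $u$, and use kernel roots to generate extra algebraic relations.

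As written, however, your argument has genuine gaps exactly where the difficulty of \cite{BMJ} lies. First, calling the kernel roots ``algebraic series $U(t)$ over $\mathbb{L}(t)$'' is circular: the equation defining them, $\partial_Z P\bigl(F(u,t),a_0(t),\dots,a_{k-1}(t),t,u\bigr)=0$, has coefficients built from $F$ and the $a_j$, which at this stage are only formal power series, so its branches in $u$ are merely fractional power series in $t$ with no a priori algebraicity; they cannot be treated as known constants in the elimination, and your count of ``$2m$ relations in $m+k$ unknowns'' silently assumes they are. In \cite{BMJ} the $U_i$ are kept as unknowns and the relation $\partial_Z P=0$ is adjoined for each of them, giving three equations per root ($P=0$, $\partial_Z P=0$, $\partial_u P=0$) in the $2m+k$ unknowns $U_i$, $F(U_i,t)$, $a_j$, all defined over $\mathbb{L}(t)$. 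Second, you never establish how many kernel roots exist, while the proof needs at least $k$ of them with $U_i(0)=0$; in \cite{BMJ} this follows from a valuation (Newton polygon) argument exploiting the well-founded shape of \eqref{eq:bousquetmeloujehanne} --- after the natural normalization the kernel reduces to $u^k$ modulo $t$, so there are exactly $k$ small roots counted with multiplicity --- together with a discriminant argument (the $U_i$ are double roots of the discriminant of $P$ in its first variable, the ``Theorem~14'' invoked in Appendix~\ref{sect:bmj}) to handle coinciding branches. Third, and most seriously, the assertion that the resulting system is ``generically independent'', so that the iterated resultants are not identically zero and yield nontrivial algebraic equations for the $a_j$, is precisely the content of the theorem being proved; writing ``granting this nondegeneracy, which ultimately rests on the well-founded structure'' concedes rather than supplies the heart of the argument. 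In short, your proposal is a faithful road map of the kernel-and-elimination method, but it is not yet a proof.
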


We shall use Theorem~\ref{thm:alg_one_variable} as a black box in
order to establish the algebraicity of power series solutions of a
polynomial equation in one catalytic variable.

In order to eliminate directly trivial algebraic models, we make the
following assumption on the step sets.  Write $-m_x$, $M_x$
(resp. $-m_y$, $M_y$) for the smallest and largest move in the $x$
direction (resp. $y$ direction) of the model $\calW$ (the $m_x$,
$M_x$, $m_y$ and $M_y$ are non-negative). Now, consider the class of
models where one of these quantities is zero. All the models in this
class are algebraic. Indeed, the corresponding models are essentially
one dimensional.  More precisely, if $M_x = 0$, one shows that a walk
based upon such a model is included in the half-line $x=0$. Similarly,
if $m_x = 0$, then the walks on this model have only the $y$
constraint. Reasoning analogously to \cite[Section 2.1]{BMM} or
\cite[Section 6]{bostan2018counting}, one proves that the series is
algebraic. Thus, we may assume from now on that none of these
parameters are zero so that \textbf{ $S(X,Y)$ is not univariate.}
Moreover, analogously to \cite[\S 8.1]{bostan2018counting}, we exclude
upper diagonal models, that is, models for which $(i,j) \in
\mathcal{S}$ satisfy $j \geq i$ as well as their symmetrical, the
lower diagonal models. Indeed, these models are automatically
algebraic.

This assumption being made, the series $Q(X,Y)$ satisfies naturally an
equation with two catalytic variables, and therefore does not fall
directly into the conditions of
Theorem~\ref{thm:alg_one_variable}. However, the functional equation
\eqref{eq:small_steps_backwards} implies that the generating function
$Q(X,Y)$ is algebraic over $\C(X,Y,t)$ if and only if the series
$A(X)$ and $B(Y)$ are algebraic over $\C(X,t)$ and $\C(Y,t)$
respectively.  Therefore, we set ourselves to find two well founded
polynomial equations with one catalytic variable: one for $A(X)$ and
the other for $B(Y)$.

In order to produce these two equations from the functional equation
\eqref{eq:small_steps_backwards}, we now present a method inspired by
Tutte \cite{TutteSurvey} which was further adapted by Bernardi,
Bousquet-Mélou and Raschel in the context of small steps walks
\cite{BBMR16} and by Bousquet-Mélou in the context of three quadrant
walks \cite{BousquetMelouThreequadrant}.  We reproduce here the method
of \cite{BousquetMelouThreequadrant} which relies on suitable notion
of $t$-invariants and an \emph{Invariant Lemma} for multivariate power
series. The strategy developed in \cite{BousquetMelouThreequadrant} is
an adaptation for formal power series of the approach already
introduced in Section 4.3 in \cite{BBMR16}.

\begin{defi}
    We denote by $\C(X,Y)((t))$ the field of Laurent series in $t$
with coefficients in the field $\C(X,Y)$. The subring
$\Cmul(X,Y)((t))$ of $\C(X,Y)((t))$ formed by the series of the
form
\[ H(X,Y,t) = \sum_t \frac{p_n(X,Y)}{a_n(X) b_n(Y)} t^n, \]
where $p_n(X,Y) \in \C[X,Y]$, $a_n(X) \in \C[X]$ and $b_n(Y) \in \C[Y]$.
\end{defi}

\begin{defi}[Definition~2.4 in \cite{BousquetMelouThreequadrant}]
    \label{defi:poles_of_bounded_order}
    Let $H(X,Y,t)$ be a Laurent series in $\Cmul(X,Y)((t))$. The series $H$ is
    said to have \emph{poles of bounded order at $0$} if the collection of its
    coefficients (in the $t$-expansion) have poles of bounded order at $X=0$
    and $Y=0$. In other words, this means that, for some natural numbers $m$ and
    $n$, the coefficients in $t$ of the series $X^m Y^n H(X,Y)$ have no pole at
    $X=0$ nor at $Y=0$.
\end{defi}

Given a model  $\calW$, one can use the notion of poles of bounded order at zero to construct an equivalence
relation in the ring $\Cmul(X,Y)((t))$. To this purpose, we slightly adapt
Definition~2.5 in \cite{BousquetMelouThreequadrant} to encompass the large step
case. Moreover, in the following definition, we consider  division by $\Ktld$
and not by $K$ as in \cite{BousquetMelouThreequadrant} but one easily checks
that  Definition~\ref{defi:analytic_inv} below and  Definition~2.3 in
\cite{BousquetMelouThreequadrant} coincide.

\begin{defi}[$t$-equivalence] \label{defi:analytic_equiv}
    Let $F(X,Y)$ and $G(X,Y)$ be two Laurent series in $\Cmul(X,Y)((t))$.
    We say that these series are \emph{$t$-equivalent},
    and we write $F(X,Y) \equiv G(X,Y)$ if the series
    $\frac{F(X,Y) - G(X,Y)}{\Ktld(X,Y)}$ has poles of bounded order at 0.
\end{defi}

The $t$-equivalence is compatible with the ring  operations on Laurent series applied pairwise as stated below.
\begin{prop}[Lemma~2.5 in \cite{BousquetMelouThreequadrant}] \label{prop:analytic_op}
    If $A(X,Y) \equiv B(X,Y)$ and $A'(X,Y) \equiv B'(X,Y)$, then
    $A(X,Y) + B(X,Y) \equiv A'(X,Y) + B'(X,Y)$ and
    $A(X,Y) B(X,Y) \equiv A'(X,Y) B'(X,Y)$.
\end{prop}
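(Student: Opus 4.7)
The plan is to verify that $\equiv$ is a congruence on the ring $\Cmul(X,Y)((t))$. Concretely, letting $\mathcal{I}$ denote the subset of $H \in \Cmul(X,Y)((t))$ such that $H/\Ktld$ has poles of bounded order at $0$, the strategy reduces the proposition to showing that $\mathcal{I}$ is closed under addition and that $\mathcal{I}$ absorbs multiplication by any element of $\Cmul(X,Y)((t))$ with poles of bounded order at $0$. Both conclusions of the proposition then drop out of the algebraic identities
\begin{equation*}
(A + A') - (B + B') = (A - B) + (A' - B'), \qquad
A A' - B B' = A (A' - B') + (A - B) B'.
\end{equation*}

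For additive closure, if $X^{m_i} Y^{n_i} H_i / \Ktld$ has $t$-coefficients free of poles at $X = 0$ and $Y = 0$ for $i = 1, 2$, then with $m := \max(m_1, m_2)$ and $n := \max(n_1, n_2)$ the Laurent series $X^m Y^n (H_1 + H_2)/\Ktld$ has pole-free $t$-coefficients as well. For multiplicative absorption, the key observation is that if each $t$-coefficient of $X^m Y^n H$ is free of poles at $X = 0$ and $Y = 0$, and each $t$-coefficient of $X^{m'} Y^{n'} C$ is likewise pole-free, then each $t$-coefficient of $X^{m+m'} Y^{n+n'} (C \cdot H)$ is a finite Cauchy sum of products of rational fractions with no poles at the origin, and is therefore itself pole-free. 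Hence $C \cdot H \in \mathcal{I}$.

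Granted these two stability properties, the proof becomes routine bookkeeping. Dividing the first identity above by $\Ktld$ expresses the sum as an element of $\mathcal{I}$ by additive closure, yielding the additive statement. Dividing the second identity by $\Ktld$ writes
\begin{equation*}
(A A' - B B')/\Ktld \;=\; A \cdot \bigl((A' - B')/\Ktld\bigr) + \bigl((A - B)/\Ktld\bigr) \cdot B',
\end{equation*}
and each term is a product of an element of $\Cmul(X,Y)((t))$ of bounded pole order at $0$ with an element of $\mathcal{I}$; by multiplicative absorption each summand lies in $\mathcal{I}$, and then so does the sum.

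The main subtlety is multiplicative absorption, because it requires the outer factors $A$ and $B'$ appearing in the telescoping identity to themselves have poles of bounded order at $0$, a property not built into membership in $\Cmul(X,Y)((t))$ alone. In the applications that motivate this lemma, the series under consideration arise as generating functions whose $t$-coefficients are Laurent polynomials with a uniform bound on the order of poles at the origin, so this hypothesis holds automatically; the rest is exponent bookkeeping and the formal Cauchy product.
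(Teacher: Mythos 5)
The paper never proves this proposition itself (it is imported from the cited reference), so the only question is whether your argument stands on its own. Your overall route is the natural one: reduce everything to the set $\mathcal{I}$ of series $H$ with $H/\Ktld$ of bounded pole order at $0$, and use the identities $(A+A')-(B+B')=(A-B)+(A'-B')$ and $AA'-BB'=A(A'-B')+(A-B)B'$. Note that what you prove is the pairwise version $A+A'\equiv B+B'$ and $AA'\equiv BB'$, which is indeed the version the paper uses ("pairs of $t$-invariants are preserved under sum and product applied pairwise"); the lettering of the printed statement is inconsistent, since with its literal pairing even the additive claim fails (take $A=B=0$, $A'=B'=1$: then $2/\Ktld$ does not have poles of bounded order at $0$ because $m_x,m_y\ge 1$). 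Your additive half is complete, and the finite-Cauchy-sum argument for absorption is correct as far as it goes.

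The multiplicative subtlety you flag is, however, not a convenience you may defer to "the applications": it is a genuinely necessary hypothesis, because the product statement is false for arbitrary elements of $\Cmul(X,Y)((t))$. Take $A=B=\sum_{n\ge 0}t^{n}X^{-n}$, $A'=\Ktld$ and $B'=0$; then $A\equiv B$ and $A'\equiv B'$ (the relevant quotients are $0$ and $1$), yet $(AA'-BB')/\Ktld=A$, whose $t$-coefficients $X^{-n}$ have poles of unbounded order at $X=0$, so $AA'\not\equiv BB'$. Thus the product half must be stated under the explicit additional hypothesis that the series themselves (at least the outer factors $A$ and $B'$, in practice all four) have poles of bounded order at $0$; under that hypothesis your Cauchy-product absorption argument completes the proof. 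You should make this hypothesis part of the statement rather than an afterthought, and observe that it is harmless for the paper: every series to which the proposition is applied (sections of $Q$, and Laurent series in $t$ with finitely many rational coefficients in $X$ or $Y$ alone) has poles of bounded order at $0$, and this property is itself preserved under sums and products, so it propagates through the iterated combinations such as those in Example~\ref{ex:gessel2_alg_3}.
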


The notion of $t$-equivalence allows us  to define  the notion of
$t$-invariants as follows.

\begin{defi}[$t$-Invariants (Definition~2.3 in \cite{BousquetMelouThreequadrant})] \label{defi:analytic_inv}
    Let $I(X)$ and $J(Y)$ be two Laurent series in $t$  with coefficients lying
    respectively in $\C(X)$ and $\C(Y)$.
    If $I(X) \equiv J(Y)$, then the pair $(I(X), J(Y))$ is said to be
    a \emph{pair of  $t$-invariants} (with respect to the model $\calW$).
\end{defi}

By Proposition~\ref{prop:analytic_op}, pairs of $t$-invariants are
also preserved under sum and product applied pairwise. We now state
the main result on $t$-invariants
\cite[Lemma~2.6]{BousquetMelouThreequadrant} whose proof originally
for small steps models passes directly to the large steps
context.\footnote{In \cite{BousquetMelouThreequadrant}, Lemma~2.6
requires that the coefficients in the $t$-expansion of $\frac{I(X) -
J(Y)}{K(X,Y)}$ vanish at $X=0$ and $Y=0$.  This is equivalent to the
condition stated in Lemma~\ref{lem:inv_Lemma}.}:

\begin{lem}[Invariant Lemma] \label{lem:inv_Lemma}
    Let $(I(X), J(Y))$ be a pair of  $t$-invariants. If
    the coefficients in the $t$-expansion  of
    $\frac{I(X) - J(Y)}{\Ktld(X,Y)}$ have no pole
    at $X=0$ nor $Y=0$, then there exists a Laurent series $A(t)$
    with coefficients in $\C$ such that $I(X) = J(Y) = A(t)$.
  \end{lem}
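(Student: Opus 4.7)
The plan is to expand everything as formal Laurent series in $t$ and induct on the $t$-order. Writing $\Ktld(X,Y,t) = X^{m_x} Y^{m_y} - tP(X,Y)$ with $P(X,Y) = X^{m_x} Y^{m_y} S(X,Y) \in \C[X,Y]$, and decomposing $I(X) = \sum_n I_n(X) t^n$, $J(Y) = \sum_n J_n(Y) t^n$, and the quotient $R(X,Y,t) := (I(X)-J(Y))/\Ktld = \sum_n R_n(X,Y) t^n$, the identity $I(X) - J(Y) = \Ktld \cdot R$ becomes, coefficient by coefficient in $t$,
\[
  I_n(X) - J_n(Y) \;=\; X^{m_x} Y^{m_y} R_n(X,Y) \;-\; P(X,Y)\, R_{n-1}(X,Y).
\]
By hypothesis each $R_n \in \C(X,Y)$ is regular along both $X=0$ and $Y=0$; in particular $R_n(0,Y)$ and $R_n(X,0)$ are well-defined rational functions.

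I would then prove by (well-founded) induction on $n$ the following strengthened statement: $I_n$ and $J_n$ are both the same constant $A_n \in \C$, and $R_n \equiv 0$. The base case handles the $n$ below the lowest order of $I$, $J$, $R$ in $t$, where the three vanish. For the inductive step, given $R_{n-1}=0$, the recurrence reduces to
\[
  I_n(X) - J_n(Y) \;=\; X^{m_x} Y^{m_y} R_n(X,Y).
\]
Since $R_n$ has no pole on the divisors $\{X=0\}$ and $\{Y=0\}$, and since $m_x, m_y \geq 1$ under the running non-univariate assumption on $S(X,Y)$, the right-hand side has a zero of order at least $m_x$ at $X=0$ and at least $m_y$ at $Y=0$. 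The left-hand side is therefore regular at $X=0$, which forces $I_n(X)$ to be regular there (as $J_n(Y)$ is independent of $X$). Specializing at $X=0$ then yields $I_n(0) - J_n(Y) = 0$, so $J_n(Y) = I_n(0)$ is a scalar; symmetrically $I_n(X) = J_n(0) = I_n(0)$. Setting $A_n := I_n(0)$, the left-hand side vanishes identically, so $X^{m_x} Y^{m_y} R_n = 0$ and hence $R_n \equiv 0$, completing the induction.

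Summing over $n$, the Laurent series $A(t) := \sum_n A_n t^n \in \C((t))$ satisfies $I(X) = A(t) = J(Y)$, as required.

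The only delicate point is justifying that $I_n$ genuinely has no pole at $X=0$ (so the evaluation $I_n(0)$ makes sense); this is exactly where the strengthened hypothesis on $R$, together with $m_x \geq 1$, is used. Everything else is a routine coefficient-chasing argument; the induction never needs to touch $P(X,Y)$ except to observe that the $R_{n-1}$ term disappears at each stage by the inductive hypothesis.
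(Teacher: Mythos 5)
Your proof is correct, and it is essentially the same argument the paper relies on: the paper does not reprove this lemma but imports Lemma~2.6 of \cite{BousquetMelouThreequadrant}, whose proof is exactly this coefficient-by-coefficient induction on the $t$-order, here transported from $K$ to $\Ktld$. Your use of $m_x,m_y\ge 1$ (which comes from the standing assumption that none of $m_x,M_x,m_y,M_y$ vanish, not merely from $S(X,Y)$ being non-univariate) is precisely the point handled by the paper's footnote identifying ``the coefficients of $(I(X)-J(Y))/\Ktld$ have no pole at $X=0$ nor $Y=0$'' with ``the coefficients of $(I(X)-J(Y))/K$ vanish at $X=0$ and $Y=0$''.
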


Note that the equations $I(X) = A(t)$ and $J(Y) = A(t)$ involve only  one catalytic
variable. In other words, the  Invariant Lemma allows us to produce
nontrivial equations with one catalytic variable from one pair of
$t$-invariants satisfying a certain analytic regularity.

Still assuming that the negative steps are small,we can now try to combine the
notion of $t$-invariants and the Invariant Lemma with the functional
equation satisfied by $Q(X,Y)$ in order to obtain two equations in one
catalytic variable for $Q(X,0,t)$ and $Q(0,Y,t)$.  

First, we find a pair of $t$-invariants which involves the specializations $Q(X,0)$
and $Q(0,Y)$ of $Q(X,Y)$.
One way to obtain such a pair of $t$-invariants is by looking at
\eqref{eq:small_steps_backwards}, namely:
\begin{equation}\label{eq:funceqwrittenasdecouplingsections}
    \Ktld(X,Y) Q(X,Y) = XY + A(X) + B(Y).
\end{equation}
Assume that there exist some  fractions $F(X)$  in $\C(X,t)$, $G(Y)$
in $\C(Y,t)$, and $H(X,Y)$ in $\C(X,Y,t)$ having poles of bounded order at 0
such that that $XY$ can be written as
\[
    XY = F(X) + G(Y) + \Ktld(X,Y) H(X,Y).
\]
We call such a relation a \emph{$t$-decoupling} of $XY$.
Combining the  $t$-decoupling of $XY$ with
\eqref{eq:funceqwrittenasdecouplingsections}, one obtains the following
rewriting
\[
    \Ktld(X,Y) \left( Q(X,Y) - H(X,Y) \right) = \left(F(X) + A(X)\right)
    + \left(G(Y) + B(Y)\right).
\]
Note now that the right-hand side has separated variables from the
$t$-decoupling of $XY$. Since $Q(X,Y)$ is a generating function for
walks in the quarter plane, the coefficients of its $t$-expansion are
polynomials in $\C[X,Y]$ (the coefficient of $t^n$ is $ \sum_{i,j \ge
0} q^{(i,j)}_n X^i Y^j$), so the power series $Q(X,Y)$ has poles of
bounded order at $0$. By assumption on $H(X,Y)$, this is also the case
for the series $Q(X,Y) - H(X,Y)$.  Therefore, $(I_1(X), J_1(Y)) =
(F(X)+A(X), -G(Y) - B(Y))$ is a pair of $t$-invariants. It is
noteworthy that this pair of $t$-invariants involves the sections
$Q(X,0)$ and $Q(0,Y)$.

We must note that the writing of $XY$ as the sum of two univariate
fractions modulo $\Ktld$ was the only condition to the existence of
the pair $(I_1,J_1)$. In Section \ref{sect:decoupling}, we introduce
the notion of \emph{Galois decoupling } of $XY$ which is weaker though
easier to test than the notion of $t$-decoupling.  A criterion to test
the existence of a Galois decoupling for $XY$ or, more generally, for
any rational fraction in $\Qb(X,Y)$ and the computation of a Galois
decoupling pair if it exists are among the main results of this paper,
and are covered in full generality in Section
\ref{sect:decoupling}. Provided the orbit of the walk defined in
Section \ref{sect:orbit} is finite, our Galois decoupling procedure is
entirely algorithmic. Thus, one can search for a $t$-decoupling of
$XY$ by first looking for a Galois decoupling and then by checking if
this Galois decoupling is a $t$-decoupling. We now illustrate this
step on the model $\Gmod$:
\begin{exa}[The model $\Gmod$] \label{ex:gessel2_alg_2}

  Recall that the functional equation \eqref{eq:funceqwrittenasdecouplingsections}
  obtained for $\Gmod$ is:
    \[
         \Ktld(X,Y)Q(X,Y) = XY  +A(X) +B(Y),
      \]
    with $A(X)=  - t(X^2+1)Q(X,0)  + t  Q(0,0)$ and $B(Y)= -tQ(0,Y)$.
    One can check that $XY$ admits a $t$-decoupling of the following form:
    \[
      XY = - \frac{3 \lambda X^2 t   -   \lambda t  - 4 X}{4 t (X^2 + 1) }
      + \frac{- \lambda Y - 4}{4 Y} -  \frac{\Ktld(X,Y)}{(X^2 + 1 )Y t} .
\]

    Combining this identity with the functional equation, one obtains the
    following pair of $t$-invariants:
    \[
      (I_1(X),J_1(Y)) = \left(
        \frac{3 \lambda  t \,X^{2}-\lambda  t -4 X}{-4 t \,X^{2}-4 t}-t \left(X^{2}+ 1\right) Q \! \left(X , 0\right)+t   Q \! \left(0, 0\right),
        t   Q \! \left(0, Y\right)+\frac{\lambda  Y +4}{4 Y}\right).  \] 
    \exqed
\end{exa}

A priori, the pair of $t$-invariants $(I_1(X), J_1(Y))$ that can be
obtained through the combination of the functional equation and a
decoupling equation does not satisfy the conditions of
Lemma~\ref{lem:inv_Lemma}, as the coefficients of the $t$-expansion of
$\frac{I_1(X) - J_1(Y)}{\Ktld(X,Y)}$ might have poles at $0$. In order
to remove these poles, we want to combine the pair $(I_1(X), J_1(Y))$
with a second pair of $t$-invariants $(I_2(X), J_2(Y))$ by means of
Proposition~\ref{prop:analytic_op}, where $I_2(X)$ and $J_2(Y)$ will
be assumed to be respectively in $\C(X,t)$ and $\C(Y,t)$. In order to
obtain this second pair of $t$-invariants, we rely once again on a
weaker notion of invariants: the \emph{Galois invariants} which are
introduced in Section \ref{sect:ratinv}. Theorem~\ref{thm:fried} below
shows that the existence of a non-constant pair of Galois invariants
is equivalent to the finitness of the \emph{orbit of the walk}.
Currently, the pole elimination between the two pairs of
$t$-invariants requires a case by case treatment. We detail it for our
running example $\Gmod$.

\begin{exa}[The model $\Gmod$] \label{ex:gessel2_alg_3}
  The pair $(I_2(X),J_2(Y))$ below is a pair of $t$-invariants for $\Gmod$:
  \begin{align*}
    (I_2,J_2) = \left(\frac{\left(-\lambda^{2} \,X^{3}- \,X^{4}-X^{6}+ X^{2}+1\right) t^{2}-X^{2} \lambda  \left(X^{2}-1 \right) t +X^{3}}{t^{2} X \left(X^{2}+ 1\right)^{2}},
    \frac{-  t \,Y^{4}+\lambda  t Y +Y^{3}+t}{Y^{2} t}\right). 
  \end{align*}
  Analogously to the $t$-decoupling, we first search for a pair of
  Galois invariants, which amounts to use the semi-algorithm presented
  in Section \ref{sect:ratinv}, and then check that this pair is a pair
  of $t$-invariants.

  As we now have two pairs of $t$-invariants $P_1=(I_1(X), J_1(Y))$
  and $P_2 = (I_2(X),J_2(Y))$, we perform some algebraic combinations
  between them in order to eliminate their poles. To lighten notation,
  we write the component-wise operations on the pairs $P_i$ of
  $t$-invariants. Computations can be checked in the joint Maple
  worksheet (also on this \href{https://www.labri.fr/perso/pbonnet/}{webpage}).

  Consider the Taylor expansions of the first coordinates:
  \begin{align*}
    I_1(X) &= \frac{\lambda}{4}+O\! \left(X \right), \\
    I_2(X) &=  X^{-1}+O\! \left(X \right).
  \end{align*}
  Out of these two pairs  of $t$-invariants, we first produce a third  pair of $t$-invariants
  without a pole at $X=0$ as follows:
  \[
    P_3=(I_3,J_3) := P_2 \left( P_1 - \frac{\lambda}{4} \right).
  \]
  The first coordinate of the pairs $P_1$ and $P_3$ do not have a pole
at $X=0$.  The Taylor expansion of their second coordinates $J_1(Y)$
and $J_3(Y)$ at $Y=0$ is as follows:
  \begin{align*}
    J_3(Y) &= Y^{-3}+\left(t Q \! \left(0,0\right)+\lambda \right) Y^{-2}+ t \left(Q \! \left(0, 0\right) \lambda +  \frac{\partial^2 Q}{\partial Y^2} \left(0, 0 \right)\right) Y^{-1}+O\! \left(Y^{0}\right),\\
    J_1(Y) &= Y^{-1}+O\! \left(Y^{0}\right).
  \end{align*}
  In order to produce a pair of $t$-invariants satisfying the assumption of the Invariant Lemma, we need to combine $P_1$ and $P_3$ in order
  to eliminate the pole at $Y=0$. Note that, since the first coordinate
  of $P_1$ and $P_3$ have no pole at zero, the first coordinate of any
  sum or product between these two pairs  have no pole at $X=0$.  Using the simple pole at $Y=0$ of $J_3$, we produce a new pair $P_4$ whose coordinates have no pole at $X$ and $Y$ equal zero by setting
  \[
    P_4=(I_4,J_4):= P_3 -P_1^{3}+\left(2 t Q \! \left(0, 0\right)-\frac{\lambda}{4}\right) P_1^{2}+\left(2   t \frac{\partial^2 Q}{\partial Y^2} \left(0, 0 \right)-t^{2} Q \! \left(0, 0\right)^{2}+\frac{5 \lambda^{2}}{16}\right) P_1.
  \]

  It remains to  check that $\frac{I_4(X)-J_4(Y)}{\Ktld(X,Y)}$
  has no poles at $X=0$ and $Y=0$. This is done in the joint Maple worksheet
  (also on this \href{https://www.labri.fr/perso/pbonnet/}{webpage}). Therefore, the Invariant
  Lemma  yields the existence of a series
  $C(t)$ in $\C((t))$ such that $I_4(X) = C(t)$ and $J_4(Y) = C(t)$. \exqed
\end{exa}

Once we have found a pair of $t$-invariants
satisfying the conditions of
Lemma~\ref{lem:inv_Lemma}, we end up with two nontrivial polynomial equations
in one catalytic variable involving the sections $Q(X,0)$ and $Q(0,Y)$.
If these equations
are well-founded, then Theorem~\ref{thm:alg_one_variable} allows us to
conclude that the
series $Q(X,0)$ and $Q(0,Y)$ are algebraic
over  $\C(X,t)$ and $\C(Y,t)$ respectively, and therefore that $Q(X,Y)$
is algebraic over $\C(X,Y,t)$.

\begin{exa}[The model $\Gmod$] \label{ex:gessel2_alg_4}
  The value of $C(t)$ can be deduced from the values of $Q(0,Y)$ and
  its derivatives at $0$ by looking at the Taylor expansion of $J_4(Y)$
  at $Y=0$. The verification that the polynomial equations $I_4(X)=C(t)$
  and $J_4(Y)= C(t)$ are well-founded is done in the Maple worksheet
  (also on this \href{https://www.labri.fr/perso/pbonnet/}{webpage}).
  We only give here the form of the well-founded equation for
  $F(Y):=Q(0,Y)$:

  \begin{align}\label{eq:polyeqonecatavar}
    F(Y) &= 1 + t \left(
           t^{2} Y F \! \left(Y\right)\left({\Delta^{(1)}F} \! \left(Y
           \right)\right)^{2} + \lambda t F \! \left(Y\right){\Delta^{(1)} F} \!
           \left(Y \right)+t \left({\Delta^{(1)}F} \! \left(Y \right)\right)^{2}
           \right. \\ &\left. +2 t F \! \left(Y\right){\Delta^{(2)}F} \! \left(Y
                        \right) +Y F \! \left(Y \right)+\lambda {\Delta^{(2)}F} \! \left(Y
                        \right) +2 {\Delta^{(3)}F} \! \left(Y \right) \right). \nonumber
  \end{align}

  Theorem~\ref{thm:alg_one_variable} with $\mathbb{L}=\Q(\lambda)$
  implies that the generating function of the weighted model $\Gmod$ is
  algebraic over $\Q(\lambda)(X,Y,t)$. Moreover, one can show that, at
  any step of our reasoning, one may have taken the weight $\lambda$ to
  be zero.  In particular, the generating function of the model
  $\calG_0$ is algebraic. Thus, the excursion generating functions
  $Q(0,0)$ of the reverse models of $\calG_0$ and $\calG_1$ are
  algebraic over $\Q(t)$. In Appendix~\ref{sect:bmj}, we apply the
  method of Bousquet-Mélou and Jehanne to the polynomial equation
  \eqref{eq:polyeqonecatavar} to find an explicit minimal polynomial of
  degree $32$ over $\Q(\lambda,t)$ for $Q(0,0)$ of the model $\Gmod$.
  \exqed
\end{exa}

For unweighted small steps models, the results of \cite{BMM,
KauersBostan, KurkRasch, DreyfusHardouinRoquesSingerGenuszero,
MelcMish} show that the generating function is algebraic in the
variables $X$ and $Y$ if and only if the model admits some non-trivial
Galois invariants and $XY$ has a Galois decoupling.  For weighted
models with small steps, \cite[Corollary 4.2]{dreyfus2019differential}
and \cite[Theorem 4.6 and Theorem 4.11]{BBMR16} imply that the
existence of non-trivial Galois invariants and of a Galois decoupling
pair for $XY$ yield the algebraicity of the generating functions. We
conjecture that the reverse implication is also true yielding an
equivalence which should also be valid in the large steps case. The
general strategy we used in this section is summarized in Figure
\ref{fig:alg_strat} and motivates the above conjecture. It is the
first attempt at finding uniform proofs for the algebraicity of
generating functions of large steps models.

 The strategy detailed above is entirely algorithmic, except for the
fact that Galois invariants and decoupling are $t$-invariants and
$t$-decoupling and that they yield polynomial equations in one
catalytic variable satisfying the conditions of Theorem
\ref{thm:alg_one_variable}. Nonetheless, we think that this last step
could be made constructive via for instance the generalization of the
notion of weak invariants \cite[Section 5.2]{BBMR16} to the large
steps framework.  The rest of the paper is devoted to the systematic
and algorithmic study of the notions of pairs of Galois invariants and
decoupling.

\begin{figure}[h!]
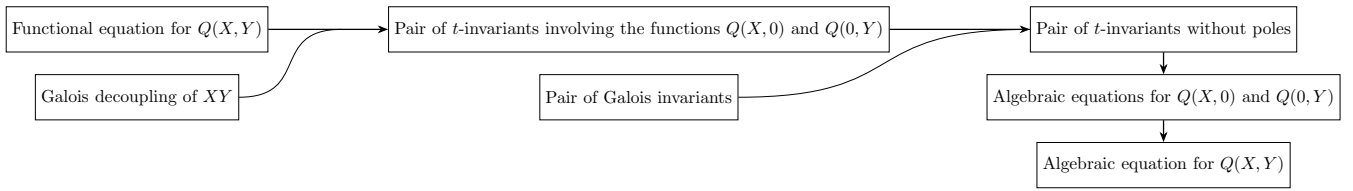

    \scalebox{0.6}{\tikzfig{strat}}
    \caption{Summary of the strategy for proving algebraicity}
 \label{fig:alg_strat}
\end{figure}

\section{The orbit of the walk and its Galoisian structure} \label{sect:orbit}

In the context of small steps models, the \emph{group of the walk}
(which we qualify \emph{classic} in this paper for disambiguation) has
been initially introduced in  \cite[Section 3]{BMM}. It is the group generated by two birational
involutions $\Phi$ and $\Psi$ of $\C \times \C$  defined as follows.
Assuming that the model has at least a
negative and a positive $X$ and $Y$-steps, one writes its step
polynomial $S(X,Y)=\sum_{ (i,j) \in \mathcal{S}}w_{(i,j)} X^i Y^j$ as
\[ S(X,Y)= A_{-1}(X)\frac{1}{Y} +A_0(X) + A_1(X)Y=B_{-1}(Y)\frac{1}{X}
+B_0(X) + B_1(Y)X,\] where the $A_i$ and $B_i$'s are Laurent
polynomials.  The birational transformations $\Phi$ and $\Psi$ are
then defined as
\[ \Phi:(x,y) \mapsto \left( \frac{B_{-1}(y)}{x B_1(y)},y \right)
\mbox{ and } \Psi:(x,y) \mapsto \left(x, \frac{A_{-1}(x)}{y A_1(x)}
\right). \]

 When the classic group of the walk is infinite, its action on the
variables $X$ and $Y$ produces an infinite amount of singularities for
the generating function $Q(X,Y)$ proving that the series is
not D-finite (see \cite{MelcMish} or \cite{KurkRasch} for
instance). When the group of the walk is finite, one can describe in
certain cases the generating function as a diagonal of a
rational function, called the (alternating) orbit sum.  To such a
group, one can attach a graph, called the \emph{orbit}, whose
vertices are the orbit in $\C(x,y)^2$ of the pair $(x, y)$ under the
action of $\Phi$ and $\Psi$ and whose edges correspond
to the action of $\Phi$ and $\Psi$ (see \cite[Section 3]{BMM}).

In \cite{bostan2018counting}, the authors generalized the notion of
the orbit of the walk to arbitrary large steps models but did not
attempt to find a group of transformations which generates this
orbit. In this section, we show how one can associate to a weighted
model $\calW$ a group, called in this paper \emph{the group of the
walk}, which is generated by Galois automorphisms of two field
extensions.  In this section, we prove that the group of the walk acts
faithfully and transitively on the orbit analogously to the classic
group.  When the orbit is finite, this group is itself presented as a
Galois group. We interpret in the next two sections the notions of
 invariants and decoupling in this Galoisian framework. Moreover,
for finite orbits, one can interpret the group of the walk as
a group of automorphisms of an algebraic curve (see Appendix~
\ref{ap:geometry}). This point of view generalizes the notion of the
classic group of the walk in the small step case used in
\cite{KurkRasch, DHRS,DreyfusHardouinRoquesSingerGenuszero}.

From now on, we fix $\calW$ a weighted model, and we assume that the
step polynomial $S(X,Y)$ is not univariate, which is the case when
considering models with both positive and negative steps in each
direction as in Section \ref{subsect:walkfuncequ}.  In order to
distinguish the coordinates of the orbit from the coordinates of
$X,Y,t$ of the functional equation in Section
\ref{subsect:walkfuncequ}, we introduce two new variables $x$ and $y$
that are taken algebraically independent over $\C$. We also denote by
$k$ the field $\C(S(x,y))$.  As $x$, $y$ and $S(x,y)$ satisfy by
definition the polynomial relation $\Ktld(x,y,\invS) = 0$, the
condition that $S$ is not univariate implies the following lemma.

\begin{lem} \label{lem:alg_rel_xyS}
  The variables $x$, $y$ and the polynomial
  $S(x,y)$ satisfy the following relations:
  \begin{enumerate}
  \item $x$ and $S(x,y)$ are algebraically independent over $\C$, and
    so are $y$ and $S(x,y)$,
  \item $x$ is algebraic over $k(y)$ and $y$ is algebraic over
    $k(x)$.
  \end{enumerate}
\end{lem}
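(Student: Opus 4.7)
The plan is to address part (1) via the classical fact that in a purely transcendental extension $F(t)/F$, the base field $F$ is algebraically closed inside $F(t)$, and then to deduce part (2) from a transcendence degree count.

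For part (1), I would show that $S(x,y)$ is transcendental over $\C(x)$, which immediately yields the algebraic independence of $x$ and $S(x,y)$ over $\C$. Since $x$ and $y$ are algebraically independent over $\C$, the element $y$ is transcendental over $\C(x)$, making $\C(x,y) = \C(x)(y)$ a purely transcendental extension of $\C(x)$; hence $\C(x)$ is algebraically closed in $\C(x,y)$, and it suffices to verify $S(x,y) \notin \C(x)$. This is exactly where the standing assumption that $S(X,Y)$ is not univariate enters: the Laurent polynomial $S(X,Y)$ contains some monomial $X^i Y^j$ with $j \neq 0$, so, viewed as a Laurent polynomial in $y$ with coefficients in $\C[x,x^{-1}]$, $S(x,y)$ involves a nonzero power of $y$ and cannot belong to $\C(x)$. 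The symmetric argument, interchanging the roles of $x$ and $y$, yields the algebraic independence of $\{y, S(x,y)\}$.

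For part (2), the field $\C(x,y)$ has transcendence degree $2$ over $\C$, since $\{x,y\}$ is algebraically independent. By part (1), the set $\{y, S(x,y)\}$ is also algebraically independent over $\C$, so the subfield $k(y) = \C(S(x,y), y)$ of $\C(x,y)$ already has transcendence degree $2$ over $\C$. Therefore the finitely generated extension $\C(x,y)/k(y)$ has transcendence degree $0$, i.e. is algebraic, and in particular $x$ is algebraic over $k(y)$. The analogous statement for $y$ over $k(x)$ follows symmetrically.

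There is no real obstacle here: everything reduces to a clean transcendence degree computation, provided one invokes the standard result about purely transcendental extensions. The only point requiring some care is the verification $S(x,y) \notin \C(x)$, which is precisely where the hypothesis that the step polynomial genuinely depends on both $X$ and $Y$ is used.
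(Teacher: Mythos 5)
Your proof is correct. Note that the paper does not actually write out a proof of this lemma: it is stated as an immediate consequence of the relation $\Ktld(x,y,\invS)=0$ together with the assumption that $S$ is not univariate, the implied argument for (2) being that $y$ is a root of the nonzero polynomial $\Ktld(x,Y,\invS)\in k(x)[Y]$ (and symmetrically $x$ is a root of $\Ktld(X,y,\invS)\in k(y)[X]$), nonzero precisely because $S$ depends on both variables. You take a slightly different, equally elementary route: you first establish (1) by showing $S(x,y)\notin\C(x)$ (here one should note that the coefficient of the relevant power $y^j$, $j\neq 0$, is a nonzero Laurent polynomial in $x$ because the weights are nonzero and the monomials $x^i$ are independent, so no cancellation occurs) and invoking the fact that $\C(x)$ is relatively algebraically closed in the purely transcendental extension $\C(x)(y)$; you then deduce (2) from (1) by additivity of transcendence degree, since $k(y)$ already has transcendence degree $2$ over $\C$ inside $\C(x,y)$. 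Both arguments hinge on the non-univariate hypothesis in the same way; the kernel-polynomial route is more in line with how the paper proceeds afterwards (e.g.\ Lemma~\ref{lem:ker_spec_irred} upgrades the specialized kernel polynomial to an irreducible one), whereas your transcendence-degree count is self-contained and avoids manipulating $\Ktld$ altogether.
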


The orbit as well as the associated group, Galois invariants
and Galois decoupling pairs are constructed for $S(X,Y)$ arising from
a model of walk.  These constructions should pass directly to the case
where  \textbf{$S(X,Y)$ is an arbitrary  non-univariate rational fraction  in $\C(X,Y)$} by letting
$\Ktld(X,Y,t)$ be $(1-tS(X,Y))Q(X,Y)$ with $S=\frac{P}{Q}$ for $P,Q$
two relatively prime polynomial in $\C[X,Y]$.

In Section \ref{subsect:theorbit}, we recall the definition of the
orbit of a model $\calW$ with large steps. We give it a Galois
structure in Section \ref{subsect:galoisetxorbit}.  In Section
\ref{sect:grouporbit}, we define the group of the walk and prove that
it acts faithfully and transitively by graph automorphisms on the
orbit. Finally, we investigate the evaluation of fractions in
$\C(X,Y,t)$ on the orbit.

\subsection{The orbit}\label{subsect:theorbit}

We recall below the definition of the orbit  introduced in  \cite[Section 3]{bostan2018counting},
and we also fix once and for all an algebraic closure $\K$ of $\C(x,y)$.

\begin{defi}[Definition~3.1 in \cite{bostan2018counting}]
  Let $(u,v)$ and $(u',v')$ be in $\K \times \K$.

  If $u = u'$ and $S(u,v) =
  S(u',v')$, then the pairs $(u,v)$ and $(u',v')$ are called \emph{$x$-adjacent},
  and we  write $(u,v) \sim^x (u',v')$.
  Similarly, if $v = v'$ and $S(u,v) =
  S(u',v')$, then the pairs $(u,v)$ and $(u',v')$ are called \emph{$y$-adjacent},
  and we write $(u,v) \sim^y (u',v')$.
  Both relations are equivalence relations on $\K \times
  \K$.

  If the pairs $(u,v)$ and $(u',v')$ are either $x$-adjacent or
  $y$-adjacent, they are called \emph{adjacent}, and we write
  $(u,v) \sim (u',v')$.  Finally, denoting by $\sim^*$ the reflexive transitive
  closure of $\sim$, the \emph{orbit of the walk}, denoted by $\cO$,
  is the equivalence class of the pair $(x,y)$ under the relation $\sim^*$.
\end{defi}

\begin{figure}[h!]
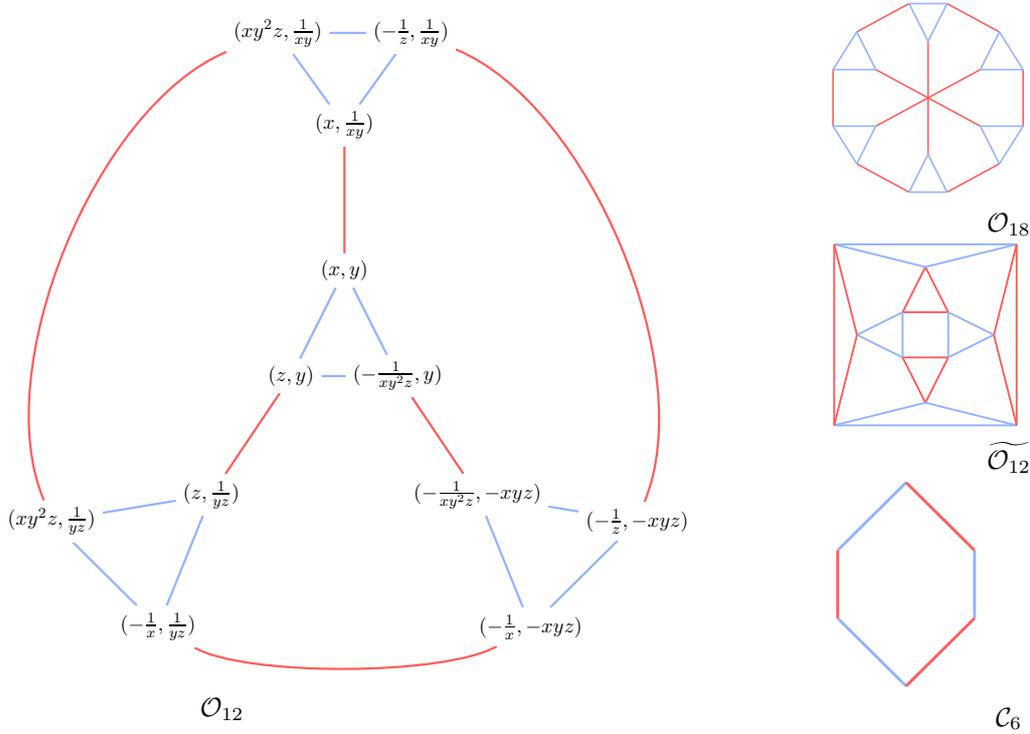

  \begin{subfigure}[c]{0.4\linewidth}
    \scalebox{0.7}{\tikzfig{o12}}
    \caption*{$\cO_{12}$}
    \label{fig:o12}
  \end{subfigure}
  \hfill
  \begin{subfigure}[c]{0.3\linewidth}
    \scalebox{0.5}{\tikzfig{o18}}
    \caption*{$\cO_{18}$}
    \scalebox{0.6}{\tikzfig{o12_tld}}
    \caption*{$\widetilde{\cO_{12}}$}
    \scalebox{0.9}{\tikzfig{c6}}
    \caption*{$\calC_6$}
    \label{fig:figrandom}
  \end{subfigure}
  \caption{A sample of finite orbits \label{fig:sample_orbits}}
\end{figure}

The orbit $\cO$ has a graph structure: the vertices are
the elements of the orbit and the edges are adjacencies, colored here by
their adjacency type. The $x$-adjacencies are represented
in red and the $y$-adjacencies in blue. As the $x$ and $y$ adjacencies come from
equivalence relations, the monochromatic connected components of $\cO$
are \emph{cliques} (any two vertices of such a component are connected
by an edge). Moreover, by definition of the transitive closure,
the graph $\cO$ is \emph{connected}, that is, every two vertices of the graph
are connected by a path.
In the sequel, we denote by $\cO$ either the set of pairs  in the orbit
or the induced graph. The structure considered should be clear from the  context.
For a model $\calW$, its \emph{orbit type} corresponds to the class of
its orbit modulo graph isomorphisms.

\begin{exa}
  For small steps models, the orbit when finite is always isomorphic
  to a cycle whose vertices all belong to $\C(x,y)^2$. Example
  $\calC_6$ in Figure~\ref{fig:sample_orbits} is for instance
  the unlabelled orbit of the unweighted small steps model
  $\calS=\{(-1,0),(0,1),(1,-1)\}$ \cite[Example 2]{BMM}.
\end{exa}

The orbit type being preserved when one reverses the model, Section
$10$ in \cite{bostan2018counting} lists the distinct orbit types for
models with steps in $\{-1,0,1,2\}^2$ with at least one large
step. For these models, the finite orbit types are exactly $\cO_{12},
\widetilde{\cO_{12}}$ and $\cO_{18}$ in Figure \ref{fig:sample_orbits}
and the cartesian product orbit-types of the \emph{Hadamard
models} that correspond to a step polynomial of the form $R(X)
+P(X)Q(Y)$ (see \cite[Section 6]{bostan2018counting} or Section
\ref{subsubsect:Hadamard}).

\begin{exa}[The model $\Gmod$] \label{ex:gessel2_alg_5}
For  $\Gmod$,  the polynomial
$\Ktld(Z,y,\invS)$ is reducible over $k(x,y)[Z]$ and factors
as $\frac{(Z-x)yP(Z)}{x^3y^2 + (\lambda y + 1)x^2 + y^2 x + 1}$ where  
 \[P(Z)=x \,y^{2} Z^{2}+ (x^{2} y^{2} +\lambda x y + x)Z - 1. \]  Thus,   an element $(z,y) \in \K^2$ distinct from
$(x,y)$ is $y$-adjacent to $(x,y)$  if and only if $z$ is a root of $P(Z)$.  Its roots  are of the form
$z,\frac{-1}{xy^2z}$ by the relation between the roots and the
coefficients of a degree two polynomial. One can then show that the
orbit $\cO_{12}$ in Figure~\ref{fig:sample_orbits} is the orbit of the
model $\Gmod$.  Since none of the vertices depend on $\lambda$, the graph $\cO_{12}$ is
also the orbit of the  model  $\mathcal{G}_0$.
\end{exa}

Finally, we would like to discuss the finiteness of the orbit. For small steps
walks, the finiteness of the orbit depends only on the order of $\Phi \circ
\Psi$.  Some number theoretic considerations on the    torsion subgroup
of the Mordell-Weil group of a rational elliptic
surface prove that this order, when finite, is bounded by $6$, which provides a
very easy algorithm to test the finiteness of the group of the walk. This bound is valid for any choice of weights contained in an algebraically closed field of characteristic zero (see
\cite[Remark 5.1]{HS} and  \cite[Corollary~8.21]{SchuttShioda}).
For models with arbitrarily large steps,  there is  currently no general criterion to
determine whether the orbit is finite or not, but only a semi-algorithm \cite[Section 3.2]{bostan2018counting}.  We hope that analogously to the
small steps case a geometric interpretation of the notion of orbit will  provide
some bounds on the potential diameter of the orbit.

\subsection{The Galois extension of the orbit}\label{subsect:galoisetxorbit}

In the remaining of the article, we  denote by $k(\cO)$ the subfield of
$\K$ generated over $k=\C(S(x,y))$ by all coordinates of the orbit
$\cO$. Note that $k(\cO)$ coincides with $\C(\cO)$ since $x,y$
belong to the orbit.

We start this subsection with some terminology on field
extensions. Our main reference is \cite{Szamuely} which is a concise
exposition of the Galois theory of field extensions of finite
and infinite degree.
A field extension $M \subset L$ is denoted by
$L|M$.  The \emph{ degree  of the field extension  $L|M$ } is the dimension of
$L$ as $M$-vector space. When this degree is finite, we denote it
$[L:M]$. For $L|M$ and $L'|M$ two field extensions, an \emph{$M$-algebra
homomorphism } of $L$ into $L'$ is a ring homomorphism from $L$ to $L'$
that is the identity on $M$.  An \emph{ algebraic closure  of a field $M$ } is
an algebraic extension of $M$ that is algebraically closed. Let us
recall some of its properties.
\begin{prop}[Proposition~1.1.3 in  \cite{Szamuely}]\label{prop:algebraicclosure}
 Let $M$ be a field.
\begin{enumerate}
\item  There exists an algebraic closure $\overline{M}$ of $M$. It is unique up to isomorphism.
\item For an algebraic extension $L$ of $M$, there exists an embedding
from $L$ to $\overline{M}$ leaving $M$ elementwise fixed. Moreover,
any $M$-algebra homomorphism from $L$ into $\overline{M}$ can be
extended to an $M$-algebra isomorphism of $\overline{L}$ to
$\overline{M}$.
\end{enumerate}
\end{prop}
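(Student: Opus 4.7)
The plan is to prove both parts by applying Zorn's lemma in the standard Steinitz/Artin style: once to construct an algebraic closure, then twice more (formally, a single template applied in two situations) to compare or extend algebraic extensions. None of the ingredients is particularly deep; the subtlety lies essentially in the set-theoretic bookkeeping.

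For the existence part of (1), I would follow Artin's construction. Form the polynomial ring $R = M[\{X_f\}_f]$ with one indeterminate $X_f$ for each non-constant monic polynomial $f \in M[X]$, and let $I \subset R$ be the ideal generated by the elements $\{f(X_f)\}_f$. The key point is that $I$ is a proper ideal: any finite subset of generators $f_1(X_{f_1}), \ldots, f_n(X_{f_n})$ can be simultaneously annihilated in a finite splitting field of $f_1 \cdots f_n$, so $1 \notin I$ by a standard argument. Zorn's lemma then furnishes a maximal ideal $\mathfrak{m} \supset I$, and the field $E_1 := R/\mathfrak{m}$ is an extension of $M$ in which every non-constant polynomial over $M$ has a root. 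Iterating, one obtains a tower $M \subset E_1 \subset E_2 \subset \cdots$ whose union $\Omega$ is algebraically closed; the subfield $\overline{M} \subset \Omega$ of elements algebraic over $M$ is then an algebraic closure.

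For the uniqueness part of (1), given two algebraic closures $\Omega_1$ and $\Omega_2$ of $M$, I would consider the poset of pairs $(E, \sigma)$ where $M \subset E \subset \Omega_1$ and $\sigma \colon E \to \Omega_2$ is an $M$-embedding, ordered by extension. Chains have unions as upper bounds, so Zorn's lemma produces a maximal element $(E, \sigma)$. If $\alpha \in \Omega_1 \setminus E$ existed, its minimal polynomial over $E$ would have a root in $\Omega_2$ (since $\Omega_2$ is algebraically closed), and one could extend $\sigma$ to $E(\alpha)$, contradicting maximality. Hence $E = \Omega_1$. Furthermore, $\sigma(\Omega_1)$ is an algebraically closed subfield of $\Omega_2$ containing $M$, and since $\Omega_2|M$ is algebraic, one deduces $\sigma(\Omega_1) = \Omega_2$.

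Part (2) is obtained by essentially the same template. To produce the embedding $L \hookrightarrow \overline{M}$, one picks any algebraic closure $\overline{L}$ of $L$; then $\overline{L}$ is algebraic over $M$ and algebraically closed, hence an algebraic closure of $M$, and uniqueness from (1) provides an $M$-isomorphism $\overline{L} \simeq \overline{M}$ whose restriction to $L$ gives the desired embedding. To extend an $M$-homomorphism $\sigma \colon L \to \overline{M}$ to an $M$-isomorphism $\overline{L} \to \overline{M}$, one applies Zorn's lemma to the poset of pairs $(F, \tau)$ with $L \subset F \subset \overline{L}$ and $\tau$ extending $\sigma$; the same two arguments as in the uniqueness proof show that the maximal element must have $F = \overline{L}$ and $\tau(\overline{L}) = \overline{M}$.

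The main obstacle, to the extent there is one, is the properness of the ideal $I$ in Artin's construction — this is where the argument genuinely uses something about polynomials (the existence of finite splitting fields) rather than just formal manipulation. Once past this point, the rest of the proposition is a routine application of Zorn's lemma together with the defining property of algebraically closed fields.
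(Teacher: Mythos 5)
Your proof is correct: the paper does not prove this statement at all but simply quotes it from Szamuely (Proposition~1.1.3 there), and your argument is the standard Artin--Steinitz construction together with the usual Zorn's lemma extension-of-embeddings argument, which is exactly the textbook proof being cited. Nothing further is needed.
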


The field $\K$ introduced in Section \ref{subsect:theorbit} is an
algebraic closure of $\C(x,y)$.  By definition of the orbit,
$k(\cO)=\C(\cO)$ is an algebraic field extension of
$\C(x,y)$. Moreover, since $y$ is algebraic over $k(x)$ and $x$ is
algebraic over $k(y)$ by Lemma~\ref{lem:alg_rel_xyS}, then $\C(x,y)$
is an algebraic field extension of $k(x)$ and $k(y)$. Therefore,
$k(\cO)$ is algebraic over $k(x)$ and $k(y)$.  Proposition
\ref{prop:algebraicclosure} implies that $\K$ is an algebraic closure
of $k(x), k(y)$ and $k(\cO)$.

Let $L|M$ be a field extension. Any $M$-algebra endomorphism of
$L$ is an automorphism and we denote by $\Aut(L|M)$ the set of
$M$-algebra endomorphisms of $L$. An algebraic field extension $L|M$
is said to be \emph{Galois} if the set
$L^{\Aut(L|M)}$ of elements of $L$ that remain fixed under the action
of $\Aut(L|M)$ coincides with $M$ (see \cite[Definition
1.2.1]{Szamuely}). In this case, $\Aut(L|M)$ is denoted by
$\Gal(L|M)$ and called the \emph{Galois group} of $L|M$.
By \cite[Proposition~1.2.4]{Szamuely}, an algebraic
field extension $L|M$ is Galois if and only if, fixing an algebraic
closure $\overline{M}$ of $M$, we have $\sigma(L) \subset L$ for any
automorphism $\sigma$ in $\Aut(\overline{M}|M)$ \footnote{Since we are
in characteristic zero, the separable closure of $M$ coincides with
the algebraic closure of $M$ (see \cite[page 12]{Szamuely}).}. The
Galois group $\Gal(L|M)$ of a finite Galois extension $L|M$ has order
$[L:M]$ \cite[Corollary~1.2.7]{Szamuely}. It is clear that any
sub-extension $L|M'$ \footnote{By
subextension, we mean that $M \subset M' \subset L$. }  of a Galois extension $L|M$  is Galois.
Finally, we recall the following result.

\begin{lem}[Lemma~1.22 in \cite{Szamuely}] \label{lem:gal_trans_roots}
  Let $L|M$ be a Galois extension and $\mu \in M[X]$ an irreducible
  polynomial with some root $\alpha$ in $L$. Then $\mu$ splits in $L$, and the
  group $\Gal(L|M)$ acts transitively on its roots.
\end{lem}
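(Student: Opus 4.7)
The plan is to deduce this statement from the characterization of Galois extensions recalled just before it, namely that $L|M$ is Galois if and only if, for every $\sigma \in \Aut(\overline{M}|M)$, one has $\sigma(L) \subset L$. Fix an algebraic closure $\overline{M}$ of $M$ containing $L$ (this is legitimate by Proposition~\ref{prop:algebraicclosure}, since $L|M$ is algebraic).

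First I would show that $\mu$ splits in $L$. Let $\beta \in \overline{M}$ be any root of $\mu$. Since $\mu$ is irreducible, both $\alpha$ and $\beta$ generate over $M$ a field isomorphic to $M[X]/(\mu)$, so there is a unique $M$-algebra isomorphism
\[ \tau \colon M(\alpha) \longrightarrow M(\beta), \qquad \alpha \longmapsto \beta. \]
By Proposition~\ref{prop:algebraicclosure}(2) applied to the inclusion $M(\beta) \hookrightarrow \overline{M}$, the homomorphism $\tau$ extends to an $M$-algebra automorphism $\widetilde{\tau}$ of $\overline{M}$. The Galois hypothesis then forces $\widetilde{\tau}(L) \subset L$, and in particular $\beta = \widetilde{\tau}(\alpha) \in L$. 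Thus every root of $\mu$ lies in $L$, which is what it means for $\mu$ to split in $L$.

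Next I would address transitivity. Given two roots $\alpha, \beta \in L$ of $\mu$, the very same construction produces an automorphism $\widetilde{\tau} \in \Aut(\overline{M}|M)$ with $\widetilde{\tau}(\alpha) = \beta$. Since $\widetilde{\tau}(L) \subset L$ and $\widetilde{\tau}$ is injective, and since $L|M$ being Galois also implies $\widetilde{\tau}^{-1}(L) \subset L$ (apply the same criterion to $\widetilde{\tau}^{-1}$), the restriction of $\widetilde{\tau}$ to $L$ is a well-defined element of $\Aut(L|M) = \Gal(L|M)$ that sends $\alpha$ to $\beta$.

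The only mildly delicate point is the existence of the extension $\widetilde{\tau}$: everything else is the standard isomorphism $M(\alpha) \cong M[X]/(\mu) \cong M(\beta)$ combined with the Galois stability criterion. Since Proposition~\ref{prop:algebraicclosure}(2) already provides this extension for free, the argument reduces to threading together the two cited facts, and there is no real obstacle.
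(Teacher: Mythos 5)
Your argument is correct: the paper does not prove this lemma itself but cites it from Szamuely, and your proof is exactly the standard one underlying that reference — extend the isomorphism $M(\alpha)\cong M[X]/(\mu)\cong M(\beta)$ to an element of $\Aut(\overline{M}|M)$ via Proposition~\ref{prop:algebraicclosure}, then use the stability criterion $\sigma(L)\subset L$ to get both splitting and transitivity. Your handling of surjectivity of the restriction (applying the criterion to $\widetilde{\tau}^{-1}$, or equivalently invoking that an $M$-endomorphism of an algebraic extension is an automorphism, as the paper notes) closes the only delicate point, so nothing is missing.
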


We let any $\C$-algebra endomorphism $\sigma$ of $\K$ act on $\K\times\K$
coordinate-wise by \[\sigma \cdot (u,v) \eqdef (\sigma(u), \sigma(v)).\] The following lemma establishes  the compatibility of the equivalence relation
$\sim^*$ with the action of $\C$-algebra endomorphisms of $\K$.
\begin{lem} \label{lem:hom_adj}
  Let $(u,v)$ and $(u',v')$ be two pairs in $\K \times \K$
    and $\sigma \colon \K \rightarrow \K$ be
    a $\C$-algebra endomorphism.
    Then $(u,v) \sim^x (u',v')$ (resp. $(u,v) \sim^y (u',v')$) implies that
    $\sigma \cdot (u,v) \sim^x \sigma \cdot (u',v')$ (resp.  $\sigma\cdot (u,v) \sim^y \sigma \cdot(u',v')$). The same holds therefore for $\sim^*$.
\end{lem}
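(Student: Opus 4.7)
The plan is to unwind the definitions and exploit the fact that $\sigma$ is $\C$-linear and multiplicative, so it commutes with evaluation of any polynomial (or rational fraction) with coefficients in $\C$.

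First I would handle the $x$-adjacency. Suppose $(u,v)\sim^x(u',v')$, i.e.\ $u=u'$ and $S(u,v)=S(u',v')$. Applying $\sigma$ to the equality $u=u'$ gives $\sigma(u)=\sigma(u')$, which is the first coordinate condition. For the $S$-condition, since the step polynomial $S$ has coefficients in $\C$ and $\sigma$ restricts to the identity on $\C$, we have
\[
\sigma\bigl(S(u,v)\bigr)=S\bigl(\sigma(u),\sigma(v)\bigr)
\quad\text{and}\quad
\sigma\bigl(S(u',v')\bigr)=S\bigl(\sigma(u'),\sigma(v')\bigr).
\]
Applying $\sigma$ to $S(u,v)=S(u',v')$ therefore yields $S(\sigma(u),\sigma(v))=S(\sigma(u'),\sigma(v'))$, so $\sigma\cdot(u,v)\sim^x\sigma\cdot(u',v')$. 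The $y$-adjacency case is identical after exchanging the roles of the two coordinates.

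For the reflexive transitive closure $\sim^*$, I would simply chain the above. Given $(u,v)\sim^*(u',v')$, there is a finite sequence
\[
(u,v)=(u_0,v_0)\sim(u_1,v_1)\sim\cdots\sim(u_n,v_n)=(u',v'),
\]
where each link is either $x$- or $y$-adjacency. Applying $\sigma$ coordinate-wise and invoking the two cases above, each link $\sigma\cdot(u_i,v_i)\sim\sigma\cdot(u_{i+1},v_{i+1})$ is preserved, and reflexivity is trivial. Concatenating these gives $\sigma\cdot(u,v)\sim^*\sigma\cdot(u',v')$, completing the proof.

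There is essentially no obstacle here; the only point worth being attentive to is that the definition of $\sim^x,\sim^y$ must be read as coordinate equality in $\K$ together with a polynomial identity in $\C[X,Y]$ evaluated in $\K$, so that a $\C$-algebra endomorphism of $\K$ automatically transports both sides. If we were working with a rational step function $S=P/Q$ one would additionally remark that the condition $S(u,v)=S(u',v')$ presupposes that $Q$ does not vanish at the relevant pairs, and $\sigma$ preserves non-vanishing since it is an embedding; but for the polynomial $S$ coming from a model this is automatic.
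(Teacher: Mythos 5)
Your proof is correct and follows essentially the same route as the paper's: both use that a $\C$-algebra endomorphism commutes with evaluation of $S$, apply $\sigma$ to the defining equalities of $x$- and $y$-adjacency, and extend to $\sim^*$ by induction along a chain of adjacencies. Your closing remark about non-vanishing of denominators (relevant since $S$ is a Laurent polynomial) is a small extra precaution the paper leaves implicit, but the argument is the same.
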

\begin{proof}
  Since $\sigma$ is a $\C$-algebra endomorphism, we have
  $\sigma S(u,v)= S( \sigma u, \sigma v)$ for any $u, v$ in $\K$.
  Therefore, if $(u,v) \sim^x (u,v')$ then
  $S(\sigma(u),\sigma(v)) = \sigma (S(u,v)) = \sigma ( S(u,v')) = S(\sigma(u), \sigma(v'))$,
  so $\sigma \cdot (u,v) \sim^x \sigma \cdot (u,v')$.
    The same argument applies if $(u,v) \sim^y (u',v)$. The general case of $(u,v) \sim^* (u',v')$ follows by induction.
\end{proof}

As a direct corollary, we find the following lemma which ensures the setwise stability of the orbit under certain endomorphisms of $\K$.
\begin{lem} \label{lem:orb_normal}
    Let $\sigma_x \colon \K  \rightarrow \K$
    be a $k(x)$-algebra endomorphism. Then, for all $(u,v)$ in the orbit,
    $\sigma_x \cdot (u,v)$ is  in the orbit. Similarly, the orbit is also
    stable under $k(y)$-algebra endomorphisms of $\K$.
\end{lem}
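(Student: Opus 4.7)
The plan is to reduce the statement to the single case $(u,v)=(x,y)$, and to observe that $\sigma_x\cdot(x,y)$ sits in the $x$-adjacency clique of $(x,y)$, which lies inside the orbit by definition.

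First, I would note that the orbit $\cO$ is by definition the $\sim^*$-equivalence class of $(x,y)$, so an element $(u',v')$ of $\K\times\K$ lies in $\cO$ if and only if $(u',v')\sim^* (x,y)$. Let $(u,v)\in\cO$. Since $(u,v)\sim^*(x,y)$, Lemma~\ref{lem:hom_adj} applied to $\sigma_x$ (which is in particular a $\C$-algebra endomorphism of $\K$) gives
\[
\sigma_x\cdot (u,v)\;\sim^*\;\sigma_x\cdot(x,y).
\]
By the transitivity of $\sim^*$, it therefore suffices to prove that $\sigma_x\cdot(x,y)=(x,\sigma_x(y))$ lies in $\cO$, i.e.\ that $\sigma_x\cdot(x,y)\sim^*(x,y)$.

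Now I would use the assumption that $\sigma_x$ is a $k(x)$-algebra endomorphism. Since $k=\C(S(x,y))\subset k(x)$, the element $S(x,y)$ is fixed by $\sigma_x$, and so is $x$. As $\sigma_x$ is a $\C$-algebra homomorphism and $S\in\C(X,Y)$, we have
\[
S(x,\sigma_x(y))=S(\sigma_x(x),\sigma_x(y))=\sigma_x\bigl(S(x,y)\bigr)=S(x,y).
\]
Hence the pairs $(x,\sigma_x(y))$ and $(x,y)$ share the same first coordinate and the same value of $S$, so by definition $(x,\sigma_x(y))\sim^x(x,y)$. In particular $\sigma_x\cdot(x,y)\in\cO$, which combined with the previous paragraph yields $\sigma_x\cdot(u,v)\in\cO$.

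The case of a $k(y)$-algebra endomorphism $\sigma_y$ is entirely symmetric: one shows $\sigma_y\cdot(x,y)=(\sigma_y(x),y)\sim^y(x,y)$ using that $\sigma_y$ fixes both $y$ and $S(x,y)$, and concludes in the same way. I do not expect any genuine obstacle here; the proof is short and relies only on Lemma~\ref{lem:hom_adj}, the definition of $\sim^x$ and $\sim^y$, and the observation that the definition of $k(x)$ (resp.\ $k(y)$) builds in invariance of both the corresponding coordinate and of $S(x,y)$.
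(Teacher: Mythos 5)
Your proof is correct and follows essentially the same route as the paper's: reduce to the pair $(x,y)$ via Lemma~\ref{lem:hom_adj} and transitivity of $\sim^*$, then use that $\sigma_x$ fixes $x$ and $S(x,y)$ to conclude $(x,\sigma_x(y))\sim^x(x,y)$. No gaps; the only difference is that you spell out slightly more explicitly why $S(x,y)$ is fixed.
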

\begin{proof}
    Let $(u,v)$ be in the orbit, i.e. $(u,v) \sim^* (x,y)$.
    By Lemma~\ref{lem:hom_adj}, we find that  \[\sigma_x \cdot (u,v) \sim^* \sigma_x \cdot (x,y) = (x, \sigma_x(y)).\]
    By transitivity, we only need to prove that $(x,\sigma_x(y))$ is in the
    orbit.
    This is true because $S(x, \sigma_x(y)) = \sigma_x S(x, y) = S(x,y)$
    since $\sigma_x$ fixes $\C(x,S(x,y))$
    so $(x,\sigma_x(y)) \sim^x (x,y)$.
\end{proof}

The above two lemmas imply that any $k(x)$ or $k(y)$-algebra
automorphism of $\K$ induces a permutation of the vertices of $\cO$
which preserves the colored adjacencies, and is therefore a
\emph{graph automorphism} of $\cO$. The stability result of
Lemma~\ref{lem:orb_normal} translates as a field theoretic statement.
\begin{thm} \label{thm:ko_gal}
    The extensions $k(\cO)|k(x), k(\cO)|k(y)$ and $k(\cO)|k(x,y)$ are Galois.
\end{thm}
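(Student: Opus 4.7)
The plan is to invoke the characterization of Galois extensions recalled just before the theorem, namely that (in characteristic zero) an algebraic extension $L|M$ is Galois if and only if, having fixed an algebraic closure $\overline{M}$ of $M$, every element of $\Aut(\overline{M}|M)$ sends $L$ into itself. In our setting $\K$ plays the role of the algebraic closure of each of the base fields $k(x)$, $k(y)$, and $k(x,y)$, as recalled in the paragraph following Proposition~\ref{prop:algebraicclosure}. So we only need to verify two things: first that the three extensions are algebraic, and second that $k(\cO)$ is stable under $\Aut(\K|k(x))$, $\Aut(\K|k(y))$, and $\Aut(\K|k(x,y))$.

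Algebraicity is immediate by induction on the distance in the orbit graph from $(x,y)$. The base case is Lemma~\ref{lem:alg_rel_xyS}, which tells us that $x$ and $y$ are each algebraic over $k(x)$, over $k(y)$, and over $k(x,y)$. For the inductive step, if $(u,v) \in \cO$ has both coordinates algebraic over $k(x)$ (respectively $k(y)$, $k(x,y)$) and $(u',v')$ is adjacent to $(u,v)$, then at least one coordinate is shared and the other satisfies the relation $S(u',Z)-S(u,v)=0$ or $S(Z,v')-S(u,v)=0$, a polynomial equation over $k(x,u,v)$ (respectively over $k(y,u,v)$, $k(x,y,u,v)$), hence over the base field by the induction hypothesis.

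Stability under the appropriate automorphism group is where Lemma~\ref{lem:orb_normal} does the work. Given any $\sigma \in \Aut(\K|k(x))$, that lemma asserts that $\sigma$ permutes the elements of $\cO$ and therefore permutes the set of all coordinates of orbit elements. Since $k(\cO)$ is by definition generated over $\C \subset k(x)$ by precisely these coordinates, we deduce $\sigma(k(\cO)) \subset k(\cO)$, which gives the Galois property for $k(\cO)|k(x)$. The same reasoning with $y$ in place of $x$ handles $k(\cO)|k(y)$. Finally, for $k(\cO)|k(x,y)$, observe that $\Aut(\K|k(x,y)) \subset \Aut(\K|k(x))$, so the stability already established a fortiori holds for $k(x,y)$-automorphisms.

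There is no serious obstacle here: the theorem is essentially a repackaging of Lemma~\ref{lem:orb_normal} in field-theoretic language, the only mild point being to separate cleanly the algebraicity verification (a combinatorial induction along edges of $\cO$) from the stability verification (a direct application of the lemma to generators).
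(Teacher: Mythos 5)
Your argument is correct and is essentially the paper's own proof: both rest on Lemma~\ref{lem:orb_normal} (stability of the orbit, hence of its generators $k(\cO)$, under $k(x)$- and $k(y)$-algebra endomorphisms of $\K$) combined with the characterization of Galois extensions by stability under $\Aut(\K|k(x))$, resp.\ $\Aut(\K|k(y))$, with the $k(x,y)$ case deduced from the $k(x)$ case. The only cosmetic difference is your induction for algebraicity, which the paper shortcuts by observing that orbit coordinates lie in $\K$, algebraic over $\C(x,y)$, itself algebraic over $k(x)$ and $k(y)$ by Lemma~\ref{lem:alg_rel_xyS} --- a shortcut worth taking, since your inductive step tacitly assumes that $S(u,Z)-S(u,v)$ is not identically zero in $Z$.
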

\begin{proof}
  We first prove that $k(\cO) | k(x)$ is a Galois extension.  Recall
  that the field extension $k(\cO)|k(x)$ is algebraic and $\K$ is an
  algebraic closure of $k(\cO)$ and $k(x)$. Thus, we only need to prove
  that $\sigma(k(\cO)) \subset k(\cO)$ for every automorphism $\sigma$
  in $\Aut(\K|k(x))$. This follows directly from
  Lemma~\ref{lem:orb_normal}. The proof for $k(\cO)|k(y)$ is entirely
  symmetric and the field extension $k(\cO) |k(x,y)$ is Galois as
  subextension of $k(\cO)|k(x)$.
\end{proof}

Theorem~\ref{thm:ko_gal} gives a Galoisian framework to the orbit,
which will be central in our study of Galois invariants and
decoupling. Remark that the algebraic extension $k(\cO)|k(x,y)$ may be
of infinite degree.  In Figure~\ref{fig:extension}, we represent
the different Galois extensions involved in Theorem~\ref{thm:ko_gal}
and we denote their Galois groups $G_x = \Gal(k(\cO) | k(x))$, $G_y =
\Gal(k(\cO) | k(y))$ and $G_{xy} = \Gal(k(\cO) | k(x, y))$. Note that
$G_{xy}=G_y \cap G_x$.

\begin{figure}[ht]
    \centering
    \includegraphics[scale=0.8]{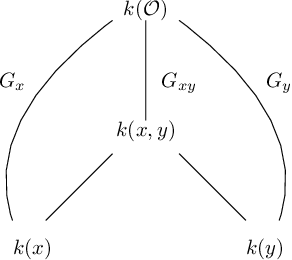}
    \caption{The field extensions attached to the orbit}
 \label{fig:extension}
\end{figure}

\begin{exa}\label{exa:Galoisgroupsmallsteps}
 For small steps models, we have
$k(\cO)=k(x,y)=\C(x,y)$. Moreover, the field extensions $k(\cO)|k(x)$
and $k(\cO)|k(y)$ are both of degree $2$ so that $G_x$ and $G_y$ are
groups of order $2$ and thereby isomorphic to $\Z/2\Z$. In the
notation of the beginning of Section \ref{sect:orbit}, consider the
endomorphisms $\phi,\psi$ of $\C(x,y)$ defined as follows: for $f(x,y)
\in \C(x,y)$, we set $\phi(f)= f(\Phi(x,y))$ and
$\psi(f)=f(\Psi(x,y))$. It is easily seen that $\psi \in G_x$ and that
$\phi \in G_y$ and that they both are non-trivial involutions. Thus,
we have $G_{xy} = 1$, $G_x = \langle \psi \rangle \simeq \Z/2\Z$
and $G_y = \langle \phi \rangle \simeq Z/2\Z$.
\end{exa}

Example \ref{ex:gessel2_alg_6} below shows that, unlike the small steps
case, one has to go to a finite non-trivial field extension of
$\C(x,y)$ in order to build the orbit of a large steps model.

\begin{exa}[The model $\Gmod$] \label{ex:gessel2_alg_6}

  In the case of $\Gmod$, we have $k(\cO) = \C(x,y,z)$ where $z$ is a
root of the polynomial $P(Z)=x y^{2} Z^{2}+ (x^{2} y^{2} +\lambda x y
+ x)Z -1$ (see Example \ref{ex:gessel2_alg_5}).  Its discriminant is \[x \left(x^{3} y^{4}+2 \lambda
x^{2} y^{3}+\lambda^{2} x \,y^{2}+2 x^{2} y^{2}+2 \lambda x y +4 
y^{2}+x \right)\] which cannot be a square in $k(x,y)=\C(x,y)$ because of the irreducible factor $x$.  Thus, the polynomial $P(Z)$ is irreducible in
$\C(x,y)[Z]$.  Therefore, $z
\notin k(x,y)$ and the extension $k(\cO)|k(x,y)$ is of degree $2$.  As
above, we find that $G_{xy} \simeq \Z/2\Z$.

   The field extension $k(\cO) | k(y)$ is of degree $6$ so that
its Galois group is either $S_3$ or $\Z/6\Z$. In this last case, the
group $G_{xy}$ would be a normal subgroup of $G_y$. As
$k(x,y)=k(\cO)^{G_{xy}}$, the extension $k(x,y)|k(y)$ would be Galois
by \cite[Theorem~1.2.5]{Szamuely}.  This is impossible since the root
$z$ of $\Ktld(Z,y,\invS)$ is not in $k(x,y)$. Hence, we find that
$G_{y} \simeq S_3$.

 The extension $k(\cO)|k(x)$ is of degree $4$. Its Galois group
is of order four and therefore either isomorphic to $\Z/2\Z \times
\Z/2\Z$ or to $\Z/4\Z$. If $G_x$ were $\Z/4\Z$ then there would exist
a $k(x)$-algebra endomorphism $\sigma$ of $k(\cO)$ of order $4$. Since
$G_{xy}$ is of order $2$, the automorphism $\sigma$ can not fix $y$
and we must have $\sigma(y)=\frac{1}{xy}$, which is the other root of
$\Ktld(x,Y,\invS) \in k(x)[Y]$. Since the orbit is setwise invariant
by $\sigma$ and $(z,y)$ is in the orbit, the same holds for $
\sigma(z,y)=(\sigma(z),\frac{1}{xy})$.  From the description of the
orbit of $\Gmod$ in Figure~\ref{fig:sample_orbits}, we find that
$\sigma(z) \in \{ \frac{1}{z}, xy^2z \}$. In both cases, we find
that $\sigma^2(z)=z$ which implies that $\sigma^2$ is the identity on
$k(\cO)$. A contradiction.  Hence, we conclude that the group $G_x$ is
isomorphic to $\Z/2\Z \times \Z/2\Z$.
\end{exa}

\subsection{The group of the walk}\label{sect:grouporbit}

In this section, we  prove that the orbit $\cO$ is the orbit of the pair $(x,y)$
under the action of a certain group which generalizes the one
introduced in the small steps case by Bousquet-Mélou and Mishna
\cite[Section 3]{BMM}.

\begin{defi} For a model $\calW$ with non-univariate step polynomial,  we denote by $G$
the subgroup of $\Aut(k(\cO)|k)$
generated by $G_x$ and $G_y$, and we call it the
\emph{group of the walk}. \end{defi}

Recall the discussion on small steps models.
 Example \ref{exa:Galoisgroupsmallsteps} shows that 
the group $G$ is generated by the automorphisms $\psi,\phi$,
as they generate $G_x$ and $G_y$.
Thereby, $G$ is isomorphic to the classic group of the walk $\left<\Phi, \Psi\right>$.

As explained in Section \ref{subsect:galoisetxorbit}, every element
of $G$ induces a \emph{graph automorphism} of $\cO$, that is, a
permutation of the vertices of $\cO$ which preserves the colored
adjacencies on the orbit $\cO$.  In Theorem~\ref{thm:group_trans}
below, we prove that there exists a finitely generated subgroup of $G$
whose action on $\cO$ is faithful and transitive, which is a
notable property of the classic group of the walk. It is clear that the group $G$ acts faithfully on the orbit
$\cO$. Indeed, if an element $\sigma$ of $G$ is the identity on any
element of the orbit then $\sigma$ is the identity on
$k(\cO)$. Therefore, $\sigma$ is the identity.  The construction of a
finitely generated subgroup of $G$ with a transitive action on the
orbit requires a bit more work. We first prove two lemmas on the polynomial $\Ktld(X,Y,t)$.

\begin{lem} \label{lem:ker_irred}
    The kernel polynomial $\Ktld(X,Y,t)$ is irreducible in
$\C[X,Y,t]$. Therefore, it is irreducible as a polynomial in
  $\C(X,t)[Y],\C(Y,t)[X]$ and $\C(t)[X,Y]$.
\end{lem}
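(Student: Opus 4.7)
The plan is to prove irreducibility in $\mathbb{C}[X,Y,t]$ first, then descend to the other three rings by a uniform Gauss's-lemma-type argument that uses primality of $\widetilde{K}$ in the UFD $\mathbb{C}[X,Y,t]$.

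For the first step, I would exploit the fact that $\widetilde{K}$ has $t$-degree exactly $1$. Writing
\[
\widetilde{K}(X,Y,t) = A(X,Y) - t\, B(X,Y), \quad A = X^{m_x} Y^{m_y}, \quad B = \sum_{(i,j) \in \calS} w_{i,j}\, X^{i+m_x} Y^{j+m_y},
\]
any factorization in $\mathbb{C}[X,Y,t]$ must split its $t$-degree as $1 = 1+0$, so it has the form
$\widetilde{K} = c(X,Y)\bigl(d(X,Y) + e(X,Y)\, t\bigr)$
with $cd = A$ and $ce = -B$. Hence $c$ divides $\gcd(A,B)$ in $\mathbb{C}[X,Y]$. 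Since $A$ is a monomial, $c$ must be (up to scalar) of the form $X^a Y^b$ with $0 \le a \le m_x$ and $0 \le b \le m_y$. For such a monomial to also divide $B$, one needs $a \le i+m_x$ and $b \le j+m_y$ for every $(i,j) \in \calS$; but by the very definitions of $m_x$ and $m_y$, the set $\calS$ contains some step with first coordinate $-m_x$ and some step with second coordinate $-m_y$, which forces $a=b=0$. Therefore $c$ is a nonzero constant, proving irreducibility in $\mathbb{C}[X,Y,t]$.

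For the second step, suppose $\widetilde{K} = P\cdot Q$ in $\mathbb{C}(X,t)[Y]$ with $P,Q$ non-units. Clearing denominators, I get $d(X,t)\,\widetilde{K} = \widetilde{P}\,\widetilde{Q}$ in $\mathbb{C}[X,Y,t]$, with $\widetilde{P},\widetilde{Q} \in \mathbb{C}[X,Y,t]$ and $d \in \mathbb{C}[X,t]$. Since $\widetilde{K}$ is prime in the UFD $\mathbb{C}[X,Y,t]$ by the first step, it divides $\widetilde{P}$ or $\widetilde{Q}$; say $\widetilde{P} = \widetilde{K}\cdot P_1$. Cancelling $\widetilde{K}$ gives $d(X,t) = P_1 \cdot \widetilde{Q}$ in $\mathbb{C}[X,Y,t]$, and since the left-hand side has $Y$-degree $0$, so must both $P_1$ and $\widetilde{Q}$. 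Thus $Q \in \mathbb{C}(X,t)$, contradicting the assumption that $Q$ is a non-unit in $\mathbb{C}(X,t)[Y]$. The arguments for $\mathbb{C}(Y,t)[X]$ and $\mathbb{C}(t)[X,Y]$ are obtained by swapping the roles of the variables (clearing denominators in $\mathbb{C}[Y,t]$ or in $\mathbb{C}[t]$, respectively).

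The substantive step is the gcd computation in $\mathbb{C}[X,Y]$; the rest is bookkeeping in UFDs. I do not anticipate any genuine obstacle, since the crucial finite-attainment of the extremal exponents $-m_x$ and $-m_y$ is built into the definitions, and the step polynomial being non-univariate (which was assumed from the start of the section) ensures $\widetilde{K}$ has positive degree in each of $X$ and $Y$ so that ``unit in $\mathbb{C}(X,t)[Y]$'' really means ``element of $\mathbb{C}(X,t)$.''
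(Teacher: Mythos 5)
Your proof is correct and takes essentially the same route as the paper's: irreducibility in $\C[X,Y,t]$ from the $t$-degree-$1$ structure together with coprimality of the two coefficients (the paper phrases this as the content being $1$ and invokes Gauss's lemma), followed by descent to $\C(X,t)[Y]$, $\C(Y,t)[X]$ and $\C(t)[X,Y]$, which in your write-up is just Gauss's lemma proved by hand via clearing denominators and primality of $\Ktld$ in the UFD $\C[X,Y,t]$. The only difference is that you spell out explicitly what the paper delegates to Gauss's lemma, which is perfectly fine.
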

\begin{proof}
    The kernel polynomial is a degree 1 polynomial in $t$, therefore
it is irreducible in $\C(X,Y)[t]$. Moreover, its content is one
by construction.  Therefore, by Gauss Lemma \cite[chap. V par. 6
Theorem~10]{Lang}, the kernel polynomial is irreducible in
$\C[X,Y][t]=\C[X,Y,t]$. Since $S(X,Y)$ is not univariate, the
polynomial $\Ktld$ does not belong to $\C[X,t]$, Gauss Lemma~asserts
that $\Ktld$ being irreducible in $\C[X,t][Y]$ is also irreducible in
$\C(X,t)[Y]$. The same reasoning holds for the irreducibility of
$\Ktld$ in $\C(Y,t)[X]$. It is clear that since $\Ktld$ is irreducible
in $\C[X,Y,t]$ and not in $\C(t)$, it is irreducible in $\C(t)[X,Y]$.
\end{proof}

\begin{lem} \label{lem:ker_spec_irred}
    The specializations of the kernel polynomial $\Ktld\left(x,Y,\invS\right)$
    and $\Ktld\left(X,y,\invS\right)$ are respectively irreducible as
    polynomials in $k(x)[Y]$ and in $k(y)[X]$.
\end{lem}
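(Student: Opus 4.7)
The plan is to reduce the irreducibility of the specialization to the irreducibility of $\Ktld(X,Y,t)$ in $\C(X,t)[Y]$ already provided by Lemma~\ref{lem:ker_irred}, via a $\C$-algebra isomorphism built from the algebraic independence statement in Lemma~\ref{lem:alg_rel_xyS}.

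First, I would set up the isomorphism. By Lemma~\ref{lem:alg_rel_xyS}(1), the elements $x$ and $S(x,y)$ are algebraically independent over $\C$. Since inverting $S(x,y)$ does not change the generated field, $x$ and $\invS=1/S(x,y)$ are also algebraically independent over $\C$. Consequently, the $\C$-algebra homomorphism
\[
\varphi\colon \C(X,t) \longrightarrow k(x), \qquad X \mapsto x, \quad t \mapsto \invS,
\]
is well defined (the universal property of $\C(X,t)$ requires only that the images be algebraically independent) and, being a nonzero field homomorphism, is injective. Its image contains $x$ and $\invS$, hence contains $\C(x,S(x,y)) = k(x)$, so $\varphi$ is an isomorphism of fields. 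It therefore extends coefficientwise to a ring isomorphism $\varphi\colon \C(X,t)[Y] \xrightarrow{\sim} k(x)[Y]$.

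Next I would transport irreducibility along $\varphi$. By Lemma~\ref{lem:ker_irred}, the polynomial $\Ktld(X,Y,t)$ is irreducible in $\C(X,t)[Y]$. Under $\varphi$, it is mapped precisely to $\Ktld(x,Y,\invS)$. Since ring isomorphisms preserve irreducibility, $\Ktld(x,Y,\invS)$ is irreducible in $k(x)[Y]$, which is the first claim.

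The second claim is obtained by the symmetric argument: Lemma~\ref{lem:alg_rel_xyS}(1) also gives that $y$ and $S(x,y)$ are algebraically independent over $\C$, so the analogous map $\C(Y,t) \to k(y)$ sending $Y \mapsto y$ and $t \mapsto \invS$ is an isomorphism, and it sends the polynomial $\Ktld(X,Y,t)$ (which is irreducible in $\C(Y,t)[X]$ by Lemma~\ref{lem:ker_irred}) to $\Ktld(X,y,\invS)$, establishing its irreducibility in $k(y)[X]$. There is no real obstacle here; the only point to be careful about is justifying that $\varphi$ is a genuine isomorphism and not merely a homomorphism, which is exactly where Lemma~\ref{lem:alg_rel_xyS}(1) is used.
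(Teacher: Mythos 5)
Your proposal is correct and follows essentially the same route as the paper: both construct the $\C$-algebra isomorphism $\C(X,t)\to k(x)$ sending $X\mapsto x$, $t\mapsto \invS$ (justified by the algebraic independence of $x$ and $S(x,y)$), extend it coefficientwise to $\C(X,t)[Y]\to k(x)[Y]$, and transport the irreducibility of $\Ktld$ from Lemma~\ref{lem:ker_irred}. The only cosmetic difference is that the paper first defines the map on $\C[X,t]$ and then passes to fraction fields, whereas you invoke the universal property of $\C(X,t)$ directly.
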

\begin{proof}
    We only prove the first assertion by symmetry of the roles of $x$ and $y$.
    Consider the $\C$-algebra homomorphism ${\phi : \C[X,t] \rightarrow
    k(x)}$ defined by $\phi(X) = x$ and $\phi(t) = \invS$. Since $S(X,Y)$ is not univariate, the fractions $x$ and $\invS$ are algebraically
independent over $\C$. Therefore the morphism $\phi$ is one-to-one,
so it extends to a field isomorphism $\phi : \C(X,t) \rightarrow
k(x)$ (onto by definition of $k(x)$), which extends to a $\C$-algebra
isomorphism $\phi$ from $\C(X,t)[Y]$ to $k(x)[Y]$. Moreover, by
Lemma~\ref{lem:ker_irred}, $\Ktld(X,Y,t)$ is irreducible as a
polynomial in $\C(X,t)[Y]$. Therefore, since
$\Ktld\left(x,Y,\invS\right) = \phi (\Ktld(X,Y,t))$ and
$\phi(\C(X,t)) = k(x)$, we conclude that the polynomial
    $\Ktld\left(x,Y,\invS\right)$ is irreducible over $k(x)$.
\end{proof}

For large steps models, the extensions $k(\cO)|k(x)$ and $k(\cO)|k(y)$
might be of infinite degree, hence the groups $G_x$ and $G_y$ might
not be finite, not even finitely generated (unlike the small steps
case where they are always cyclic of order $2$).  However, note that
$G_{xy}$ is the stabilizer of the pair $(x,y)$ in the orbit.
Therefore, the action of $G$ on $(x,y)$ factors through the
left quotients $G_x / G_{xy}$ and $G_y / G_{xy}$ which are proved to
be finite in the following lemma.

\begin{lem}
  The group $G_{xy}$ is of finite index in $G_x$ and in $G_y$
  with $[G_x : G_{xy}] = m_y+M_y$ and $[G_y : G_{xy}] = m_x+M_x$.
\end{lem}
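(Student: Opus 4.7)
The strategy is to compute each index via the orbit--stabilizer theorem applied to the natural action of the Galois groups on the roots of a minimal polynomial. By the symmetric roles of $x$ and $y$, I focus on $[G_x : G_{xy}] = m_y + M_y$; the other equality follows the same pattern.

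First I would identify $G_{xy}$ as the stabilizer of $y$ under the action of $G_x$ on $k(\cO)$: indeed, an element $\sigma \in G_x$ fixes $k(x,y)$ if and only if it fixes $y$ (since it already fixes $x$), so $G_{xy} = \mathrm{Stab}_{G_x}(y)$. By orbit--stabilizer, $[G_x : G_{xy}]$ equals the cardinality of the orbit $G_x \cdot y$ inside $k(\cO)$.

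Next I would determine this orbit via Galois theory. The element $y$ is a root of the polynomial $\Ktld(x,Y,\sinv) \in k(x)[Y]$, which is irreducible by Lemma~\ref{lem:ker_spec_irred}; hence, up to a nonzero scalar in $k(x)$, it is the minimal polynomial of $y$ over $k(x)$. Since $k(\cO) \mid k(x)$ is Galois by Theorem~\ref{thm:ko_gal} and contains the root $y$, Lemma~\ref{lem:gal_trans_roots} ensures that $\Ktld(x,Y,\sinv)$ splits in $k(\cO)$ and that $G_x$ acts transitively on its set of roots. Therefore the orbit $G_x \cdot y$ has cardinality equal to the number of distinct roots of $\Ktld(x,Y,\sinv)$, which in characteristic zero equals the $Y$-degree of this irreducible polynomial.

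The last step is a degree computation: by definition $\Ktld(X,Y,t) = X^{m_x} Y^{m_y}(1 - t S(X,Y))$, and since the $Y$-exponents of $S(X,Y)$ range between $-m_y$ and $M_y$, the polynomial $\Ktld(X,Y,t)$ has $Y$-degree exactly $m_y + M_y$. Specializing to $X = x$ and $t = \sinv$ preserves this degree, because the leading $Y$-coefficient of $\Ktld$ is (up to sign) $t\, X^{m_x}$ times a nonzero polynomial in $X$, and neither $x$ nor $\sinv$ is zero in $k(x)$. This yields $[G_x : G_{xy}] = m_y + M_y$, and the analogous argument with $x$ and $y$ exchanged gives $[G_y : G_{xy}] = m_x + M_x$. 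The only mildly delicate point is the verification that the degree is preserved under specialization; everything else reduces to a direct application of the machinery already established.
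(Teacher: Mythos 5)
Your proof is correct and follows essentially the same route as the paper: identify $G_{xy}$ as the stabilizer of $y$ under the action of $G_x$, use the irreducibility of the specialized kernel polynomial $\Ktld(x,Y,\invS)$ together with Lemma~\ref{lem:gal_trans_roots} to see that the orbit of $y$ is exactly its set of roots, and count those roots as $\deg_Y \Ktld = m_y + M_y$ (and symmetrically for $G_y$). Your extra verifications (separability in characteristic zero and preservation of the $Y$-degree under the specialization $X=x$, $t=\invS$) are fine details that the paper leaves implicit.
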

\begin{proof}
  The orbit $\Omega$ of $y$ under the action of $G_x$
  is a subset of the roots of the polynomial $\Ktld(x,Z,\invS) \in k(x)[Z]$.
  This polynomial is irreducible by Lemma~\ref{lem:ker_irred}, so
  $G_x$ acts transitively on its roots by Lemma~\ref{lem:gal_trans_roots},
hence $\Omega$ coincides with the set of roots
  of $\Ktld(x,Z,\invS)$ which is a finite set of cardinal $\deg_Y \Ktld = M_y + m_y$.
  Moreover, the stabilizer of $y$ for this action is precisely the group $G_{xy}$. Therefore,
  the quotient $G_x / G_{xy}$ can be identified with $\Omega$, wich
  proves that $G_{xy}$ is of finite index in $G_x$ with $[G_x : G_{xy}] = M_y + m_y$.
  The proof for the subgroup $G_y$ is analogous.
\end{proof}

Therefore, we fix, once and for all, a set
$I_x = \{\id, \iota^x_{1}, \dots, \iota^x_{m_y+M_y}\}$
of representatives of the left cosets of $G_x / G_{xy}$,
and a set $I_y = \{\id, \iota^y_{1}, \dots, \iota^y_{m_x+M_x}\}$ of
representatives of the left cosets of $G_y / G_{xy}$.
By construction, \[G_x = \left<I_x, G_{xy}\right>, G_y =
  \left<I_y, G_{xy}\right>, \mbox{ and } G = \left<I_x, I_y, G_{xy}\right>.\]

We now have all the ingredients to prove the transitivity of the
action of a finitely generated subgroup of $G$ on $\cO$.  We only
recall that the distance between two vertices of a graph is the number
of edges in a shortest path connecting them.

\begin{thm}[Transitivity of the action] \label{thm:group_trans}
    The subgroup of $G$ generated by $I_x$ and $I_y$ acts transitively
    on the orbit $\cO$.
\end{thm}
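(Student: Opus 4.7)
The plan is to prove transitivity of $H := \langle I_x, I_y \rangle$ on $\cO$ by induction on the graph distance from the base vertex $(x,y)$, using at each step a single generator from $I_x$ or $I_y$ to traverse one edge of the orbit. Since $\cO$ is connected by the definition of the transitive closure $\sim^{*}$, every vertex sits at finite distance from $(x,y)$, so such an induction will cover the whole orbit. The base case $d=0$ is trivial, and the entire content of the argument lies in a lifting lemma for a single adjacency.

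For the inductive step, fix $\sigma \in H$ with $\sigma \cdot (x,y) = (u,v)$ and suppose $(u,v) \sim^x (u,v')$ is a new adjacency (the $y$-case being symmetric). Since $\sigma$ fixes $k = \C(S(x,y))$ and $\sigma(S(x,y)) = S(u,v)$, the element $v'$ is a root of $\Ktld\bigl(u, Y, 1/S(u,v)\bigr) = \sigma\bigl(\Ktld(x, Y, 1/S(x,y))\bigr)$; equivalently $v' = \sigma(z)$ for some root $z \in k(\cO)$ of $\Ktld(x,Y,1/S(x,y))$. Now combine three earlier results: Lemma \ref{lem:ker_spec_irred} says this polynomial is irreducible in $k(x)[Y]$, Theorem \ref{thm:ko_gal} says $k(\cO)|k(x)$ is Galois, and Lemma \ref{lem:gal_trans_roots} then gives an $\alpha \in G_x$ with $\alpha(y) = z$. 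Writing $\alpha = \iota \beta$ with $\iota \in I_x$ and $\beta \in G_{xy}$, the factor $\beta$ fixes $y$, so $\iota(y) = z$ while $\iota(x) = x$. The composition $\tau := \sigma \circ \iota$ therefore satisfies $\tau(x) = u$ and $\tau(y) = \sigma(z) = v'$, exhibiting the new vertex $(u,v')$ as an image of $(x,y)$ under an element of $H$.

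The main (modest) conceptual point is the translation of the combinatorial adjacency step into a Galois-theoretic statement: to traverse an $x$-edge from $(u,v)$ to $(u,v')$, one must realize $v'$ as an image of $y$ by a coset representative from $I_x$, and the transitivity of $G_x$ on the roots of the irreducible polynomial $\Ktld(x,Y,1/S(x,y))$ is precisely what makes this possible. Once this observation is made, the induction propagates along any edge path in $\cO$, and since every element of the orbit lies at finite distance from $(x,y)$, we conclude that $H$ acts transitively, which is the desired statement.
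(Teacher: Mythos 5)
Your proof is correct and follows essentially the same route as the paper: induction along the connected graph of $\cO$, reduction of an $x$-adjacency to the statement that a pullback of the new second coordinate is a root of the irreducible polynomial $\Ktld(x,Y,\invS)$ (Lemma~\ref{lem:ker_spec_irred}), transitivity of $G_x$ on those roots (Lemma~\ref{lem:gal_trans_roots}), and replacement of the resulting automorphism by its coset representative in $I_x$, which agrees with it on $y$ because $G_{xy}$ fixes $y$. The only cosmetic difference is that the paper applies $\sigma^{-1}$ to transport the adjacency back to $(x,y)$, while you transport the polynomial forward via $\sigma$ and take $z=\sigma^{-1}(v')$, which is the same step in disguise.
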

\begin{proof}
    We show that for all pairs $(u,v)$ of $\cO$ there exists an
    element $\sigma$ in $\langle I_x,I_y\rangle$ such that $\sigma \cdot
    (x,y) = (u,v)$.
    As the graph of the orbit is connected, the proof is done by
induction on the distance between $(x,y)$ and $(u,v)$. If $(u,v)$ is
at distance zero to $(x,y)$ then $(u,v)=(x,y)$ and we set
$\sigma=\id$.

    Let $(u,v)$ be in $\cO$ of positive distance $d$ to $(x,y)$. Then
    there exists a pair $(u',v')$ at distance $d-1$ to $(x,y)$ that is
    adjacent to $(u,v)$. Without loss of generality, one can assume that
    $(u',v')$ is $x$-adjacent to $(u,v)$, that is, $u=u'$.
    By induction hypothesis, there exists $\sigma$ in $\left< I_x,I_y\right>$ such that
    $\sigma \cdot (x,y) = (u,v')$. Therefore,
    since $(u,v') \sim^x (u,v)$, the application of $\sigma^{-1}$ implies
    by Lemma~\ref{lem:hom_adj} that $(x,y) \sim^x (x, \sigma^{-1}(v))$.
    Thus, $y$ and $\sigma^{-1} (v)$ satisfy the equation $S(x,y) -
S(x,Y)$, so they are roots of the polynomial
$\Ktld\left(x,Y,\invS\right)$ which is an irreducible polynomial over
$k(x)$ by Lemma~\ref{lem:ker_spec_irred}.  Therefore, by
    Lemma~\ref{lem:gal_trans_roots},
    there is an element $\sigma_x$ in $G_x$ such
    that $\sigma_x(y)= \sigma^{-1}(v)$.
    Let $\iota^x_{i}$ in $I_x$ be the representative of the left coset
    $\sigma_x G_{xy}$. Then,  $(\sigma\, \iota^x_{i}) \cdot (x,y) = \sigma
    \cdot (\sigma_x \cdot (x,y)) = \sigma \cdot (x, \sigma^{-1} (v)) = (u,v)$.
    This concludes the proof.
\end{proof}

This result shows that the orbit  $\cO$ is actually the
orbit of the pair $(x,y)$ under the action of a finitely generated
subgroup of $G$. As a direct corollary, one finds that the extensions
$k(x,y)|k$ and $k(u,v)|k$ are isomorphic for any pair $(u,v)$ in the
orbit. Indeed, let $\sigma$  in $G$ such that $\sigma\cdot (x,y)=(u,v)$
then $\sigma$ induces a $k$-algebra isomorphism between $k(x,y)$ and
$k(u,v)$.

For large steps models with an infinite orbit, it might be quite
difficult to give a precise description of the automorphisms in $I_x$
and $I_y$.  Indeed, they act as a permutation on the infinite orbit $\cO$
and their action on $x$ or $y$ is not in general given by a rational fraction
in $x$ and $y$ as in the small steps case. When the steps are small or
when the orbit is finite, one is able to give a more precise
description of these generators.

\begin{exa} In the small steps case and in the notation of Example \ref{exa:Galoisgroupsmallsteps}, one can choose $I_x =\{ \id ,\psi \}$ and $I_y=\{ \id, \phi \}$.
\end{exa}

\begin{exa}[The model $\Gmod$] \label{ex:gessel2_alg_7}
For $\Gmod$, the group $G_y$ is isomorphic to $S_3$, $G_{xy}$ to
$\Z/2\Z$ and $G_x$ to $\Z/2\Z$. We give below the expression of
automorphisms  $\iota^x,\iota^y$ and $\tau$ such that
\[I_x =\{ \id, \iota^x\}, \, I_y=\{ \id ,\iota^y\} , G_{xy}=\langle
\tau \rangle. \] They satisfy the relations ${(\iota^x)}^2= {(\iota^y)}^3=
\tau^2 =\id $. We represent below their action on the orbit.

\begin{figure}[h!]
\begin{minipage}{0.5\textwidth}
  \scalebox{0.8}{
  \tikzfig{o12_gens}
  }
\end{minipage}
\hspace{0.05\textwidth}
\begin{minipage}{0.2\textwidth}
$\iota^x(x) = x$\\
$\iota^x(y) = \frac{1}{xy}$\\
$\iota^x(z) = xy^2z$ \\

$\tau(x)=x$ \\
$\tau(y)=y$ \\
$\tau(z)= -\frac{1}{xy^2z}$ \

\end{minipage}
\hspace{5pt}
\begin{minipage}{0.2\textwidth}
$\iota^y(x) = z$\\
$\iota^y(y) = y$\\
$\iota^y(z) = -\frac{1}{xy^2z}$

\end{minipage}
\caption{The elements of $I_x$ and $I_y$ for the model $\Gmod$}

\end{figure}

\end{exa}

\begin{exa} 
In \cite[Notation 3.6]{BuchacherKauersPogudin}, the authors attach to
a ``separated polynomial '' $P(X) +Q(Y)$ where $P(X) \in \C[X]$ of
degree $m$ and $Q(Y) \in \C[Y]$ of degree $n$ a Galois group which can
be seen as a subgroup of $S_m \times S_n$. This Galois group coincides
with the group of the walk of the Hadamard model with step polynomial
$P(X)+Q(Y)$ (see Appendix \ref{sec:groupe-hadam-empl}).
\end{exa}

 Note that an $x$-adjacency in the orbit corresponds to the action of an
element of $G$ that is conjugate to an element of $G_x$. Indeed, for
$(u,v)$ in the orbit, Theorem~\ref{thm:group_trans} yields the
existence of $\sigma \in G$ such that $\sigma(u,v)=(x,y)$. Then, if
$(u,v) \sim^x (u,v')$, Lemma~\ref{lem:hom_adj} proves that
$\sigma(u,v') \sim^x (x,y)$. As explained above, any $x$-adjacency to
$(x,y)$ corresponds to the action of an automorphism in $G_x$ so that
there exists $\sigma_x$ in $G_x$ such that $\sigma(u,v')
=\sigma_x(x,y)$. We conclude that $(u,v')=\sigma^{-1} \sigma_x \sigma
(u,v)$.  In the above example for the model $\Gmod$, one sees that
$(z,y) \sim^x(z,\frac{1}{xz})$ and that  $\iota^y \iota^x (\iota^y)^{-1} \cdot (z,y)=(z,\frac{1}{xz})$. The automorphism $\iota^x$ belong
to $G_x$ but it is not the case of $\iota^y \iota^x (\iota^y)^{-1}$ since
$\iota^y \iota^x (\iota^y)^{-1}(x)=xy^2z$.

 Moreover, the transitivity of the action of $G$ on
the orbit also implies the following minimality result for the
extension $k(\cO)$.

\begin{prop} \label{prop:smallest_normal_ext}
 The field $k(\cO)$ is the smallest field in $\K$ that is a Galois extension of $k(x)$ and a Galois extension of $k(y)$.
\end{prop}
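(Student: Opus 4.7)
The plan is to reduce the claim to showing inclusion of orbit coordinates, and then to exploit the transitivity result of Theorem~\ref{thm:group_trans} together with the characterization of Galois extensions in terms of stability under automorphisms of the algebraic closure (\cite[Proposition~1.2.4]{Szamuely}, recalled just after Proposition~\ref{prop:algebraicclosure}).

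First, Theorem~\ref{thm:ko_gal} shows that $k(\cO)|k(x)$ and $k(\cO)|k(y)$ are Galois, so $k(\cO)$ itself has the required property; only minimality remains to be proved. Let $L \subset \K$ be any field such that $L|k(x)$ and $L|k(y)$ are Galois. Since $k(\cO) = \C(\cO)$ is generated over $\C$ by the coordinates of all elements of $\cO$, it suffices to show that $u, v \in L$ for every $(u,v) \in \cO$.

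Fix $(u,v) \in \cO$. The second key ingredient is Theorem~\ref{thm:group_trans}, which yields an element $\sigma \in G$ with $\sigma \cdot (x,y) = (u,v)$. I would decompose $\sigma = \sigma_1 \cdots \sigma_n$ with each $\sigma_i \in G_x \cup G_y$, and then apply Proposition~\ref{prop:algebraicclosure}(2) (noting that $\K$ is an algebraic closure of both $k(x)$ and $k(y)$) to extend each $\sigma_i$ to an element $\tilde\sigma_i$ of $\Aut(\K|k(x))$ or $\Aut(\K|k(y))$ accordingly. By \cite[Proposition~1.2.4]{Szamuely}, the Galois hypothesis on $L$ over $k(x)$ and $k(y)$ ensures that each $\tilde\sigma_i$ stabilizes $L$ setwise, and for the same reason Theorem~\ref{thm:ko_gal} ensures that each $\tilde\sigma_i$ stabilizes $k(\cO)$ setwise. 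Consequently $\tilde\sigma := \tilde\sigma_1 \cdots \tilde\sigma_n$ is a well-defined element of $\Aut(\K|k)$ that stabilizes both $L$ and $k(\cO)$, and whose restriction to $k(\cO)$ equals $\sigma_1 \cdots \sigma_n = \sigma$. In particular $u = \tilde\sigma(x)$ and $v = \tilde\sigma(y)$ belong to $\tilde\sigma(L) \subset L$ since $x,y \in k(x) \cup k(y) \subset L$.

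The only subtle point, which I expect to be the main obstacle to make fully rigorous, is precisely the compatibility between extending each $\sigma_i$ to all of $\K$ and then restricting the product back to $k(\cO)$: one must justify that the composition of the extensions still restricts to $\sigma$ on $k(\cO)$, and this requires the setwise stability of $k(\cO)$ under each $\tilde\sigma_i$, which itself relies on Theorem~\ref{thm:ko_gal}. No computation is needed beyond this bookkeeping.
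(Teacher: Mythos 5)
Your proposal is correct and follows essentially the same route as the paper: invoke Theorem~\ref{thm:group_trans} to write $(u,v)=\sigma\cdot(x,y)$, factor $\sigma$ into elements of $G_x\cup G_y$, extend each factor to $\K$ via Proposition~\ref{prop:algebraicclosure}, and use the Galois hypothesis on the candidate field (via the stability characterization of \cite[Proposition~1.2.4]{Szamuely}) to conclude that $u$ and $v$ lie in it. The "subtle point" you flag — setwise stability of $k(\cO)$ under the extended automorphisms so that the composite restricts to $\sigma$ — is exactly the content of the paper's "easy induction", and your justification of it via Theorem~\ref{thm:ko_gal} is sound.
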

\begin{proof}
    Let $M \subset \K$ be a Galois extension of $k(x)$ and a Galois
extension of $k(y)$. Proposition~\ref{prop:algebraicclosure}
shows that $\K$ is an algebraic closure for $M$. Let $(u,v)$ be an
element of $\cO$. To prove that $k(\cO) \subset M$, we only need to
show that $u$ and $v$ belong to $M$.
By Theorem~\ref{thm:group_trans}, there exists
$\sigma$ in $G$ such that $\sigma \cdot (x,y) = (u,v)$.  Let
us first assume that $\sigma$ belongs to $G_x$. Since $\K$ is an
algebraic closure for $k(x)$, Proposition~\ref{prop:algebraicclosure}
shows that $\sigma$ extends as a $k(x)$-algebra endomorphism of $\K$
still denoted $\sigma$. The field extension $M|k(x)$ is Galois and
$\K$ is an algebraic closure of $M$ so that $\sigma(M) \subset
M$. Since $x$ and $y$ belong to $M$, the same holds for $(u,v)$. The
proof is analogous if $\sigma$ belong to $G_y$. Since $G$ is generated
by $G_x$ and $G_y$, an easy induction concludes that $u=\sigma(x)$ and $v=\sigma(y)$
both belong to $M$ for any $\sigma$ in $G$.
\end{proof}

\subsection{Orbit sums} \label{subsect:orbit sum}

One of the purposes of the orbit is to provide a nice family of
changes of variables, in the sense that the kernel
polynomial $K(X,Y,t)$ is constant on the orbit: for all pairs $(u,v)$
of the orbit, $K(u,v,t) = K(x,y,t)$ (because $S(x,y)=S(u,v)$)
This polynomial being a  factor of the left-hand side
 of the functional equation satisfied by the generating function, one
can evaluate the variables $(X,Y)$ at any pair $(u,v)$ of the orbit
and obtain what is called an \emph{orbit equation}.  Indeed, the
   generating function $Q(X,Y)$
and its sections $Q(X,0)$ and $Q(0,Y)$ belong to the ring of formal
power series in $t$ with coefficients in $\C[X,Y]$ so that their
evaluation at $(u,v)$ belong to the ring $\C[\cO][[t]]$.  Note that
such an evaluation leaves the variable $t$ fixed. The strategy
developed in \cite[Section 4]{bostan2018counting} for models with
small forward steps consists in forming linear combinations of these
orbit equations so that the resulting equation is free from
sections. From the section-free equation, Bostan, Bousquet-Mélou
and Melczer sometimes succeed in isolating the generating function
$Q(X,Y)$ and expressing it as a diagonal of algebraic fractions which
leads to its $D$-finitness by \cite{Lip}. For models with small
backward steps, it is quite easy to produce a section-free equation
from \eqref{eq:small_steps_backwards} when the orbit contains a
cycle. However, it is very unlikely that, for models with small
backward steps and at least one large step, such a section free
equation suffices to characterize the generating function.

In this paper, we want to evaluate the variables $X,Y,t$ at
$(u,v,\invS)$ for $(u,v)$ an element of the orbit. Since
$\Ktld(u,v,\invS)=0$ for any element $(u,v)$ of the orbit $\cO$, such
an evaluation is similar to the kernel method used in \cite{KurkRasch}
for models with small steps. More precisely, let us define a
\emph{$0$-chain} as a formal $\C$-linear combination of elements of
the orbit $\cO$ with finite support. This terminology is borrowed from graph homology (see
Section \ref{sect:decoupling} for some basic introduction). Let
$\gamma =\sum_{(u,v) \in \cO} c_{(u,v)} (u,v)$ be a zero chain. Since
the coefficients $c_{(u,v)}$ are complex and almost all zero, the
evaluation $P_{\gamma}$ of a polynomial $P(X,Y,t) \in \C[X,Y,t]$ at
$\gamma$ is defined as
\[ P_\gamma=\sum_{(u,v) \in \cO} c_{(u,v)} P(u,v),\] and belongs to
$\C[\cO]$. The evaluation of $\Ktld(X,Y,t)$ at any $0$-chain vanishes
so that one can not evaluate a rational fraction in $\C(X,Y,t)$ whose
denominator is divisible by $\Ktld$. This motivate the following
definition.

\begin{defi}\label{defi:regularfraction}
    Let $H(X,Y,t) = \frac{A(X,Y,t)}{B(X,Y,t)}$ be a rational fraction in
    $\C(X,Y,t)$ where $A(X,Y,t)$ and $B(X,Y,t)$ are relatively prime
    polynomials in $\C[X,Y,t]$. We say that $H(X,Y,t)$ is a  \emph{regular
    fraction} if $B(X,Y,t)$ is not divisible by the kernel polynomial
    $\Ktld(X,Y,t)$ in $\C[X,Y,t]$.
\end{defi}

\begin{rem} \label{exa:univ_regular}
  Since $S(X,Y)$ is not univariate, the kernel polynomial involves all three variables
  $X$,$Y$ and $t$, so does a multiple of $\Ktld(X,Y,t)$ (by a simple degree
   argument). Therefore, any fraction in $\C(X,t)$ or $\C(Y,t)$ is regular.
\end{rem}

 We endow the set of regular fractions in $\C(X,Y,t)$ with the
following equivalence relation: two regular fractions $H, G$ are
\emph{ equivalent} if there exists a regular fraction $R$ such that
$H-G=\Ktld (X,Y,t)R$.  We denote by $\mathcal{C}$ the set of
equivalence classes. Since the equivalence relation is compatible with
the addition and multiplication of fractions, one easily notes that
$\mathcal{C}$ can be endowed with a ring structure. Moreover, since
$\Ktld(X,Y,t)$ is irreducible in $\C[X,Y,t]$, any non-zero class is
invertible proving that $\mathcal{C}$ is a field.  Indeed, if $H$ is a
regular fraction that is not equivalent to zero, then one can write
$H=\frac{P}{Q}$ with $P,Q \in \C[X,Y,t]$ relatively prime and $\Ktld$
does not divide $P$ nor $Q$. Thus, the fraction $\frac{Q}{P}$ is
regular and its class in $\mathcal{C}$ is an inverse of the class of
$\frac{P}{Q}$. Moreover, since $\Ktld$ is not univariate, any non-zero
element in $\C(X,t)$ or $\C(Y,t)$ is a regular fraction which is not
equivalent to zero. Therefore, the fields $\C(X,t)$ and $\C(Y,t)$
embed into $\mathcal{C}$. By an abuse of notation, we denote by
$\C(X,t)$ and $\C(Y,t)$ their image in $\mathcal{C}$.

\begin{prop}\label{lem:liftingidentities}
  For a fraction $H$ in $\C(X,Y,t)$ and $(u,v)$ in $\cO$, the
  evaluation $H(u,v,\invS)$ of $H$ at $(u,v)$ is a well defined element
  of $\K$ if and only if $H$ is a regular fraction.

  The $\C$-algebra homomorphism $\phi:\mathcal{C} \rightarrow
  k(x,y), P(X,Y,t) \mapsto P(x,y,\invS)$ is well defined and is a field
  isomorphism which maps isomorphically $\C(t)$ onto $k=\C(S(x,y))$,
  $\C(X,t)$ onto $k(x)$ and $ \C(Y,t)$ onto $k(y)$.
\end{prop}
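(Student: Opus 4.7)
The two claims both reduce to understanding when $P(x,y,\invS)=0$ for a polynomial $P\in\C[X,Y,t]$. My plan is to combine a Galoisian transfer step with a divisibility computation. First, for any $(u,v)\in\cO$, Theorem~\ref{thm:group_trans} yields a $\sigma\in G$ with $\sigma\cdot(x,y)=(u,v)$; since $G\subset\Aut(k(\cO)|k)$ and $\invS\in k$, the automorphism $\sigma$ fixes $\invS$, so $P(u,v,\invS)=\sigma(P(x,y,\invS))$ for every $P\in\C[X,Y,t]$. As $\sigma$ is a field automorphism, $P(u,v,\invS)=0$ if and only if $P(x,y,\invS)=0$. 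Thus everything reduces to analysing the evaluation morphism $\phi_0:\C[X,Y,t]\to k(x,y)$, $P\mapsto P(x,y,\invS)$, at the single point $(x,y)$.

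Next I would show $\ker\phi_0=(\Ktld)$. One direction is clear since $\Ktld(x,y,\invS)=0$. For the converse, view $\Ktld$ as a polynomial of $t$-degree one over $\C(X,Y)$, with leading coefficient a nonzero element of $\C[X,Y]$. Euclidean division in $\C(X,Y)[t]$ gives $P=Q\Ktld+R$ with $R\in\C(X,Y)$; substituting $t=\invS$ forces $R(x,y)=0$, and the algebraic independence of $x,y$ over $\C$ from Lemma~\ref{lem:alg_rel_xyS} yields $R=0$. Writing $Q=Q_1/Q_2$ with $Q_1\in\C[X,Y,t]$ and $Q_2\in\C[X,Y]$ coprime, one obtains $Q_2P=Q_1\Ktld$ in $\C[X,Y,t]$. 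Since $\Ktld$ is irreducible in the UFD $\C[X,Y,t]$ by Lemma~\ref{lem:ker_irred} and has positive $t$-degree, it is coprime to $Q_2\in\C[X,Y]$, so $\Ktld\mid P$ in $\C[X,Y,t]$. Consequently, for a reduced fraction $H=A/B$, the denominator $B(x,y,\invS)$ vanishes iff $\Ktld\mid B$, iff $H$ is not regular; combined with the transfer step, this yields the first claim.

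For the second claim, the compatibility with the equivalence on regular fractions is automatic: if $H_1-H_2=\Ktld R$ with $R$ regular, the evaluation gives $(\Ktld R)(x,y,\invS)=\Ktld(x,y,\invS)\cdot R(x,y,\invS)=0$ (since $R(x,y,\invS)$ is already defined by the first claim), so $\phi_0$ descends to a $\C$-algebra morphism $\phi:\mathcal{C}\to k(x,y)$. Injectivity is automatic since $\mathcal{C}$ is a field, and surjectivity follows because the image contains $x=\phi(X)$, $y=\phi(Y)$ and $\invS=\phi(t)$, and hence $S(x,y)$ and all of $k(x,y)=\C(x,y)$. The three identifications $\phi(\C(t))=k$, $\phi(\C(X,t))=k(x)$, $\phi(\C(Y,t))=k(y)$ are then immediate from $\phi(t)=1/S(x,y)$ together with $\phi(X)=x$ and $\phi(Y)=y$. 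The main obstacle is the descent from divisibility in $\C(X,Y)[t]$, where Euclidean division is available, to divisibility in $\C[X,Y,t]$; this is precisely where the irreducibility of $\Ktld$ and the UFD structure are essential.
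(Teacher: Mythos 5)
Your proposal is correct, and it shares the paper's opening move (the transfer step via Theorem~\ref{thm:group_trans}, reducing everything to the single evaluation at $(x,y,\invS)$), but the algebraic core is organized differently. For well-definedness the paper does not compute a kernel: it writes $H=A/B$ in lowest terms, forms the resultant $\Res_t(\Ktld,B)=BU+\Ktld V\in\C[X,Y]$, notes it is nonzero because $\Ktld$ is irreducible and does not divide $B$ (Lemma~\ref{lem:ker_irred}), and evaluates at $(x,y,\invS)$ to get $B(x,y,\invS)\neq 0$; you instead prove the stronger statement $\ker\bigl(\C[X,Y,t]\to k(x,y)\bigr)=(\Ktld)$ by Euclidean division in $\C(X,Y)[t]$ (using $\deg_t\Ktld=1$) followed by denominator clearing and primality of $\Ktld$. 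Both arguments rest on the same two pillars (irreducibility of $\Ktld$ and algebraic independence of $x,y$ from Lemma~\ref{lem:alg_rel_xyS}), so this part is a variant rather than a new idea. The more substantive divergence is in the second claim: the paper identifies $\mathcal{C}$ with the fraction field of the domain $\C(t)[X,Y]/(\Ktld)$ (citing Matsumura) and asserts that evaluation induces a ring isomorphism of this quotient onto $k[x,y]$, then passes to fraction fields; you bypass the quotient-ring identification entirely, checking directly that evaluation descends to $\mathcal{C}$ (equivalent regular fractions evaluate equally since $\Ktld R$ evaluates to $0$), that the resulting map is injective because $\mathcal{C}$ is a field, and surjective because the image contains $x$, $y$ and $\invS$, hence $k(x,y)=\C(x,y)$. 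What your route buys is self-containedness: your kernel computation is exactly the injectivity statement that the paper's "ring isomorphism onto $k[x,y]$" leaves implicit, so your write-up justifies that point explicitly, at the cost of a slightly longer argument than the paper's resultant-plus-fraction-field shortcut. The subfield identifications at the end are handled the same way in both proofs.
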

\begin{proof}
  Recall  that  by Theorem~\ref{thm:group_trans}, given a pair $(u,v) \in \cO$, there exists $\sigma \in G$
  such that $\sigma \cdot (x,y)= (u,v)$. The automorphism $\sigma$
  induces a $k$-algebra isomorphism between $k(x,y)$ and
  $k(u,v)$ so that the evaluation at $(x,y, \invS)$ composed by $\sigma$
  is the evaluation at $(u,v,\invS)$. Thereby, we only need to prove the
  first part of the proposition for the evaluation at $(x,y,\invS)$.

  Since $\Ktld(x,y,\invS)=0$, it is clear that one can not evaluate a fraction that is not regular. Thus, we only need to show that the evaluation of a regular fraction at $(x,y,\invS)$ is well defined.
  Let us write $H(X,Y,t) = \frac{A(X,Y,t)}{B(X,Y,t)}$ where $A(X,Y,t)$ and
  $B(X,Y,t)$ are relatively prime in $\C[X,Y,t]$, and the kernel polynomial $\Ktld(X,Y,t)$
  does not divide $B(X,Y,t)$. There exist two polynomials $U,V \in
  \C[X,Y,t]$ such that \[
    B(X,Y,t) U(X,Y,t) + \Ktld(X,Y,t) V(X,Y,t) = R(X,Y)
  \]
  with $R(X,Y) \in \C[X,Y]$ the resultant of $\Ktld(X,Y,t)$ and
  $B(X,Y,t)$ for the variable $t$. Since $\Ktld(X,Y,t)$ is an irreducible
  polynomial that does not divide
  $B(X,Y,t)$, the
  resultant $R(X,Y)$ is a nonzero polynomial. Since $x,y$ are algebraically independent over $\C$, one finds that $R(x,y) \neq 0$ and $K(x,y,\invS)=0$ which implies that $B\left(x,y,\invS\right) \neq 0$, so $H\left(x,y,\invS\right)$ is well defined.

  By Lemma~\ref{lem:ker_irred}, the kernel polynomial $\Ktld(X,Y,t)$ is
  irreducible as a
  polynomial in $\C(t)[X,Y]$.
  The ring
  $R=\C(t)[X,Y]/(\Ktld(X,Y,t))$ is therefore an integral domain. By
  \cite[page 9, (1K)]{Matsumura}, its fraction field is precisely
  $\mathcal{C}$. Now, the evaluation map from
  $\C(t)[X,Y]/(\Ktld(X,Y,t))$ to $k[x,y]$ is a ring isomorphism which
  maps isomorphically $\C(t)$ onto $k$.  The latter ring isomorphism
  extends to an isomorphism between the fraction fields $\mathcal{C}$ of
  $\C(t)[X,Y]/(\Ktld(X,Y,t))$ and the fraction field $k(x,y)$ of
  $k[x,y]$ which concludes the proof.
\end{proof}

If $H$ is a regular fraction, we denote $H_{(u,v)}$ its evaluation at
an element $(u,v)$ of the orbit and we can extend this evaluation by
$\C$-linearity to any $0$-chain $\gamma$.  We denote by $H_{\gamma}$
the corresponding element in $k(\cO)$. Such an evaluation is called an
\emph{orbit sum}.  We let the group $G$ act on $0$-chains by
$\C$-linearity, that is, $\sigma\left(\sum_{(u,v) \in \cO}
c_{(u,v)}(u,v) \right) = \sum_{(u,v) \in \cO} c_{(u,v)} \sigma \cdot
(u,v)$.  The following lemma shows that the evaluation morphism is
compatible with the action of $G$ on $k(\cO)$ and on $0$-chains.
\begin{lem}\label{lem::commutationgrp_eval_orb}
    Let $\sigma$ be an element of $G$, $\gamma$ be a $0$-chain, and $H(X,Y,t)$ be
    a regular fraction in $\C(X,Y,t)$. Then $\sigma(H_\gamma) = H_{\sigma \cdot \gamma}$.
\end{lem}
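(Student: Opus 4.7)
The plan is to reduce the statement to the case of a single orbit element by $\C$-linearity, then unwind the definition of the evaluation map and exploit the fact that $\sigma$ is a $\C$-algebra homomorphism fixing the base field $k = \C(S(x,y))$.

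First I would note that both sides of the identity $\sigma(H_\gamma) = H_{\sigma\cdot\gamma}$ are $\C$-linear in $\gamma$, because $\sigma$ is $\C$-linear and the evaluation $H_{(\cdot)}$ was defined by $\C$-linear extension. Thus it suffices to prove the claim for a single pair $(u,v) \in \cO$, i.e. to show $\sigma(H_{(u,v)}) = H_{\sigma \cdot (u,v)}$. Observe already that $\sigma \cdot (u,v) \in \cO$, since $G$ is generated by $G_x$ and $G_y$, each of which preserves $\cO$ setwise by Lemma~\ref{lem:orb_normal}; hence the right-hand side is well defined.

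Next I would write $H(X,Y,t) = A(X,Y,t)/B(X,Y,t)$ with $A,B \in \C[X,Y,t]$ coprime and $\Ktld \nmid B$, so that by Proposition~\ref{lem:liftingidentities} the evaluations are well defined. A key observation is that $S$ is constant on the orbit: for any $(u,v) \sim^* (x,y)$ one has $S(u,v) = S(x,y)$, so the substitution $t \mapsto 1/S(u,v)$ is really the substitution $t \mapsto 1/S(x,y)$. Since $\sigma$ belongs to $\Aut(k(\cO)|k)$, it fixes $1/S(x,y)$ pointwise. Because $\sigma$ is a $\C$-algebra homomorphism, for any polynomial $P \in \C[X,Y,t]$ it commutes with polynomial evaluation in the sense that
\[
\sigma\bigl(P(u,v,1/S(x,y))\bigr) \;=\; P\bigl(\sigma(u),\sigma(v),1/S(x,y)\bigr).
\]
Applying this to both $A$ and $B$, and using that $\sigma$ respects the field quotient in $k(\cO)$, yields
\[
\sigma(H_{(u,v)}) \;=\; \frac{A(\sigma(u),\sigma(v),1/S(x,y))}{B(\sigma(u),\sigma(v),1/S(x,y))} \;=\; H_{\sigma\cdot(u,v)},
\]
which is exactly the desired identity. (Alternatively, one may argue via Proposition~\ref{lem:liftingidentities}: the evaluation at $(x,y)$ is the isomorphism $\phi \colon \mathcal{C} \to k(x,y)$, and evaluation at $\tau \cdot (x,y)$ is $\tau \circ \phi$ for any $\tau \in G$ by Theorem~\ref{thm:group_trans}; the identity then follows by composing automorphisms.)

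There is no real obstacle here: the only point that deserves care is that $H_{(u,v)}$ is defined through a choice of numerator-denominator representation of $H$, and one must check that the substitution is well-defined (no zero denominator). This is ensured by regularity of $H$ together with $\Ktld(u,v,1/S(x,y)) = 0$ and Proposition~\ref{lem:liftingidentities} applied both at $(u,v)$ and at $\sigma\cdot(u,v)$. Everything else is a direct consequence of $\sigma$ being a $\C$-algebra homomorphism that fixes $S(x,y)$.
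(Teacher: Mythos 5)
Your proof is correct and follows essentially the same route as the paper: reduce to a single orbit element by $\C$-linearity, then use that $\sigma$ is a $\C$-algebra homomorphism fixing $S(x,y)$ (hence $\invS$) to commute it past the evaluation. The extra care you take with the numerator--denominator representation and the nonvanishing of the evaluated denominator is a harmless elaboration of what the paper leaves implicit (it also follows at once from $\sigma$ being injective on $k(\cO)$).
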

\begin{proof}
    Let $(u,v)$ be an element in the orbit.  Since $\sigma$ fixes $k = \C(S(x,y))$,
    we have  \[
        \sigma(H_{(u,v)}) = \sigma\left(H(u,v,\invS\right) =
        H\left(\sigma(u), \sigma(v), \invS\right)
    = H_{\sigma \cdot (u,v)}. \] The general case follows by $\C$-linearity.
\end{proof}

Two equivalent regular fractions have the same evaluation in
$k(\cO)$. Thereby, certain class of regular fractions can be
characterized by the Galoisian properties of their evaluation in
$k(\cO)$. This idea underlies the Galoisian study of invariants and
decoupling in Sections \ref{sect:ratinv} and \ref{sect:decoupling}. To
conclude, we want to compare the equivalence relation among regular
fractions that are elements of $\Cmul(X,Y)((t))$ and the
$t$-equivalence (see Section \ref{subsect:algebraicitystrategy} for
notation).

\begin{prop} \label{prop:link_pob_regular}
Let $F \in \Cmul(X,Y)((t))$ that is also a regular fraction in
$\C(X,Y,t)$. If $F$ is $t$-equivalent to $0$, that is, the
$t$-expansion of $F/\Ktld$ has poles of bounded order at $0$, then
the fraction $F / \Ktld$ is regular so that the regular fraction $F$
is equivalent to zero by definition.
  \end{prop}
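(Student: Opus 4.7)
The plan is to argue by contradiction. Write $F = A/B$ in reduced form with $A,B \in \C[X,Y,t]$ coprime; by the regularity of $F$, $\Ktld \nmid B$. Since $\Ktld$ has $t$-degree $1$ over $\C(X,Y)$ with unique root $t = 1/S(X,Y)$, Euclidean division of $A$ by $\Ktld$ gives the polynomial remainder $r(X,Y) := A(X,Y,1/S) \in \C[X,Y]$; then $\Ktld \mid A$ is equivalent to $r = 0$, which is what we must prove, so assume toward a contradiction that $r \neq 0$.

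Introduce a new auxiliary variable $c$ and make the substitution $t = cX^{m_x}$. Writing $\tilde S(X,Y) := X^{m_x}Y^{m_y}S(X,Y) \in \C[X,Y]$, we have $\Ktld(X,Y,cX^{m_x}) = X^{m_x}(Y^{m_y} - c\tilde S(X,Y))$, so the rational function $R(X,Y,c) := F(X,Y,cX^{m_x})/\Ktld(X,Y,cX^{m_x})$ has a simple pole in $c$ at $c = c_0(X,Y) := Y^{m_y}/\tilde S(X,Y)$, and a direct computation yields
\[
\operatorname{Res}_{c=c_0}(R) \;=\; -\,\frac{r(X,Y)}{\tilde S(X,Y)\,X^{m_x}\,B(X,Y,1/S)}.
\]
This residue is nonzero in $\C(X,Y)$: indeed $B(X,Y,1/S)$ is the $\Ktld$-remainder of $B$ and is nonzero since $\Ktld\nmid B$, and $r \neq 0$ by assumption. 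Note $c_0(0,Y) = Y^{m_y}/\tilde S(0,Y)$ is finite and nonzero, whereas the other poles of $R$ in $c$ are the roots $c_j(X,Y)$ of $B(X,Y,cX^{m_x})$; the leading $c$-coefficient of this polynomial is $B_{\deg_tB}(X,Y)X^{\deg_t B \cdot m_x}$, which vanishes at $X = 0$, so each $c_j(X,Y)$ escapes to infinity as $X\to 0$. A standard expansion of $1/(c - c_j(X,Y))$ then shows that each corresponding partial-fraction piece $\rho_j/(c - c_j(X,Y))$ contributes only a polynomial in $c$ to every coefficient of the $X$-Laurent expansion at $X = 0$.

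Now invoke the hypothesis: writing $F/\Ktld = \sum_n g_n(X,Y)\,t^n$, bounded pole orders at $X=0$ yield an $M$ with $v_X(g_n) \geq -M$ for all $n$. Substituting $t = cX^{m_x}$ formally, the series $\sum_n g_n\,c^n X^{nm_x}$ has its coefficient of $X^d$ given by a finite sum over $n$ with $d - nm_x \geq v_X(g_n) \geq -M$, i.e.\ $n \leq (d+M)/m_x$; hence this coefficient is a Laurent polynomial in $c$. This formal series coincides with the genuine $X$-Laurent expansion of $R$ viewed in $\C(Y,c)((X))$, so each $X$-Laurent coefficient of $R$ lies in $\C(Y)[c,c^{-1}]$ and in particular has no pole at the finite nonzero value $c = c_0(0,Y)$. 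Combined with the preceding paragraph, the only surviving source of a pole at $c_0(0,Y)$ in the $X$-Laurent coefficients of $R$ is the term $\operatorname{Res}_{c=c_0}(R)/(c - c_0(X,Y))$, which therefore must vanish identically; this forces $r = 0$, the desired contradiction. The main obstacle to clean up is the rigorous identification of the formal $t$-substitution with the $X$-Laurent expansion of $R$ in $\C(Y,c)((X))$, together with the careful control of the escaping poles $c_j(X,Y)$, which together underpin the step that a Laurent polynomial in $c$ cannot absorb the residue contribution at $c_0$.
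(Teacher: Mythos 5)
The genuine gap is in your control of the non-kernel poles of $R$ in the variable $c$. From the vanishing at $X=0$ of the leading $c$-coefficient $B_{\deg_t B}(X,Y)\,X^{\deg_t B\cdot m_x}$ of $B(X,Y,cX^{m_x})$ you infer that \emph{every} root $c_j(X,Y)$ escapes to infinity as $X\to 0$. That inference is false: a degenerating leading coefficient only forces \emph{some} roots to be unbounded, while others may stay finite whenever $B(X,Y,0)$ itself vanishes at $X=0$. For instance $B(X,Y,t)=t-X^{m_x}$ is a perfectly admissible denominator (it is coprime to $\Ktld$, and $1/(t-X^{m_x})=-\sum_n t^nX^{-(n+1)m_x}$ has separated denominators, so lies in $\Cmul(X,Y)((t))$), yet $B(X,Y,cX^{m_x})=X^{m_x}(c-1)$ has the bounded root $c\equiv 1$; worse, a factor such as $Y^{m_y}\bigl(A_{-m_x}(Y)\,t-X^{m_x}\bigr)$, where $A_{-m_x}(Y)$ denotes the coefficient of $X^{-m_x}$ in $S(X,Y)$, produces a root identically equal to $c_0(0,Y)=1/A_{-m_x}(Y)$. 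In such a situation the corresponding partial-fraction pieces also contribute poles at $c=c_0(0,Y)$ to the $X$-Laurent coefficients of $R$, so the kernel factor is no longer ``the only surviving source'' of such a pole, and Laurent-polynomiality of the coefficients no longer forces $\Res_{c=c_0}(R)=0$: a cancellation between the kernel piece and pieces coming from $B$ would have to be excluded, and nothing in your argument excludes it. (That no such $B$ can actually occur under the full hypotheses is exactly the statement being proved, so it cannot be invoked.) Until this collision of poles is ruled out, the final step ``the residue must vanish, hence $r=0$'' does not follow.

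By contrast, the part you flag as the main obstacle is fine and standard: under the bounded-pole-order hypothesis the rearranged double series is locally finite, the substitution $t=cX^{m_x}$ is a ring morphism on such series, and multiplying through by the polynomial $B(X,Y,cX^{m_x})$ identifies your formal series with the $X$-Laurent expansion of $R$ in $\C(Y,c)((X))$. (Two minor points: $r:=A(X,Y,1/S)$ lies in $\C(X,Y)$ rather than $\C[X,Y]$, and the bound $n\le (d+M)/m_x$ needs $m_x\ge 1$, which holds under the paper's standing assumptions.) The paper's own proof sidesteps the pole-collision issue entirely: it substitutes into $F=\Ktld H$ the power-series root $\mathcal{X}(Y,t)$ of $K(\cdot,Y,t)$ with zero constant term, which is legitimate because both $F$ and $H$ have poles of bounded order at $0$; this gives $F(\mathcal{X},Y,t)=0$, hence the numerator of $F$ vanishes on the kernel and is divisible by $\Ktld$ by irreducibility (Lemma~\ref{lem:ker_irred}). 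Your substitution $t=cX^{m_x}$ parametrizes essentially the same branch, but the residue/partial-fraction route obliges you to control all poles of $B$ simultaneously, and that is where the proof breaks; either repair that step with a genuine valuation-theoretic argument or specialize along the kernel branch directly as the paper does.
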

\begin{proof}
 Our proof starts by following the lines of the proof of Lemma~2.6 in
\cite{BousquetMelouThreequadrant}. Assume that $F$ is $t$-equivalent
to $0$, so that there exists $H(X,Y,t) \in \Cmul(X,Y)((t))$ with poles
of bounded order at $0$ such
that \begin{equation}\label{eq:analyticequzero} F(X,Y)=\Ktld(X,Y,t)
H(X,Y,t).
\end{equation}

Analogous arguments to Lemma~2.6 in \cite{BousquetMelouThreequadrant}
show that there exists a root $\mathcal{X}$ of $K(.,Y,t)=0$ that is a
formal power series in $t$ with coefficients in an algebraic closure
of $\C(Y)$ and with constant term $0$. Since $H$ and $F$ have poles of
bounded order at $0$, one can specialize \eqref{eq:analyticequzero} at
$X=\mathcal{X}$ and find $F(\mathcal{X},Y,t)=0$. Writing $F
=\frac{P}{Q}$ where $P,Q \in \C[X,Y,t]$ are relatively prime, one
finds that $P(\mathcal{X},Y,t)=0$. Since $\Ktld(.,Y,t)$ is an
irreducible polynomial over $\C(Y,t)$ by Lemma~\ref{lem:ker_irred}, we
conclude that $\Ktld$ divides $P$. Because $P$ and $Q$ are relatively
prime, we find that $\Ktld$ doesn't divide $Q$ which concludes the
proof.
\end{proof}

Clearly, the regular fraction $\frac{\Ktld(X,Y,t)}{Y-t}$ is equivalent
to zero but not $t$-equivalent to zero, so the converse of
Proposition~\ref{prop:link_pob_regular} is false.  With the strategy
presented in Section \ref{subsect:alg_strat} in mind, we will use in
the next sections the notion of equivalence on regular fractions and
its Galoisian interpretation to produce pairs of \emph{Galois
invariants} and \emph{ Galois decoupling pairs}. For each pair of
Galois invariants and decoupling functions constructed for the models
presented in Section \ref{subsect:examples}, it happens that any
equivalence relation among these regular fractions is actually a
$t$-equivalence.  Unfortunately, we do not have any theoretical
arguments yet to explain this phenomenon.

The rest of the paper is devoted to the Galoisian interpretation of
the notions of invariants and decoupling.  Their construction relies
on the evaluation of regular fractions on suitable $0$-chains.

\section{Galois invariants} \label{sect:ratinv}

In this section, we prove that the finiteness of the orbit is
equivalent to the existence of a non-constant pair of Galois
invariants (see Theorem~\ref{thm:fried} below). This result
generalizes \cite[Theorem~7]{BBMR16} in the small steps case and was
proved in the more general context of finite algebraic correspondences
in \cite[Theorem~1]{FriedPoncelet}. Fried's framework is geometric,
but his proof is essentially Galois theoretic. We give here an
alternative presentation which does not require any algebraic
geometrical background. Moreover, we show in this section that if the
orbit is finite, the field of Galois invariants is of the form $k(c)$
for some element $c$ transcendental over $k$. In addition, we give an
algorithmic procedure to effectively construct $c$.

\subsection{Galois formulation of invariants}

In Section \ref{subsect:alg_strat}, we aimed at constructing
$t$-invariants that were rational fractions, that is, pairs
$(I(X,t),J(Y,t))$ satisfying an equation of the form $I(X,t) - J(Y,t)
= \Ktld(X,Y,t) R(X,Y,t)$ with $R$ having poles of bounded order at
zero ($I$ and $J$ are $t$-equivalent).  With the philosophy of Section
\ref{subsect:orbit sum} in mind, we introduce the weaker notion of
pair of \emph{Galois invariants} based on rational equivalence. Our
definition extends Definition~4.3 in \cite{BBMR16} to the large steps
context.

\begin{defi} \label{defi:rat_inv}
  Let $(I(X,t), J(Y,t))$ be a pair of rational fractions in
  $\C(X,t) \times \C(Y,t)$ (hence regular, as they are univariate).
  We say that this pair is a \emph{pair of Galois invariants} if
  there exists a regular fraction $R(X,Y,t)$
  such that $I(X,t) - J(Y,t) = \Ktld(X,Y,t) R(X,Y,t)$,
  that is, the regular fractions $I(X,t)$ and $J(Y,t)$ are equivalent.
\end{defi}

From Proposition~\ref{prop:link_pob_regular}, a pair of rational
$t$-invariants is a pair of Galois invariants. Therefore, it is
justified to look for a pair of Galois invariants first, and then to
check by hand if their difference is $t$-equivalent to $0$. Moreover,
the notion of pairs of Galois invariants is purely algebraic while the
notion of pairs of $t$-invariants involves some analytic
considerations which might be difficult to handle.  Using
Lemma~\ref{lem:liftingidentities}, the set of pairs of Galois
invariants corresponds to a subfield of $k(\cO)$ which can be easily
described.

\begin{prop} \label{prop:inv_to_ext}
  Let $P = (I(X,t),J(Y,t))$ be a pair of fractions in $\C(X,t)
  \times \C(Y,t)$. Then $P$ is a pair of Galois invariants if and only
  if the evaluations $I_{(x,y)}$ and $J_{(x,y)}$ are equal, and thus
  belongs to $k(x) \cap k(y) \subset k(\cO)$. Moreover, the pair $P$ is
  a constant pair of Galois invariants if and only if $I_{(x,y)} =
  J_{(x,y)}$ is in $k$.
\end{prop}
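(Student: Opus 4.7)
The plan is to transport all the conditions through the evaluation isomorphism $\phi\colon \mathcal{C}\to k(x,y)$ established in Proposition~\ref{lem:liftingidentities}. By Definition~\ref{defi:rat_inv}, $P=(I(X,t),J(Y,t))$ is a pair of Galois invariants exactly when $I$ and $J$ represent the same class in the field $\mathcal{C}$ of regular fractions modulo the equivalence relation generated by $\Ktld$. Since $\phi$ is a field isomorphism (in particular injective), this equality of classes is equivalent to $\phi(I)=\phi(J)$ in $k(x,y)$, i.e.\ to $I_{(x,y)}=J_{(x,y)}$ in $k(\mathcal{O})$.

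To locate this common value I would use the second part of Proposition~\ref{lem:liftingidentities}, which asserts that $\phi$ restricts to isomorphisms $\C(X,t)\xrightarrow{\sim} k(x)$ and $\C(Y,t)\xrightarrow{\sim} k(y)$. Because $I \in \C(X,t)$ and $J \in \C(Y,t)$, this yields $I_{(x,y)}\in k(x)$ and $J_{(x,y)}\in k(y)$, so if they agree their common value automatically lies in $k(x)\cap k(y)\subset k(\mathcal{O})$.

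For the final assertion I rely on the third part of Proposition~\ref{lem:liftingidentities}: $\phi$ also restricts to an isomorphism $\C(t)\xrightarrow{\sim} k$. Since $\phi$ is bijective on subfields, $\phi^{-1}(k)=\C(t)$, and therefore for any $I \in \C(X,t)$ the condition $I_{(x,y)}\in k$ is equivalent to $I \in \C(t)$; the analogous statement holds for $J$. As a \emph{constant} pair of Galois invariants is by convention one whose two components lie in $\C(t)$, combining this observation with the first part of the proposition gives the desired equivalence with $I_{(x,y)}=J_{(x,y)} \in k$.

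I do not anticipate any real obstacle here: once Proposition~\ref{lem:liftingidentities} is granted, the whole argument is an unwinding of definitions through the isomorphism $\phi$. The one point worth stating explicitly is the bijectivity of $\phi$ on the pertinent subfields $\C(X,t)$, $\C(Y,t)$ and $\C(t)$, which is exactly the content of Proposition~\ref{lem:liftingidentities} and is where the substantive work has already been done.
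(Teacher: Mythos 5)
Your argument is correct and matches the paper's intended justification: the paper states this proposition without a separate proof, treating it as a direct unwinding of Proposition~\ref{lem:liftingidentities}, exactly as you do via the evaluation isomorphism $\phi\colon\mathcal{C}\to k(x,y)$ and its restrictions to $\C(X,t)$, $\C(Y,t)$ and $\C(t)$. Your explicit note that injectivity of $\phi$ (and of the embeddings of these subfields into $\mathcal{C}$) gives both the equivalence-of-classes criterion and the constancy criterion is precisely the point the paper leaves implicit.
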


Therefore  we denote the  field $k(x) \cap k(y)$
as $\kinv$ and, by an abuse of terminology, call its elements  Galois invariants. The definition of the
group $G$ and the Galois correspondence applied to $k(\cO) |
k(x)$ and $k(\cO) | k(y)$ show that $f$ in $k(\cO)$ is a
Galois  invariant if and only if $f$ is fixed by $G$.  Moreover, Proposition~\ref{prop:inv_to_ext}
reduces the question of the existence of a  nonconstant  pair of Galois
invariants to the question of deciding whether $\kinv = k$ or not.

\subsection{Existence of nontrivial Galois invariants and finiteness of the orbit}\label{subsect:existencenontrivinv}
The existence  of a non-constant pair of  Galois invariants is equivalent to
the finiteness of the orbit  as proved in Theorem~1 and Lemma~p~470 in
\cite{FriedPoncelet} which holds also in positive characteristic and in a
higher dimensional context. Theorem~\ref{thm:fried} below is a rephrasing of
Fried's Theorem in pure Galois theoretic arguments.

\begin{thm} \label{thm:fried}
    The following are equivalent:
    \begin{enumerate}
        \item The orbit $\cO$ is finite.
        \item There exists a finite
          Galois extension $M$ of $k(x)$ and $k(y)$ such
            that $\Gal(M|k(x))$ and $\Gal(M|k(y))$ generate a finite
            group $\left<\Gal(M|k(x)),\Gal(M|k(y))\right>$ of automorphisms
            of~$M$.
        \item There exists a  nontrivial Galois invariant, that is, $k \subsetneq \kinv$.
    \end{enumerate}
\end{thm}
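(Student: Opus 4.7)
The plan is to prove the cyclic chain of implications $(1) \Rightarrow (2) \Rightarrow (3) \Rightarrow (1)$, so that each assertion implies the next.

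For $(1) \Rightarrow (2)$, I would take $M = k(\cO)$, which is already Galois over both $k(x)$ and $k(y)$ by Theorem~\ref{thm:ko_gal}. When $\cO$ is finite, I want to check that these extensions are finite. Since $\cO$ is connected, for every pair $(u,v) \in \cO$ there is a path of adjacencies from $(x,y)$ to $(u,v)$; along this path, each new coordinate is a root of $\Ktld(X,\cdot,\tfrac{1}{S(x,y)})$ or $\Ktld(\cdot,Y,\tfrac{1}{S(x,y)})$ evaluated at the previously constructed coordinate, hence algebraic over $k(x)$ (by a straightforward induction). As $k(\cO)$ is generated over $k(x)$ by the finitely many coordinates of pairs in $\cO$, all algebraic over $k(x)$, the extension $k(\cO)|k(x)$ is finite, and symmetrically for $k(\cO)|k(y)$. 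Finally $G' := \langle \Gal(M|k(x)),\Gal(M|k(y))\rangle$ sits inside $G$, which acts faithfully on $\cO$ by Section~\ref{sect:grouporbit}, so $G'$ embeds in the finite group $\mathrm{Sym}(\cO)$ and is finite.

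For $(2) \Rightarrow (3)$, I apply Galois correspondence inside $M$. As $M|k(x)$ and $M|k(y)$ are Galois, $M^{\Gal(M|k(x))} = k(x)$ and $M^{\Gal(M|k(y))} = k(y)$, so $M^{G'} = k(x) \cap k(y) = \kinv$ (the intersection is intrinsic, not depending on whether it is computed inside $M$ or inside $k(\cO)$). Since $M|M^{G'}$ is finite Galois with Galois group $G'$, the intermediate extension $k(x)|M^{G'}$ has degree dividing $|G'|$, hence is finite. On the other hand, $x$ is transcendental over $k$ by Lemma~\ref{lem:alg_rel_xyS}, so $[k(x):k] = \infty$. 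Were $\kinv = M^{G'}$ equal to $k$, we would get $[k(x):k] = [k(x):M^{G'}] < \infty$, a contradiction; therefore $\kinv \supsetneq k$.

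For $(3) \Rightarrow (1)$, take $f \in \kinv \setminus k$ and write $f = P(x)/Q(x) = R(y)/S(y)$ with $P,Q \in k[X]$ and $R,S \in k[Y]$ pairs of coprime polynomials. By the Galois correspondence (applied to $k(\cO)|k(x)$ and $k(\cO)|k(y)$), an element of $k(\cO)$ lies in $\kinv$ iff it is fixed by $\langle G_x, G_y\rangle = G$, so every $\sigma \in G$ fixes $f$. For any pair $(u,v) \in \cO$, Theorem~\ref{thm:group_trans} furnishes $\sigma \in G$ with $\sigma \cdot (x,y) = (u,v)$; applying $\sigma$ to the identity $P(x) - fQ(x) = 0$ gives $P(u) - fQ(u) = 0$. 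Now the polynomial $P(X) - fQ(X) \in k(f)[X]$ is nonzero (otherwise $p_i = fq_i$ for all coefficients, forcing $f = p_j/q_j \in k$ for any index with $q_j \neq 0$, contradicting $f \notin k$), so it has at most $\max(\deg P, \deg Q)$ roots in $k(\cO)$. Hence only finitely many first coordinates $u$ appear in $\cO$; the symmetric argument with $R,S$ bounds the second coordinates, and $\cO$ is finite.

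The main technical point is the degree argument in $(2) \Rightarrow (3)$: once one has identified $\kinv$ with $M^{G'}$, the contradiction with the transcendence of $x$ over $k$ drops out cleanly, but it is the step that genuinely uses finiteness of $G'$ (rather than finiteness of just the individual Galois groups). The other two directions are essentially bookkeeping: $(1) \Rightarrow (2)$ uses connectedness of $\cO$ and faithfulness of the $G$-action, while $(3) \Rightarrow (1)$ rests on the observation that a Galois invariant gives a polynomial equation of bounded degree satisfied by every coordinate in the orbit.
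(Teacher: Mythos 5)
Your proof is correct and takes essentially the same route as the paper's: $M=k(\cO)$ together with the faithful action of $G$ on the finite orbit for $(1)\Rightarrow(2)$, the identification $M^{G'}=\kinv$ followed by the finiteness-of-$k(x)|\kinv$ versus transcendence-of-$x$-over-$k$ contradiction for $(2)\Rightarrow(3)$, and the transitivity of $G$ on $\cO$ forcing every orbit coordinate to be a root of a fixed polynomial of bounded degree for $(3)\Rightarrow(1)$. The only cosmetic differences are that the paper phrases $(3)\Rightarrow(1)$ through a transcendence-degree argument and the minimal polynomial of $x$ over $\kinv$ rather than your explicit $P(X)-fQ(X)$, and it bounds the pairs above each first coordinate by $\deg_Y \Ktld$ instead of running your symmetric argument in $y$.
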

\begin{proof}

    (1) $\Rightarrow$ (2): Set $M = k(\cO)$.
    The group $G = \left<G_x,G_y\right>$ acts faithfully
    on the orbit, so it embeds as a subgroup of $S(\cO)$, the group
    of permutations of the pairs of the orbit $\cO$.
    The orbit is finite, therefore $G$ is finite.

    (2) $\Rightarrow$ (3): Write $H = \left<\Gal(M|k(x)),
      \Gal(M|k(y))\right>$.  By the same argument as in the beginning of Section
    \ref{sect:grouporbit}, the field $M^H$ is the field $\kinv$ of
    Galois invariants. Since $H$ is finite, the extension $M | \kinv$ is
    finite of degree $|H|$, hence the subextension $k(x) | \kinv$ is also
    finite. Since the extension $k(x) | k$ is transcendental by hypothesis
    on $\calW$, we conclude that $k \subsetneq \kinv$. Proposition
    \ref{prop:inv_to_ext} yields the existence of a pair of nontrivial
    Galois invariants.
    
    (3) $\Rightarrow$ (1): Let $(I(X,t),J(Y,t))$ be a pair of nontrivial
    Galois invariants. By the assumption on the model,
    $S(x,y)$ and $x$ are algebraically independent over $\C$. Since
    $I(x,\invS)$ is not in $\C(\invS)$
    by Lemma~\ref{lem:liftingidentities}, this implies that the extension
    $k(I(x,\invS))|k$ is transcendental. As the transcendence
    degree of $k(x)$ over $k$ is $1$, this implies that the extension
    $k(x) | k(I(x,\invS))$ is algebraic, hence $x$ is algebraic over $\kinv$,
    with minimal polynomial $P(X)$.

    The group $G$ leaves $\kinv$ fixed.  Thus
    the orbit of $x$ in $\K$ under the action of $G$ is a subset of
    the roots of $P(X)$. By Theorem~\ref{thm:group_trans}, the action of
    $G$ is transitive on the orbit, hence
    the set $G \cdot x = \{u \in \K \st \exists \sigma \in G, u =
    \sigma x\} = \{u \in \K \st \exists v \in \K,\, (u,v) \in \cO\}$ is
    finite.  As there are $\deg_Y \Ktld(X,Y,t)$ pairs of the orbit
    with first coordinate $u$ for each $u$ in $G \cdot x$, we
    conclude that $\cO$ is finite.
\end{proof}

In the rest of the paper, we assume that the orbit is finite. Theorem~\ref{thm:fried}
implies that the extension $k(\cO)| \kinv$ is finite and  Galoisian with  Galois group
$G = \left<G_x, G_y\right>$.

\subsection{Effective construction}\label{sect:effectiveinvariants}

In order to apply the algebraic strategy presented in Section
\ref{subsect:alg_strat}, we want to find explicit nonconstant rational
$t$-invariants. As already mentioned, we shall first construct explicitly
the field of Galois invariants and then search among these Galois
invariants the potential rational $t$-invariants.

In the small steps case, an orbit sum argument was used to
construct a pair of Galois invariants \cite[Theorem~4.6]{BBMR16}. This
construction generalizes mutatis mutandis to the large steps case, and is reproduced
here to show one way to exploit the group of the walk.

\begin{lem}\label{lem:definitionsinv}
    Let $\sinv$ be the $0$-chain $\frac{1}{|\cO|}\sum_{a \in \calO} a$. Then,
    for any regular fraction $H \in \Q(X,Y,t)$ the element $H_{\sinv}$ is a
    Galois invariant.
\end{lem}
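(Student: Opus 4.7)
The plan is to exploit the fact that $\sinv$ is a uniform average over the entire orbit, so it is automatically invariant under any permutation of $\cO$ induced by the group $G$, and then to transport this invariance through the evaluation map using the compatibility established in Lemma \ref{lem::commutationgrp_eval_orb}.

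More concretely, I would proceed in three short steps. First, since we are in the finite orbit setting, $\sinv$ is a bona fide $0$-chain. For any $\sigma \in G$, Lemma \ref{lem:orb_normal} (combined with the fact that $\sigma$ restricts to a graph automorphism of $\cO$) tells us that $\sigma$ acts as a permutation on the set of vertices of $\cO$. Therefore
\[
\sigma \cdot \sinv \;=\; \frac{1}{|\cO|} \sum_{a \in \cO} \sigma \cdot a \;=\; \frac{1}{|\cO|} \sum_{a \in \cO} a \;=\; \sinv.
\]

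Second, because $H$ is a regular fraction, $H_a \in k(\cO)$ for every $a \in \cO$ by Proposition \ref{lem:liftingidentities}, so $H_\sinv$ is a $\C$-linear combination of elements of $k(\cO)$ and hence lies in $k(\cO)$. Applying Lemma \ref{lem::commutationgrp_eval_orb} and the $G$-invariance of $\sinv$ just established, we get for every $\sigma \in G$
\[
\sigma(H_\sinv) \;=\; H_{\sigma \cdot \sinv} \;=\; H_\sinv.
\]

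Third, since $k(\cO) \,|\, k(x)$ and $k(\cO) \,|\, k(y)$ are Galois with groups $G_x$ and $G_y$ respectively (Theorem \ref{thm:ko_gal}), and since $G = \langle G_x, G_y \rangle$, the element $H_\sinv$, being fixed by $G$, is in particular fixed by $G_x$ and by $G_y$. By the Galois correspondence this means $H_\sinv \in k(x) \cap k(y) = \kinv$, which by the discussion following Proposition \ref{prop:inv_to_ext} is exactly the statement that $H_\sinv$ is a Galois invariant. There is no real obstacle here: the only subtle point is the setwise stability of $\cO$ under each $\sigma \in G$, which however is already packaged in Lemma \ref{lem:orb_normal} and the transitivity result Theorem \ref{thm:group_trans}.
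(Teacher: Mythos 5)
Your proof is correct and follows essentially the same route as the paper's: $G$-invariance of the uniform $0$-chain $\sinv$ (you justify the permutation action via Lemma~\ref{lem:orb_normal}, the paper via Theorem~\ref{thm:group_trans}; either suffices), then Lemma~\ref{lem::commutationgrp_eval_orb} to get $\sigma(H_\sinv)=H_{\sigma\cdot\sinv}=H_\sinv$, and finally the Galois correspondence to conclude $H_\sinv\in\kinv$.
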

\begin{proof}
  Let $H(X,Y,t)$ be a regular fraction. Since,  by Theorem~\ref{thm:group_trans}, the group $G$ acts faithfully on $\cO$,
  the $0$-chain
    $\sinv$ is invariant by the action of $G$. Thus, by Lemma
    \ref{lem::commutationgrp_eval_orb}, for all $\sigma$ in $G$,
    $\sigma\left(H_{\sinv}\right) = H_{\sigma \cdot \sinv} = H_{\sinv}$.
    Therefore, by the Galois correspondence, $H_{\sinv}$ is a Galois
    invariant.
  \end{proof}

Unfortunately, a non-constant regular fraction $H$ might have a constant
evaluation, that is,  $H_\sinv$ might belong to  $k$. Thus, one has to choose
carefully $H$ in order to avoid this situation which  is precisely the strategy
used in \cite[Theorem~4.6]{BBMR16}. Below,    we  describe  an alternative construction
which is easier to compute effectively and  yields a complete description of
the field $\kinv$.

Consider first this simple observation. Since $x$ is algebraic over $\kinv$,  we
can consider its minimal polynomial $\mu_{x}(Z)$ in $\kinv[Z]$. One of its
coefficients must be in $\kinv \setminus k$
because $x$ is transcendental over $k$.
Thus, such a coefficient is a non-trivial  Galois invariant.

A more sophisticated argument using a constructive version of Lüroth’s Theorem
says actually much more about such a coefficient.
\begin{thm}[Lüroth's Theorem \cite{Rotman}, Th. 6.66] \label{prop:gen_kinv}
    Let $k(x)$ be a field with $x$ transcendental over $k$
    and $k \subset K \subset k(x)$ a subfield.
    If $x$ is algebraic over $K$, then
    any  coefficient $c$ of its minimal polynomial $\mu_{x}(Z)$ over $K$ that is not in $k$
    is such that $K = k(c)$.
\end{thm}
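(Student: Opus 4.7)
The plan is to establish $[k(x):k(c)] = [k(x):K]$; combined with the obvious inclusion $k(c) \subseteq K$ (from $c \in K$), this forces $K = k(c)$ by the tower law. First, I observe that $c$ is transcendental over $k$ because $c \in k(x) \setminus k$ and $x$ is transcendental over $k$. Writing $c = p(x)/q(x)$ with $p, q \in k[X]$ coprime and $m := \max(\deg p, \deg q) \geq 1$, the element $x$ is a root of $\phi(Z) := p(Z) - c\, q(Z) \in k(c)[Z]$, which has degree exactly $m$. A Gauss's lemma argument applied to $\Phi(T,Z) := p(Z) - T\, q(Z) \in k[T,Z]$ --- primitive of degree $1$ in $T$ (because $\gcd(p,q)=1$), and also primitive in $Z$ over $k[T]$ by the same coprimality --- shows $\Phi$ is irreducible in $k(T)[Z]$; since $c$ is transcendental over $k$, this transfers via the $k$-algebra isomorphism $T \mapsto c$ to show $\phi$ is irreducible in $k(c)[Z]$, whence $[k(x):k(c)] = m$.

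Next, since $\phi \in K[Z]$ admits $x$ as a root, $\mu_x(Z) \mid \phi(Z)$ in $K[Z]$, giving $n := [k(x):K] = \deg \mu_x \leq m$. For the reverse inequality, I would clear denominators and remove the content in $k[X]$ of $\mu_x(Z)$ to obtain $\tilde\mu(X,Z) \in k[X,Z]$ that is primitive in $Z$ over $k[X]$, has degree $n$ in $Z$, and satisfies $\tilde\mu(x,Z) = \lambda(x)\, \mu_x(Z)$ for some $\lambda(x) \in k(x)^\times$. Using that $c = p(x)/q(x)$ is the coefficient of $Z^i$ in $\mu_x$ and $1$ is its coefficient of $Z^n$, tracking the denominators combined with $\gcd(p,q)=1$ forces the coefficients of $Z^i$ and $Z^n$ in $\tilde\mu$ to take the form $s(X)\, p(X)$ and $s(X)\, q(X)$ respectively for some $s \in k[X]$. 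In particular, $\deg_X \tilde\mu \geq \deg s + m \geq m$.

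The key step is then to show $\tilde\mu(X,Z) \mid L(X,Z)$ in $k[X,Z]$, where $L(X,Z) := q(X)\, p(Z) - p(X)\, q(Z)$ is the auxiliary polynomial of degrees at most $m$ in both $X$ and $Z$. Indeed, $L(x,Z) = q(x)\, \phi(Z)$ is divisible by $\mu_x(Z)$, hence by $\tilde\mu(x,Z)$, in $k(x)[Z]$; transferring this divisibility to $k(X)[Z]$ via the isomorphism $X \mapsto x$ and applying Gauss's lemma (using primitivity of $\tilde\mu$ in $Z$) yields a factorization $L = \tilde\mu\, H$ in $k[X,Z]$ for some $H$. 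Comparing $X$-degrees gives $m \geq \deg_X \tilde\mu \geq \deg s + m$, so $\deg s = 0$, $\deg_X \tilde\mu = m$, and $H \in k[Z]$. Any root $\beta \in \overline{k}$ of $H$ would force $L(X,\beta) \equiv 0$ in $k[X]$, i.e., $p(X)\,q(\beta) = q(X)\,p(\beta)$ identically in $X$, which by $\gcd(p,q)=1$ requires $p(\beta) = q(\beta) = 0$ --- impossible. Hence $H \in k^\times$, and comparing $Z$-degrees yields $m = n$. The tower law then gives $[K:k(c)] = m/n = 1$, so $K = k(c)$. The most delicate point will be this final degree comparison, which is the classical heart of Lüroth's theorem.
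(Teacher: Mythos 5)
The paper never proves this statement: it is imported as a black box from Rotman (Th.~6.66), so there is no in-paper argument to compare against. Your proof is, in substance, the classical constructive proof of L\"uroth's theorem that this citation points to: irreducibility of $p(Z)-c\,q(Z)$ over $k(c)$ via Gauss's lemma, giving $[k(x):k(c)]=m$; then clearing denominators to get a $Z$-primitive $\tilde\mu(X,Z)$ with $\tilde\mu(x,Z)=\lambda(x)\mu_x(Z)$, the divisibility $\tilde\mu \mid L$ with $L(X,Z)=q(X)p(Z)-p(X)q(Z)$, and degree bookkeeping forcing $m=n$, whence $K=k(c)$ by the tower law. The individual steps check out: both primitivity claims, the identity $a_i(X)q(X)=a_n(X)p(X)$ forcing the coefficients $s(X)p(X)$ and $s(X)q(X)$, the transfer of divisibility from $k(X)[Z]$ to $k[X,Z]$ by Gauss, and the argument that $H$ has no root in $\overline{k}$ and hence lies in $k^{\times}$. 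The one place you must add a line is the final comparison: from $L=\tilde\mu H$ with $H\in k^{\times}$ you get $\deg_Z L=n$, but the bound you actually stated, $\deg_Z L\le m$, only re-derives $n\le m$; to conclude $m\le n$ you need $\deg_Z L=m$ exactly. This is immediate either from the antisymmetry $L(X,Z)=-L(Z,X)$, which gives $\deg_Z L=\deg_X L=\deg_X\tilde\mu=m$ (using the facts you already established, $\deg_X\tilde\mu=m$ and $\deg_X H=0$), or by observing that the coefficient of $Z^{m}$ in $L$ is $q(X)p_m-p(X)q_m\neq 0$ because $\gcd(p,q)=1$ and $m\ge 1$. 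With that one observation inserted, the proof is complete and matches the standard argument the paper relies on.
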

Applying this result to the tower $k \subset \kinv \subset k(x)$,
not only can we find nontrivial  Galois invariants among the coefficients of
$\mu_x$, but any one of them generates the field of Galois
invariants. In one sense, these coefficients
contain all the information on the Galois   invariants attached to the model.
Therefore, all that remains is to compute effectively the polynomial $\mu_x(Z)$.

By irreducibility of the polynomial $\mu_x(Z)$ in $\kinv[Z]$, the
Galois group $G = \Gal(k(\cO) |\kinv)$ acts transitively on its
roots. By Theorem~\ref{thm:group_trans}, the orbit of $x$ under the
action of $G$ is the set of left coordinates of the orbit. Therefore,
$\mu_x(Z)$ is precisely the vanishing polynomial of the left
coordinates of the orbit,
which is exactly computed in the construction of the orbit in
\cite[Section 3.2]{bostan2018counting}.  We detail
this construction in Appendix~\ref{sect:decoupl_comp}.

In order to find an explicit pair of non-constant Galois invariants
$(I(X,t), J(Y,t))$, we only need to apply
Proposition~\ref{lem:liftingidentities} to lift to $\C(X,t)$ and to
$\C(Y,t)$ any non-constant coefficient of the polynomial $\mu_x(Z) \in
\kinv[Z]$. The lifts of the polynomial $\mu_x[Z]$ to $\C(X,t)[Z]$ and
to $\C(Y,t)[Z]$ can be computed directly when constructing the orbit,
see \ref{app:varietiesorbit}.

\begin{exa}[The model $\Gmod$]
  Consider the model $\Gmod$. Its orbit type is  $\cO_{12}$. We compute
  the lift of $\mu_x(Z)$ in $\C(X,t)[Z]$ as
    \begin{equation*}
      \begin{split}
        &Z^{6}\boxed{-\frac{\left(\lambda^{2}   \,X^{3}+X^{6}+1  \,X^{4}- X^{2}-1\right) t^{2}+X^{2} \lambda  \left(X^{2}-1 \right) t -X^{3}}{t^{2} X \left(X^{2}+1 \right)^{2}}}Z^5+\frac{t +\lambda}{t} Z^4 \\
        &-2 \frac{X^{6} t^{2} +\left(-\frac{\lambda^{2}  \,t^{2}}{2}+\frac{1}{2}\right) X^{5}+t \left( t +\lambda \right) X^{4}+\left(-t^{2}-\lambda  \, t \right) X^{2}-\frac{\left(\lambda^{2}  \,t^{2}-1\right) X}{2}-t^{2}}{t^{2} X \left(X^{2}+1 \right)^{2}}Z^3\\
        &-\frac{\left(  t +\lambda \right) Z^{2}}{t}-\frac{\left(\left(\lambda^{2}   \,X^{3}+X^{6}+\,X^{4}-X^{2}-1\right) t^{2}+X^{2} \lambda  \left(X^{2}-1 \right) t -X^{3}\right)}{t^{2} X \left(X^{2}+\right)^{2}}Z-1\\
        \end{split}
      \end{equation*}
      and in $\C(Y,t)[Z]$ as
      \begin{equation*}
        \begin{split}
        &Z^{6}+\boxed{\frac{-t \,Y^{4}+\lambda  t Y +Y^{3}+t}{t \,Y^{2}}}Z^5+\frac{t +\lambda}{t}Z^4
        -2\frac{\left(Y^{4} -\frac{1}{2} Y^{2} \lambda^{2}-Y \lambda -1\right) t^{2}-t \,Y^{3}+\frac{Y^{2}}{2}}{t^{2} Y^{2}}Z^3 \\
        &-\frac{\left(t +\lambda \right) }{t}Z^2+\frac{ \left(-  t \,Y^{4}+\lambda  t Y +Y^{3}+t \right) }{t \,Y^{2}}Z-1.
      \end{split}
    \end{equation*}

    The coefficient of $Z^5$ is nonconstant, hence we have the following pair of non-trivial Galois
    invariants $(I(X,t),J(Y,t))$
    \[\left(-\frac{\left(\lambda^{2} \,X^{3}+X^{6}+1  \,X^{4}- X^{2}-1\right) t^{2}+X^{2} \lambda  \left(X^{2}- \right) t -X^{3}}{t^{2} X \left(X^{2}+1 \right)^{2}},\frac{- t \,Y^{4}+\lambda  t Y +Y^{3}+t}{t \,Y^{2}}\right).\]
    We check that $\frac{I(X,t)-J(X,t)}{\Ktld(X,Y,t)}$ has poles of bounded order at $0$,
    hence  $(I(X,t),J(Y,t))$ is a pair of $t$-invariants.
    Moreover, Theorem~\ref{prop:gen_kinv} says that $\kinv = k\left(I(x,\invS)\right)$,
    so any pair of Galois invariants for $\Gmod$  is a  fraction
    in the pair $(I(X,t),J(Y,t))$.
\end{exa}

\begin{exa}
 The orbit type of the model with step polynomial
$S(X,Y)=X+\frac{X}{Y}+\frac{Y}{X^2}+\frac{1}{X^2}$ is $\cO_{18}$ (see
Figure~\ref{fig:sample_orbits}).  With our method, we find the
following pair of Galois invariants
\[\left(\frac{\left(-X^{9}-3 X^{3}+1\right) t^{2}+\left(X^{8}+X^{5}-2
X^{2}\right) t +X^{4}}{X^{6} t^{2}},\frac{\left(Y^{3}+3 Y +1\right)
\left(Y +1\right)^{3} t^{3}+Y^{4}}{Y^{2} t^{3} \left(Y
+1\right)^{3}}\right). \] One can also check by looking at the
$t$-expansions that it is a pair of $t$-invariants.
\end{exa}

\section{Decoupling} \label{sect:decoupling}

In this section, we study the Galoisian formulation of the  notion of decoupling introduced in Section \ref{subsect:algebraicitystrategy}.  In particular, assuming the
finiteness of the orbit, we show how the Galois decoupling of a rational
fraction $H(X,Y,t)$ can be, analogously to the small steps case,
tested and constructed if it exists via the evaluation on certain
$0$-chains on the orbit.

\subsection{Galois formulation of decoupling}

As in the previous section, we adapt the notion of decoupling
introduced in Section \ref{subsect:algebraicitystrategy} to our
Galoisian framework. The definition below is the straightforward
analogue of Definition~4.7 in \cite{BBMR16} for large steps models.

\begin{defi}[Galois decoupling of a fraction]
  Let $H(X,Y,t)$ be a regular  fraction in $\C(X,Y,t)$.
  A pair of fractions $(F(X,t),G(Y,t))$ in $\C(X,t) \times \C(Y,t)$
    is called a \emph{Galois decoupling pair} for the fraction $H$ if there exists
    a regular fraction $R(X,Y,t)$  satisfying
    \[H(X,Y,t) = F(X,t) + G(Y,t) + \Ktld(X,Y,t) R(X,Y,t).\]
    We call such an  identity a \emph{Galois decoupling} of the fraction $H$.
\end{defi}

Thanks to Proposition~\ref{prop:link_pob_regular}, if a regular  fraction admits a decoupling
with respect to the $t$-equivalence then it admits a Galois decoupling. Analogously to the notion of Galois invariants and as a corollary of
Proposition~\ref{lem:liftingidentities}, one can interpret the Galois decoupling
as an identity in the extension $k(\cO)$.

\begin{prop} \label{prop:decoupl_extension}
    Let $H$ be a regular fraction in $\C(X,Y,t)$. Then $H$ admits a
    Galois decoupling if and only if $H_{(x,y)}$ can be written as $f +
    g$ with $f$ in $k(x)$ and $g$ in $k(y)$.
\end{prop}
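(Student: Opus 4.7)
The plan is to deduce the proposition directly from the field isomorphism established in Proposition~\ref{lem:liftingidentities}. Recall that the evaluation map $\phi \colon \mathcal{C} \to k(x,y)$ sending $P(X,Y,t)$ to $P(x,y,\invS)$ is a field isomorphism mapping $\C(X,t)$ onto $k(x)$ and $\C(Y,t)$ onto $k(y)$. Since two regular fractions have the same image under $\phi$ if and only if they are equivalent (that is, differ by a multiple of $\Ktld$ that is itself regular), the statement about Galois decoupling translates immediately into an identity in $k(x,y)$.

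For the direct implication, suppose $H$ admits a Galois decoupling, so that
\[
H(X,Y,t) = F(X,t) + G(Y,t) + \Ktld(X,Y,t) R(X,Y,t)
\]
for some $F \in \C(X,t)$, $G \in \C(Y,t)$ and $R$ a regular fraction. Applying $\phi$, and using that $\phi(\Ktld) = \Ktld(x,y,\invS) = 0$, one obtains $H_{(x,y)} = F(x,\invS) + G(y,\invS)$, which is of the desired form with $f := F(x,\invS) \in k(x)$ and $g := G(y,\invS) \in k(y)$ by Proposition~\ref{lem:liftingidentities}.

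For the converse, assume $H_{(x,y)} = f + g$ with $f \in k(x)$ and $g \in k(y)$. Since $\phi$ maps $\C(X,t)$ isomorphically onto $k(x)$ and $\C(Y,t)$ isomorphically onto $k(y)$, there exist $F(X,t) \in \C(X,t)$ and $G(Y,t) \in \C(Y,t)$ with $\phi(F) = f$ and $\phi(G) = g$. Then $\phi(H - F - G) = 0$ in $k(x,y)$, so by injectivity of $\phi$ the class of $H - F - G$ in $\mathcal{C}$ is zero, which by construction of $\mathcal{C}$ means exactly that there exists a regular fraction $R(X,Y,t)$ such that $H - F - G = \Ktld \cdot R$. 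This is the required Galois decoupling.

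There is no real obstacle here: the proof amounts to unpacking the definitions and invoking Proposition~\ref{lem:liftingidentities}. The only point that deserves attention is making sure the surjectivity statements in Proposition~\ref{lem:liftingidentities} are used to lift $f \in k(x)$ and $g \in k(y)$ back to honest univariate fractions in $\C(X,t)$ and $\C(Y,t)$ (rather than arbitrary elements of $\mathcal{C}$), so that the pair $(F,G)$ we produce genuinely qualifies as a Galois decoupling pair.
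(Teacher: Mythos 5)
Your proof is correct and follows exactly the route the paper intends: the proposition is stated there as an immediate corollary of Proposition~\ref{lem:liftingidentities}, and your argument simply unpacks that, using the vanishing of $\Ktld$ at $(x,y,\invS)$ for one direction and the isomorphisms $\C(X,t)\simeq k(x)$, $\C(Y,t)\simeq k(y)$ together with injectivity of the evaluation map on $\mathcal{C}$ for the converse.
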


By an abuse of  terminology, we call any identity $H_{(x,y)} = f + g$ with
$f$ in $k(x)$ and $g$ in $k(y)$ a Galois decoupling of $H$. Furthermore,
these last two conditions can be reformulated algebraically via the Galois correspondence
applied to the extensions $k(\cO) | k(x)$ and $k(\cO) | k(y)$:
$H_{(x,y)} = f + g$ with $f$ fixed by $G_x$ and $g$ fixed by $G_y$.

Given a regular fraction $H$, one could try to use the normal basis theorem (see
\cite[chapter 6, § 13]{Lang}) to test the existence of a Galois decoupling
for $H$. The normal basis theorem  states that there exists a $\kinv$-basis of $k(\cO)$ of the form
$(\sigma(\alpha))_{\sigma \in G}$ for some $\alpha \in k(\cO)$.
The action of $G_x$ and $G_y$ on this basis is given by
permutation matrices, and thus the linear constraints for the Galois decoupling of $H_{(x,y)}$
is equivalent to a system of linear equations.
Unfortunately the computation of a normal basis requires \emph{a priori}
a complete knowledge of the Galois group $G$, whose computation is a difficult
problem. Therefore, we present in the rest of the section a construction of a Galois decoupling
test which relies entirely on the orbit and its Galoisian structure.

\subsection{The decoupling of $(x,y)$ in the orbit}

\begin{defi}
    Let $\alpha$ be a $0$-chain of the orbit. We say that
    $\alpha$ \emph{cancels decoupled fractions} if $H_{\alpha} = 0$
    for any regular fraction $H(X,Y,t)$ of $\C(X,t) + \C(Y,t)$.
\end{defi}

\begin{figure}[ht]
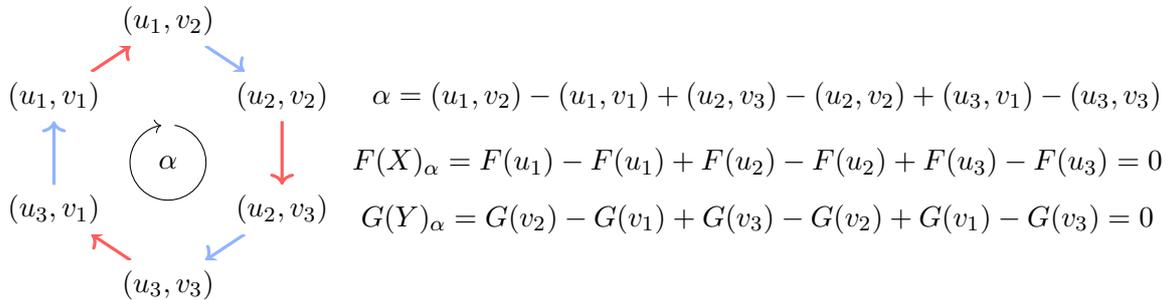

    \ctikzfig{cycle_alterne}
    \caption{The $0$-chain induced by a bicolored loop cancels decoupled fractions}
\label{fig:bicol_cycle}
\end{figure}

  We recall that a \emph{path} in the graph of the orbit is a sequence
of vertices $(a_1, a_2, \dots, a_{n+1})$ such that $a_i \sim a_{i+1}$
for all $0 \le i \le n$.  The length of $(a_1, a_2, \dots, a_{n+1})$ is the number of
adjacencies (that is $n$). A path is called a \emph{loop}
\footnote{We know that the  terminology \emph{loop} is unorthodox, however
 we follow \cite[Definition~1.8]{Giblin}.}
if $a_{n+1}=a_1$. A loop is called \emph{simple} if only its first and last vertices are equal.

\begin{exa} \label{exa:bicol_cycle}
  A \emph{bicolored loop} is a loop $(a_1, a_2, \dots, a_{2n+1})$ of
  even length such that for all $i$, $a_{2i} \sim^x a_{2i-1}$ and
  $a_{2i+1} \sim^y a_{2i}$. One associates to $ (a_1, a_2, \dots,
  a_{2n+1}) $ the $0$-chain \[ \alpha = \sum_{i=1}^{2n} (-1)^i a_i =
    \sum_{i=1}^{n} (a_{2i} - a_{2i-1}) = \sum_{i=1}^{n}
    (a_{2i}-a_{2i+1}).\]

  Taking $F(X,t)$ a regular fraction in $\C(X,t)$, one observes that
  for all $i$, $F_{a_{2i}} - F_{a_{2i-1}} = 0$, as vertices $a_{2i}$ and
  $a_{2i-1}$ share their first coordinate.  Symmetrically, taking
  $G(Y,t)$ a regular fraction in $\C(Y,t)$, $G_{a_{2i+1}} - G_{a_{2i}} =
  0$.  Therefore, $F_{\alpha} = G_{\alpha} = 0$. Hence, the
  \emph{$0$-chains induced by bicolored loops} cancel decoupled fractions.
  Figure~\ref{fig:bicol_cycle} illustrates this observation.
\end{exa}

Example \ref{exa:bicol_cycle} is fundamental for picturing the $0$-chains
  that cancel decoupled fractions because of the following stronger result:
\begin{prop} \label{prop:alt_cycles}
  A $0$-chain cancels decoupled fractions if and only if it can be
  decomposed as a $\C$-linear\footnote{Note that if the $0$-chain is
    with integer coefficients, one can choose the combination with integer
    coefficients as well.} combination of $0$-chains induced by bicolored
  loops.
\end{prop}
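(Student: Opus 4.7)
My plan is to recast the condition that $\alpha$ cancels decoupled fractions into a homological statement on a suitable bipartite incidence graph. The ``if'' direction is Example~\ref{exa:bicol_cycle}, so I only need the converse. The first step is to show that $\alpha = \sum_a c_a a$ cancels decoupled fractions if and only if, within every equivalence class of $\sim^x$ and every equivalence class of $\sim^y$ occurring in the orbit, the coefficients of $\alpha$ sum to zero. One direction is immediate: for $F \in \C(X,t)$, the value $F_a = F(x(a), 1/S(x,y))$ depends only on the first coordinate of $a$, so $F_\alpha$ decomposes as a sum over the $\sim^x$-classes of the corresponding clique sums, which vanishes once each clique sum vanishes. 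For the other direction, let $u_1, \dots, u_k \in \K$ be the distinct first coordinates of the vertices in $\Supp(\alpha)$ and set $C_i = \sum_{a \,:\, x(a) = u_i} c_a \in \C$. Specialising $F(X,t) = X^j$ for $j = 0, \dots, k-1$ yields $\sum_i C_i u_i^j = 0$; since the $u_i$ are distinct elements of $\K$, the Vandermonde matrix $(u_i^j)$ is invertible over $\K$, which forces every $C_i$ to vanish in $\K$, hence in $\C$. The $y$-side is symmetric.

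Next, I would introduce the bipartite incidence graph $B$ whose left (resp.\ right) vertices are the $\sim^x$-classes (resp.\ $\sim^y$-classes) of the orbit and whose edges are the orbit vertices themselves, each $a$ being the edge joining $x(a)$ to $y(a)$, oriented from the left endpoint to the right one. The graph $B$ is simple, because two orbit vertices with the same $x$- and $y$-coordinates coincide. The $0$-chain $\alpha$ then corresponds to the $1$-chain $\tilde\alpha = \sum c_a [a]$ of $B$ whose boundary is $\partial \tilde\alpha = \sum_j \bigl(\sum_{a \in Y_j} c_a\bigr)[Y_j] - \sum_i \bigl(\sum_{a \in X_i} c_a\bigr)[X_i]$. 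By the first step, $\alpha$ cancels decoupled fractions if and only if $\tilde\alpha$ lies in the $1$-cycle space $Z_1(B, \C)$. Furthermore, a simple cycle of length $2n$ in the bipartite graph $B$ is a sequence of distinct edges $a_1, \dots, a_{2n}$ in which consecutive edges share alternately a left vertex and a right vertex of $B$, which is precisely the data of a bicolored loop of the orbit; the associated $1$-cycle $\sum (-1)^i [a_i]$ in $B$ matches the bicolored-loop $0$-chain $\sum (-1)^i a_i$ of Example~\ref{exa:bicol_cycle} under the chain identification above.

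The proposition then follows from the classical fact that the $1$-cycle space of any graph is spanned over any field by the $1$-cycles of its simple cycles, for instance by the fundamental cycles of a spanning forest. The hard part will be the sign bookkeeping when identifying oriented simple cycles of $B$ with bicolored loops of the orbit, to check that the alternating-sum conventions on both sides agree after a consistent choice of edge orientation and loop traversal. Once these conventions are fixed, the argument reduces to elementary linear algebra over $\K$ (for the Vandermonde step) together with a standard result in graph homology.
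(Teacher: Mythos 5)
Your argument is correct, and it reaches the statement by a genuinely different homological packaging than the paper. You share the first step verbatim: the Vandermonde argument with the monomials $X^j$ showing that cancelling decoupled fractions is equivalent to the vanishing of the coefficient sums over each $\sim^x$-clique and each $\sim^y$-clique (in the paper this is the equivalence $(1)\Leftrightarrow(2)$ of Theorem~\ref{thm:cycloloc}, phrased via the augmentation maps $\varepsilon^x,\varepsilon^y$; note that both of you use that $S$ is constant on $\cO$, so the $\sim^x$-classes are exactly the fibres of the first coordinate). After that the routes diverge. The paper stays on the orbit graph $\cO$: it writes $\alpha=\partial^x(c)$ for a $1$-cycle $c$ of $\cO$ (via $\Ker\varepsilon=\imag\,\partial$ on $\cO^x$ and $\cO^y$), decomposes $c$ into simple loops, and then contracts monochromatic runs into single arcs, using that monochromatic components are cliques, to land on bicolored loops. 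You instead transfer to the bipartite incidence graph $B$ whose vertices are the cliques and whose edges are the orbit elements; there the clique-sum condition becomes exactly $\partial\tilde\alpha=0$, bicolored loops become exactly the simple cycles of $B$ (length-$2$ cycles are excluded since $B$ is simple), and the decomposition is the standard fact that the cycle space is spanned by the fundamental cycles of a spanning forest. Your version is arguably leaner for this proposition, absorbs the paper's run-contraction step into the bipartite structure, and the spanning-forest basis gives the integrality footnote for free (the coefficient of each fundamental cycle is the coefficient of $\tilde\alpha$ on the corresponding non-tree edge); the sign bookkeeping you defer is genuinely routine, since only $\C$-linear (or $\Z$-linear) combinations are claimed, so per-loop global signs are irrelevant and the local alternation is forced by bipartiteness. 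What the paper's route buys in exchange is the intrinsic characterization $\alpha=\partial^x(c)$ with $c$ a $1$-cycle of $\cO$ itself (Theorem~\ref{thm:cycloloc}), which is reused later in the construction of the decoupling (Theorem~\ref{thm:path_decoupl}), whereas your auxiliary graph $B$ serves only this proposition.
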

There exists an elementary graph theoretic proof of this
fact. However, we choose to postpone the proof of
Proposition~\ref{prop:alt_cycles} after the proof of
Theorem~\ref{thm:cycloloc}, which is an algebraic reformulation of the
condition for a $0$-chain to cancel decoupled fractions.

\begin{exa}\label{exa:obstructiondecouplingsquare}
  A straightforward application of this observation,
  is the following obstruction for the existence of a Galois decoupling of $ XY$.
  Consider an orbit whose graph contains a bicolored square
  (bicolored loop of length 4), with associated $0$-chain
  $\alpha = (u_1, v_1) - (u_1, v_2) + (u_2,v_2) - (u_2,v_1)$
  (thus with $u_1 \neq u_2$ and $v_1 \neq v_2$).
  The evaluation of $XY$ on this $0$-chain factors as $(XY)_{\alpha} = (u_1-u_2)(v_1-v_2)$,
  which is always nonzero.
  Therefore, if the orbit of a model $\calW$ contains a bicolored square, then
  $XY$ never admits a Galois decoupling and thereby a decoupling in the sense of the $t$-equivalence.
  Thus, we can conclude that for models with orbit $\widetilde{\cO_{12}}$
  (see Figure~\ref{fig:sample_orbits})
  or Hadamard (see Section~\ref{subsubsect:Hadamard}),
  or the ``Fan model'' (see Appendix~\ref{subsubsect:fanmod}),
  the fraction $XY$ never admits a decoupling.
\end{exa}

For now, we only saw that the canceling of a regular fraction on $0$-chains that cancel
decoupled fraction gives a necessary condition for the
Galois decoupling of this fraction. We prove in this section that this
condition is  in fact sufficient and that one only needs to consider the
evaluation on a single $0$-chain.

For small steps walks with finite orbit, there is only one bicolored
loop and thereby only one $0$-chain $\alpha$ induced by  the bicolored
loop.  Theorem~4.11 in \cite{BBMR16} shows that a regular fraction
admits a Galois decoupling if and only its evaluation on $\alpha$ is
zero.  More precisely, Bernardi, Bousquet-Mélou and Raschel proved an
explicit identity in the algebra of the group of the
walk. Rephrasing their equality in terms of $0$-chains in the orbit,
we introduce the notion of decoupling of the pair $(x,y)$ in the orbit
as follows:

\begin{defi}[Decoupling of $(x,y)$] \label{defi:decoupl_xy}
    We say that $(x,y)$ \emph{admits a decoupling in the orbit} if
    there exist $0$-chains $\wgammax$, $\wgammay$, $\alpha$ such that
    \begin{itemize}
        \item $(x,y) = \wgammax + \wgammay + \alpha$
        \item $\sigma_x \cdot \wgammax = \wgammax$ for all $\sigma_x \in G_x$
        \item $\sigma_y \cdot \wgammay = \wgammay$ for all $\sigma_y \in G_y$
        \item the $0$-chain $\alpha$ cancels decoupled fractions
        \end{itemize}

    In that case, we call the identity $(x,y) = \wgammax + \wgammay + \alpha$ a \emph{decoupling
    of $(x,y)$}.
\end{defi}
Note that if $(\wgammax,\wgammay,\alpha)$ is a decoupling of $(x,y)$
then the $0$-chain $\alpha$ is equal to $(x,y) -\wgammax -\wgammay$.
Hence, when giving such a decoupling, we will often state explicitly
only $\wgammax$ and $\wgammay$.
\begin{exa}
  For the orbit of the model $\Gmod$,
  a decoupling equation is as follows:
  $(x,y) = \wgammax + \wgammay + \alpha$ with
  \begin{align*}
    \wgammax &= \left(
               \frac{1}{2} \left((x,y) + (x,\overline{xy}) \right)
               -  \frac{1}{8} \left((z,y) + (-\overline{xy^2z},y) + (xy^2z,\overline{xy}) + (-\overline{z},\overline{xy}) \right) \right. \\
             & \left. + \frac{1}{8} \left(
               (xy^2z,\overline{yz}) + (z,\overline{yz})
               + (-\overline{xy^2z},-xyz)
               + (-\overline{z}, -xyz) \right) \right) \\
    \mbox{and }\wgammay &= \left(
                          \frac{1}{4} \left((x,y) + (z,y) + (-\overline{xy^2z},y)\right) - \frac{1}{4} \left( (x,\overline{xy}) + (z,\overline{yz}) + (-\overline{xy^2z}, -  xyz)\right) \right). \\
  \end{align*}

  This decoupling is constructed in
  Subsection~\ref{subsect:decouplingO12} and the $0$-chain $\alpha$ is
  represented in Figure~\ref{fig:bicol_cycles_gmod}. It is the sum of
  the two $0$-chains $\alpha_1$ and $\alpha_2$ induced by the two
  bicolored loops where the weights of the $0$-chains $\alpha_1$ and
  $\alpha_2$ are written in grey next to their corresponding vertex.

  \begin{figure}[h!]
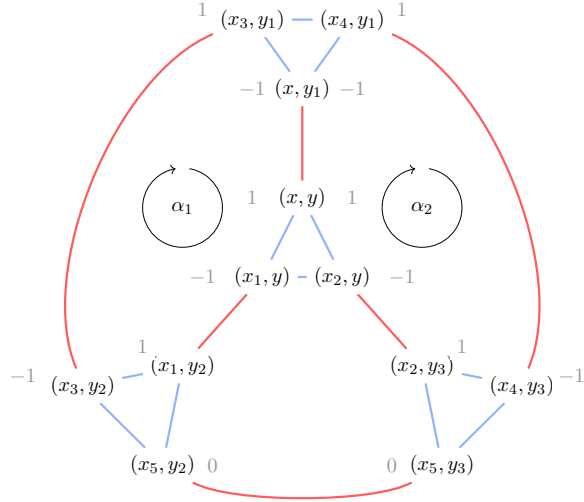

    \scalebox{0.7}{\tikzfig{o12_cyc}}
    \caption{A $0$-chain of $\calO_{12}$ characterizing decoupled
      fractions  for the model $\Gmod$
      where the weights are
      written in grey next to their corresponding vertex.}

    \label{fig:bicol_cycles_gmod}
  \end{figure}
\end{exa}

The relation between the notion of decoupling of $(x,y)$ in the orbit and the notion of Galois decoupling is  detailed in the following proposition.
\begin{prop} \label{prop:crit_decoupl}
    Assume that $(x,y) = \wgammax + \wgammay + \alpha$ is a decoupling of $(x,y)$,
    and let $H(X,Y,t)$ be a regular fraction.
    Then the following assertions are equivalent:
    \begin{itemize}
    \item[(1)] $H$ admits a  Galois decoupling
    \item[(2)] $H_{\alpha} = 0$
    \item[(3)] $H_{(x,y)} = H_{\wgammax} + H_{\wgammay}$ is a Galois decoupling of $H$.
    \end{itemize}
\end{prop}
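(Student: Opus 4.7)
The plan is to establish the equivalence by running the cycle of implications $(3) \Rightarrow (1) \Rightarrow (2) \Rightarrow (3)$. The implication $(3) \Rightarrow (1)$ is tautological: if the identity $H_{(x,y)} = H_{\wgammax} + H_{\wgammay}$ actually realizes a Galois decoupling of $H$, then $H$ admits one by definition.

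For $(1) \Rightarrow (2)$, I would start from a Galois decoupling $H = F(X,t) + G(Y,t) + \Ktld(X,Y,t) R(X,Y,t)$ with $R$ regular. Because $\Ktld$ vanishes at every vertex of $\cO$, evaluating at any $(u,v) \in \cO$ kills the $\Ktld R$ term and gives $H_{(u,v)} = F_{(u,v)} + G_{(u,v)}$; by $\C$-linearity this extends to $H_\alpha = F_\alpha + G_\alpha$, which vanishes since $\alpha$ cancels decoupled fractions and $F+G$ lies in $\C(X,t) + \C(Y,t)$.

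The heart of the argument is $(2) \Rightarrow (3)$, where the structural assumptions on $\wgammax$ and $\wgammay$ come into play. Evaluating the identity $(x,y) = \wgammax + \wgammay + \alpha$ against $H$ yields $H_{(x,y)} = H_{\wgammax} + H_{\wgammay} + H_\alpha$, and the hypothesis $H_\alpha = 0$ simplifies this to $H_{(x,y)} = H_{\wgammax} + H_{\wgammay}$. The key point is then to check that $H_{\wgammax}$ lies in $k(x)$ and $H_{\wgammay}$ in $k(y)$. For any $\sigma_x \in G_x$, Lemma~\ref{lem::commutationgrp_eval_orb} together with the $G_x$-invariance of $\wgammax$ gives
\[
\sigma_x(H_{\wgammax}) = H_{\sigma_x \cdot \wgammax} = H_{\wgammax}.
\]
Since $k(\cO)|k(x)$ is Galois by Theorem~\ref{thm:ko_gal}, the Galois correspondence forces $H_{\wgammax} \in k(x)$, and the symmetric argument gives $H_{\wgammay} \in k(y)$. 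Proposition~\ref{prop:decoupl_extension} then upgrades this decomposition in $k(\cO)$ to a genuine Galois decoupling at the level of regular fractions, via the field isomorphism identifying $\C(X,t)$ with $k(x)$ and $\C(Y,t)$ with $k(y)$ provided by Proposition~\ref{lem:liftingidentities}.

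I do not expect any genuine obstacle: once Lemma~\ref{lem::commutationgrp_eval_orb}, Proposition~\ref{prop:decoupl_extension}, and the Galois structure of the extensions $k(\cO)|k(x)$ and $k(\cO)|k(y)$ are in place, the proof amounts to routine bookkeeping combining the $\C$-linearity of evaluation on $0$-chains with the defining invariance properties of $\wgammax$ and $\wgammay$. The only delicate point to keep straight is the distinction between decoupling \emph{of regular fractions} (an equation modulo $\Ktld$) and decoupling \emph{in $k(\cO)$} (a sum in $k(x) + k(y)$); this is precisely what Proposition~\ref{prop:decoupl_extension} handles, so the translation is painless.
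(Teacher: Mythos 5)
Your proof is correct and follows essentially the same route as the paper: the cycle $(3)\Rightarrow(1)\Rightarrow(2)\Rightarrow(3)$, with $(1)\Rightarrow(2)$ coming from the fact that $\alpha$ cancels decoupled fractions, and $(2)\Rightarrow(3)$ using Lemma~\ref{lem::commutationgrp_eval_orb}, the $G_x$- and $G_y$-invariance of $\wgammax$ and $\wgammay$, and the Galois correspondence in $k(\cO)|k(x)$ and $k(\cO)|k(y)$. Your version merely spells out a couple of steps (evaluation killing the $\Ktld R$ term, and the passage back to fractions via Proposition~\ref{prop:decoupl_extension}) that the paper leaves implicit.
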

\begin{proof}
  (3) $\Rightarrow$ (1) is obvious.

  (1) $\Rightarrow$ (2): By definition of a Galois decoupling of $(x,y)$, $\alpha$
  cancels decoupled fractions.

  (2) $\Rightarrow$ (3): Evaluating $H$ on the decoupling of $(x,y)$ yields
  the identity $H_{(x,y)} = H_{\wgammax} + H_{\wgammay}$. Moreover,
  since $\wgammax$ (resp. $\wgammay$) is fixed by $G_x$ (resp. $G_y$), then
  Lemma~\ref{lem::commutationgrp_eval_orb} and the Galois correspondence
  in the extensions $k(\cO)|k(x)$ and $k(\cO)|k(y)$ ensure that $H_{\wgammax}$ and
  $H_{\wgammay}$ belong respectively to $k(x)$ and $k(y)$, hence
  $H_{\wgammax} + H_{\wgammay}$ is a Galois decoupling of $H$.
\end{proof}

Therefore, if we solve the  decoupling problem  of $(x,y)$ in the orbit, we also
solve the  Galois decoupling problem  for rational fractions: an explicit decoupling of $(x,y)$
will grant us with a simple test to check whether a regular fraction admits a Galois decoupling
(some orbit sum is zero), and an effective way to construct the associated Galois
decoupling based on orbit sum computations. We now state the main result of this section, whose proof will follow
from Theorem~\ref{thm:path_decoupl}.

\begin{thm}[Decoupling] \label{thm:decoupling_orb}
    If the orbit $\cO$ is finite, then $(x,y)$ always admits a decoupling in the orbit with rational coefficients.
\end{thm}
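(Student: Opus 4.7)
Plan:

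The strategy is to establish a stronger path-decoupling statement (Theorem~\ref{thm:path_decoupl}), which asserts that for any path $p = (a_0, \ldots, a_n)$ in $\cO$, the endpoint difference $a_n - a_0$ admits an explicit decomposition $\wgammax^p + \wgammay^p + \alpha^p$, where $\wgammax^p$ is fixed by $G_x$, $\wgammay^p$ is fixed by $G_y$, and $\alpha^p$ cancels decoupled fractions. Since $\cO$ is finite and all averaging is over finite groups ($G_x \cdot (x,y)$ and $G_y \cdot (x,y)$ are finite by Theorem~\ref{thm:group_trans}), the coefficients will remain rational throughout.

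We prove this path decoupling by induction on the length $n$ of the path. The base case $n = 0$ is trivial. For the inductive step, it suffices to handle the extension of a path by a single edge, say $a_n \sim^x a_{n+1}$: that is, to exhibit a decoupling of the elementary $0$-chain $a_{n+1} - a_n$. The construction uses the Galois-theoretic structure of $\cO$: by Theorem~\ref{thm:group_trans}, pick $\tau \in G$ with $\tau \cdot (x,y) = a_n$; the analysis of $x$-adjacencies after Example~\ref{ex:gessel2_alg_7} then provides $\sigma_x \in G_x$ such that $\tau \sigma_x \tau^{-1} \cdot a_n = a_{n+1}$. Averaging over the finite $G_x$-orbit of $a_n$ produces a $G_x$-invariant chain; a symmetric construction yields a $G_y$-invariant chain; and the residue is supported on a bicolored loop of $\cO$, hence cancels decoupled fractions by Proposition~\ref{prop:alt_cycles}.

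To conclude Theorem~\ref{thm:decoupling_orb} itself, one applies the path decoupling to a suitably chosen closed loop based at $(x,y)$ (whose existence is guaranteed by the connectedness of $\cO$), or equivalently to a path from $(x,y)$ to a vertex $a_0$ admitting a canonical starting decoupling (e.g., the $G_x$- or $G_y$-average containing $(x,y)$). Combining this initial decoupling with the path decoupling then expresses $(x,y)$ itself as $\wgammax + \wgammay + \alpha$.

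The main obstacle lies in controlling the residue at each inductive step: one must ensure that the averaging procedure leaves behind a residue that not only cancels decoupled fractions, but is explicitly presented as a $\mathbb{Q}$-linear combination of bicolored loop chains, so that the decomposition glues coherently across successive edges of the path. This delicate bookkeeping relies on the interplay between the Galois conjugation structure of $G$ and the bicolored graph structure of $\cO$, and is ultimately what makes Proposition~\ref{prop:alt_cycles} indispensable in the argument.
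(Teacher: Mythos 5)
There is a genuine gap, and it sits exactly where you place "the main obstacle": the inductive step is never actually carried out, and the construction you sketch for it does not work. Your induction needs a decoupling of a single edge difference $a_{n+1}-a_n$ with $a_n\sim^x a_{n+1}$, i.e.\ a decomposition $\beta_x+\beta_y+\alpha$ with $\beta_x$ fixed by $G_x$, $\beta_y$ fixed by $G_y$, and $\alpha$ canceling decoupled fractions. The averaging you describe does not produce this. If you set $\beta_x=[G_x]\cdot(a_{n+1}-a_n)$, the residue $(a_{n+1}-a_n)-[G_x]\cdot(a_{n+1}-a_n)$ has $\varepsilon^x=0$ but in general $\varepsilon^y\neq 0$ (the vertices $\sigma_x\cdot a_n$, $\sigma_x\cdot a_{n+1}$ land in $y$-cliques unrelated to those of $a_n$, $a_{n+1}$), so by Theorem~\ref{thm:cycloloc} it does \emph{not} cancel decoupled fractions, and it is certainly not "supported on a bicolored loop"; Proposition~\ref{prop:alt_cycles} only characterizes such chains, it does not produce the decomposition. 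Note also that the conjugate $\tau\sigma_x\tau^{-1}$ realizing the edge at $a_n$ typically lies outside $G_x$ (the paper points this out explicitly for $\Gmod$ after Example~\ref{ex:gessel2_alg_7}), so averaging over $G_x$ has no reason to respect that edge. In other words, the edge-level statement is itself as hard as (and not known independently of) the theorem, and no concrete construction or verification is given for it. There is also a slip in your final assembly: applying a path decoupling to a closed loop based at $(x,y)$ only yields $0=\wgammax+\wgammay+\alpha$, which says nothing about $(x,y)$; the variant anchored at the $G_x$-average of the clique of $(x,y)$ is the right idea, but it rests entirely on the unproven edge claim.

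The paper's proof avoids any edge-by-edge or path-by-path argument precisely because the required invariance cannot be achieved locally. It chooses, for \emph{every} vertex $a$ of the finite orbit simultaneously, a $1$-chain $p_a$ of a path from $(x,y)$ to $a$, and forms the global averages $\gamma_x=-\frac{1}{|\cO|}\sum_a\partial^y(p_a)$ and $\gamma_y=-\frac{1}{|\cO|}\sum_a\partial^x(p_a)$, giving $(x,y)=\sinv+\gamma_x+\gamma_y$. The invariance is then only "up to chains canceling decoupled fractions" (Lemma~\ref{lem:cocycle_cond}), and the key computation in Theorem~\ref{thm:path_decoupl} shows $\sigma_x\cdot\gamma_x-\gamma_x=\partial^y$ of an explicit $1$-cycle: this uses in an essential way that $\sigma_x$ permutes the whole orbit, so the sum over all $a$ can be reindexed by $a\mapsto\sigma_x\cdot a$ and corrected by a chain $c$ in $C_1(\cO^x)$; no analogue of this cancellation exists for a single edge or a single path. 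Finally the pseudo-decoupling is converted into a genuine decoupling by the $[G_x]$, $[G_y]$ symmetrization of Theorem~\ref{thm:pseudec_is_dec}, and rationality of the coefficients comes from the factors $1/|\cO|$, $1/|G_x|$, $1/|G_y|$. To repair your argument you would have to either prove the edge-level decoupling directly (which is not done and is not a formal consequence of the tools you cite) or pass, as the paper does, to the global homological construction.
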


The rest of this section is dedicated to the proof of Theorem~\ref{thm:decoupling_orb}
and to the effective construction of the decoupling of $(x,y)$ in the orbit.

\subsection{Pseudo-decoupling}

We define here a more flexible notion of decoupling in the orbit
called \emph{pseudo-decoupling},
mainly used in the proof of the Theorem~\ref{thm:decoupling_orb}.
\begin{defi}[Pseudo-decoupling]
    Let $\gamma_x$ and $\gamma_y$ be two $0$-chains.
    We call the pair $(\gamma_x,\gamma_y)$ a \emph{pseudo-decoupling}
    of $(x,y)$ if for every regular fraction $H(X,Y,t)$
    that admits a Galois
    decoupling, the equation $H_{(x,y)} = H_{\gamma_x} + H_{\gamma_y}$ is
    a  Galois decoupling of $H$, that is, $H_{\gamma_x} \in k(x)$ and $H_{\gamma_y} \in k(y)$.
  \end{defi}

For instance, if $(x,y) = \wgammax + \wgammay + \alpha$ is a decoupling of $(x,y)$, then
the pair $(\wgammax, \wgammay)$ is a pseudo-decoupling of $(x,y)$
by Proposition~\ref{prop:crit_decoupl}.

Theorem \ref{thm:pseudec_is_dec} below shows how a pseudo-decoupling yields a decoupling. First  let us give some notation. Let $G'$ be a subgroup of $G$. We denote by $\left[G'\right]$
the formal sum $\frac{1}{|G'|} \sum_{\sigma \in G'} \sigma$.
From a Galois theoretic point of view, if $G'$ is the Galois group of some subextension $k(\cO)|M$,
then  $[G']$  is the trace of the field extension $k(\cO)|M$.
\begin{thm} \label{thm:pseudec_is_dec}
    If a pair $(\gamma_x, \gamma_y)$ is a pseudo-decoupling of $(x,y)$, then
    $(x,y)$ admits a decoupling of the form \[
        (x,y) = \wgammax + \wgammay + \alpha
    \]
    where $\wgammax =  [G_x] \cdot \gamma_x$ and
    $\wgammay = [G_y] \cdot \gamma_y$.
  \end{thm}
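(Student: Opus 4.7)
The plan is to verify, in order, the three defining conditions of a decoupling of $(x,y)$: that $\wgammax$ is fixed by $G_x$, that $\wgammay$ is fixed by $G_y$, and that $\alpha := (x,y) - \wgammax - \wgammay$ cancels decoupled fractions. I would begin by noting that since the orbit $\cO$ is assumed finite throughout Section~\ref{sect:decoupling}, Theorem~\ref{thm:fried} ensures $G$, and hence $G_x$ and $G_y$, are finite groups; so the averaging operators $[G_x]$ and $[G_y]$ are well-defined $\Q$-linear combinations of automorphisms acting on $0$-chains.

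The first two conditions are then formal. Given $\sigma \in G_x$, the map $\tau \mapsto \sigma \tau$ is a bijection of $G_x$, so
\[
\sigma \cdot \wgammax \;=\; \frac{1}{|G_x|} \sum_{\tau \in G_x} (\sigma\tau) \cdot \gamma_x \;=\; [G_x] \cdot \gamma_x \;=\; \wgammax,
\]
and the same reindexing argument applies to $\wgammay$ with respect to $G_y$.

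The core step is the third condition. Let $H(X,Y,t)$ be a regular fraction in $\C(X,t) + \C(Y,t)$; such an $H$ trivially admits a Galois decoupling, so by definition of pseudo-decoupling we have $H_{(x,y)} = H_{\gamma_x} + H_{\gamma_y}$ with $H_{\gamma_x} \in k(x)$ and $H_{\gamma_y} \in k(y)$. The plan is then to compute $H_{\wgammax}$ by commuting the evaluation with the group averaging: applying Lemma~\ref{lem::commutationgrp_eval_orb} $\C$-linearly gives
\[
H_{\wgammax} \;=\; H_{[G_x]\cdot\gamma_x} \;=\; \frac{1}{|G_x|} \sum_{\sigma \in G_x} H_{\sigma \cdot \gamma_x} \;=\; \frac{1}{|G_x|} \sum_{\sigma \in G_x} \sigma(H_{\gamma_x}).
\]
Now the key observation: $H_{\gamma_x} \in k(x)$ and $G_x = \Gal(k(\cO)|k(x))$ fixes $k(x)$ elementwise, so each term equals $H_{\gamma_x}$, yielding $H_{\wgammax} = H_{\gamma_x}$. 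The symmetric argument gives $H_{\wgammay} = H_{\gamma_y}$. Therefore
\[
H_{\alpha} \;=\; H_{(x,y)} - H_{\wgammax} - H_{\wgammay} \;=\; H_{(x,y)} - H_{\gamma_x} - H_{\gamma_y} \;=\; 0,
\]
which is exactly the condition that $\alpha$ cancels decoupled fractions.

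No step appears genuinely hard: the argument is a direct combination of the pseudo-decoupling hypothesis with the Galois-theoretic fact that averaging a $k(x)$-element over $G_x$ leaves it unchanged. The only subtle point worth double-checking is that the evaluation map commutes with $G$-action on $0$-chains in the precise form required, which is exactly Lemma~\ref{lem::commutationgrp_eval_orb}, so no new ingredient is needed.
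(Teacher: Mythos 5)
Your proof is correct and follows essentially the same route as the paper: the fixedness of $\wgammax$ and $\wgammay$ is the same formal averaging argument, and your computation $H_{\wgammax}=H_{\gamma_x}$ (via Lemma~\ref{lem::commutationgrp_eval_orb} and the fact that $G_x$ fixes $k(x)$) is just a repackaging of the paper's decomposition of $\alpha$ into $((x,y)-\gamma_x-\gamma_y)+(\gamma_x-[G_x]\cdot\gamma_x)+(\gamma_y-[G_y]\cdot\gamma_y)$, each piece of which is shown to cancel $H$ by the identical Galois argument. The only cosmetic difference is that you verify the cancellation only for $H\in\C(X,t)+\C(Y,t)$, which is exactly what the definition requires, while the paper proves it for all regular fractions admitting a Galois decoupling.
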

\begin{proof}
    By construction, the $0$-chains $\wgammax$ and $\wgammay$
    are fixed under the respective actions of $G_x$ and $G_y$.
    Therefore, we only need to prove that $\alpha$ cancels
    decoupled fractions, for which purpose we rewrite it as the sum of three terms
    \[\alpha = ((x,y) - \gamma_x - \gamma_y) + (\gamma_x - [G_x] \cdot \gamma_x)
    + (\gamma_y - [G_y] \cdot \gamma_y).\]

    Let $H$ be a regular fraction that admits a Galois decoupling. Then
    \begin{itemize}
        \item $H_{(x,y)} - H_{\gamma_x} - H_{\gamma_y} = 0$ by definition
            of the pseudo-decoupling $(\gamma_x, \gamma_y)$.
          \item  For $\sigma_x$ in $G_x$, we compute
                $H_{\gamma_x - \sigma_x \cdot \gamma_x}
                = H_{\gamma_x} - \sigma_x(H_{\gamma_x})$
                thanks to Lemma~\ref{lem::commutationgrp_eval_orb}.
                As $H_{\gamma_x}$ is in $k(x)$,
                it turns out that $H_{\gamma_x - \sigma_x \cdot \gamma_x}$ is zero.
            Since $\frac{1}{|G_x|} \sum_{\sigma_x \in G_x} (\gamma_x - \sigma_x \cdot \gamma_x)
            = \gamma_x - [G_x] \cdot \gamma_x$, we obtain that $\gamma_x - [G_x] \cdot \gamma_x$
            cancels $H$.
        \item The argument for $\gamma_y - [G_y] \cdot \gamma_y$ is similar.
    \end{itemize}
    Thus $H_{\alpha} = 0$,
    which concludes the proof.
\end{proof}

We finish this subsection with two important lemmas.

\begin{lem} \label{lem:trans_pseudodecoupl}
    If the pair $(\gamma_x, \gamma_y)$ is a pseudo-decoupling of $(x,y)$, and
    $\alpha$ and $\alpha'$ are $0$-chains that cancel decoupled
    fractions, then $(\gamma_x + \alpha, \gamma_y + \alpha')$ is also
    a pseudo-decoupling of $(x,y)$.
\end{lem}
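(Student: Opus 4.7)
The plan is to reduce everything to a single observation: if a regular fraction $H$ admits a Galois decoupling, then its evaluation on any $0$-chain that cancels decoupled fractions is zero. Once this is established, the conclusion follows by $\C$-linearity of evaluation.

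First I would unpack the Galois decoupling of $H$. By definition, we can write $H(X,Y,t) = F(X,t) + G(Y,t) + \Ktld(X,Y,t) R(X,Y,t)$ for some regular fraction $R$. By Remark~\ref{exa:univ_regular}, the univariate fractions $F$ and $G$ are regular, so for every pair $(u,v) \in \cO$ the evaluation formula of Proposition~\ref{lem:liftingidentities} together with the vanishing of $\Ktld$ on orbit pairs gives $H_{(u,v)} = F(u,t) + G(v,t) = (F+G)_{(u,v)}$. By $\C$-linearity, $H_\alpha = (F+G)_\alpha$ for any $0$-chain $\alpha$, and since $F+G \in \C(X,t) + \C(Y,t)$ is itself regular, the hypothesis that $\alpha$ cancels decoupled fractions yields $H_\alpha = 0$. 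The same argument applied to $\alpha'$ gives $H_{\alpha'} = 0$.

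Now I would verify the three defining conditions of a pseudo-decoupling for the pair $(\gamma_x + \alpha, \gamma_y + \alpha')$. Let $H$ be any regular fraction admitting a Galois decoupling. By the observation above, $H_{\gamma_x + \alpha} = H_{\gamma_x} + H_\alpha = H_{\gamma_x}$, which lies in $k(x)$ because $(\gamma_x, \gamma_y)$ is a pseudo-decoupling. Symmetrically $H_{\gamma_y + \alpha'} = H_{\gamma_y} \in k(y)$. Finally, adding these two identities and using that $H_{(x,y)} = H_{\gamma_x} + H_{\gamma_y}$ is the Galois decoupling provided by $(\gamma_x,\gamma_y)$, we obtain
\[
H_{(x,y)} = H_{\gamma_x + \alpha} + H_{\gamma_y + \alpha'},
\]
so $(\gamma_x + \alpha, \gamma_y + \alpha')$ is a pseudo-decoupling of $(x,y)$. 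There is no real obstacle here; the only subtlety is remembering that ``cancels decoupled fractions'' is phrased for fractions literally in $\C(X,t) + \C(Y,t)$, so one must first absorb the $\Ktld R$ term by evaluating on the orbit before invoking the hypothesis on $\alpha$ and $\alpha'$.
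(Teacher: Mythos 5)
Your proof is correct and follows essentially the same route as the paper's: evaluate a decoupled $H$ on the extra $0$-chains to see they contribute nothing, then transfer the pseudo-decoupling identity from $(\gamma_x,\gamma_y)$ to $(\gamma_x+\alpha,\gamma_y+\alpha')$ by linearity. The only difference is that you spell out (via Proposition~\ref{lem:liftingidentities} and the vanishing of $\Ktld$ on orbit pairs) why $H_\alpha = H_{\alpha'} = 0$ for a fraction that merely \emph{admits} a Galois decoupling, a step the paper's proof treats as immediate ``by definition of $\alpha$ and $\alpha'$''.
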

\begin{proof}
  Let $H(X,Y,t)$ be a regular fraction that
  admits a Galois decoupling. By definition of
  $\alpha$ and $\alpha'$, we have
  $H_\alpha = H_{\alpha'} = 0$, which
    by linearity proves that $H_{\gamma_x+\alpha} = H_{\gamma_x}$ and
    $H_{\gamma_y + \alpha'} = H_{\gamma_y}$. Since $(\gamma_x, \gamma_y)$
    is a pseudo-decoupling of $(x,y)$, the equation
    $H_{(x,y)} = H_{\gamma_x} + H_{\gamma_y} = H_{\gamma_x + \alpha} + H_{\gamma_x + \alpha'}$
    is a  Galois decoupling of $H$ proving that $(\gamma_x + \alpha, \gamma_y + \alpha')$ is also
    a pseudo-decoupling of $(x,y)$.
 \end{proof}

\begin{lem} \label{lem:cocycle_cond}
    If two $0$-chains $\gamma_x$ and $\gamma_y$ satisfy the following conditions
    \begin{itemize}
        \item $(x,y) = \gamma_x + \gamma_y$
        \item for all $\sigma_x \in G_x$, the $0$-chain $\sigma_x \cdot \gamma_x - \gamma_x$
            cancels decoupled fractions
        \item for all $\sigma_y \in G_y$, the $0$-chain $\sigma_y \cdot \gamma_y - \gamma_y$
          cancels decoupled fractions
    \end{itemize}
    then $(\gamma_x, \gamma_y)$ is a pseudo-decoupling of $(x,y)$.
\end{lem}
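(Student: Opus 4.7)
The plan is to prove directly that, for any regular fraction $H(X,Y,t)$ admitting a Galois decoupling, the evaluations $H_{\gamma_x}$ and $H_{\gamma_y}$ lie respectively in $k(x)$ and $k(y)$. First, I would observe that by $\C$-linearity of evaluation, the hypothesis $(x,y)=\gamma_x+\gamma_y$ immediately yields $H_{(x,y)}=H_{\gamma_x}+H_{\gamma_y}$, so the only real content is the membership of each piece in the right subfield.

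Next, I would use the Galois correspondence applied to $k(\cO)|k(x)$, which is Galois with group $G_x$ by Theorem~\ref{thm:ko_gal}: it suffices to show that $H_{\gamma_x}$ is fixed by every $\sigma_x\in G_x$. Using Lemma~\ref{lem::commutationgrp_eval_orb} on the compatibility of the group action with evaluation, I would rewrite
\[
\sigma_x(H_{\gamma_x}) - H_{\gamma_x} = H_{\sigma_x\cdot\gamma_x} - H_{\gamma_x} = H_{\sigma_x\cdot\gamma_x-\gamma_x},
\]
reducing the problem to showing $H_{\sigma_x\cdot\gamma_x-\gamma_x}=0$. The hypothesis tells us that $\sigma_x\cdot\gamma_x-\gamma_x$ cancels decoupled fractions, so the remaining step is to check that $H$ behaves, on orbit evaluations, like a genuine element of $\C(X,t)+\C(Y,t)$.

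For this, I would unfold the decoupling identity: $H=F(X,t)+G(Y,t)+\Ktld(X,Y,t)R(X,Y,t)$ with $R$ regular and $F,G$ univariate (hence regular by Remark~\ref{exa:univ_regular}). Since $\Ktld$ vanishes on every pair of the orbit, for any $(u,v)\in\cO$ one has $H_{(u,v)}=F(u)+G(v)=(F+G)_{(u,v)}$, and the denominator of $F+G$ lies in $\C[X,t]\cdot\C[Y,t]$, so $F+G$ is a regular fraction in $\C(X,t)+\C(Y,t)$. Therefore $H_{\alpha}=(F+G)_{\alpha}=0$ for every $0$-chain $\alpha$ that cancels decoupled fractions, and in particular $H_{\sigma_x\cdot\gamma_x-\gamma_x}=0$. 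This establishes $H_{\gamma_x}\in k(x)$, and the symmetric argument using the third hypothesis and $G_y$ gives $H_{\gamma_y}\in k(y)$, whence $(\gamma_x,\gamma_y)$ is indeed a pseudo-decoupling of $(x,y)$.

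There is no serious obstacle: the proof is essentially a bookkeeping exercise combining the Galois correspondence, the equivariance of evaluation, and the trivial fact that a regular fraction coincides on orbit points with any $\Ktld$-equivalent representative. The only mildly delicate point is checking that the representative $F+G$ is itself regular, so that the definition of ``cancels decoupled fractions'' applies to it directly.
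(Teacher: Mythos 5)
Your proof is correct and takes essentially the same route as the paper's: linearity gives $H_{(x,y)}=H_{\gamma_x}+H_{\gamma_y}$, and the equivariance of evaluation together with the Galois correspondence for $k(\cO)|k(x)$ and $k(\cO)|k(y)$ reduce everything to the hypothesis that the chains $\sigma_x\cdot\gamma_x-\gamma_x$ and $\sigma_y\cdot\gamma_y-\gamma_y$ cancel decoupled fractions. The only (welcome) difference is that you make explicit the step the paper leaves implicit, namely that a regular fraction which merely \emph{admits} a Galois decoupling evaluates on orbit points like its decoupled representative $F(X,t)+G(Y,t)$ (since $\Ktld$ vanishes there), so chains canceling decoupled fractions also annihilate it.
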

\begin{proof}
    Let $H$ be a regular fraction which admits a Galois decoupling.
    As $H_{(x,y)} = H_{\gamma_x} + H_{\gamma_y}$ from the first point,
    one only needs to show that $H_{\gamma_x}$ is in $k(x)$ and that $H_{\gamma_y}$ is in $k(y)$.
    Let $\sigma_x$ be in $G_x$, then $\sigma_x(H_{\gamma_x}) =
    H_{\sigma_x \cdot \gamma_x} = H_{(\sigma_x \cdot \gamma_x - \gamma_x)
      + \gamma_x} = H_{\gamma_x}$ because $(\sigma_x \cdot \gamma_x -
    \gamma_x)$ cancels decoupled fractions. Therefore, the Galois
    correspondence proves that $H_{\gamma_x}$ is in $k(x)$.  The same
    argument proves that $H_{\gamma_y}$ is in $k(y)$.
\end{proof}

\subsection{Graph homology and construction of the decoupling}

Our construction of a decoupling relies on the graph structure of the
orbit $\cO$, and in particular on the formalism of graph homology.

\subsubsection{Basic graph homology}\label{subsec:basicgraphhomology}

We recall here the basic definitions of graph homology and the
properties that will be used in the construction of the decoupling
(see \cite{Giblin} for a comprehensive introduction to graph
homology).

\begin{defi}
  A \emph{graph} (undirected) is a pair $\Gamma = (V, E)$ where $V$
  is the set of vertices and $E \subset \left\{ \{a,a'\} \st a, a' \in
    V, a \neq a'\right\}$ is the set of edges. A \emph{subgraph} of
  $\Gamma$ is a graph $\Gamma' = (V',E')$ such that $V' \subset V$ and
  $E' \subset E$.

  An \emph{oriented graph} is a pair $\Gamma = (V,E^+)$ where $V$ is
  the set of vertices and $E^+ \subset \left\{(a,a') \st a,a' \in V, a
    \neq a'\right\}$ the set of \emph{arcs} (oriented edges) such that if
  $(a,a') \in E^+$ then $(a',a) \notin E^+$.  An \emph{orientation} of a
  graph $\Gamma = (V,E)$ is an oriented graph $\Gamma' = (V,E^+)$ such
  that the map $E^+ \rightarrow E$ which maps $(a,a')$ to $\{a,a'\}$ is
  a bijection.
\end{defi}
  Note that every graph can be given an orientation by freely choosing
an origin for each edge. Conversely, given an oriented graph
$\Gamma=(V, E^{+})$, one can consider the associated undirected graph
$(V,E)$ where $E= \{ \{a,a'\} \mbox{ such that } (a,a') \in E^+ \mbox{
or } (a',a) \in E^+ \}$. In what follows, the notions of graph
homomorphism, path, connected components concern the structure of
undirected graph.

\begin{exa} \label{exa:graph_orbit}
    The graphs considered here  are the graph induced
    by the orbit $(\cO, \sim)$ still denoted $\cO$,
    and the two subgraphs of the orbit restricted to each individual type of adjacency,
    which are $\cO^x = (\cO, \sim^x)$ and $\cO^y = (\cO, \sim^y)$.
\end{exa}

We now introduce the chain complex attached to an oriented graph.

\begin{defi}
    Let $\Gamma = (V, E^+)$ be an oriented graph and $K$ a field.
    The space $C_0(\Gamma)$ of $0$-chains of $\Gamma$ is the free $K$-vector space
    spanned by the vertices of $V$.
    Similarly, the space $C_1(\Gamma)$ of $1$-chains of $\Gamma$ is the free $K$-vector space
    spanned by the arcs of $E^+$. We turn $C_*(\Gamma)$ into a chain complex by defining
    the \emph{boundary} homomorphism, which is the $K$-linear map defined by
    \[
        \begin{array}{rccc}
            \partial \colon & C_1(\Gamma) & \longrightarrow & C_0(\Gamma) \\
                            & (a,a') \in E^+ & \longmapsto & a' - a
        \end{array}
    \]
\end{defi}

As the reader notices, the chain complex has only been defined for an
oriented graph.  Nonetheless, if $(V,E_1^+)$ and $(V,E_2^+)$ are two
orientations of a graph $\Gamma$, it is easy to see that the
associated chain complexes are isomorphic \cite[1.21
(3)]{Giblin}. When the context is clear, we shall abuse notation and
define a chain complex $C_{*}(\Gamma)$ of a graph $\Gamma$ as the
chain complex of the oriented graph $(V,E^+)$ where $E^+$ is an
arbitrary orientation of $\Gamma$.

We make the following convention.  Let $a$ and $a'$ be two adjacent
vertices of $\Gamma$. Given an orientation $E^+$ of $\Gamma$, we abuse
notation and denote by $(a,a')$ the $1$-chain
\[
  (a,a') = \left\{
    \begin{array}{cr} (a,a') & \text{if $(a,a')$ is in $E^+$} \\
-(a',a) & \text{otherwise}
    \end{array}\right.
\]
This notation is extremely convenient,
because for two adjacent vertices of $\Gamma$, the boundary
homomorphism always satisfies $\partial((a,a')) = a' - a$ and $(a,a')
= -(a',a)$.

\begin{defi}
 Let $\Gamma = (V,E^+)$ be an oriented graph.   A $1$-chain $c$ which satisfies $\partial(c) = 0$ is called a $\emph{$1$-cycle}$.
\end{defi}

\begin{exa}[$1$-chain induced by a path] \label{exa:pathandonechains}

Let $\Gamma=(V,E)$ be a graph and let $ (a_1, a_2, \dots, a_{n+1})$ be
a path in $\Gamma$, that is, a sequence of vertices such that $a_i$ is
adjacent to $a_{i+1}$ for $i=1, \dots,n$. Given an arbitrary
orientation $E^+$ of $\Gamma$, we define the $1$-chain $p = \sum_{i =
1}^{n} (a_i, a_{i+1})$, and we call it the \emph{$1$-chain induced by
the path $ (a_1, a_2, \dots, a_{n+1}) $}.  By telescoping,
$\partial(p) = a_{n+1} - a_1$, therefore if the path is a loop of
$\Gamma$ then $p$ is a $1$-cycle, hence the name. Every $1$-cycle is a
linear combination of $1$-cycles induced by the simple loops of the
graph, that is, loops with no repeated vertex (see
\cite[Theorem~1.20]{Giblin}).
\end{exa}

We recall that a graph is called \emph{connected} if any two vertices
are joined by a path. The reader should note that the notion of path
does not take into account a potential orientation of the edges.
Every finite graph is the disjoint union of finitely many connected
components which are maximal connected subgraphs. Any orientation of a
graph induces by restriction an orientation on its subgraphs and
thereby on its connected components. With this convention, it turns
out that the chain complex of a finite oriented graph is isomorphic to
the direct sum of the chain complexes of its connected
components. Hence, it is harmless to extend Theorem~1.23 in
\cite{Giblin} to the case of a non-connected graph.

\begin{prop} \label{prop:bound_connected}
    Let $\Gamma = (V, E)$ be a graph, and  let
    $(\Gamma_i = (V_i, E_i))_{i=1,\dots,r}$ be its connected components.
    Define the \emph{augmentation map}
    $\varepsilon \colon C_0(\Gamma) \rightarrow K^r$ by
    $\varepsilon(\sum_{a \in V} \lambda_a a)
    = (\sum_{a \in V_i} \lambda_a)_{i=1,\ldots,r}$.
    Then, $\Ker\, \varepsilon = \imag\, \partial$.
\end{prop}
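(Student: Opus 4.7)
The plan is to prove the two inclusions $\imag\,\partial \subseteq \Ker\,\varepsilon$ and $\Ker\,\varepsilon \subseteq \imag\,\partial$ separately, using only the definitions and the connectedness of each $\Gamma_i$. The chain complex $C_*(\Gamma)$ splits as a direct sum $\bigoplus_i C_*(\Gamma_i)$ because every arc of $\Gamma$ lies in a unique component, so the boundary map and the augmentation both split over the components. It therefore suffices to handle each connected component individually and then assemble.

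For $\imag\,\partial \subseteq \Ker\,\varepsilon$: by $K$-linearity, I only need to check this on basis elements, i.e.\ on a single arc $(a,a')$ of the fixed orientation. Such an arc lies in exactly one component $\Gamma_i$, and $\partial((a,a')) = a'-a$ with $a,a' \in V_i$. Then $\varepsilon(a'-a)$ has $i$-th coordinate $1-1=0$ and all other coordinates equal to $0$, so $\varepsilon(\partial((a,a')))=0$.

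For the converse inclusion, fix a component $\Gamma_i$ and a basepoint $a_i^\ast \in V_i$. By connectedness of $\Gamma_i$, for every $a \in V_i$ I may choose a path $(a_i^\ast = b_0, b_1, \dots, b_n = a)$ in $\Gamma_i$ and let $p_a \in C_1(\Gamma_i)$ be the induced $1$-chain as in Example \ref{exa:pathandonechains}, so that $\partial(p_a) = a - a_i^\ast$ (telescoping). Take any $c = \sum_{a\in V} \lambda_a a \in \Ker\,\varepsilon$; by the direct sum decomposition write $c = \sum_i c_i$ with $c_i = \sum_{a\in V_i}\lambda_a a$, and note that $\varepsilon(c)=0$ translates to $\sum_{a\in V_i}\lambda_a = 0$ for each $i$. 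Setting $\tilde c_i = \sum_{a \in V_i} \lambda_a \, p_a \in C_1(\Gamma_i)$, I compute
\[
\partial(\tilde c_i) \;=\; \sum_{a \in V_i} \lambda_a (a - a_i^\ast) \;=\; c_i - \Bigl(\sum_{a \in V_i} \lambda_a\Bigr)\, a_i^\ast \;=\; c_i.
\]
Summing over $i$ yields $c = \partial\bigl(\sum_i \tilde c_i\bigr) \in \imag\,\partial$.

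The only subtlety is justifying the reduction to connected components, which is immediate once one observes that the boundary of an arc stays inside its own component and that $\varepsilon$ reads off coefficients component by component; no universal-coefficients or simplicial machinery is required.
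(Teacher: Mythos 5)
Your proof is correct. The paper itself does not write out an argument for this proposition: it observes that the chain complex of the graph splits as the direct sum of the chain complexes of its connected components and then invokes Theorem~1.23 of \cite{Giblin}, which is the statement for a connected graph. Your reduction to components is exactly the paper's remark, but you go further and supply the connected-case argument that the paper delegates to the reference: the inclusion $\imag\,\partial \subseteq \Ker\,\varepsilon$ checked on single arcs, and the converse via a basepoint $a_i^\ast$ in each component, paths $p_a$ with $\partial(p_a)=a-a_i^\ast$ (telescoping, as in Example~\ref{exa:pathandonechains}), and the cancellation of the $a_i^\ast$-term using $\sum_{a\in V_i}\lambda_a=0$. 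This is the standard spanning-path argument and it is complete; the only micro-point worth making explicit is that for $a=a_i^\ast$ one takes the trivial path, so $p_a=0$ and the identity $\partial(p_a)=a-a_i^\ast$ still holds. The net effect is a self-contained elementary proof where the paper relies on a citation, at the (small) cost of a few extra lines; nothing in your argument conflicts with the conventions on orientations and the arc notation $(a,a')=-(a',a)$ fixed earlier in the section.
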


Let $\Gamma=(V,E)$ be a graph and let $\sigma$ be a graph endomorphism
of $\Gamma$. Fixing an orientation $E^+$ on $\Gamma$, we let $\sigma$
act on the space of $0$ and $1$-chains by $K$-linearity via:
 \[\sigma \cdot a =\sigma(a) \mbox{ for any } a \mbox{ in } V \mbox{
and } \sigma \cdot(a,a')= (\sigma (a),\sigma (a')) \mbox{ for any }
(a,a') \mbox{ in } E^+. \] The reader should note that the action on
the space of $1$-chains uses the convention on the arc notation
introduced at the beginning of this  subsection. It is easily seen that the
action of a graph endomorphism of $\Gamma$ is compatible with the
boundary homomorphism of the chain complex $C_*(\Gamma)$.

\begin{prop} \label{prop:hom_bound}
  Let $\Gamma=(V,E)$ be a graph and $\sigma$ be a graph endomorphism
of $\Gamma$. Then $\sigma$ induces a \emph{chain map} on
$C_*(\Gamma)$, which means that the following diagram of $K$-linear
maps is commutative.

  \begin{center}
   \begin{tikzcd}
C_1(\Gamma) \arrow[r, "\partial"] \arrow[d,"\sigma"]
& C_0(\Gamma) \arrow[d, "\sigma"] \\
C_1(\Gamma) \arrow[r,  "\partial" ]
&  C_0(\Gamma)
\end{tikzcd}
  \end{center}

  \end{prop}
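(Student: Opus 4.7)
The plan is to verify the commutativity of the square directly on a basis of $C_1(\Gamma)$. Both $\sigma$ and $\partial$ are $K$-linear by construction, so it suffices to check $\partial(\sigma \cdot c) = \sigma \cdot \partial(c)$ when $c = (a,a')$ is an arc in the chosen orientation $E^+$ of $\Gamma$.

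Fix such an arc $(a,a') \in E^+$. Since $\sigma$ is a graph endomorphism of $\Gamma$, the pair $\{\sigma(a), \sigma(a')\}$ is an edge of $\Gamma$, so $\sigma(a)$ and $\sigma(a')$ are adjacent. By the convention introduced right before the statement, the symbol $(\sigma(a), \sigma(a'))$ denotes a well-defined element of $C_1(\Gamma)$, namely the arc $(\sigma(a), \sigma(a'))$ itself if it lies in $E^+$, and its opposite $-(\sigma(a'), \sigma(a))$ otherwise. In either case, the boundary homomorphism evaluated on this $1$-chain gives $\partial((\sigma(a), \sigma(a'))) = \sigma(a') - \sigma(a)$, since both $a' - a$ and $-(a - a')$ have boundary $a' - a$. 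Therefore
\[
\partial(\sigma \cdot (a,a')) = \partial((\sigma(a), \sigma(a'))) = \sigma(a') - \sigma(a).
\]
On the other hand,
\[
\sigma \cdot \partial((a,a')) = \sigma \cdot (a' - a) = \sigma(a') - \sigma(a),
\]
by the $K$-linearity of the action of $\sigma$ on $C_0(\Gamma)$. The two expressions agree, which concludes the proof.

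The only real subtlety is that $\sigma$ need not preserve the chosen orientation $E^+$, so the arc $(\sigma(a), \sigma(a'))$ might not be an element of $E^+$. This is precisely what the sign convention $(a,a') = -(a',a)$ is designed to absorb, and once that convention is granted, the verification is completely mechanical. Consequently the map of graded vector spaces $\sigma \colon C_*(\Gamma) \to C_*(\Gamma)$ is a chain map, as claimed.
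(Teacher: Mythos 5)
Your verification is correct and is exactly the argument the paper has in mind: the paper omits the proof entirely (introducing the proposition with ``it is easily seen''), and the intended justification is precisely your basis check on arcs, with the sign convention $(a,a')=-(a',a)$ absorbing the fact that $\sigma$ need not preserve the chosen orientation. Nothing further is needed.
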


\subsubsection{The chain complex of the orbit and the
algebraic description of bicolored loops}

We now apply the homological formalism to the graphs associated with
the orbit $\cO$ with base field $\C$ (see
Example~\ref{exa:graph_orbit}). We fix once for all an
orientation on $\cO$ which induces an orientation on the subgraphs
$\cO^x$ and $\cO^y$. Quoting \cite[Remark 1.21]{Giblin},`` the choice
of this orientation is just a technical device introduced to enable
the computation of the boundary homomorphisms''.  We denote by
$\partial$ (resp. $\partial^x$, $\partial^y$) the boundary
homomorphism on the connected graph $\cO$ (resp. the non-connected
graphs $\cO^x$, $\cO^y$). Moreover, we denote by $\varepsilon$
(resp. $\varepsilon^x$, $\varepsilon^y$) the augmentation map defined
in Proposition~\ref{prop:bound_connected} for $\cO$ (resp. $\cO^x$,
$\cO^y$).

\begin{lem}\label{lem:decompositioncohomologie}
    The $\C$-vector space $C_1(\cO)$  is equal to $C_1(\cO^x) \oplus
    C_1(\cO^y)$ and  the boundary homomorphism $\partial$
    coincides with $\partial^x + \partial^y$ where one has extended
    $\partial^x$ (resp. $\partial^y$) by zero on $C_1(\cO^y)$ (resp.
    $C_1(\cO^x)$).
\end{lem}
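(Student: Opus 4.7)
The plan is to reduce both claims to a simple combinatorial observation: in the orbit graph, no edge is simultaneously monochromatic of two colors. More precisely, I will show that the arc set $E^+$ chosen on $\cO$ splits as a disjoint union $E^+_x \sqcup E^+_y$, where $E^+_x$ (resp.\ $E^+_y$) is the inherited orientation on $\cO^x$ (resp.\ $\cO^y$). Once this is established, the freeness of $C_1$ on the respective arc sets gives the direct sum decomposition $C_1(\cO) = C_1(\cO^x) \oplus C_1(\cO^y)$, and the boundary formula follows from the fact that $\partial$, $\partial^x$, $\partial^y$ are all defined by the same rule $(a,a') \mapsto a' - a$ on arcs.

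Concretely, first I would argue that if $(u,v)$ and $(u',v')$ are two distinct vertices of $\cO$ satisfying $(u,v) \sim^x (u',v')$ and $(u,v) \sim^y (u',v')$ simultaneously, then $u = u'$ (from $x$-adjacency) and $v = v'$ (from $y$-adjacency), contradicting distinctness. Hence every edge of $\cO$ is of exactly one type, and the edge set of $\cO$ is the disjoint union of the edge sets of $\cO^x$ and $\cO^y$. Since $\cO^x$ and $\cO^y$ share the same vertex set as $\cO$ and the orientation on $\cO$ restricts to orientations on each, the same disjoint union statement holds at the level of arcs: $E^+ = E^+_x \sqcup E^+_y$.

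Next, since by definition $C_1$ of a graph is the free $\C$-vector space on its arc set, the equality $E^+ = E^+_x \sqcup E^+_y$ gives directly
\[
C_1(\cO) \;=\; \C^{E^+_x} \oplus \C^{E^+_y} \;=\; C_1(\cO^x) \oplus C_1(\cO^y).
\]
For the boundary, I would note that $\partial$, $\partial^x$ and $\partial^y$ all send an arc $(a,a')$ to $a'-a$ in $C_0(\cO) = C_0(\cO^x) = C_0(\cO^y)$ (the vertex sets agree). Hence on $C_1(\cO^x) \subset C_1(\cO)$ the boundaries $\partial$ and $\partial^x$ agree, and similarly on $C_1(\cO^y)$. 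Extending $\partial^x$ by zero on $C_1(\cO^y)$ and $\partial^y$ by zero on $C_1(\cO^x)$ and summing, one recovers $\partial$ by $\C$-linearity.

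No step is genuinely difficult here: this lemma is primarily a bookkeeping statement setting up the homological decomposition used in the sequel. The only subtlety worth pointing out is the disjointness of $x$- and $y$-adjacencies, which relies on the convention that an edge connects two \emph{distinct} vertices; once this is noted, everything else follows formally from the definition of chain complexes on graphs recalled in Section~\ref{subsec:basicgraphhomology}.
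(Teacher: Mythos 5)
Your proof is correct and follows essentially the same route as the paper's: you observe that no edge of $\cO$ can be simultaneously an $x$-adjacency and a $y$-adjacency (since that would force the two endpoints to coincide), conclude that the arc set splits as a disjoint union, and deduce both the direct sum decomposition of $C_1(\cO)$ and the decomposition $\partial = \partial^x + \partial^y$ from the common rule $(a,a') \mapsto a' - a$. The paper's proof is just a terser version of this same argument, so nothing further is needed.
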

\begin{proof}
    Every edge $\{a,a'\}$ of $\cO$ is either an $x$-adjacency
    or an $y$-adjacency, and not both. Therefore, the set of arcs of an
    orientation of $\cO$ is the disjoint union of the arcs of the orientations of
    $\cO^x$ and $\cO^y$, which thus induces a direct sum decomposition on the free
    vector space $C_1(\cO)$. The decomposition of the homomorphism $\partial$ follows directly.
\end{proof}

The action of the Galois group $G$ on the vertices of $\cO$ preserves the adjacency types
of the edges (see Lemma~\ref{lem:hom_adj}). Therefore $G$ acts by graph automorphisms on $\cO^x$ and
$\cO^y$. Thus, Proposition~\ref{prop:hom_bound} allows us to define the
action of $G$ on the chains of $\cO^x$ and $\cO^y$ in a compatible way
with the decomposition of Lemma~\ref{lem:decompositioncohomologie}.
\begin{prop} \label{prop:group_bound}
  Let $\sigma$ be in $G$. Then $\sigma$ induces automorphisms of the
  chain complexes $C_*(\cO), C_*(\cO^x)$ and $C_*(\cO^y)$ such that
  $\sigma \circ \partial^x=\partial^x \circ \sigma$ and $\sigma \circ
  \partial^y=\partial^y \circ \sigma$.
\end{prop}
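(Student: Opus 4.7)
The plan is to assemble the statement directly from the three ingredients already at hand: Lemma~\ref{lem:hom_adj}, Proposition~\ref{prop:hom_bound}, and the direct sum decomposition of Lemma~\ref{lem:decompositioncohomologie}. First I would observe that, by Lemma~\ref{lem:hom_adj}, every $\sigma \in G$ preserves the $x$-adjacency relation and the $y$-adjacency relation separately. Hence $\sigma$ restricts to a graph endomorphism of each of the three graphs $\cO$, $\cO^x$ and $\cO^y$; moreover $\sigma^{-1}$ also belongs to $G$, so these endomorphisms are in fact graph automorphisms.

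Next, I would invoke Proposition~\ref{prop:hom_bound} three times, once for each of the graphs $\cO$, $\cO^x$ and $\cO^y$ (once an orientation has been fixed on each). This directly yields that $\sigma$ induces a chain map on each of $C_*(\cO)$, $C_*(\cO^x)$ and $C_*(\cO^y)$, and since the same construction applied to $\sigma^{-1}$ provides an inverse chain map, these are chain complex automorphisms. This already gives the first part of the claim, namely $\sigma \circ \partial = \partial \circ \sigma$ on $C_*(\cO)$ and the analogous identities internal to $C_*(\cO^x)$ and $C_*(\cO^y)$.

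Finally, to get $\sigma \circ \partial^x = \partial^x \circ \sigma$ as an equality of $\C$-linear maps on the full space $C_1(\cO)$, I would use the decomposition $C_1(\cO) = C_1(\cO^x) \oplus C_1(\cO^y)$ of Lemma~\ref{lem:decompositioncohomologie}. Since $\sigma$ sends $x$-arcs to $x$-arcs and $y$-arcs to $y$-arcs, it preserves this direct sum decomposition. On the summand $C_1(\cO^x)$, the identity $\sigma \circ \partial^x = \partial^x \circ \sigma$ is exactly the chain map property of $\sigma$ on $C_*(\cO^x)$ provided by the previous paragraph. On $C_1(\cO^y)$, by the convention that extends $\partial^x$ by zero, both sides of $\sigma \circ \partial^x = \partial^x \circ \sigma$ vanish (the right-hand side because $\sigma$ preserves $C_1(\cO^y)$, the left-hand side because $\partial^x$ is zero there). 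The case of $\partial^y$ is entirely symmetric. There is no real obstacle: the statement is a formal consequence of the earlier results, and the only small point to handle carefully is the respect of the direct sum decomposition, which is exactly what Lemma~\ref{lem:hom_adj} ensures.
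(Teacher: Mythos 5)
Your proof is correct and follows exactly the route the paper takes: the paper derives this proposition from Lemma~\ref{lem:hom_adj} (preservation of adjacency types, hence graph automorphisms of $\cO$, $\cO^x$, $\cO^y$), Proposition~\ref{prop:hom_bound} (induced chain maps), and the decomposition of Lemma~\ref{lem:decompositioncohomologie}, which is precisely your argument. Your explicit handling of the extension-by-zero of $\partial^x$, $\partial^y$ on the complementary summand is a welcome bit of extra care but not a departure from the paper's approach.
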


The boundary homomorphisms $\partial^x,\partial^y$ allow us to rewrite
the $0$-chains induced by bicolored loops as boundaries.  If $\alpha$
is the $0$-chain associated to a bicolored loop as in Example
\ref{exa:bicol_cycle}, then it is easily seen that $\alpha =
\partial^x (p) = \partial^y(-p)$ with $p$ the $1$-chain as in Example
\ref{exa:pathandonechains}. The homology formalism generalizes the
above description to any $0$-chain that cancels decoupled fractions.

\begin{thm} \label{thm:cycloloc}
    Let $\alpha$ be a $0$-chain. Then the following statements are equivalent:
    \begin{enumerate}
        \item[(1)] $\alpha$ cancels decoupled fractions.
        \item[(2)] $\varepsilon^x(\alpha) = 0$ and $\varepsilon^y(\alpha) = 0$.
        \item[(3)] There exists a $1$-cycle $c$ of $\cO$ such that $\alpha = \partial^x(c)$.
        \item[(3')] There exists a $1$-cycle $c$ of $\cO$ such that $\alpha = \partial^y(c)$.
    \end{enumerate}
  \end{thm}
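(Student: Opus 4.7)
The plan is to prove (1)$\Leftrightarrow$(2) directly via an orbit-sum computation, and then to deduce (2)$\Leftrightarrow$(3) (with (2)$\Leftrightarrow$(3') being symmetric) from the graph-homology decomposition of $C_1(\cO)$ already available from Lemma~\ref{lem:decompositioncohomologie} and Proposition~\ref{prop:bound_connected}.

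First I would handle (1)$\Leftrightarrow$(2). The key observation is that for any $F(X,t)\in\C(X,t)$ (which is regular by Remark~\ref{exa:univ_regular}) and any $(u,v)\in\cO$, one has $F_{(u,v)}=F(u,\invS)$ since $S$ is constant along the orbit; so $F_{(u,v)}$ depends only on the first coordinate of the pair. Writing $\alpha=\sum c_{(u,v)}(u,v)$ and letting $u_1,\dots,u_r$ be the distinct first coordinates appearing in the support of $\alpha$, I would group terms by $x$-clique to obtain
\[
F_\alpha=\sum_{i=1}^r s_{u_i}\,F(u_i,\invS),
\]
where $s_{u_i}=\sum_{v\,:\,(u_i,v)\in\mathrm{supp}\,\alpha}c_{(u_i,v)}$ is the $u_i$-component of $\varepsilon^x(\alpha)$. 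A symmetric formula holds for $G\in\C(Y,t)$ and $\varepsilon^y$, making (2)$\Rightarrow$(1) immediate. For the converse, I would specialize $F=X^n$ for $n=0,\dots,r-1$, yielding the Vandermonde system $\sum_i s_{u_i}u_i^n=0$. Since the $u_i$ are pairwise distinct in $\K$, the Vandermonde determinant $\prod_{i<j}(u_j-u_i)$ is nonzero in $\K$, forcing $s_{u_i}=0$ for every $i$, that is, $\varepsilon^x(\alpha)=0$; the analogous argument in $Y$ yields $\varepsilon^y(\alpha)=0$.

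Next, I would establish (2)$\Leftrightarrow$(3) using Lemma~\ref{lem:decompositioncohomologie}, which decomposes $C_1(\cO)=C_1(\cO^x)\oplus C_1(\cO^y)$ and writes $\partial=\partial^x+\partial^y$ (each boundary extended by zero on the complementary factor). Splitting an arbitrary $1$-chain as $c=c^x+c^y$, the cycle condition $\partial c=0$ becomes $\partial^y(c^y)=-\partial^x(c^x)$, while $\partial^x(c)=\partial^x(c^x)$. Hence (3) is equivalent to the simultaneous existence of $c^x\in C_1(\cO^x)$ with $\partial^x(c^x)=\alpha$ and of $c^y\in C_1(\cO^y)$ with $\partial^y(c^y)=-\alpha$. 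Applying Proposition~\ref{prop:bound_connected} separately to the (possibly disconnected) graphs $\cO^x$ and $\cO^y$, these two existence statements translate respectively into $\varepsilon^x(\alpha)=0$ and $\varepsilon^y(\alpha)=0$, which is exactly (2). The equivalence (2)$\Leftrightarrow$(3') follows by swapping the roles of $x$ and $y$.

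The only nontrivial point is the Vandermonde step in (1)$\Rightarrow$(2): it crucially uses that evaluation on the orbit specializes $t$ to $\invS$, so that the family $\{X^n\}_{n\ge 0}$ already separates the distinct first coordinates appearing in $\alpha$. The rest is a mechanical bookkeeping exercise combining the direct-sum structure of $C_1(\cO)$ with Proposition~\ref{prop:bound_connected}, so I do not expect any additional obstacles beyond the ones already addressed by the previously established lemmas.
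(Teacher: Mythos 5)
Your proof is correct and uses essentially the same ingredients as the paper: the Vandermonde argument on the monomials $X^n$ grouped by $x$-cliques for (1)$\Rightarrow$(2), and Proposition~\ref{prop:bound_connected} applied to $\cO^x$ and $\cO^y$ together with the splitting $C_1(\cO)=C_1(\cO^x)\oplus C_1(\cO^y)$ to relate (2) and (3). The only difference is organizational — you close the loop via a direct (2)$\Rightarrow$(1) instead of the paper's (3)$\Rightarrow$(1), which rests on the same observation that fractions in $\C(X,t)$ are constant along $x$-cliques — so no substantive gap remains.
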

\begin{proof}
    (1) $\Rightarrow$ (2): Let $\alpha$ be a $0$-chain that cancels
    decoupled fractions. The connected components of the graph $\cO^x$ are
    of the form $\cO^x_u = \left\{(u',v') \in \cO \st u' = u\right\}$
    for the distinct left coordinates $u$ of $\cO$. Therefore, we decompose
    $\alpha = \sum_u \alpha_u$ where $\alpha_u = \sum_{v'} \lambda^u_{v'}
    (u,v')$ is a $0$-chain with vertices in $\cO^x_u$.
    Now, we consider the family of monomials $(X^i)_i$ which are obviously decoupled.
    Since $\alpha$ cancels decoupled fractions, the following holds for all $i$: \[
        0 = (X^i)_{\alpha} = \sum_u (X^i)_{\alpha_u}
        = \sum_u \sum_{v' \sts (u,v') \in \cO^x_u} \lambda^u_{v'} (X^i)_{(u,v')}
        = \sum_u \left(\sum_{v' \sts (u,v') \in \cO^x_u} \lambda^u_{v'}\right) u^i
        = \sum_u \varepsilon^x(\alpha)_u u^i.
      \]
    Because the elements $u$ are distinct, this is a Vandermonde system,
    in which the unknowns are the $\varepsilon^x(\alpha)_u$,
    therefore we deduce that they are all equal to $0$. Thus,
   $\varepsilon^x(\alpha) = 0$.
    The same argument yields $\varepsilon^y(\alpha) = 0$.

    (2) $\Rightarrow$ (3) and (3'): Assume that $\varepsilon^x(\alpha) = 0$ and
    $\varepsilon^y(\alpha) = 0$. By Proposition~\ref{prop:bound_connected},
    there  exist $c_x$ in $C_1(\cO^x)$ and $c_y$ in $C_1(\cO^y)$ such
    that $\partial^x(c_x) = \alpha$ and $\partial^y(c_y) = \alpha$. Moreover,\[
        \partial(c_x - c_y) = \partial(c_x) - \partial(c_y) = \partial^x(c_x) - \partial^y(c_y)
    = \alpha - \alpha = 0.\]
    Therefore, $c = c_x - c_y$ is a $1$-cycle of $\cO$ which satisfies $\partial^x(c) = \alpha$
    and $\partial^y(-c) = \alpha$.

    (3) $\Leftrightarrow$ (3'): Let $c$ be a $1$-cycle of $\cO$,
    then $\partial^x(c) = \partial(c) - \partial^y(c) = \partial^y(-c)$. This proves
    the equivalence.

    (3) $\Rightarrow$ (1): Assume that $\alpha = \partial^x(c) = \partial^y(-c)$ for $c$ a cycle of
    $\cO$.
    Now, let $e = ((u,v),(u,v'))$ be an arc of $\cO^x$ and take
$F(X,t) \in \C(X,t)$. Then
    $F_{\partial^x(e)} = F(u,\invS) - F(u,\invS) = 0$. Therefore, by $\C$-linearity,
    this implies that $F_{\alpha} = F_{\partial^x(c)} = 0$. Symmetrically, if $G(Y,t) \in \C(Y,t)$,
    then we deduce that $G_{\alpha} = G_{\partial^y(-c)} = 0$, which concludes the proof.
\end{proof}

We now apply this pleasant characterization to prove our earlier
claim that $0$-chains that cancel decoupled fractions are induced by $\C$-linear
combinations of $1$-cycles induced by bicolored loops.
\begin{proof}[Proof of Proposition~\ref{prop:alt_cycles}]
Let $\alpha$ be a $0$-chain which cancels decoupled fractions, then by
(3) of Theorem~\ref{thm:cycloloc}, we can write it $\alpha = \partial^x(c) =
- \partial^y(c)$ with $c$ a $1$-cycle of $\cO$. Since the $1$-cycles induced by the simple loops of $\cO$ generate
the $1$-cycles of $\cO$ (see \cite[Theorem~1.20]{Giblin}), we can assume
without loss of generality that $c$ is induced by a simple loop $p
= (a_1, a_2, \dots, a_n)$ of $\cO$.

Moreover, if consecutive arcs $e_i, \dots, e_{i+k-1} = (a_i, a_{i+1}), (a_{i+1}, a_{i+2}),
\dots, (a_{i+k-1}, a_{i+k})$ of $p$ are of the same adjacency type
(say $x$),
then since the monochromatic components of $\cO$ are
cliques, $(a_i,a_{i+k})$ is an arc of $\cO$.
Therefore, $\partial^x(e_i + \dots + e_{i+k-1}) = \partial^x(e_i +
\dots + e_{i+k-1} + (a_{i+k}, a_{i})) + \partial^x((a_i, a_{i+k}))$,
the first term being zero because it is the boundary of a
monochromatic cycle. The exact same reasoning
can be done for consecutive $y$-adjacencies.  Thus,  replacing  consecutive
arcs of the same adjacency type by one single arc of the same adjacency type, we can
assume without loss of generality
that $c$ is the $1$-chain induced by a simple bicolored loop. This proves that
$\alpha$ is the $0$-chain induced by a bicolored loop, finishing
the proof.
\end{proof}

\subsubsection{Construction of the decoupling}
We now use the results of the previous subsections to construct a
pseudo-decoupling of $(x,y)$ on a finite orbit $\cO$. For $p=(p_a)_{a
\in \cO}$ a family of $1$-chains, we consider the $0$-chains
  \[\gamma_x(p) = -\frac{1}{|\cO|} \sum_a \partial^y(p_a)
    \mbox{ and } \gamma_y(p) =   -\frac{1}{|\cO|} \sum_a \partial^x(p_a)\]
where all sums run over $\cO$. The $\C$-linearity of the boundary
homomorphisms implies that $\gamma_x$ and $\gamma_y$ are $\C$-linear
morphisms from $C_1(\cO)^{\cO}$ to $C_0(\cO)$.  We recall that $\sinv$
is the $0$-chain $\frac{1}{|\cO|}\sum_{a \in \cO} a$ defined in
Lemma~\ref{lem:definitionsinv}.

\begin{thm}[Decoupling theorem] \label{thm:path_decoupl}
  Let $p^x=(p^x_a)_{a \in \cO}$ and $p^y=(p^y_a)_{a \in \cO}$ be
  two families of $1$-chains,
  that are such that for all $a \in \cO$ one has
    \[ \varepsilon^x(\partial(p^x_a) + (x,y) - a)=0 \mbox{ and }
      \varepsilon^y(\partial(p^y_a)+(x,y) - a) = 0.\]
    Then, the pair $(\sinv + \gamma_x(p^x), \gamma_y(p^y))$ is a pseudo-decoupling of $(x,y)$.
\end{thm}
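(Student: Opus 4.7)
To verify that the pair $(\sinv + \gamma_x(p^x), \gamma_y(p^y))$ is a pseudo-decoupling, I will pick an arbitrary regular fraction $H$ admitting a Galois decoupling and check the three defining conditions. By Propositions~\ref{prop:decoupl_extension} and~\ref{lem:liftingidentities}, I can lift the decoupling to rational fractions $F(X,t) \in \C(X,t)$ and $G(Y,t) \in \C(Y,t)$ with $H$ equivalent to $F+G$ in $\mathcal{C}$, so that $H_a = F_a + G_a$ for every $a \in \cO$. The goal is then to show $H_{\sinv + \gamma_x(p^x)} \in k(x)$, $H_{\gamma_y(p^y)} \in k(y)$, and that the two orbit sums add up to $H_{(x,y)}$.

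The bridge between the combinatorial hypotheses on $\varepsilon^x,\varepsilon^y$ and the algebraic conclusion is the Vandermonde argument from the proof of Theorem~\ref{thm:cycloloc}: a $0$-chain $\alpha$ satisfies $\varepsilon^x(\alpha) = 0$ exactly when $F_\alpha = 0$ for every $F \in \C(X,t)$, and symmetrically for $\varepsilon^y$. A second key observation is that $G \in \C(Y,t)$ is constant on each connected component of $\cO^y$, hence $G_{\partial^y(c)} = 0$ for every $c \in C_1(\cO)$, and symmetrically $F_{\partial^x(c)} = 0$. Combined with the decomposition $\partial = \partial^x + \partial^y$ of Lemma~\ref{lem:decompositioncohomologie}, this allows me to swap $\partial^y$ for $\partial$ when pairing against $F$ and $\partial^x$ for $\partial$ when pairing against $G$, yielding $F_{\partial^y(p^x_a)} = F_{\partial(p^x_a)}$ and $G_{\partial^x(p^y_a)} = G_{\partial(p^y_a)}$.

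The hypothesis $\varepsilon^x(\partial(p^x_a) + (x,y) - a) = 0$ then unfolds into $F_{\partial(p^x_a)} = F_a - F_{(x,y)}$ for every $F \in \C(X,t)$; averaging over $a \in \cO$ gives $H_{\gamma_x(p^x)} = F_{(x,y)} - F_\sinv$. Adding the trivial identity $H_\sinv = F_\sinv + G_\sinv$ makes the $F_\sinv$ terms cancel and leaves $H_{\sinv + \gamma_x(p^x)} = F_{(x,y)} + G_\sinv$, which lies in $k(x)$ because $F_{(x,y)} \in k(x)$ and $G_\sinv \in \kinv \subset k(x)$ by Lemma~\ref{lem:definitionsinv}. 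A symmetric computation produces $H_{\gamma_y(p^y)} = G_{(x,y)} - G_\sinv \in k(y)$, and summing the two identities returns $F_{(x,y)} + G_{(x,y)} = H_{(x,y)}$, which completes the verification.

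I do not anticipate a real conceptual obstacle; the main subtlety is explaining why $\sinv$ is added to the $x$-side but not to the $y$-side. It plays exactly the role of the counterterm needed to absorb the residual average $F_\sinv$ coming from averaging the telescoping identity over the orbit, while the leftover $G_\sinv$ remains in $k(x)$ only because it is a Galois invariant. In other words, the asymmetric appearance of $\sinv$ in the statement is forced by Lemma~\ref{lem:definitionsinv}.
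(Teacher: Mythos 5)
Your proof is correct, and it takes a genuinely different route from the paper's. The paper works at the level of $0$-chains: it first corrects the families $p^x, p^y$ by chains $c^x_a \in C_1(\cO^x)$, $c^y_a \in C_1(\cO^y)$ (via Proposition~\ref{prop:bound_connected}) so as to reduce to a single family $q$ with $\partial(q_a)=a-(x,y)$, absorbs the discrepancy using Lemma~\ref{lem:trans_pseudodecoupl} together with Theorem~\ref{thm:cycloloc}, obtains the exact chain identity $(x,y)=\left(\sinv+\gamma_x(q)\right)+\gamma_y(q)$ by summing over the orbit, and finally verifies the Galois-equivariance hypotheses of Lemma~\ref{lem:cocycle_cond} using Proposition~\ref{prop:group_bound}. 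You instead verify the definition of a pseudo-decoupling head-on: you fix $H$ equivalent to $F+G$ with $F\in\C(X,t)$, $G\in\C(Y,t)$ (this is in fact immediate from the definition of a Galois decoupling, so the detour through Proposition~\ref{prop:decoupl_extension} is not strictly needed), use the splitting $\partial=\partial^x+\partial^y$ of Lemma~\ref{lem:decompositioncohomologie} together with the vanishing of $F$ on $\partial^x$-boundaries and of $G$ on $\partial^y$-boundaries, read the hypotheses through the augmentations (note that the implication you actually use, $\varepsilon^x(\alpha)=0 \Rightarrow F_\alpha=0$, is the easy grouping-by-components direction, not the Vandermonde converse), and invoke Lemma~\ref{lem:definitionsinv} to place $G_\sinv$ in $\kinv$. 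This yields the closed forms $H_{\sinv+\gamma_x(p^x)}=F_{(x,y)}+G_\sinv$ and $H_{\gamma_y(p^y)}=G_{(x,y)}-G_\sinv$, which is a real gain in transparency: it shortens the argument (no need for Lemmas~\ref{lem:trans_pseudodecoupl} and~\ref{lem:cocycle_cond}, nor for the correcting chains), reduces the Galois-theoretic input to Lemma~\ref{lem:definitionsinv} alone, and explains exactly why $\sinv$ is attached to the $x$-side: it compensates the residual average $F_\sinv$, while the leftover $G_\sinv$ is harmless precisely because it is a Galois invariant. What the paper's chain-level argument buys in exchange is structural information about the chains themselves — they satisfy an exact $0$-chain identity after correction by $1$-cycles and are stable under $G_x$, $G_y$ up to chains cancelling decoupled fractions — which keeps the proof uniform with the homological and Galois-equivariant machinery of Section~\ref{sect:decoupling} feeding into Theorem~\ref{thm:pseudec_is_dec}. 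Both arguments rely on the standing finiteness assumption on $\cO$, needed for $\sinv$ and the averages to make sense.
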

\begin{proof}
  Let $(p^x_a)_{a \in \cO}$ and $(p^y_a)_{a \in \cO}$ be two families that
  satisfy the conditions of the theorem. For all $a$ in $\cO$, we
  have $\varepsilon^x(\partial(p^x_a) + (x,y) - a) = 0$. By Proposition
  \ref{prop:bound_connected} applied to $\Gamma = \cO^x$, there exists
  $c^x_a \in C_1(\cO^x)$ such that $\partial(c^x_a) = \partial(p^x_a)- a + (x,y)$,
  which rewrites as $\partial(p^x_a - c^x_a) = a - (x,y)$. We denote by
  $c^x$ the family of $1$-chains $(c^x_a)_a$. Note that $\partial^y(c^x_a) = 0$ for all $a $ in $\cO$ so that   $\gamma_x(c^x) = 0$.
  Similarly, there exists a family  $c^y=(c^y_a )_{a \in \cO}$ in $C_1(\cO^y)^{\cO}$   such that
  we have  $\partial(p^y_a - c^y_a) = a - (x,y)$   for all $a$ in $\cO$ and
  $\gamma_y(c^y) = 0$.
  Therefore,  using the linearity of $\gamma_x$ and $\gamma_y$, we find
   \[(\sinv + \gamma_x(p^x), \gamma_y(p^y)) = (\sinv + \gamma_x(p^x-c^x), \gamma_y(p^x-c^x) + \gamma_y((p^y-c^y)-(p^x-c^x))).\]
   By  construction of $c^x$ and $c^y$,  the $1$-chain $(p^y_a-c^y_a)-(p^x_a-c^x_a)$ is a $1$-cycle for all $a$.
   Hence, by linearity of the boundary homomorphim $\partial$, the $1$-chain
   $t-\frac{1}{|\cO|} \sum_a \left(   (p^y_a-c^y_a)-(p^x_a-c^x_a) \right)$ is a $1$-cycle. Hence,
  by~(3) of Theorem~\ref{thm:cycloloc},
  the $0$-chain $\gamma_y((p^y-c^y)-(p^x-c^x))$ cancels decoupled fractions.

  Therefore, by Lemma~\ref{lem:trans_pseudodecoupl}, it only remains
  to show that the pair $(\sinv + \gamma_x(p^x-c^x), \gamma_y(p^x-c^x))$
  is a pseudo-decoupling of $(x,y)$. Denote by $q=(q_a)_{a \in \cO}$ the
  family of $1$-chains $p^x-c^x$. Then,
  \[  a - (x,y) = \partial(p^x_a-c^x_a) = \partial(q_a) = \partial^y(q_a) + \partial^x(q_a). \]
  Summing this identity over the orbit yields
  \[\sum_{a \in \cO} a - |\cO| (x,y) = \sum_{a \in \cO} \partial^y(q_a) + \sum_{a \in \cO} \partial^x(q_a), \]
  which can be rewritten as
    \[(x,y) = \left(\sinv + \gamma_x (q) \right) + \gamma_y(q). \]

    In order to conclude that the pair $(\sinv + \gamma_x (q), \gamma_y(q))$ is a pseudo-decoupling, we just need to check that,
    for all $\sigma_x$ in $G_x$, the $0$-chain $\sigma_x \cdot (\sinv + \gamma_x(q)) - (\sinv + \gamma_x(q))$
    cancels decoupled fractions, and that, for all $\sigma_y$ in $G_y$, the $0$-chain
    $\sigma_y \cdot \gamma_y(q) - \gamma_y(q)$ cancels decoupled fractions  and apply Lemma~\ref{lem:cocycle_cond}. Let $\sigma_x$ be in $G_x$. Then by compatibility of $G$ with the
    boundaries (Proposition~\ref{prop:group_bound}), we compute
    \begin{align*}
      \sigma_x \cdot \gamma_x(q) &= -\frac{1}{|\cO|} \sum_{a \in \cO} \partial^y(\sigma_x \cdot q_a) \\
                                 &= -\frac{1}{|\cO|} \sum_{a \in \cO} \partial^y(q_{\sigma_x \cdot a})
                                   - \frac{1}{|\cO|} \sum_{a \in \cO} \partial^y(\sigma_x \cdot q_a - q_{\sigma_x \cdot a}).
    \end{align*}

    The homomorphism $\sigma_x$ is a bijection on the vertices of $\calO$, so
    the first sum on the right hand-side is equal to $\gamma_x(q)$ so that
    \begin{equation}\label{eq:pseudodecouplingproof}
      \sigma_x \cdot \gamma_x(q) -\gamma_x(q) = \partial^y\left(  - \frac{1}{|\cO|} \sum_{a \in \cO} (\sigma_x \cdot q_a - q_{\sigma_x \cdot a}) \right).
    \end{equation}

    Now, that since $\sigma_x$ fixes $x$, we have  $\sigma_x \cdot (x,y) = (x,v)$ for some $v$.
    Thus there exists $c$ in $C_1(\cO^x)$ such that $\sigma_x \cdot (x,y) - (x,y) = \partial(c)$.
    Then, for all $a \in \cO$, we have  \[
        \partial(\sigma_x \cdot q_a - q_{\sigma_x \cdot a} + c) =
        (\sigma_x \cdot a - \sigma_x \cdot (x,y))
        - (\sigma_x \cdot a - (x,y))
        + (\sigma_x \cdot (x,y) - (x,y)) = 0.
      \]
      Therefore, the $1$-chain $\sigma_x \cdot q_a - q_{\sigma_x \cdot
        a} + c$ is a $1$-cycle for all $a$ so that $-\frac{1}{|\cO|} \sum_a
      \left( \sigma_x \cdot q_a - q_{\sigma_x \cdot a} + c \right)$ is also
      a $1$-cycle by linearity of the boundary homorphism $\partial$.
      Moreover, since $c$ is in $C_1(\cO^x)$, we have $\partial^y(\sigma_x
      \cdot q_a - q_{\sigma_x \cdot a}) = \partial^y(\sigma_x \cdot q_a -
      q_{\sigma_x \cdot a} + c)$ for all $a$ in $\cO$, so from
      \eqref{eq:pseudodecouplingproof}, we conclude that $\sigma_x \cdot
      \gamma_x(q) -\gamma_x(q) =\partial^y\left(-\frac{1}{|\cO|} \sum_a (
        \sigma_x \cdot q_a - q_{\sigma_x \cdot a} + c)
      \right)$. Theorem~\ref{thm:cycloloc} implies that $\sigma_x \cdot
      \gamma_x(q) -\gamma_x(q)$ cancels decoupled fractions.  Finally, as
      $\sinv$ is fixed by $\sigma_x$, we deduce that $\sigma_x \cdot (\sinv
      + \gamma_x(q)) - (\sinv + \gamma_x(q)) = \sigma_x \cdot \gamma_x(q) -
      \gamma_x(q)$ cancels decoupled fractions.  The proof for $\sigma_y
      \gamma_y(q) -\gamma_y(q)$ is completely analogous.
  \end{proof}

  We can now prove the existence of a decoupling of $(x,y)$ for any
finite orbit.
\begin{proof}[Proof of Theorem~\ref{thm:decoupling_orb}]
  The graph $\cO$ is connected. Hence, for every $a \in \cO$, there
  exists a path from $(x,y)$ to $a$. Denoting by $p_a^x=p_a^y$ the
  associated $1$-chain, we have $\partial(p^x_a) = a - (x,y)$ (see
  Example \ref{exa:pathandonechains}). Therefore, the families
  $(p^x_a)_{a \in \cO}$ and $(p^y_a)_{a \in \cO}$ satisfy the
  assumptions of Theorem~\ref{thm:path_decoupl} leading to the existence
  of a pseudo-decoupling.  Theorem~\ref{thm:pseudec_is_dec} establishes
  the existence of a decoupling obtained from a pseudo-decoupling
  concluding the proof of Theorem~\ref{thm:decoupling_orb}: if the orbit
  is finite, the pair $(x,y)$ always admits a decoupling in the orbit.
\end{proof}

In \cite[Definition~6.1]{BMEFHR}, the authors introduce the
notion of a multiplicative decoupling of a regular fraction. In our
context, we say that a regular fraction $H(X,Y)$ has a multiplicative
Galois decoupling if and only if there exists a positive integer $m$
such that \[H(X,Y)^m =F(X,t)G(Y,t) +\Ktld(X,Y,t)P(X,Y,t),\] for some
rational fractions $F(X,t),G(Y,t)$ and a regular fraction
$P(X,Y,t)$. 

Theorem~\ref{thm:decoupling_orb} yields a decoupling of $(x,y)$
with $0$-chains $\wgammax, \wgammay$ and $\alpha$ having rational
coefficients. Let $d$ be the common denominator of the rational
coefficients of $\wgammax, \wgammay$ and $\alpha$ which is easily seen
to divide the size of the orbit in the proof of Theorem~\ref{thm:path_decoupl}
when the input  $1$-chains in   $p^x$ and $p^y$ all have
integer coefficients. Then, the $0$-chains
$d\wgammax, d\wgammay, d\alpha$ have integer coefficients.
For such chains, one can define a multiplicative evaluation:

 For a $0$-chain $c = \sum_{u,v} c_{u,v} (u,v)$ with integer
coefficients, define \[H^{\text{mul}}_{c}= \prod_{u,v}
H(u,v,\invS)^{c_{u,v}}.\]

As a direct corollary of the existence of a decoupling in the orbit,
the following lemma gives an explicit procedure to test and construct,
when it exists, the multiplicative Galois decoupling of a regular
fraction $H$.

\begin{lem}
The following statements are equivalent:
\begin{itemize}
\item $H(X,Y,t)$ has a multiplicative Galois decoupling.
\item There exists a a positive integer $m$ such that ${\left( H^{\text{mul}}_{d \alpha} \right)}^m=1$.
\end{itemize}
\end{lem}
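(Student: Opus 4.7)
My plan is to prove each direction by transporting the additive theory developed above to a multiplicative one, using the fact that evaluation turns sums of $0$-chains into products. The key observation is that for any $0$-chain $c = \sum c_{u,v}(u,v)$ with integer coefficients and any $0$-chain $c'$ with integer coefficients one has $H^{\text{mul}}_{c+c'} = H^{\text{mul}}_c \cdot H^{\text{mul}}_{c'}$, and the analogue of Lemma~\ref{lem::commutationgrp_eval_orb} is $\sigma(H^{\text{mul}}_c) = H^{\text{mul}}_{\sigma \cdot c}$ for $\sigma \in G$, both of which follow at once from the definition.

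For the forward direction, I would assume $H^m = F(X,t)G(Y,t) + \Ktld(X,Y,t)P(X,Y,t)$ for some $m \geq 1$, some rational fractions $F \in \C(X,t)$, $G \in \C(Y,t)$ and some regular fraction $P$. Evaluating on the orbit, $H(u,v,\invS)^m = F(u,\invS)\,G(v,\invS)$ for every $(u,v) \in \cO$. Hence, for any integer $0$-chain $c$, $(H^{\text{mul}}_c)^m = \bigl(\prod_{(u,v)} F(u,\invS)^{c_{u,v}}\bigr) \cdot \bigl(\prod_{(u,v)} G(v,\invS)^{c_{u,v}}\bigr)$. Grouping the exponents of $F$ by the first coordinate yields the factor $\prod_{u} F(u,\invS)^{\varepsilon^x(c)_u}$, and similarly for $G$ with $\varepsilon^y(c)$. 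Since $\alpha$ cancels decoupled fractions, Theorem~\ref{thm:cycloloc} gives $\varepsilon^x(\alpha) = \varepsilon^y(\alpha) = 0$, and the same for $d\alpha$. Therefore $(H^{\text{mul}}_{d\alpha})^m = 1$.

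For the reverse direction, I would start from the decoupling $(x,y) = \wgammax + \wgammay + \alpha$ of Theorem~\ref{thm:decoupling_orb} and multiply through by $d$ to obtain an identity of integer $0$-chains. Applying $H^{\text{mul}}$ and using additivity of the exponents yields, in $k(\cO)$, the identity
\[
H(x,y,\invS)^{d} \;=\; H^{\text{mul}}_{d\wgammax} \cdot H^{\text{mul}}_{d\wgammay} \cdot H^{\text{mul}}_{d\alpha}.
\]
Since $d\wgammax$ is fixed by $G_x$, the multiplicative commutation of the action of $G$ gives that $H^{\text{mul}}_{d\wgammax}$ is fixed by $G_x$, hence lies in $k(\cO)^{G_x} = k(x)$ by Galois correspondence; symmetrically $H^{\text{mul}}_{d\wgammay} \in k(y)$. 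Assuming now $(H^{\text{mul}}_{d\alpha})^m = 1$, raising the displayed identity to the power $m$ eliminates the $\alpha$-factor and produces $H(x,y,\invS)^{dm} = A \cdot B$ with $A \in k(x)$ and $B \in k(y)$. Lifting via the isomorphism $\phi$ of Proposition~\ref{lem:liftingidentities}, which maps $\C(X,t)$ onto $k(x)$ and $\C(Y,t)$ onto $k(y)$, provides $F(X,t) \in \C(X,t)$ and $G(Y,t) \in \C(Y,t)$ with $H(X,Y,t)^{dm} \equiv F(X,t)G(Y,t)$ modulo $\Ktld$, i.e.\ a multiplicative Galois decoupling of $H$ with exponent $dm$.

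The argument is essentially routine once the multiplicative analogues of the commutation and additivity properties are observed. The only minor point to be careful about is that $d$ must clear the denominators of \emph{all} three chains $\wgammax$, $\wgammay$, $\alpha$ simultaneously so that the multiplicative evaluations of each factor are well-defined; this is exactly the definition of $d$ given just before the statement of the lemma.
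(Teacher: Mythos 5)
Your proof is correct, and its second half is the paper's argument verbatim in substance: multiplicative evaluation of the decoupling $d\cdot(x,y)=d\wgammax+d\wgammay+d\alpha$, the multiplicative analogue of Lemma~\ref{lem::commutationgrp_eval_orb} plus the Galois correspondence to place $H^{\text{mul}}_{d\wgammax}$ in $k(x)$ and $H^{\text{mul}}_{d\wgammay}$ in $k(y)$, and Proposition~\ref{lem:liftingidentities} to pass between the identity in $k(\cO)$ and a genuine identity modulo $\Ktld$ (the paper invokes the lifting at the outset as an equivalence, you invoke it at the end; same content). The only genuine divergence is in the forward direction: the paper decomposes $d\alpha$ as a $\Z$-linear combination of $0$-chains induced by bicolored loops via Proposition~\ref{prop:alt_cycles} (relying on the footnote there that integer chains admit integer decompositions) and then kills each factor by a multiplicative telescoping as in Example~\ref{exa:bicol_cycle}, whereas you use characterization (2) of Theorem~\ref{thm:cycloloc}, namely $\varepsilon^x(\alpha)=\varepsilon^y(\alpha)=0$, and group the exponents of $F$ (resp.\ $G$) along the cliques of $\cO^x$ (resp.\ $\cO^y$). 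Your variant is slightly more economical, since it bypasses the loop decomposition and its integrality refinement altogether; both arguments share the same implicit genericity convention that the multiplicative evaluations involved are nonzero, so neither has an advantage on that point.
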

\begin{proof}
From Proposition~\ref{lem:liftingidentities}, the regular fraction
$H(X,Y,t)$ admits a multiplicative Galois decoupling if and only if
there exist a positive integer $m$, $f(x) \in k(x)$ and $g(y) \in
k(y)$ such that $H_{(x,y)}^m=f(x)g(y)$.

 Let us assume that $H$ admits a multiplicative Galois decoupling and
let $m$ be a positive integer such that $H(X,Y)^m =F(X)G(Y)
+K(X,Y,t)P(X,Y,t)$ for some rationals fractions $F,G$ and a regular
fraction $P$. By multiplicative evaluation of the previous identity on
$d\alpha$, we find that ${\left(H^{\text{mul}}_{d\alpha} \right)}^m=
{\left(F(X)^{\text{mul}}_{d \alpha } \right)}^m
{\left(G(Y)^{\text{mul}}_{d \alpha} \right)}^m.$ It is clear that
$d\alpha$ is a $0$-chain with integer coefficients that cancels
decoupled fractions. By Proposition~\ref{prop:alt_cycles}, the chain
$d \alpha$ is a $\Z$-linear combination of $0$-chains induced by
bicolored loops. One proves easily by a multiplicative analogue of
Example~\ref{exa:bicol_cycle} that if $\beta$ is a $0$-chain
induced by a bicolored loop then $F(X)^{\text{mul}}_{\beta}=
G(Y)^{\text{mul}}_{\beta}=1$ which concludes the proof of the first
implication.

Conversely, if there exists a positive integer $m$ such that
${\left(H^{\text{mul}}_{d \alpha}\right)}^m=1$, the decoupling
$d \cdot (x,y)=d \wgammax + d \wgammay + d \alpha$ yields by multiplicative
evaluation \[{\left(H^{\text{mul}}_{d(x,y)}\right)}^m={H_{(x,y)}}^{dm}
={\left(H^{\text{mul}}_{d\wgammax}\right)}^m {\left(H^{\text{mul}}_{d
\wgammay}\right)}^m.\] By definition of the decoupling of
$(x,y)=\wgammax +\wgammay +\alpha$, we find that $\sigma\cdot d
\wgammax= d \sigma\cdot \wgammax=d \wgammax$ for all $\sigma \in G_x$.
A multiplicative analogue of Lemma~\ref{lem::commutationgrp_eval_orb} implies easily that
$H^{\text{mul}}_{d \wgammax}$ is left fixed by $G_x$ so that
$H^{\text{mul}}_{d\wgammax}$ belongs to $k(x)$. A similar argument
shows that $H^{\text{mul}}_{d \wgammay}$ belong to $k(y)$ which
concludes the proof.
\end{proof}

\subsection{Effective construction} \label{subsect:effective_decoupling}

The evaluation of a regular fraction at a vertex of the orbit, that
is, at a pair of algebraic elements in $\K$ might be difficult from an
algorithmic point of view since this requires to compute in an
algebraic extension of $\Q(x,y)$. This is however the cost we may have
to pay in our decoupling procedure if we choose random families of
$1$-chains satisfying the assumptions of
Theorem~\ref{thm:path_decoupl}.

In this section, we show how, under mild assumption on the
\emph{distance transitivity} of the graph of the orbit, one can
construct a decoupling in the orbit expressed in terms of specific
$0$-chains that we call \emph{level lines}. These level lines regroup
vertices of the orbit that satisfy the same polynomial
relations. Therefore, one can use symmetric functions and efficient
methods from computer algebra to evaluate regular fractions on these
level lines (see Appendix~\ref{sect:decoupl_comp}).

\begin{defi}
  Let $a$ be a vertex
  of $\cO$. We define the \emph{$x$-distance}
  of $a$ to be $d_x(a) = \inf\{d(a,a') \st a' \sim^x (x,y)\}$,
  that is, the length of a shortest path in $\cO$ from $a$ to the clique $(x,\cdot)$.
  
  Such a shortest path $(g_0, g_1, \dots, g_r)$,
  that is, $g_r = a$, $g_0 \sim^x (x,y)$ and $d_x(a) = r$,
 is  called an \emph{$x$-geodesic} for $a$. Note that  we have $d_x(g_i) = i$ for all $i=0,\dots,r$.  We  denote by
  $\calP^x_a$ the set of $1$-chains associated with $x$-geodesics for
  $a$ as in Example \ref{exa:pathandonechains}.

  The \emph{$x$-level lines} $\levX_0, \levX_1, \dots$ are defined by
  $\levX_i = \{a \in \cO \st d_x(a) = i\}$, and we associate to the level line $\levX_i$   the $0$-chain
  $X_i = \sum_{a \in \levX i} a$. Analogously, we define the $y$-distance $d_y$,
  the set $\calP^y_a$ of $y$-geodesics for $a$, the $y$-level lines
  $\levY_0, \levY_1, \dots$, and denote by $Y_i$ the $0$-chain
  associated with the $y$-level line $\levY_i$.
\end{defi}

\begin{figure}[h]
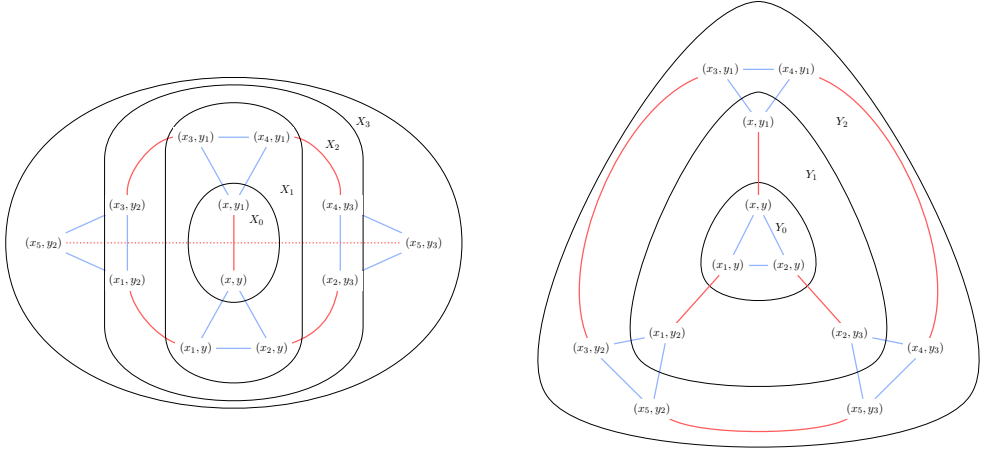

  \centering
  \scalebox{0.4}{\tikzfig{o12_ldn}}
\caption{The level lines for the orbit $\cO_{12}$} \label{fig:llines_o12}
\end{figure}

The level lines can be represented graphically, as in Figure~\ref{fig:llines_o12},
or in Section\ref{subsect:examples} or in the examples of Appendix~\ref{appendix:other_orbits}.
The level lines and geodesics are our key tools
to construct relevant collections of $1$-chains satisfying the
conditions of Theorem~\ref{thm:path_decoupl}. First, the boundaries of a geodesic are easy to express.
\begin{lem} \label{lem:alt_geodesic}
  Let $a$ be a vertex of $\cO$ and $(g_0,g_1,\dots,g_r)$ an
  $x$-geodesic for $a$. Then $g_{i} \sim^y g_{i-1}$ if and only if $i$
  is odd.
  Similarly, for $(g_0,g_1,\dots,g_r)$ an $y$-geodesic for $a$, then
  $g_{i} \sim^x g_{i-1}$ if and only if $i$ is odd.
\end{lem}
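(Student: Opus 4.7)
The plan is to combine two observations: any geodesic in $\cO$ must strictly alternate between $x$-adjacencies and $y$-adjacencies; and the first step of an $x$-geodesic must itself be a $y$-adjacency. The conclusion then follows by a trivial induction.

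For the alternation, I would proceed by contradiction. Suppose two consecutive edges of an $x$-geodesic $(g_0, g_1, \dots, g_r)$ share the same color, say $g_{i-1} \sim^x g_i$ and $g_i \sim^x g_{i+1}$. As recalled in Subsection~\ref{subsect:theorbit}, the monochromatic connected components of $\cO$ are cliques, i.e.\ $\sim^x$ is an equivalence relation on $\cO$. Transitivity yields $g_{i-1} \sim^x g_{i+1}$. Since shortest paths in an unweighted graph have no repeated vertex, $g_{i-1} \neq g_{i+1}$, so this adjacency is an edge of $\cO$, and replacing the three vertices $g_{i-1}, g_i, g_{i+1}$ by the edge $(g_{i-1}, g_{i+1})$ produces a path of length $r-1$ from $g_0 \sim^x (x,y)$ to $a$; this contradicts $d_x(a)=r$. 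The same argument rules out two consecutive $y$-adjacencies. Hence the colors strictly alternate along $(g_0, \dots, g_r)$.

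Next, I would show that if $r \geq 1$ then $g_1 \sim^y g_0$. Suppose instead that $g_1 \sim^x g_0$. Combining this with $g_0 \sim^x (x,y)$ and using again transitivity of $\sim^x$, we obtain $g_1 \sim^x (x,y)$. But then the sub-path $(g_1, g_2, \dots, g_r)$ is a path of length $r-1$ from a vertex $x$-adjacent to $(x,y)$ to $a$, contradicting the definition of $d_x(a)=r$. So the first step is a $y$-adjacency, which is the case $i=1$ of the claim. Combined with the alternation, a straightforward induction gives $g_i \sim^y g_{i-1}$ exactly when $i$ is odd. The proof for $y$-geodesics is obtained by swapping the roles of $x$ and $y$.

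The argument is routine once the two key facts (clique structure of the monochromatic components, and non-repetition of vertices in a shortest path) are in place, so I anticipate no real obstacle; the only point requiring a small amount of care is ruling out the degenerate case $g_{i-1}=g_{i+1}$ in the alternation step, which is handled by the standard fact that a geodesic cannot revisit a vertex.
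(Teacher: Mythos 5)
Your proof is correct and follows essentially the same route as the paper: transitivity of the monochromatic equivalence relations rules out two consecutive edges of the same color, and the same argument forbids an initial $x$-adjacency (since $g_0\sim^x(x,y)$ would give $g_1\sim^x(x,y)$), fixing the parity. The only difference is that you explicitly address the degenerate case $g_{i-1}=g_{i+1}$, which the paper leaves implicit.
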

\begin{proof}
  Let $g = (g_0, g_1, \dots, g_r)$ be an $x$-geodesic of length $r$.
  Assume that there exists $i$ such that $g_i \sim^x g_{i+1} \sim^x g_{i+2}$.
  By transitivity of $\sim^x$,
  this implies that $g_i \sim^x g_{i+2}$, contradicting the minimality of the
  geodesic $g$. Similarly, if there exists $i$ such that $g_i \sim^y g_{i+1} \sim^y g_{i+2}$
  then $g_i \sim^y g_{i+2}$, also contradicting the minimality of the geodesic.
  Therefore, the adjacency types of the edges of the geodesic alternate. Finally, if $g_0 \sim^x g_1$, then this also contradicts the minimality of
  the geodesic because then $(x,y) \sim^x g_1$. This fixes the starting
  parity of the alternating adjacency types of edges of the geodesic,
  and thus $g_i \sim^y g_{i-1}$ if and only if $i$ is odd. The case of an $y$-geodesic is symmetric.
\end{proof}

\begin{cor} \label{cor:bound_geodesic}
 Let $a$ be a vertex  of $\cO$, $(g_0,g_1,\dots,g_r)$ an $x$-geodesic for $a$ and   $g$ its associated $1$-chain, then
  $\displaystyle \partial^y(g) = \sum_{\substack{1 \le i \le r \\ i
      \text{ odd }}} g_i - g_{i-1}$.
  Analogously, for $(g_0,g_1,\dots,g_r)$ a $y$-geodesic for $a$ then
  $\displaystyle \partial^x(g) = \sum_{\substack{1 \le i \le r \\ i
      \text{ odd }}} g_i - g_{i-1}$.
\end{cor}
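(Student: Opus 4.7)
The plan is to deduce the corollary directly from Lemma~\ref{lem:alt_geodesic} and the decomposition of the boundary homomorphism established in Lemma~\ref{lem:decompositioncohomologie}. First I would write the $1$-chain associated with the $x$-geodesic $(g_0,g_1,\dots,g_r)$ as the sum $g = \sum_{i=1}^r (g_{i-1}, g_i)$, using the arc-notation convention introduced in Subsection~\ref{subsec:basicgraphhomology}.

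Next, I would recall that $\partial^y$ is extended by zero on $C_1(\cO^x)$, so only the arcs $(g_{i-1},g_i)$ which are $y$-adjacencies contribute to $\partial^y(g)$. By Lemma~\ref{lem:alt_geodesic}, applied to the $x$-geodesic, these are exactly the arcs with odd index $i$. On each surviving arc, the boundary formula gives $\partial^y((g_{i-1},g_i)) = \partial((g_{i-1},g_i)) = g_i - g_{i-1}$. Summing yields
\[
\partial^y(g) = \sum_{\substack{1 \le i \le r \\ i \text{ odd}}} (g_i - g_{i-1}),
\]
as claimed. The proof for a $y$-geodesic is entirely symmetric, using the second half of Lemma~\ref{lem:alt_geodesic} and swapping the roles of $\partial^x$ and $\partial^y$.

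There is no real obstacle: the statement is essentially a bookkeeping consequence of the alternation result. The only point demanding a line of care is the sign convention on arcs, to ensure the identity $\partial^y((g_{i-1},g_i)) = g_i - g_{i-1}$ holds regardless of whether $(g_{i-1},g_i)$ or $(g_i,g_{i-1})$ lies in the fixed orientation $E^+$ of $\cO$; this is precisely why the convention of Subsection~\ref{subsec:basicgraphhomology} was introduced, and it makes the formula orientation-independent.
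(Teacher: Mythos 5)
Your argument is correct and is exactly the (omitted) intended proof: the paper states this as an immediate corollary of Lemma~\ref{lem:alt_geodesic}, and you simply spell out the bookkeeping, using the decomposition of $\partial$ into $\partial^x+\partial^y$ (with $\partial^y$ vanishing on $C_1(\cO^x)$) and the sign convention on arcs so that $\partial^y$ of each odd-index arc is $g_i-g_{i-1}$. Nothing is missing; the care you take with the orientation-independent arc notation is precisely the point of the convention in Subsection~\ref{subsec:basicgraphhomology}.
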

Recall from Section \ref{subsec:basicgraphhomology} that any graph
automorphism $\tau$ of $\cO$ acts on the vertex $a$ of $\cO$
coordinate-wise and that we denote this action $\tau \cdot a$. We
extend the action of $\tau$ to any path $(a_1,\dots,a_{n+1})$ as
follows
\[ \tau \cdot (a_1,\dots,a_{n+1}) = (\tau\cdot a_1, \dots, \tau \cdot
a_{n+1}).\] Note that this action is compatible with the action of
graph automorphisms on $1$-chains defined in Section
\ref{subsec:basicgraphhomology}. Indeed, if $p$ is the $1$-chain
associated with the path $(a_1,\dots,a_{n+1})$ as in Example
\ref{exa:pathandonechains} then $\tau\cdot p$ is the $1$-chain
associated with the path $\tau \cdot (a_1,\dots,a_{n+1})$.

The following lemma shows that the geodesics and level lines satisfy
some stability properties with respect to the action of elements of
$G_x$ and $G_y$ viewed as subgroups of the group of graph
automorphisms of $\cO$.

\begin{lem} \label{lem:gxy_respect_lines}
  Let $\sigma_x$ be in $G_x$ and $a$ in $\cO$.
  Then $d_x(\sigma_x \cdot a) = d_x(a)$. Moreover, if $(g_0,\dots,g_r)$ is an $x$-geodesic for $a$, then
   $\sigma_x \cdot (g_0,\dots,g_r) $  an $x$-geodesic for $\sigma_x \cdot a$.
  Analogously, if $\sigma_y$ is in $G_y$ and $a$ in $\cO$, then $d_y(\sigma_y \cdot a) = d_y(a)$,
  and if $(g_0,\dots,g_r)$ an $y$-geodesic for $a$, so is $\sigma_y \cdot (g_0,\dots,g_r)$ for $\sigma_y \cdot a$.
\end{lem}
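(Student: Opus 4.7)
The plan is to exploit that any $\sigma_x \in G_x$ acts on $\cO$ as a graph automorphism (by Lemmas~\ref{lem:hom_adj} and~\ref{lem:orb_normal}) which moreover fixes $x$, and therefore preserves setwise the $x$-clique $C_x := \{(x,v) \in \cO\}$ of all vertices of $\cO$ with first coordinate $x$. The entire statement then reduces to the general principle that graph automorphisms preserve distances and send shortest paths to shortest paths.

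More precisely, I would first observe that $C_x$ is stabilized setwise by $\sigma_x$: if $(x,v) \in \cO$ then $\sigma_x \cdot (x,v) = (x, \sigma_x(v))$, and this lies in $\cO$ by Lemma~\ref{lem:orb_normal}, hence in $C_x$. By definition, $d_x(a) = d(a, C_x)$, where $d$ is the graph distance in $\cO$. Since $\sigma_x$ is a graph automorphism (Lemma~\ref{lem:hom_adj} ensures it preserves both $\sim^x$ and $\sim^y$), applying $\sigma_x$ to any path from $a$ to $C_x$ yields a path of the same length from $\sigma_x \cdot a$ to $\sigma_x \cdot C_x = C_x$, giving $d_x(\sigma_x \cdot a) \le d_x(a)$. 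Applying the same argument to $\sigma_x^{-1} \in G_x$ to the vertex $\sigma_x \cdot a$ yields the reverse inequality, whence $d_x(\sigma_x \cdot a) = d_x(a)$.

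For the second assertion, let $(g_0, g_1, \dots, g_r)$ be an $x$-geodesic for $a$, so $r = d_x(a)$, $g_r = a$, $g_0 \sim^x (x,y)$ (equivalently $g_0 \in C_x$), and consecutive vertices are adjacent in $\cO$. Applying $\sigma_x$ coordinate-wise, the sequence $\sigma_x \cdot (g_0, \dots, g_r) = (\sigma_x \cdot g_0, \dots, \sigma_x \cdot g_r)$ is again a path in $\cO$ by Lemma~\ref{lem:hom_adj}, its final vertex is $\sigma_x \cdot a$, its initial vertex $\sigma_x \cdot g_0$ lies in $C_x$ (so $\sigma_x \cdot g_0 \sim^x (x,y)$), and its length $r$ equals $d_x(\sigma_x \cdot a)$ by the distance statement just proved. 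Hence it is an $x$-geodesic for $\sigma_x \cdot a$.

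The $y$-case is entirely symmetric: replacing $G_x$ by $G_y$, the clique $C_x$ by $C_y := \{(u,y) \in \cO\}$, and $\sim^x$ by $\sim^y$, the same two-step argument (distance preservation via $\sigma_y$ and $\sigma_y^{-1}$, followed by the image of a geodesic being a geodesic) yields $d_y(\sigma_y \cdot a) = d_y(a)$ and the statement about $y$-geodesics. I expect no significant obstacle here; the only thing one must be careful about is to invoke Lemma~\ref{lem:orb_normal} to ensure that $\sigma_x \cdot g_0$ remains in $\cO$ (and not merely in $\K \times \K$) so that it genuinely belongs to the clique $C_x \subset \cO$.
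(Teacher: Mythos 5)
Your proof is correct and follows essentially the same route as the paper: apply the graph automorphism $\sigma_x$ to an $x$-geodesic, note that its initial vertex stays $x$-adjacent to $(x,y)$ because $\sigma_x$ fixes $x$, deduce $d_x(\sigma_x\cdot a)\le d_x(a)$, and obtain equality from the inverse automorphism before concluding that the image path is a geodesic. Your explicit use of $\sigma_x^{-1}$ just spells out what the paper summarizes as ``since $\sigma_x$ is an automorphism''.
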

\begin{proof}
  Assume that $d_x(a) = r$. Then there exists an $x$-geodesic for
  $a$ that is  $(g_0, g_1, \dots, g_r)$ with $g_r=a$. Apply the graph automorphism
  $\sigma_x$ to each of the vertices of this path. Then
  $(\sigma_x \cdot g_0, \sigma_x \cdot g_1, \dots, \sigma_x \cdot g_r)$ with $   \sigma_x \cdot g_r= \sigma_x \cdot a$
  is a path of the orbit. By definition, $g_0 \sim^x (x,y)$, thus
  $\sigma_x \cdot g_0 \sim^x (x,y)$ since $x$ is fixed by $G_x$.
  Therefore,
  $d_x(\sigma_x \cdot a) \le r = d_x(a)$. Since $\sigma_x$ is an automorphism,
  we conclude that $d_x(\sigma_x \cdot a) = d_x(a)$. We finally deduce that $\sigma_x \cdot (g_0,\dots,g_r)$
  is a $x$-geodesic for $\sigma_x \cdot a$.
\end{proof}

This observation leads us to define two subgroups of
automorphisms of the graph $\cO$. We denote by $\Aut_x(\cO)$
(resp. $\Aut_y(\cO)$) the subgroup of graph automorphisms of $\cO$
that preserve the $x$ (resp. $y$)-distance and the  adjacency types
\footnote{One can show that this last condition is redundant with the condition on the distance preservation.}.
By definition, any element $\tau$ in $\Aut_x(\cO)$ maps an
$x$-geodesic for $a$ onto an $x$-geodesic for $\tau \cdot
a$. Moreover, a graph automorphism preserve the $x$-distance if and
only if it induces a bijective map from $\levX_i$ to itself for each
$i$.  Analogous results hold for $\Aut_y(\cO)$.

Lemma~\ref{lem:gxy_respect_lines} implies that $G_x$ (resp.  $G_y$) is
isomorphic to a subgroup of $\Aut_x(\cO)$ (resp. $\Aut_y(\cO)$). The
benefit of the groups $\Aut_x(\cO)$ and $\Aut_y(\cO)$ is that, unlike
$G_x$ and $G_y$, they only depend on the graph structure of the orbit,
and thus are more easily computable.  Note however that not all such
graph automorphisms come from a Galois automorphism (see for instance
the Hadamard example in Section~\ref{subsubsect:Hadamard}). We now
state an assumption on the \emph{distance transitivity} of the graph
of the orbit.

\begin{assumption} \label{conj:xyorb_levlines}
  Let $a$ and $a'$ be two pairs of $\cO$. If $d_x(a) = d_x(a')$, then
  there exists  $\sigma_x$ in $\Aut_x(\cO)$ such that $\sigma_x \cdot a = a'$.
  Similarly, if $d_y(a) = d_y(a')$, then there exists
   $\sigma_y$ in $\Aut_y(\cO)$ such that $\sigma_y(a) = a'$.
  In other words, $\Aut_x(\cO)$ (resp. $\Aut_y(\cO)$) acts
  transitively on $\levX_i$ (resp. $\levY_i$) for all $i$.
\end{assumption}

This assumption has been checked for all the finite orbit types
appearing for models with steps in $\{-1,0,1,2\}^2$ as well as for
Hadamard and Fan-models (see the examples in
Section~\ref{subsect:examples} or
Appendix~\ref{appendix:other_orbits}). However, Assumption
\ref{conj:xyorb_levlines} does not always hold as illustrated in the
following example.

\begin{exa}
  Consider the weighted model described by the Laurent polynomial
$S(X,Y) = \left(X+\frac{1}{X} + Y + \frac{1}{Y}\right)^2$.  The kernel
polynomial $\Ktld$ is an irreducible polynomial of degree $4$ in $X$
and in $Y$.  Therefore, the cardinal of $\levY_0$ is $4$ and the only
right coordinate of the elements in $\levY_0$ is $y$. Moreover, each
element of $\levY_0$ is $x$-adjacent to three distinct elements in
$\levY_1$ so the cardinality of $\levY_1$ is $12$.  Now, it is easily
seen that the right coordinates of vertices in $\levY_0~\cup~\levY_1$
are the roots of the polynomial
$\Res(\Ktld(X,y,\invS),\Ktld(X,Y,\invS),X)$. Since $x$ and $y$ are
algebraically independent over $\C$, its irreducible factors in
$\C(x,y)[Y]$ are $\left(Y y -1\right)$, $\left(-y +Y \right)$ ,
$\left(Y^{2} x y +2 Y \,x^{2} y +Y x \,y^{2}+Y x +2 Y y +x y \right)$
and $\left(Y^{2} x y -2 Y \,x^{2} y -3 Y x \,y^{2}-3 Y x -2 Y y +x y
\right)$.  This proves that the cardinality of the set $\mathcal{V}$
of right-coordinates of elements in $\levY_1$ is $5$.
  
   If Assumption \ref{conj:xyorb_levlines} were true for this model
then the transitive action of $\Aut_y(\cO)$ on $\levY_1$ implies that
the sets $K_v = \left\{(u,w) \st \, w=v \mbox{ and } (u,w) \in \levY_1
\right\} \subset \levY_1$ for $v$ in $\mathcal{V}$ are all in bijection.
Indeed, $K_v$ is equal to $\{a \in \cO \st a \sim^y (u,v)\} \cap \levY_1$ for
some $(u,v) \in K_v$.
Therefore, as Assumption \ref{conj:xyorb_levlines} provides $\sigma_y$ in
$\Aut_y(\cO)$ such that $\sigma_y \cdot (u,v) = (u',v') \in K_{v'}$,
its restriction to $K_v$ gives an embedding into $K_{v'}$, because
$\sigma_y$ preserves the $y$-adjacencies and the $y$-distance. 
By symmetry, this proves that $K_v$ and $K_{v'}$ are in bijection.
Since these sets form
a partition of $\levY_1$, this would imply that the cardinality of
$\mathcal{V}$ (5) divides the cardinality of $\levY_1$ (12). A contradiction.
\end{exa}
  
We now show that Assumption~\ref{conj:xyorb_levlines} is sufficient
for $(x,y)$ to admit a decoupling in terms of level lines.

\begin{lem}[Under Assumption \ref{conj:xyorb_levlines}]  \label{lem:bij_desc_paths}
  Let $a$ and $a'$ to be two vertices with $d_x(a) = d_x(a')$. Then
  there is a bijection between $\calP^x_a$ and
  $\calP^x_{a'}$. Analogously, if $a$ and $a'$ satisfy $d_y(a) =
  d_y(a')$, then there is a bijection between $\calP^y_a$ and
  $\calP^y_{a'}$.
\end{lem}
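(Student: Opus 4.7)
The plan is to construct the bijection explicitly from the automorphism provided by Assumption~\ref{conj:xyorb_levlines}, and then verify by symmetry that it admits an inverse. I will only write the argument for $\calP^x_a$ and $\calP^x_{a'}$; the case of $y$-geodesics is entirely analogous.

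First, I would invoke Assumption~\ref{conj:xyorb_levlines} to pick $\sigma_x \in \Aut_x(\cO)$ such that $\sigma_x \cdot a = a'$. The key observation is that $\sigma_x$ sends $x$-geodesics for $a$ to $x$-geodesics for $a'$: given a shortest path $(g_0, g_1, \dots, g_r)$ with $g_r = a$ and $g_0 \sim^x (x,y)$, the image $(\sigma_x \cdot g_0, \dots, \sigma_x \cdot g_r)$ is again a path of length $r$ ending at $\sigma_x \cdot a = a'$, because $\sigma_x$ is a graph automorphism that preserves the adjacency types. Moreover, since $\sigma_x$ preserves the $x$-distance, we have $d_x(\sigma_x \cdot g_0) = d_x(g_0) = 0$, that is, $\sigma_x \cdot g_0 \sim^x (x,y)$. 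Finally, the length $r = d_x(a) = d_x(a')$ is exactly the $x$-distance of $a'$, so the image path is indeed an $x$-geodesic for $a'$.

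Next, recall from Example~\ref{exa:pathandonechains} and Section~\ref{subsec:basicgraphhomology} that the $1$-chain associated with a path $(g_0,\dots,g_r)$ is $\sum_{i=1}^r (g_{i-1},g_i)$, and that the action of $\sigma_x$ on $1$-chains extends the pointwise action on paths. Hence the map
\[
\Phi_{\sigma_x} \colon \calP^x_a \longrightarrow \calP^x_{a'}, \qquad p \longmapsto \sigma_x \cdot p,
\]
is a well-defined $\C$-linear map on $1$-chains that sends $\calP^x_a$ into $\calP^x_{a'}$.

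To conclude that $\Phi_{\sigma_x}$ is a bijection, I would apply the same argument to the automorphism $\sigma_x^{-1} \in \Aut_x(\cO)$, which satisfies $\sigma_x^{-1} \cdot a' = a$. It yields a map $\Phi_{\sigma_x^{-1}} \colon \calP^x_{a'} \to \calP^x_a$, and by functoriality of the action of $\Aut_x(\cO)$ on paths, we have $\Phi_{\sigma_x^{-1}} \circ \Phi_{\sigma_x} = \id$ and $\Phi_{\sigma_x} \circ \Phi_{\sigma_x^{-1}} = \id$. The whole argument is essentially formal once the assumption has been used; there is no real obstacle, only a mild bookkeeping check that the action on paths restricts to a map between sets of geodesics, which is guaranteed by the preservation of both adjacency types and $x$-distance built into the definition of $\Aut_x(\cO)$.
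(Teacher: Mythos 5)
Your proposal is correct and follows essentially the same route as the paper: invoke Assumption~\ref{conj:xyorb_levlines} to obtain $\sigma_x \in \Aut_x(\cO)$ with $\sigma_x \cdot a = a'$, observe that preservation of adjacency types and of the $x$-distance forces $\sigma_x$ to map $x$-geodesics for $a$ to $x$-geodesics for $a'$ (compatibly with the induced action on $1$-chains), and get bijectivity from $\sigma_x^{-1}$. Your write-up merely spells out the geodesic-to-geodesic and inverse-map checks that the paper leaves implicit.
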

\begin{proof}
  Use Assumption~\ref{conj:xyorb_levlines} to produce $\sigma_x$
  in $\Aut_x(\cO)$ such that $\sigma_x (a) = a'$. This $\sigma_x$
  induces a bijection between $\calP^x_a$ and $\calP^x_{\sigma_x \cdot a} = \calP^x_{a'}$
  by Lemma~\ref{lem:gxy_respect_lines} and the compatibility between the action of $\sigma_x$ on $x$-geodesics
  and its action on the associated $1$-chains.
\end{proof}

The following theorem gives a decoupling of $(x,y)$ in terms of level lines.

\begin{thm}[Under Assumption \ref{conj:xyorb_levlines}] \label{thm:decoupl_levlines}
  Define the following $0$-chains:
  \[
    \begin{array}{lcr}
      {\displaystyle \gamma_x = -\frac{1}{|\cO|} \sum_{i \ge 1} |\levX_i| \sum_{\substack{1 \le j \le i \\ j \text{ odd}}} \left(\frac{X_j}{|\levX_j|} - \frac{X_{j-1}}{|\levX_{j-1}|} \right)}
      &
        \text{and}
      &
        {\displaystyle \gamma_y = -\frac{1}{|\cO|} \sum_{i \ge 1} |\levY_i| \sum_{\substack{1 \le j \le i \\ j \text{ odd}}} \left(\frac{Y_j}{|\levY_j|}  - \frac{Y_{j-1}}{|\levY_{j-1}|}\right)}
    \end{array}.
  \]
  Then $(x,y) = \left(\sinv + \gamma_x\right)+ \gamma_y + \alpha$ is a
  decoupling of $(x,y)$ in the orbit (with $\sinv = \frac{1}{|\cO|} \sum_{a \in \cO} a$).
\end{thm}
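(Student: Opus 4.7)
The plan is to apply Theorem~\ref{thm:path_decoupl} with specific families of $1$-chains built from geodesics. For each vertex $a$ of $\cO$, I will take $p^x_a$ to be the uniform average of the $1$-chains in $\calP^x_a$, and $p^y_a$ the analogous average on $\calP^y_a$:
\[
p^x_a \;=\; \frac{1}{|\calP^x_a|} \sum_{g \in \calP^x_a} g, \qquad
p^y_a \;=\; \frac{1}{|\calP^y_a|} \sum_{g \in \calP^y_a} g.
\]
Since every $1$-chain in $\calP^x_a$ comes from a path from $(x,y)$ to $a$, we have $\partial(g) = a - (x,y)$ for each such $g$, and by linearity $\partial(p^x_a) = a - (x,y)$. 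In particular $\varepsilon^x(\partial(p^x_a) + (x,y) - a) = 0$, and the analogous equation holds for $p^y_a$. Thus the assumptions of Theorem~\ref{thm:path_decoupl} are satisfied.

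The heart of the proof is the computation showing that $\gamma_x(p^x) = -\frac{1}{|\cO|}\sum_{a \in \cO} \partial^y(p^x_a)$ coincides with the $\gamma_x$ of the statement. By Corollary~\ref{cor:bound_geodesic}, for every geodesic $g = (g_0, \dots, g_r) \in \calP^x_a$ with $r = d_x(a)$, one has $\partial^y(g) = \sum_{1 \le i \le r,\, i \text{ odd}} (g_i - g_{i-1})$. The key claim reduces to showing that, for all $r \ge 1$ and $0 \le i \le r$,
\[
\sum_{a \in \levX_r} \frac{1}{|\calP^x_a|} \sum_{g \in \calP^x_a} g_i \;=\; \frac{|\levX_r|}{|\levX_i|}\, X_i.
\]
This is a double counting argument that uses Assumption~\ref{conj:xyorb_levlines} essentially: by Lemma~\ref{lem:bij_desc_paths}, $|\calP^x_a|$ depends only on $r$, say equal to $N^x_r$; and counting triples $(a, g, b)$ with $a \in \levX_r$, $g \in \calP^x_a$ and $g_i = b$, one shows that the count depends only on $r$ and $i$, not on the chosen $b \in \levX_i$. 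Indeed, for any $b, b' \in \levX_i$ the assumption provides an element of $\Aut_x(\cO)$ mapping $b$ to $b'$, and this element induces a bijection between such triples via compatibility with $x$-distances, adjacency types and positions along geodesics. Computing this common count in two ways (fix $b$, or fix $a$ and sum) yields the formula. A symmetric computation gives $\gamma_y(p^y) = \gamma_y$.

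Theorem~\ref{thm:path_decoupl} then shows that $(\sinv + \gamma_x, \gamma_y)$ is a pseudo-decoupling of $(x,y)$. It remains to upgrade this to a genuine decoupling in the sense of Definition~\ref{defi:decoupl_xy}. For this, observe that by Lemma~\ref{lem:gxy_respect_lines}, every $\sigma_x \in G_x$ preserves the $x$-distance and therefore permutes each level line $\levX_j$; consequently $\sigma_x \cdot X_j = X_j$ for all $j$. Since $\sinv$ is $G$-invariant, the chain $\sinv + \gamma_x$ is fixed by $G_x$, and the symmetric argument shows that $\gamma_y$ is fixed by $G_y$. Setting $\alpha = (x,y) - (\sinv + \gamma_x) - \gamma_y$, the pseudo-decoupling property applied to any $H = F(X,t) + G(Y,t)$ (which trivially admits a Galois decoupling) yields $H_\alpha = H_{(x,y)} - H_{\sinv + \gamma_x} - H_{\gamma_y} = 0$, so $\alpha$ cancels decoupled fractions. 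This completes the proof. The main obstacle is the counting step identifying $\gamma_x(p^x)$ with $\gamma_x$, where Assumption~\ref{conj:xyorb_levlines} is the crucial input that forces the averages along geodesics to equalize across each level line.
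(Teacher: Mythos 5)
Your proposal is correct and follows essentially the same route as the paper: the same averaged geodesic $1$-chains $p^x_a$, $p^y_a$ fed into Theorem~\ref{thm:path_decoupl}, the same identification of the sums $\sum_{a\in\levX_r}\frac{1}{|\calP^x_a|}\sum_{g\in\calP^x_a}g_i$ with $\frac{|\levX_r|}{|\levX_i|}X_i$ via Assumption~\ref{conj:xyorb_levlines} (your double counting is just a rephrasing of the paper's invariance-of-$S^i_j$-plus-augmentation argument), and the same upgrade to a genuine decoupling using that $G_x$ and $G_y$ preserve the level lines. One local slip: an $x$-geodesic for $a$ ends at the clique $\levX_0$, not at $(x,y)$ itself, so $\partial(g)=a-g_0$ with $g_0\sim^x(x,y)$ rather than $a-(x,y)$; the hypothesis of Theorem~\ref{thm:path_decoupl} is nevertheless satisfied because $g_0$ and $(x,y)$ lie in the same connected component of $\cO^x$, giving $\varepsilon^x(\partial(p^x_a)+(x,y)-a)=0$, and nothing downstream in your argument uses the incorrect identity.
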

\begin{proof}
  Consider the two families of $1$-chains $(p^x_a)_{a \in \cO}$ and $(p^y_a)_{a \in \cO}$ defined
  for $a$ in $\cO$ as
  \begin{align*}
    p^x_a = \frac{1}{|\calP^x_a|} \sum_{g \in \calP^x_a} g & \quad \mbox{ and } \quad
                                                             p^y_a = \frac{1}{|\calP^y_a|} \sum_{g \in \calP^y_a} g.
  \end{align*}

  For all $g=(g_0,\dots, g_r)$ in
  $\calP^x_a$, we have  $\partial(g) = a - g_0$ with $g_0 \sim^x (x,y)$.  Then,
  $\varepsilon^x(\partial(g) - a + (x,y)) = 0$.  Thus, we find by
  linearity that $\varepsilon^x(\partial(p^x_a) - a + (x,y)) = 0$. The same
  argument shows that $\varepsilon^y(\partial(p^y_a) - a + (x,y)) =
  0$. Therefore, both families of $1$-chains $(p^x_a)_{a \in \cO}$ and
  $(p^y_a)_{a \in \cO}$ satisfy the conditions of Theorem~\ref{thm:path_decoupl},
  which thus states that if we take
  \begin{align*}
    \gamma_x = - \frac{1}{|\cO|} \sum_{a \in \cO} \partial^y(p^x_a) & \quad \text{ and } \quad
                                                                      \gamma_y = -\frac{1}{|\cO|} \sum_{a \in \cO} \partial^x(p^y_a),
  \end{align*}
  then the pair $(\sinv + \gamma_x, \gamma_y)$
  is a pseudo decoupling. As the geodesics are stable under the action of
  their respective Galois groups by Lemma~\ref{lem:gxy_respect_lines}, it is also a decoupling.

  Therefore, we are left to prove that 
  $\gamma_x$ and $\gamma_y$ admit the announced (pleasant) expressions.
  We only treat the case of $\gamma_x$, the case of $y$ being  totally symmetric.

  First, note that, by Lemma~\ref{lem:bij_desc_paths}, the cardinality
  of $\calP^x_a$ (resp. $\calP^y_a$) depend only on the $x$-distance
  (resp. $y$-distance) of $a$. For $i$ a non-negative integer, we denote
  by $m^x_i$ (resp. $m^y_i$) the cardinality of $\calP^x_a$
  (resp. $\calP^y_a$) for any $a$ such that $d_x(a)=i$
  (resp. $d_y(a)=i$).  The expression of the boundary of a geodesic
  (Lemma \ref{cor:bound_geodesic}) combined with the partition of $\cO$
  into $x$-level lines yields
  \[
    \gamma_x = -\frac{1}{|\cO|} \sum_{i \ge 0} \sum_{a \in \levX_i} \partial^y(p^x_a)
    = -\frac{1}{|\cO|} \sum_{i \ge 0} \frac{1}{m^x_i} \sum_{a \in \levX_i} \sum_{g \in \calP^x_a} \sum_{\substack{j \text{ odd}\\ j \le i}} \left(g_j - g_{j-1}\right).
  \]
  If we denote
  \[
    S^i_j = \frac{1}{m^x_i} \sum_{a \in \levX_i} \sum_{g \in \calP^x_a} g_j,
  \]
  then $\gamma_x$ rewrites as
  \[
    \gamma_x = -\frac{1}{|\cO|} \sum_{i \ge 1} \sum_{\substack{j \text{ odd} \\ j \le i}}
    S^i_j - S^i_{j-1}.
  \]

  First, observe that, for any $x$-geodesic $(g_0,\dots,g_i)$, the
  $j$-th component $g_j$ has $x$-distance $j$, so the vertices appearing
  in $S^i_j$ with nonzero coefficients are in $\levX_j$. Thus, we can
  write \[S^i_j = \sum_{b \in \levX_j} \lambda^{i,j}_b b. \]
  
  Let $\sigma_x$ be in $\Aut_x(\cO)$. Remind that $\sigma_x$ induces a
  bijection on each $x$-level line and maps bijectively $\calP^x_a$ and
  $\calP^x_{\sigma_x \cdot a}$ for all $a$. Thus, we find

  \begin{align*}
    \sigma_x \cdot S^i_j &=& \frac{1}{m^x_i} \sum_{a \in \levX_i}
                             \sum_{g \in \calP^x_a} \sigma_x \cdot g_j
    &=& \frac{1}{m^x_i} \sum_{a \in \levX_i}
        \sum_{g \in \calP^x_a} (\sigma_x \cdot g)_j
    &=& \frac{1}{m^x_i} \sum_{a \in \levX_i}
        \sum_{g \in \calP^x_{\sigma_x \cdot a}} g_j
    &=& S^i_j.
  \end{align*}

  Under Assumption \ref{conj:xyorb_levlines}, the group $\Aut_x(\cO)$
  acts transitively on $\levX_j$.  Since $ S^i_j$ is fixed by the action
  of $\Aut_x(\cO)$, one concludes easily that all the coefficients
  $\lambda^{i,j}_b$ are equal to some scalar $\lambda^i_j$ and that
  $S^i_j = \lambda^i_j X_j \quad (*)$. To compute the value of
  $\lambda^i_j$, we recall the existence of the augmentation morphism
  ${\varepsilon :C_0 (\cO) \rightarrow \C}$ which associates to a
  $0$-chain the sum of its coefficients.  We apply $\varepsilon$ to each
  side of $(*)$.  On the one hand, $\varepsilon(S^i_j) = \sum_{a \in
    \levX_i} \frac{1}{|\calP^x_a|} \sum_{g \in \calP^x_a} 1 = \sum_{a \in
    \levX_i} 1 = |\levX_i|$. On the other hand, $\varepsilon(\lambda^i_j
  X_j) = \lambda^i_j |\levX_j|$. Therefore, we deduce $\lambda^i_j =
  \frac{|\levX_i|}{|\levX_j|}$ and the announced expression for the
  decoupling follows.
\end{proof}

To conclude, we have defined in this section a
\emph{distance-transitivity} property that is only
graph-theoretic. When this property is satisfied by the orbit-type, it
leads to a decoupling expressed in terms of level lines. As described
in Appendix~\ref{sect:decoupl_comp}, the evaluation of a regular
fraction on a level line is efficient from an algorithmic point of
view and so is our procedure for the Galois decoupling of a regular
fraction. In the following section and in Appendix
\ref{appendix:other_orbits}, we easily check
Assumption~\ref{sect:decoupl_comp} on various orbit-types and produce
the associated decoupling in terms of level-lines.

\subsection{Examples}\label{subsect:examples}

In this last subsection and Appendix~\ref{appendix:other_orbits}, we
check Assumption~\ref{conj:xyorb_levlines} and unroll the construction
of the decoupling of the previous section for all the finite
orbit-types of models with steps in $\{-1,0,1,2\}^2$, namely with
orbits $\cO_{12}$, $\cO_{18}$, $\widetilde{\cO_{12}}$ as well as for
the cyclic models, the Hadamard models and the fan models. We
summarize the results of this section and Appendix
\ref{appendix:other_orbits} on the decoupling of $XY$ in the following
proposition.

\begin{prop}\label{prop:XYdecouple}
  The regular fraction $XY$ does not decouple for any weighted models
  with orbit-types Hadamard (see below) nor for the family of the
  fan-models (see Appendix~\ref{appendix:other_orbits}).  The regular
  fraction $XY$ does not decouple for unweighted models with steps in
  $\{-1,0,1,2 \}^2$ with orbit-types $\cO_{18}$, $\widetilde{\cO_{12}}$.
  The fraction $XY$ decouples for the model $\Gmod$ with any $\lambda$.
\end{prop}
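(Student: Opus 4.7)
The plan is to invoke Proposition~\ref{prop:crit_decoupl} as the central decoupling test: given any decoupling of $(x,y)$ in the orbit, a regular fraction $H$ admits a Galois decoupling if and only if $H_{\alpha} = 0$, where $\alpha$ is the $0$-chain appearing in that decoupling. I will therefore treat the three regimes of the statement (\emph{bicolored-square obstruction}, \emph{$\cO_{18}$ unweighted case}, and \emph{model $\Gmod$}) separately, as each requires a different level of computation.

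For the Hadamard orbit-types, the family of fan models, and the orbit-type $\widetilde{\cO_{12}}$, I would cite Example~\ref{exa:obstructiondecouplingsquare} directly: each of these orbits contains a bicolored square, with four distinct vertices $(u_1,v_1),(u_1,v_2),(u_2,v_2),(u_2,v_1)$ where $u_1\neq u_2$ and $v_1\neq v_2$. Denoting by $\alpha_{\mathrm{sq}}$ the $0$-chain induced by this square, a direct computation gives
\[
(XY)_{\alpha_{\mathrm{sq}}} \;=\; (u_1-u_2)(v_1-v_2) \;\neq\; 0,
\]
and since $\alpha_{\mathrm{sq}}$ cancels decoupled fractions (Example~\ref{exa:bicol_cycle}), any candidate Galois decoupling of $XY$ would force this evaluation to vanish. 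The only task here is, for each orbit family, to exhibit explicitly such a bicolored square in the graph of the orbit; for Hadamard models this follows from the product structure of the orbit, and for the fan models it can be read off from the description in Appendix~\ref{appendix:other_orbits}.

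For unweighted models with orbit-type $\cO_{18}$, there is no bicolored square available, so I would use the effective decoupling machinery of Section~\ref{subsect:effective_decoupling}. First I would verify Assumption~\ref{conj:xyorb_levlines} on the graph $\cO_{18}$ by inspecting its automorphism group acting on level lines. With that in hand, Theorem~\ref{thm:decoupl_levlines} produces an explicit $0$-chain $\alpha$ in terms of level lines $X_j,Y_j$. It then suffices to enumerate the finitely many unweighted step sets with steps in $\{-1,0,1,2\}^2$ whose orbit is of type $\cO_{18}$ (as classified in \cite{bostan2018counting}), to compute the corresponding vertex coordinates (via the algorithm recalled in Appendix~\ref{sect:decoupl_comp}), and to check that in every case $(XY)_\alpha \neq 0$. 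By Proposition~\ref{prop:crit_decoupl} this rules out the existence of a Galois decoupling for each such model. The main obstacle of the whole proposition lies here: the concrete symbolic computation of $(XY)_\alpha$ requires evaluating $XY$ on pairs $(u,v)$ lying in a nontrivial algebraic extension of $\Q(x,y)$, and efficiency depends crucially on replacing sums over level lines by elementary symmetric functions of the roots of the polynomials described in Appendix~\ref{app:varietiesorbit}.

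For the model $\Gmod$, the positive case, I would simply verify the explicit formula written down in Example~\ref{ex:gessel2_alg_2}:
\[
XY \;=\; -\frac{3\lambda X^{2} t - \lambda t - 4X}{4t(X^{2}+1)} \;+\; \frac{-\lambda Y - 4}{4Y} \;-\; \frac{\Ktld(X,Y)}{(X^{2}+1)Y\,t}.
\]
The right-hand side is, by inspection, of the form $F(X,t)+G(Y,t)+\Ktld(X,Y,t)R(X,Y,t)$ with $R$ a regular fraction, and a cross-multiplication by $4t(X^{2}+1)Y$ reduces the verification to an identity of polynomials in $\C[X,Y,t,\lambda]$. This shows $XY$ admits a Galois decoupling for every $\lambda$, completing the proof. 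No further parameter-dependence analysis is needed since the decoupling is written uniformly in $\lambda$.
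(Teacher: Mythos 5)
Your proposal is correct and follows essentially the same route as the paper: the bicolored-square obstruction of Example~\ref{exa:obstructiondecouplingsquare} for the Hadamard, fan and $\widetilde{\cO_{12}}$ orbit-types, the level-line decoupling of Theorem~\ref{thm:decoupl_levlines} together with the test of Proposition~\ref{prop:crit_decoupl} and the symmetric-chain evaluation of Appendix~\ref{sect:decoupl_comp} for the unweighted $\cO_{18}$ models, and direct verification of the explicit identity of Example~\ref{ex:gessel2_alg_2} for $\Gmod$. This is precisely how the paper assembles Proposition~\ref{prop:XYdecouple} from Section~\ref{subsect:examples} and Appendices~\ref{sect:decoupl_comp} and~\ref{appendix:other_orbits}, with the finitely many $\cO_{18}$ evaluations delegated to the accompanying worksheet.
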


\subsubsection{Cyclic orbit}

Assume that the orbit is a cycle of size $2n$, which is the orbit-type
of any small-steps model with finite orbit.  The graph of the orbit
looks as follows, where we have labeled vertices from $0$ to $2n-1$.
We represent both $x$-level lines and $y$-level lines.
\begin{center}
  \scalebox{1}{
    \tikzfig{cn_aut}
  }
\end{center}
Each of the $x$-level lines has $2$ elements, so does any $y$-level
line.  The reader can check that the permutation $ \sigma^x = (0,1)
(2,3) \dots (2i,2i+1) \dots (2n-1,2n-2)$ which corresponds to a
horizontal reflection on the figure on the left-hand side, induces a
graph automorphism of $\Aut_x(\cO)$, that is preserving the
$x$-distance and the type adjacencies.  Moreover, $\sigma^x$ acts
transitively on each $x$-level line. As the situation is completely
symmetric for $y$-level lines, this proves
Assumption~\ref{conj:xyorb_levlines} for cyclic orbits.  In this
section, we take the convention that the exponents on the permutation
indicate which type of level lines these automorphisms
stabilize. According to Theorem~\ref{thm:decoupl_levlines}, we find:

\begin{align*}
  (x,y) &=\left( \sinv-\frac{1}{2n} \sum_{j\text{ odd}} (n-j) \left(X_j - X_{j-1}\right)\right)
        - \left( \frac{1}{2n} \sum_{j\text{ odd}} (n-j)\left( Y_j - Y_{j-1} \right) \right)
          + \alpha.
\end{align*}
 In the above equation and in the rest of the section, we only give
the explicit expressions of $\wgammax, \wgammay$ and we write them
between parenthesis according to their order in the expression $(x,y)=
\wgammax +\wgammay + \alpha$. The above decoupling equation
corresponds to the decoupling construction obtained for small steps
walks in \cite[Theorem~4.11]{BBMR16}.

\subsubsection{The case of $\cO_{12}$}\label{subsect:decouplingO12}
Below are the $x$ and $y$-level lines  for the orbit type $\cO_{12}$:
\begin{center}
  \scalebox{0.5}{
    \tikzfig{o12_aut}}
\end{center}
Consider the following permutations of the vertices of the orbit: $
\tau^{x,y} = (1\,2) (4\,5) (6\,7) (9\,10) (8\,11)$ the vertical
reflection on both sides, $ \tau^x\,= (0\,3) (1\,6) (2\,7) (4\,11)
(5\,8)$ the horizontal reflection on the left-hand side, $\tau^y =
(0\,1\,2) (3\,4\,5) (6\,10\,8) (7\,11\,9)$ the $\frac{2\pi}{3}$
rotation on the right -hand side. The reader can check that these
automorphisms are elements of $\Aut_x(\cO)$ or $\Aut_y(\cO)$ according
to their exponents and that their action on the corresponding level
lines is transitive.

Therefore Assumption~\ref{conj:xyorb_levlines} holds for the orbit
type $\cO_{12}$. The cardinality of $\cO$ is $12$ and one can write
$\sinv = \frac{1}{12} \left(X_0 + X_1 + X_2 + X_3\right)$.  Thus,
according to Theorem~\ref{thm:decoupl_levlines}, the decoupling
equation is
\begin{align*}
  (x,y) &= \frac{2}{12} \left(\frac{X_2}{4} - \frac{X_3}{2}\right) + \frac{4+4+2}{12} \left(
          \frac{X_0}{2} - \frac{X_1}{4}\right)
          + \frac{3+6}{12} \left(\frac{Y_0}{3} - \frac{Y_1}{3}\right)
          + \sinv + \alpha \\
        &= \left(\frac{X_0}{2}-\frac{X_1}{8}+\frac{X_2}{8}\right)+\left(\frac{Y_0}{4}-\frac{Y_1}{4}\right) + \alpha.
\end{align*}

\subsubsection{Hadamard models}\label{subsubsect:Hadamard}

The notion of \emph{Hadamard} models has been introduced by Bostan,
Bousquet-Mélou and Melczer who proved that these models are always
$D$-finite \cite[Proposition~21]{bostan2018counting}. Hadamard models
are characterized by the shape of their Laurent polynomial: $S(X,Y) =
P(X) Q(Y) + R(X)$ for $P$, $Q$ and $R$ three Laurent polynomials.

The Hadamard models form an interesting class because their orbit is
always finite and in the form of a cartesian product. Indeed,
\cite[Proposition~22]{bostan2018counting} yields the existence of
distinct elements $x_0, \dots, x_{n-1}$ and $y_0, \dots, y_{m-1}$ in
$\K$ with $x_0=x$ and $y_0=y$ such that $\cO = \{x_i \st 1 \le 0 \le
n-1\} \times \{y_j \st 0 \le j \le m-1\}$.  As a consequence, the
orbits of the Hadamard models, even though their size might be
arbitrarily large, are always of diameter two. This means that the
distance between any two vertices is at most two as illustrated below:

\begin{center}
  \scalebox{0.5}{
    \tikzfig{had_aut}
  }
\end{center}

These orbit-types are very symmetric. The $x$-level lines $\levX_0$ is
$\{ (x,y_j) \st 0 \le j \le m-1 \}$ while $\levX_1=\{ (x_i,y_j) \st 0
\le j \le m-1 \mbox{ and } 1 \le i \le n-1 \}$. Thus, $|\levX_0|=m$
and $|\levX_1|=(n-1)m$. It is easy to prove that any element of
$\Aut_x(\cO)$ is of the form $ \phi^x_{\sigma,\tau} \colon
(x_i,y_j)\mapsto (\sigma(x_i),\tau(y_j)),$ for $\tau$ a permutation of
the set $\{ y_j \st 0 \le j \le m-1 \}$ and $\sigma$ a permutation of
$\{x_i \st 0 \le i \le n-1 \}$ such that $\sigma(x)=x$.  An analogous
description holds for the $y$-level lines and $\Aut_y(\cO)$ proving
that the Hadamard models satisfy Assumption~\ref{conj:xyorb_levlines}
and that $\Aut_x(\cO) \simeq S_{n-1} \times S_m$ and $\Aut_y(\cO)
\simeq S_{n} \times S_{m-1}$. Theorem~\ref{thm:decoupl_levlines} gives
the following decoupling:
\begin{align*}
  (x,y) &= \frac{m(n-1)}{nm} \left(\frac{X_0}{m} - \frac{X_1}{m(n-1)}\right)
          + \frac{n(m-1)}{nm} \left(\frac{Y_0}{n} - \frac{Y_1}{n(m-1)}\right) + \omega + \alpha \\
        &= \left(\frac{1}{m} X_0\right) + \left(\frac{m-1}{nm} Y_0 - \frac{1}{nm} Y_1\right) + \alpha
          = \left(\frac{1}{m} X_0\right) + \left(\frac{1}{n} Y_0 - \omega\right) + \alpha,
\end{align*}
 with $\sinv=\frac{1}{mn}(Y_0 +Y_1)$.
Note that any Hadamard model where $\deg_X \Ktld > 1$ and $\deg_Y \Ktld > 1$ always
contains a bicolored square, so the fraction $XY$ never admits a decoupling
(see Example~\ref{exa:obstructiondecouplingsquare}).

The complete description of the groups $\Aut_x(\cO)$ and $\Aut_y(\cO)$ obtained above is particularly useful 
to construct examples of orbits whose graph automorphisms are not necessarily Galois automorphisms as illustrated below.

\begin{exa}
Consider the nontrivial unweighted model defined by
$S(X,Y)=\left(X+\frac{1}{X}\right)\left(Y^n + \frac{1}{Y^n}\right)$.
Then by Proposition~22 in \cite{bostan2018counting}, the orbit has the
form
\[ \{ x, \frac{1}{x} \} \times \{ \zeta^i y, \zeta^i \frac{1}{y}\mbox{
for } i=0, \dots,n-1 \} \] where $\zeta$ is a primitive $n$-th root of
unit. Hence, the extension $k(\cO)$ equals $\C(x,y)=k(x,y)$.  Consider
the tower of field extension $k(x) \subset k(x, y^n) \subset
k(x,y)$. Since $k(x)$ coincides with $\C(x,y^n + \frac{1}{y^n})$ and
$k(x, y^n)$ with $\C(x,y^n)$, the multiplicativity of the degree of a
field extension yields
\[[k(\cO):k(x)] = [\C(x,y):\C(x,y^n)] \times [\C(x,y^n):\C(x,y^n +
\frac{1}{y^n})] = n \times 2.\] Indeed, since $x$ and $y$ are
algebraically independent over $\C$, the element $y^n$ is not a $m$-th
power in $\C(x,y^n)$ for $m$ dividing $n$. Thus, the minimal
polynomial of $y$ over the field $\C(x,y^n)$ is $Y^n-y^n$ so that
$[\C(x,y):\C(x,y^n)]$ equals $n$. Moreover, since $y^n$ does not
belong to $\C(x, y^n + \frac{1}{y^n})$, its minimal polynomial over
the later field is $Y^2 -( y^n + \frac{1}{y^n})Y +1$. Thus, $G_x
\subsetneq \Aut_x(\cO)$ because $G_x$ is a dihedral group of size $2n$
and $\Aut_x(\cO)$ is $S_{2n}$ by the above description.
\end{exa}

\section{The algebraic kernel curve and its covering}\label{ap:geometry}

In this section, we present an informal discussion on the geometric
framework for walks confined in a quadrant.  For small steps walks,
this approach was developed in \cite{KurkRasch,DHRS,
DreyfusHardouinRoquesSingerGenuszero} and allowed these authors to
construct analytic weak invariants \cite{RaschelJEMS, BBMR16},
difference equations \cite{KurkRasch,
DreyfusHardouinRoquesSingerGenuszero} as well as efficient algorithms
to compute the order of the group or some decoupling in the infinite
group case \cite{HardouinSingerselecta}.

For small steps models, this geometric framework amounts to interpret
the $X$ and $Y$-symmetries of the polynomial $\widetilde{K}(X,Y,t)$ as
automorphisms of a certain algebraic curve. For large steps models, we
shall see that this interpretation is still valid when the orbit is
finite but might be no longer true for an infinite orbit. Our
intention in this section is to introduce a geometric framework and
not to give a complete and systematic study of this geometric setting
for large steps walks which is a whole subject in its own right.

Though the kernel polynomial $\widetilde{K}(X,Y,t)$ is irreducible
over $\Q(t)[X,Y]$, it might be reducible over $\overline{\Q(t)}$, the
algebraic closure of $\Q(t)$. For small steps walks, Proposition~1.2
in \cite{DreyfusHardouinRoquesSingerGenuszero2} characterizes the
models, called \emph{degenerate}, whose associated kernel polynomial
is reducible over $\overline{\Q(t)}$.  These small steps models
correspond to the univariate cases described in
Section~\ref{subsect:algebraicitystrategy} plus the two cases where
the step polynomial $S(X,Y)$ is either a Laurent polynomial in $XY$ or
in $X/Y$.  The generating function $Q(X,Y,t)$ of a degenerate model
with small steps is always algebraic over $\Q(X,Y,t)$. One could
wonder if the degenerate models in the large steps situation still
coincide with the univariate cases described in
Section~\ref{subsect:algebraicitystrategy} and are therefore
algebraic. The question of the reducibility of the kernel polynomial
over $\overline{\Q(t)}$ requires some substantial work and we leave it
for further articles. Thus, we assume from now on that the kernel
polynomial is irreducible of positive degree $d_x=m_x +M_x$
(resp. $d_y=m_y +M_y$) in $X$ (resp. in $Y$) in the notation of
Section \ref{subsect:algebraicitystrategy}.

 Let us fix once for all a complex transcendental value for $t$ so
that $\Qb(t)$ embeds into $\C$. We denote by $\P^1(\C)$ the complex
projective line, that is, the set of equivalence classes
$[\alpha_0:\alpha_1]$ of elements $(\alpha_0,\alpha_1) \in \C^2$ up to
multiplication by a non-zero scalar. The projective line $\P^1(\C)$
can be identified to $\C \cup \{ \infty \}$ where $\C=\{ [\alpha_0:1]
\mbox{ with } \alpha_0 \in \C \}$ and $\infty$ is the point $[1:0]$.
We define the kernel curve $\Et$ as follows.
\[
\Et = \{([x_{0}:x_{1}],[y_{0}:y_{1}]) \in \P^1(\C) \times \P^1(\C) \ \vert \ \overline{K}(x_0,x_1,y_0,y_1,t) = 0\},
\]
where $\overline{K}(x_0,x_1,y_0,y_1,t)$ is the homogeneous polynomial
defined by
$x_1^{d_x}y_1^{d_y}\widetilde{K}(\frac{x_0}{x_1},\frac{y_0}{y_1},t)$
(see \cite[Section 2]{DreyfusHardouinRoquesSingerGenuszero2} for the
small steps case). The kernel curve $\Et \subset \P^1(\C) \times
\P^1(\C)$ is a projective algebraic curve. It is naturally equipped
with two projections $\pi_x :\Et \rightarrow \P^1_x,
([x_{0}:x_{1}],[y_{0}:y_{1}]) \mapsto [x_0:x_1]$ and $\pi_y :\Et
\rightarrow \P^1_y, ([x_{0}:x_{1}],[y_{0}:y_{1}]) \mapsto [y_0:y_1]$
where the notation $\P^1_x,\P^1_y$ emphasizes the variable on which
one projects. The curve $\Et$ is irreducible by our assumption on
$\widetilde{K}$. If we denote by $\mathrm{Sing}$ its singular locus,
that is, the set of points of $\Et$ at which the tangent is not
defined, its genus is given by the formula

\begin{equation}\label{equn:virtualgenus}
g(\Et)= 1+ d_xd_y -d_x-d_y -\sum_{P \in \mathrm{Sing}}  \sum_i \frac{m_i(P)(m_i(P)-1)}{2},
\end{equation}
where $m_i(P)$ is a positive integer standing for the multiplicity of
a point $P$, {that is, for every $\ell < m_i(P)$, the partial
derivatives of $\overline{K}$ of order $\ell$ vanish at $P$}
\cite[Exercise 5.6, Page 231-232 and Example 3.9.2, Page 393]{Hart}.
\begin{exa}\label{exa:algebraiccurveGmod}
  The kernel polynomial associated to the model $\Gmod$ is
  $\widetilde{K}(X,Y,t)= XY-t(1+XY^2+X^2+X^3Y^2+ \lambda X^2Y)$ One can
  easily check that the algebraic curve $\Et$ corresponding to the model
  $\Gmod$ is smooth\footnote{This means that $\Et$ has no singular
    point. Otherwise, one says that the curve is singular.}, so that its
  genus equals $2=1 +3.2 -3-2$.
\end{exa}

If the curve $\Et$ is smooth, it can be endowed with a structure of
compact Riemann surface (see \cite[Example 1.46]{Girondo}).  We recall
that the function field $\C(E)$ of an irreducible projective curve $E$
defined by some irreducible polynomial $F(X,Y)$ is the fraction field
of the $\C$-algebra $\C[X,Y]/(F)$ where $(F)$ is the polynomial ideal
generated by $F$.  The following categories are equivalent
\begin{itemize}
\item the  category of  smooth projective curves $E$  over $\C$ and non-constant morphisms,
\item  the category   of finitely generated field extensions $\C(E)$ of $\C$ of transcendence degree one and morphisms of field extensions,
\item the category of compact Riemann surfaces $E$ and their morphisms \cite[Remark 1.94 and Proposition~1.95]{Girondo}.
\end{itemize}

When the projective curve $E_t$ is singular, any automorphism of its
function field corresponds to a birational transformation of the curve
$E_t$ but, for simplicity of presentation, we assume from now on that
$\Et$ is smooth. The above equivalence of categories applied to the
two projections $\pi_x,\pi_y$ implies that $\C(\Et)$ is a field
extension of $\C(x)=\C(\P^1_x)$ and of $\C(y)=\C(\P^1_y)$.
 
  When the model is with small steps, the curve $\Et$ is of genus one
if $\Et$ is smooth (see
\cite[Proposition~2.1]{DreyfusHardouinRoquesSingerGenuszero2}). Moreover,
the field $\C(\Et)$ is an extension of degree $2$ of the fields
$\C(x)$ and $\C(y)$ and thereby a Galois extension of these two
fields. The Galois groups $\Gal(\C(\Et)|\C(x))$
(resp. $\Gal(\C(E_t)|\C(y))$) are cyclic of order two. Their
generators correspond via the aforementioned equivalence of categories
to two automorphisms $\Phi,\Psi$ of $\Et$ which are respectively the
deck transformations of the projections from $\Et$ to $\P^1_x$ and to
$\P^1_y$. These two automorphisms coincide on a Zariski open set of
$\Et \cap \C^2$ with the two birational involutions defined in
Section~\ref{sect:orbit}.

When the model has at least one large step, that is, $d_x$ or $d_y$ is
strictly greater than $2$, and the curve $\Et$ is irreducible and
smooth, \eqref{equn:virtualgenus} yields that the genus of $\Et$ is
strictly greater than one.  Hurwitz's Theorem
\cite[Theorem~2.41]{Girondo} implies that the group of automorphisms
of $\Et$, as the group of automorphism of any algebraic curve of genus
strictly greater than one, is of finite order bounded by
$84(g(\Et)-1)$. Moreover, the function field $\C(\Et)$ is in general
not a Galois extension of $\C(x)$ and of $\C(y)$ as illustrated in the
following example.

\begin{exa}
  In the notation of Example \ref{ex:gessel2_alg_6}, the field
  $\C(\Et)=\C(x,y)$ associated to the model $\Gmod$ is not Galois and is
  a proper subextension of the Galois extension $\C(\cO)=\C(x,y,z)$.
\end{exa}

If the genus of the curve $\Et$ is strictly greater than one, the same
holds for any any cover $\mathcal{M}$ of $\Et$ so that the group of
automorphisms $ \Aut(\mathcal{M})$ of any cover $\mathcal{M}$ of $\Et$
is finite \cite[Theorem~1.76]{Girondo}. Therefore if the curve $\Et$
is smooth, irreducible and the model has at least one large step,
Theorem~\ref{thm:fried}\footnote{which still holds if one replaces $k$
by $\C$.} shows that the existence of a Galois extension $M$ of
$\C(x)$ and $\C(y)$ is equivalent to the finiteness of the orbit of
the model. Indeed, the condition that $\Gal(M|\C(x))$ and
$\Gal(M|\C(y))$ generate a finite group of $\Aut(M)$ is automatic
since $\Aut(M)$ which is isomorphic to $\Aut(\mathcal{M})$ by the
above equivalence of category is finite. Applying once again the
equivalence of category, one finds that the Galois group $\Gal(M
|\C(x))$ (resp. $\Gal(M |\C(y))$) corresponds to the group of deck
transformations of $\pi_x$ (resp. $\pi_y$). We summarize this
discussion in the following Theorem.

\begin{thm}\label{thm:finiteorbitGaloiscovering}
  Assume that the model has  at least  one large step and that the curve $\Et$ is irreducible and smooth. The following statements are equivalent:

  \begin{itemize}
  \item the orbit of the walk is finite,
  \item there exists a cover $\mathcal{M}$ of $\Et$ which is a Galois cover of $\P^1_x$ and $\P^1_y$.
  \end{itemize}
  In that case, the group of the walk $G$ is isomorphic to the group of automorphims of $\mathcal{M}$ generated by the
  deck transformations of the covers $\mathcal{M} \rightarrow \P^1_x$ and $\mathcal{M} \rightarrow \P^1_y$.
\end{thm}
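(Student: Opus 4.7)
The strategy is to translate between the Galois-theoretic characterization of the finiteness of the orbit given by Theorem~\ref{thm:fried} (applied with $k$ replaced by $\C$, as permitted by the footnote) and the geometric condition on covers, using the equivalence of categories between smooth projective curves over $\C$, finitely generated field extensions of $\C$ of transcendence degree one, and compact Riemann surfaces. As a preliminary remark, the smoothness of $\Et$ combined with the large-step hypothesis forces $g(\Et) \geq 2$: formula~\eqref{equn:virtualgenus} simplifies to $g(\Et) = (d_x - 1)(d_y - 1)$, and since both $d_x, d_y \geq 2$ (the step polynomial is not univariate) and at least one of them is $\geq 3$, one obtains $g(\Et) \geq 2$.

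For the direction $(1) \Rightarrow (2)$, I would set $M := \C(\cO)$ and invoke Theorem~\ref{thm:fried} to see that $M$ is a finite Galois extension of both $\C(x)$ and $\C(y)$. Since $x, y \in \cO$, we have $\C(\Et) = \C(x,y) \subseteq M$, and $M$ is finitely generated of transcendence degree one over $\C$. The equivalence of categories then produces a smooth projective curve $\mathcal{M}$ with $\C(\mathcal{M}) \cong M$, together with non-constant morphisms $\mathcal{M} \to \Et$, $\mathcal{M} \to \P^1_x$, $\mathcal{M} \to \P^1_y$ dual to the three inclusions. The last two are Galois covers since the corresponding field extensions are Galois.

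For $(2) \Rightarrow (1)$, setting $M := \C(\mathcal{M})$ yields a field extension which is finite and Galois over both $\C(x)$ and $\C(y)$. Since $\mathcal{M}$ is a (finite) cover of $\Et$ and $g(\Et) \geq 2$, the Riemann-Hurwitz formula gives $g(\mathcal{M}) \geq g(\Et) \geq 2$, so Hurwitz's theorem bounds $|\Aut(\mathcal{M})| \leq 84(g(\mathcal{M}) - 1) < \infty$. In particular, the subgroup $\left<\Gal(M|\C(x)), \Gal(M|\C(y))\right>$ of $\Aut(\mathcal{M})$ is finite, and Theorem~\ref{thm:fried} (with $k = \C$) then concludes that the orbit is finite. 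The identification of $G$ follows in the same breath: under the equivalence of categories, $G_x = \Gal(\C(\cO)|\C(x))$ and $G_y = \Gal(\C(\cO)|\C(y))$ correspond precisely to the deck transformation groups of $\mathcal{M} \to \P^1_x$ and $\mathcal{M} \to \P^1_y$, so $G = \left<G_x, G_y\right>$ coincides with the subgroup of $\Aut(\mathcal{M})$ generated by these deck transformations.

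The main subtlety of the argument lies not in either implication per se, but in bridging rigorously between the abstract Galois setup of the earlier sections, where $x$ and $y$ are algebraically independent transcendentals over $\C$ with $t = 1/S(x,y)$, and the geometric setup, where $t$ is a fixed transcendental complex number and $x, y$ are coordinate functions on $\Et$. This is precisely what the footnote in Theorem~\ref{thm:fried} ensures: the two setups share the same orbit graph, the same group of the walk $G$, and the same lattice of finite algebraic field extensions, so Fried's Galois-theoretic criterion transports unchanged and matches the geometric content delivered by the category equivalence.
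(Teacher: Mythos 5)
Your proposal is correct and follows essentially the same route as the paper: apply Theorem~\ref{thm:fried} with $k$ replaced by $\C$ (as the footnote permits), use the genus formula together with the large-step and smoothness hypotheses to get $g(\Et)\ge 2$ and hence finiteness of the automorphism group of any cover of $\Et$, and translate Galois extensions of $\C(x)$ and $\C(y)$ into Galois covers of $\P^1_x$ and $\P^1_y$ (and the groups $G_x$, $G_y$ into deck transformation groups) via the equivalence of categories, taking $M=\C(\cO)$ in one direction and $M=\C(\mathcal{M})$ in the other. The only difference is cosmetic: you spell out $g(\Et)=(d_x-1)(d_y-1)$ and the Riemann--Hurwitz comparison where the paper simply cites the corresponding facts.
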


Under the assumption of Theorem~\ref{thm:finiteorbitGaloiscovering},
one can generalize the notion of group of the walk defined by the two
birational involutions $\Phi,\Psi$ for small steps models (see Section
\ref{sect:orbit} ) to the large step framework if and only if the
orbit of the walk is finite. If the orbit is finite, the group of the
walk is generated by the deck transformations of the two projections
of $\mathcal{M}$ onto $\P^1_x,\P^1_y$. It is in general no longer a
group of automorphisms of the kernel curve $\Et$, unless $\Et$ equals
$\mathcal{M}$, which happens only in very restricted situations.  If
the orbit is infinite and the curve $\Et$ is of genus greater than
one, one cannot realize the group of the walk as a group of
automorphisms of an algebraic curve which covers $\Et$.

\section*{Acknowledgement}
We thank Andrea Sportiello for suggesting the addition of weights to
the models $\calG_0$ and $\calG_1$ (of which the model
$\Gmod$ of Example~\ref{ex:gessel2_alg_1} is a generalization), Alin
Bostan for his advice on formal computation over the orbit, and
Mireille Bousquet-Mélou for her  inspiring  guidance, suggestions and review of
preliminary versions.

\appendix 
\section{Solving the model $\Gmod$} \label{sect:bmj}

In Section~\ref{subsect:algebraicitystrategy}, we illustrate how the
construction of pair of Galois invariants and Galois decoupling pairs
for the model $\Gmod$ allows us to construct explicit equations in one
catalytic variable satisfied by the sections $Q(X,0)$ and
$Q(0,Y)$. Theorem~3 in \cite{BMJ} implies that these sections are
algebraic which yields the algebraicity of the generating function
$Q(X,Y)$.  However, \cite{BMJ} actually gives a general method to
obtain explicit polynomial equations for the solutions of equations in
one catalytic variable.

In this section, we follow this method  to provide
an explicit polynomial equation for the excursion generating function $Q(0,0)$
attached to the model $\Gmod$. All the computations can be found in the Maple
worksheet (also on this \href{https://www.labri.fr/perso/pbonnet/}{webpage}) and we give here their guidelines.

We start from the functional equation obtained for $Q(0,Y)$,
because it is the simplest of the two and we recall below
its canonical form, with $F(Y) = Q(0,Y)$:
\begin{align*}
  F(Y) &= 1 +  t \bigg( t^{2} Y F \! \left(Y\right)\left({\Delta^{(1)}F} \! \left(Y \right)\right)^{2} +  \lambda  t F \! \left(Y\right){\Delta^{(1)} F} \! \left(Y \right)+t \left({\Delta^{(1)}F} \! \left(Y \right)\right)^{2} \\
       &+2  t F \! \left(Y\right){\Delta^{(2)}F} \! \left(Y \right) +Y F \! \left(Y \right)+\lambda {\Delta^{(2)}F} \! \left(Y \right) +2 {\Delta^{(3)}F} \! \left(Y \right) \bigg),
\end{align*}
where $\Delta$ is the \emph{discrete derivative}: $\Delta G(Y)=\frac{G(Y)-G(0)}{Y}$. Besides $F(Y)$, there are three unknown functions: $F(0)$ (the excursions series),
$F'(0)$ and $F''(0)$. The above equation can hence be rewritten as
\begin{equation} \label{eq:cat_1}
  P(F(Y),F(0),F'(0),F''(0),t,Y) = 0,
\end{equation}
with $P(x_0,x_1,x_2,x_3,t,Y)$ a polynomial with coefficients in $\Q(\lambda)$.

The method of Bousquet-Mélou and Jehanne consists in constructing more
equations from \eqref{eq:cat_1}. For that purpose, we search for
fractional power series \footnote{A fractional power series is an
element of $ \C[[t^{1/d}]]$ for some positive integer $d$.} $Y_i$'s
that are solutions of \eqref{eq:cat_1} and of the following equation
\begin{equation} \label{eq:cat_2}
  \left(\partial_{x_0}P\right)(F(Y_i),F(0),F'(0),F''(0),t,Y_i) = 0.
\end{equation}
Then the paper \cite{BMJ} points out  that any such  solution is also a solution of the following equation
\begin{equation} \label{eq:cat_3}
  \left(\partial_{Y}P\right)(F(Y_i),F(0),F'(0),F''(0),t,Y_i) = 0.
\end{equation}
Moreover, these solutions are double roots of
$D(F(0),F'(0),F''(0),t,Y)$ the discriminant of $P$ with respect to
$x_0$ \cite[Theorem 14]{BMJ}.  If there are enough fractional power
series $Y_i$'s (at least the number of unknown functions), then the
result of \cite{BMJ} provides ``enough'' independent polynomial
equations $P_i(X_0,X_1,X_2)$ relating the unknown functions (here
$F(0)$, $F'(0)$ and $F''(0)$) so that the dimension of the polynomial
ideal generated by the $P_i$'s is zero. This shows that one can
eliminate these multivariate polynomial equations to find a one
variable polynomial equation for each of the unknown series.

Eliminating $F'(0)$ between \eqref{eq:cat_1} and \eqref{eq:cat_3},
one finds a first equation between $Y_i$ and $F(Y_i)$:
\begin{equation} \label{eq:cat_4}
\begin{aligned}
-2 F({Y_i}) t \,{Y_i}^{4} & +{F(0)}^{2} \,t^{2} {Y_i} -4
{F(0)} {F({Y_i})} \,t^{2} {Y_i} +3 {Y_i} \,t^{2}
{F({Y_i})}^{2}-{F(0)} \lambda t {Y_i} \\ &+{F({Y_i})}
\lambda t {Y_i} +{F({Y_i})} \,{Y_i}^{3}-2 {F'(0)} t {Y_i} -{Y_i}^{3}-4 t
{F(0)} +4 {F({Y_i})} t = 0.
\end{aligned}
\end{equation}
Now, eliminating $F(Y_i)$ between \eqref{eq:cat_4} and \eqref{eq:cat_1},
and removing the trivially nonzero factors, we obtain
the following polynomial equation for the $Y_i$'s:
\begin{equation} \label{eq:cat_5}
2 t \,{Y_i}^{4}+\lambda  t {Y_i} -{Y_i}^{3}+2 t = 0.
\end{equation}
Using Newton polygon's method, we find  that, among the four roots of the 
irreducible polynomial above, exactly three are fractional power series
$Y_1$, $Y_2$ and $Y_3$  that are not  formal power series. The last root,  denoted $Y_0$, is a
Laurent series with a simple pole at $t=0$. Moreover, \eqref{eq:cat_5} yields
\[
  t = \frac{Y_0}{2 {Y_0}^4 + \lambda Y_0 + 2},
\]
so that $\Q(\lambda, t) \subset \Q(\lambda,Y_0)$.
Replacing $t$ by the above expression in \eqref{eq:cat_5} and factoring by $Y-Y_0$, we   obtain the minimal polynomial $M(Y_0,Y)$ 
satisfied by the series  $Y_1$, $Y_2$, $Y_3$ over $\Q(\lambda,Y_0)$ 
 as:
\begin{equation} \label{eq:cat_6}
M(Y_0,Y) = 2{Y_0}^3Y^3 - {Y_0}^2 \lambda Y - Y_0 \lambda Y^2 - 2 {Y_0}^2 - 2 Y_0 Y - 2 Y^2.
\end{equation}

This polynomial of degree $3$ is irreducible over the field $\Q(\lambda,Y_0)
\subset \Q(\lambda) ((t))$ because otherwise one of the series
$Y_i$'s would belong to $\Q(\lambda,Y_0)$ which is impossible
since the $Y_i$'s are not Laurent series in $t$. Since
$\Q(\lambda,Y_0, F(0),F'(0),F''(0)) \subset \Q(\lambda)
((t))$, the same argument shows that $M(Y_0,Y)$ remains irreducible
over $\Q(\lambda,Y_0, F(0),F'(0),F''(0))$.

Now, since the $Y_i$'s are double roots of
$D(F(0),F'(0),F''(0),t(Y_0),Y)$, the polynomial $M(Y_0,Y)^2$ must
divide $D(F(0),F'(0),F''(0),t(Y_0),Y)$ so that the remainder $R(Y)$ in
the euclidian division of $D(F(0),F'(0),F''(0),t(Y_0),Y)$ by
$M(Y_0,Y)^2$ should be identically zero. The polynomial $R(Y)$ has
degree at most $6$ (the discriminant has degree $12$ and $M(Y_0,Y)^2$
has degree $6$), and we write it as \[ R(Y) = e_0 + e_1 Y + e_2 Y^2 +
e_3 Y^3 + e_4 Y^4 + e_5 Y^5 + e_6 Y^6
\] with $e_i$ a polynomial in $Y_0$,$F(0)$,$F'(0)$ and $F''(0)$.
Hence, each of its coefficient gives an equation $e_i = 0$ on the
unknown functions in terms of $Y_0$.  We first eliminate $F''(0)$
between $e_0$ and $e_1$ which yields an equation $e_6$ between $Y_0$,
$F(0)$, $F'(0)$. We get another such equation $e_7$ by eliminating
$F''(0)$ between $e_0$ and $e_2$. Finally, eliminating $F'(0)$ between
$e_6$ and $e_7$ yields an equation $e_7$ over $\Q(\lambda)$ between
$Y_0$ and $F(0)$. The polynomial defining the equation $e_7$ factors
into two nontrivial irreducible factors. To decide which of these
factors is a polynomial equation for $F(0)$, we compute the first
terms of the $t$-expansion $F(0)=Q(0,0,t)$ (which is easy from the
functional equation for $Q(X,Y)$) and of $Y_0(t)$ (thanks to the
Newton method) and we plug these approximations in the two factors of
$e_7$.  One finds that $F(0)$ is algebraic of degree $8$ over
$\Q(\lambda)(Y_0)$. One eliminates $Y_0$ thanks to its functional
equation and, thanks to Maple, one verifies that $F(0)$ is algebraic
of degree $32$ over $\Q(t)$ (see the Maple \href{https://www.labri.fr/perso/pbonnet/}{worksheet}).  This gives the
following result:
\begin{prop}\label{prop:appendixminipolygmod}
  The series $Q(0,0)$ is algebraic of degree $8$ over $\Q(\lambda)(Y_0)$
  (for any $\lambda$).
  Hence, as $Y_0$ is of degree $4$ over $\Q(\lambda)(t)$, we conclude
  that $Q(0,0)$ is an algebraic series of degree $32$ over $\Q(\lambda)(t)$.
\end{prop}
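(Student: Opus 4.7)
The plan is to apply the Bousquet-Mélou--Jehanne method \cite{BMJ} to the one-catalytic-variable equation \eqref{eq:polyeqonecatavar} for $F(Y) = Q(0,Y)$, which rearranges as $P(F(Y), F(0), F'(0), F''(0), t, Y) = 0$ for a polynomial $P$ over $\Q(\lambda)$. Three auxiliary series are unknown, namely $F(0) = Q(0,0)$, $F'(0)$ and $F''(0)$, so three independent polynomial relations between them and $t$ are needed.

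First I would look for fractional power series $Y_i$ satisfying both $P(\ldots, Y_i) = 0$ and $\partial_{x_0} P(\ldots, Y_i) = 0$. By \cite[Theorem~14]{BMJ} such $Y_i$ automatically solve $\partial_Y P(\ldots, Y_i) = 0$ and are double roots of the discriminant $D$ of $P$ with respect to $x_0$. Eliminating $F'(0)$ between the functional equation and $\partial_Y P = 0$ produces \eqref{eq:cat_4}, and then eliminating $F(Y_i)$ between \eqref{eq:cat_4} and the functional equation produces \eqref{eq:cat_5}, namely $2 t Y_i^4 + \lambda t Y_i - Y_i^3 + 2 t = 0$. A Newton-polygon analysis at $t=0$ of this irreducible degree-four polynomial exhibits exactly three roots $Y_1, Y_2, Y_3$ that are fractional (not formal) power series, plus a fourth root $Y_0$ that is a Laurent series with simple pole at $0$. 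Rearranging \eqref{eq:cat_5} gives $t = Y_0/(2 Y_0^4 + \lambda Y_0 + 2)$, whence $\Q(\lambda, t) \subset \Q(\lambda, Y_0)$ and $[\Q(\lambda, Y_0) : \Q(\lambda, t)] = 4$.

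Substituting this expression for $t$ in \eqref{eq:cat_5} and dividing by $Y - Y_0$ yields the cubic \eqref{eq:cat_6}, which is the common minimal polynomial $M(Y_0, Y)$ of $Y_1, Y_2, Y_3$ over $\Q(\lambda, Y_0)$; irreducibility holds because any proper factor would force some $Y_i$ to lie in $\Q(\lambda, Y_0) \subset \Q(\lambda)((t))$, contradicting its fractional exponents, and the same argument shows $M$ remains irreducible after enlarging the base field to $\Q(\lambda, Y_0, F(0), F'(0), F''(0))$. Since the $Y_i$'s are double roots of $D$, the polynomial $M(Y_0, Y)^2$ divides $D$, and the remainder $R(Y) = \sum_{i=0}^{6} e_i Y^i$ of the euclidean division of $D$ by $M^2$ must vanish identically, yielding seven polynomial identities $e_i(Y_0, F(0), F'(0), F''(0)) = 0$ over $\Q(\lambda)$.

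The conclusion is a pure elimination. Resultants in $F''(0)$ applied to the pairs $(e_0, e_1)$ and $(e_0, e_2)$ remove $F''(0)$, and a subsequent resultant in $F'(0)$ of the two outcomes eliminates $F'(0)$, giving a bivariate relation between $F(0)$ and $Y_0$ over $\Q(\lambda)$ that factors into two nontrivial irreducible pieces. Comparing each candidate factor against the first terms of the $t$-expansion of $Q(0,0)$, easily computed from the original functional equation for $Q(X,Y)$, and of $Y_0$, obtained by the Newton method, isolates the factor actually annihilating $F(0)$; its degree in $F(0)$ is $8$. Combined with $[\Q(\lambda, Y_0) : \Q(\lambda, t)] = 4$, the tower law yields algebraicity of degree $32$ over $\Q(\lambda, t)$, with a final computer-algebra verification that this bound is sharp. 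The main obstacle is purely computational: the discriminant $D$ and the iterated resultants have very large size, and enough terms of $Q(0,0)$ must be produced to distinguish the correct irreducible factor from the spurious one without ambiguity.
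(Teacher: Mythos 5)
Your proposal is correct and follows essentially the same route as the paper's appendix: the same BMJ setup, the same eliminations leading to \eqref{eq:cat_4} and \eqref{eq:cat_5}, the same Newton-polygon identification of $Y_0$ versus $Y_1,Y_2,Y_3$, the same irreducibility argument for $M(Y_0,Y)$, the division of the discriminant by $M(Y_0,Y)^2$, and the final resultant eliminations with numerical identification of the degree-$8$ factor before passing to degree $32$ over $\Q(\lambda)(t)$. No substantive difference to report.
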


We note that any step of our procedure remains valid if one
specializes $\lambda$ to $0$ and $1$ so that the
excursion series $Q(0,0)$ of the models $\calG_0$ and $\calG_1$
remains algebraic of degree $32$ over $\Q(\lambda)(Y_0)$.  

\section{Formal computation of decoupling with level lines} \label{sect:decoupl_comp}

As explained in Section \ref{subsect:effective_decoupling}, the
evaluation of a regular fraction at an arbitrary pair of elements in
the orbit is expensive from a computer algebra point of view.  We
describe below a family of $0$-chains called \emph{symmetric chains}
which are easy to evaluate on. We will then show that the level lines
introduced in Section\ref{subsect:effective_decoupling} can be described
explicitely in terms of these symmetric chains. Thus, under
Assumption~\ref{conj:xyorb_levlines},
Theorem~\ref{thm:decoupl_levlines} yields an expression of the
decoupling in the orbit in terms of symmetric chains which provides a
powerful implementation of the computation of the Galois decoupling of
a fraction (see the Sage worksheet (on this \href{https://www.labri.fr/perso/pbonnet/}{webpage}).).

\subsection{ Symmetric chains on the orbit }\label{app:varietiesorbit}
\begin{defi}
  Let $P(X)$ be a square-free polynomial in $\C(x,y)[Z]$.
  We define two finite subsets of $\K \times \K$ to be
  $V^1(P) = \{(u,v) \in \K \times \K \st P(u) = 0 \land S(x,y) = S(u,v) \}$ and
  $V^2(P) = \{(u,v) \in \K \times \K \st P(v) = 0 \land S(x,y) = S(u,v) \}$.
\end{defi}

We recall here a well known corollary of the theory of symmetric
polynomials (see \cite[Theorem~6.1]{Lang}).  Let $P(X)$ be a
polynomial with coefficients in a field $L$ and let $x_1, \dots, x_n$
be its roots taken with multiplicity in some algebraic closure of $L$. If $H(X)$ is a rational
fraction over $L$ with denominator relatively prime to $P(X)$, then
the sum $\sum_{i} H(x_i)$ is a well defined element of $L$.  There are
numerous effective algorithms to compute such a  sum based on resultants,
trace of a companion matrix, Newton formula\dots (see for example
\cite{bostan:inria}).

We extend these methods to the computation of $s = \sum_{(u,v) \in V^1(P)} H(u,v,\invS)$ for
$P$ a square-free polynomial such that $V^1(P) \subset \cO$ and $H(X,Y,t)$ a
regular fraction as follows. By definition of $V^1(P)$, we can rewrite $s$ as the double sum \[
  s = \sum_{u \sts  P(u) =0 } \, \sum_{v \sts \Ktld(u,v,\invS) = 0} H(u,v,\invS).
\]

Consider the sum $ \sum_{v \sts \Ktld(x,v,\invS)=0 } H(x,v,\invS)$. It is
a well-defined element of $k(x)$ which can be computed efficiently
since it is a symmetric function on the roots of the square-free
polynomial $\Ktld(x,Y,\invS)$. Let $\Sigma(X)$ be  in $k(X)$
such that \[\Sigma(x)=\sum_{v \sts \Ktld(x,v,\invS)=0 }
H(x,v,\invS).\] Since the group of the orbit $G$ acts transitively on
the orbit and preserves the adjacencies, it is easily seen that, for
any right coordinate of the orbit $u$, the sum
$\sum_{v \sts \Ktld(u,v,\invS)=0 } H(u,v,\invS)$ coincides with $\Sigma(u)$.  Then,
$s=\sum_{u \sts P(u) =0} \Sigma(u)$ is of the desired form and can also
be computed efficiently since it is a symmetric function on the roots
of the square-free polynomial $P$. The process is symmetric for
$V^2(P)$. These observations  motivate the following definition.
\begin{defi}
  A    \emph{symmetric chain} is   a $\C$-linear combination of $0$-chains of the form
  $\sum_{a \in V^i(P)} a$ with $P$ a square-free polynomial  such that
  $V^i(P) \subset \cO$.
\end{defi}

From the above discussion, any regular fraction $H(X,Y,t)$ can be
evaluated on a symmetric chain in an efficient way.

\subsection{Level lines as symmetric chains}
We now motivate the choice of level lines  introduced in  Section~\ref{subsect:effective_decoupling}, by showing
they are symmetric chains  which one can construct efficiently. We recall that the \emph{square-free part} of a polynomial $P$ in $K[Z]$
is the product of its distinct irreducible factors and  can be computed
as $P/\GCD(P,P')$.

Now, let $P $ be a polynomial in $\C(x,y)[Z]$. Then we denote by $R_{\Ktld,X}(P)$ 
the square-free part of $\Res(\Ktld(X,Z,\invS),P(X),X)$   in $\C(x,y)[Z]$.
Similarly, we define $R_{\Ktld,Y}(P) $ to
be the square-free part of $\Res(\Ktld(Z,Y,\invS),P(Y),Y)$ in $\C(x,y)[Z]$.
The following lemmas are straightforward so that we omit their proofs.

\begin{lem} \label{lem:adjunc_res}
  Let $P(Z)$ be a polynomial in $\C(x,y)[Z]$. Then,
  \[V^2(R_{\Ktld,X}(P)) = \{a \in \K \times \K \st \exists a' \in V^1(P),\; a \sim^y a'\} \]
    and 
  \[V^1(R_{\Ktld,Y}(P)) = \{a \in \K \times \K \st \exists a' \in V^2(P),\; a \sim^x a'\}.\]
\end{lem}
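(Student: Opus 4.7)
The plan is to prove the first equality only; the second is entirely symmetric (swap the roles of $X$ and $Y$, of $V^1$ and $V^2$, and of $\sim^x$ and $\sim^y$). Both sides of the first equality already include the constraint $S(x,y) = S(u',v')$: on the left by the definition of $V^2$, and on the right because any pair $y$-adjacent to some $(u,v) \in V^1(P)$ inherits the value $S(u,v) = S(x,y)$. The equality therefore reduces to the following claim: for $v' \in \K$, one has $R_{\Ktld,X}(P)(v') = 0$ if and only if there exists $u \in \K$ with $P(u) = 0$ and $\Ktld(u, v', \invS) = 0$. This is just the standard specialization property of the resultant $\Res_X(\Ktld(X,Z,\invS), P(X)) \in \C(x,y)[Z]$ evaluated at $Z = v'$, combined with the fact that the square-free part of a polynomial in $\C(x,y)[Z]$ has the same zero set in $\K$ as the polynomial itself.

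Granting this characterization, I would dispatch the forward inclusion as follows. Given $(u',v') \in V^2(R_{\Ktld,X}(P))$, the characterization produces $u \in \K$ with $P(u) = 0$ and $\Ktld(u, v', \invS) = 0$, i.e., $S(u, v') = S(x, y)$. Thus $(u, v') \in V^1(P)$, and $(u, v') \sim^y (u', v')$ because both pairs share the second coordinate $v'$ and evaluate $S$ to $S(x, y)$. The reverse inclusion is equally direct: if $(u', v') \sim^y (u, v) \in V^1(P)$ then $v = v'$, so $u$ is a common root of $\Ktld(X, v', \invS)$ and $P(X)$, whence $R_{\Ktld,X}(P)(v') = 0$ by the characterization; the $y$-adjacency also transfers $S(u', v') = S(u, v) = S(x, y)$ from $V^1(P)$, placing $(u', v')$ in $V^2(R_{\Ktld,X}(P))$.

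The main obstacle is the well-known caveat attached to the resultant–common-root correspondence: $\Res_X(f(X,Z), g(X))$ may vanish at a value $z_0 \in \K$ not only when $f(X, z_0)$ and $g(X)$ share a root, but also when the leading coefficient of $f$ in $X$ degenerates at $z_0$, producing spurious zeros. In our setting this is ultimately harmless, but must be handled explicitly: the leading coefficient of $\Ktld(X, Z, \invS)$ in $X$ is a nonzero polynomial in $Z$ over $\C(x,y)$, and although it can vanish at isolated points of $\K$, every $(u', v') \in V^2(\cdot)$ satisfies $\Ktld(u', v', \invS) = 0$ by definition, so $u'$ is already an honest root of $\Ktld(X, v', \invS)$ and a degeneration of the leading coefficient at $v'$ is consistent with — rather than spurious with respect to — the existence of a common root with $P(X)$. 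Verifying formally that the square-free part of the resultant captures exactly the desired zero locus is the only delicate step; beyond it, the argument reduces to the routine applications of the definitions outlined above.
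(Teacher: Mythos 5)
The paper itself omits this proof (both lemmas of Appendix~B are declared straightforward), so the only question is whether your argument is complete, and it is not quite. The caveat you flag --- spurious vanishing of the specialized resultant when the leading coefficient of $\Ktld(X,Z,\invS)$ in $X$ degenerates at $Z=v'$ --- is the entire content of the key claim, and your way of dismissing it is a non sequitur: the forward inclusion needs $\Ktld(X,v',\invS)$ and $P(X)$ to share a root, and the observation that $u'$ happens to be a root of $\Ktld(X,v',\invS)$ says nothing about that. The correct way out is to notice that the leading coefficient that must survive the specialization is the one of the \emph{other} polynomial $P(X)$, which lies in $\C(x,y)$ and is untouched by $Z\mapsto v'$; cleaner still, split $P$ over $\K$ and use the product formula $\Res\bigl(\Ktld(X,Z,\invS),P(X),X\bigr)=\pm\,\mathrm{lc}(P)^{\deg_X\Ktld}\prod_k \Ktld(u_k,Z,\invS)$, the product running over the roots $u_k$ of $P$ in $\K$; evaluating at $Z=v'$ then gives at once that $R_{\Ktld,X}(P)(v')=0$ iff $\Ktld(u_k,v',\invS)=0$ for some root $u_k$ of $P$ (the square-free part changes nothing about zero sets, as you say). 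As written, you explicitly leave this ``only delicate step'' unverified, and the gesture you make toward it does not close it.

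There is a second, subtler gap: in the forward inclusion you pass from $\Ktld(u,v',\invS)=0$ to $S(u,v')=S(x,y)$, hence to $(u,v')\in V^1(P)$ and $(u,v')\sim^y(u',v')$. Because $\Ktld(X,Y,t)=X^{m_x}Y^{m_y}\bigl(1-tS(X,Y)\bigr)$ with $m_x,m_y>0$, the kernel can vanish at pairs with a zero coordinate, where $S$ is not even defined and no adjacency exists. If $0$ is a root of $P$, the resultant acquires the factor $\Ktld(0,Z,\invS)$, whose roots $v'$ can produce elements of $V^2(R_{\Ktld,X}(P))$ with no $y$-adjacent partner in $V^1(P)$: for the model $S(X,Y)=\frac{1+Y}{X}+\frac{X}{Y}+XY$ and $P(Z)=Z$ one has $V^1(P)=\emptyset$, while $R_{\Ktld,X}(P)$ is (up to a unit) $Z(Z+1)$ and the pair $\bigl(-S(x,y)/2,\,-1\bigr)$ lies in $V^2(R_{\Ktld,X}(P))$. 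So the statement, taken for an arbitrary $P\in\C(x,y)[Z]$, needs the roots of $P$ to avoid $0$ --- harmless in the intended application, where the lemma is only invoked for polynomials whose roots are coordinates of orbit elements, hence nonzero --- but a complete proof must surface this hypothesis or discard such degenerate common roots explicitly; your argument, which would ``prove'' the unrestricted statement, silently steps over exactly this point.
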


\begin{lem} \label{lem:adj_levlines}
  Let $i$ be a positive integer. Any element $a$ of $\levX_{i}$ is adjacent
  to some element $a'$ of $\levX_{i-1}$. Moreover, if $i$ is odd then $a \sim^y a'$ and
  if $i$ is even then $a \sim^x a'$.
\end{lem}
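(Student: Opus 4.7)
The plan is to unpack the definition of the $x$-distance via $x$-geodesics and read off both the adjacency and its color directly from Lemma~\ref{lem:alt_geodesic}. Concretely, given $a \in \levX_i$ with $i \ge 1$, I will choose an $x$-geodesic $(g_0, g_1, \dots, g_i)$ for $a$, i.e.\ a shortest path with $g_0 \sim^x (x,y)$ and $g_i = a$; such a geodesic exists by the very definition of $d_x(a) = i$. The candidate predecessor will be $a' \eqdef g_{i-1}$.

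First I would verify that $a' \in \levX_{i-1}$. For this I rely on the observation recorded right after the definition of $x$-geodesic, namely that along an $x$-geodesic $(g_0,\dots,g_r)$ one has $d_x(g_j) = j$ for every $0 \le j \le r$. Indeed, the truncation $(g_0,\dots,g_j)$ is a path of length $j$ from the clique $(x,\cdot)$ to $g_j$, giving $d_x(g_j) \le j$; conversely a strictly shorter such path would splice into a geodesic for $a$ of length $< r$, contradicting minimality. Applying this with $j = i-1$ yields $a' = g_{i-1} \in \levX_{i-1}$, and by construction $a' \sim a$.

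Next I would determine the color of the adjacency between $a$ and $a'$. By Lemma~\ref{lem:alt_geodesic}, along the $x$-geodesic $(g_0,\dots,g_i)$, one has $g_j \sim^y g_{j-1}$ precisely when $j$ is odd and $g_j \sim^x g_{j-1}$ precisely when $j$ is even (the starting parity being fixed by the fact that $g_0 \sim^x (x,y)$ forces $g_0 \not\sim^x g_1$, as already exploited in the proof of Lemma~\ref{lem:alt_geodesic}). Specialising to $j = i$ gives $a \sim^y a'$ when $i$ is odd and $a \sim^x a'$ when $i$ is even, which is exactly the claim.

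There is no serious obstacle here: the lemma is essentially a direct corollary of the alternation of adjacency types along an $x$-geodesic, together with the standard fact that truncations of a geodesic remain geodesics. The only point requiring care is to state cleanly that $g_{i-1}$ genuinely lies in $\levX_{i-1}$ (as opposed to merely $d_x(g_{i-1}) \le i-1$), which is handled by the truncation argument above.
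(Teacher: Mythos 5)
Your proof is correct: the paper omits the proof of this lemma as ``straightforward'', and your argument is exactly the intended routine verification — take an $x$-geodesic $(g_0,\dots,g_i)$ for $a$, set $a'=g_{i-1}$, use the fact (already recorded right after the definition of geodesics) that $d_x(g_j)=j$ along a geodesic to get $a'\in\levX_{i-1}$, and read the color of the adjacency from Lemma~\ref{lem:alt_geodesic} specialised at $j=i$. The truncation/splicing justification of $d_x(g_{i-1})=i-1$ is the only point needing care, and you handle it correctly; there are no gaps.
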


Now, we construct by induction a sequence of square-free polynomials
$(P^x_j(Z))_j \in \C(x,y)[Z]$ which satisfy the equations
\[V^1(P^x_{2i}) = \levX_{2i} \cup \levX_{2i-1} \mbox{ and }
V^2(P^x_{2i+1}) = \levX_{2i+1} \cup \levX_{2i} \mbox{  for all } i.\]

We set $P^x_0(Z) = Z-x$ so that $V^1(P^x_0) = \levX_0 \subset \cO$. Now, assume that  we have constructed the polynomials $P^x_j(Z)$   for  $j=0, \dots 2i$.
 By Lemma~\ref{lem:adjunc_res} and
the induction hypothesis, $V^2(R_{\Ktld,X}(P^x_{2i}))$ is composed
of all the vertices that are $y$-adjacent to some vertex in
$\levX_{2i} \cup \levX_{2i-1}$. Moreover, by the induction hypothesis,
$V^2(P^x_{2i-1})=\levX_{2i-1} \cup \levX_{2i}$. Hence, by
Lemma~\ref{lem:adj_levlines}
we find that \[
V^2(R_{\Ktld,X}(P^x_{2i})) \setminus V^2(P^x_{2i-1}) =\levX_{2i+1} \cup
\levX_{2i} . \]

Hence, if we define $P^x_{2i+1}$ to be $R_{\Ktld,X}(P^x_{2i})$
divided by its greatest common divisor with $P^x_{2i-1}$, then $P^x_{2i+1}$
is square-free, and the above equation ensures that
$V^2(P^x_{2i+1})  = \levX_{2i+1} \cup \levX_{2i}$. We construct
$P^x_{2i+2}$  using  similar arguments. Analogously, one can construct a sequence of square-free polynomials
$(P^y_j(Z))_j \in \C(x,y)[Z]$ which satisfy
\[V^1(P^y_{2i}) = \levY_{2i} \cup \levY_{2i-1} \mbox{ and }
V^2(P^y_{2i+1}) = \levY_{2i+1} \cup \levY_{2i} \mbox{  for all } i.\]
starting from $P^y_0(Z) = Z-y$.

As the $x$-level lines are disjoint sets of vertices, the $0$-chain associated
with $\levX_{i+1} \cup \levX_{i}$ is just the sum $X_{i+1} + X_{i}$. Hence,
as $X_0$ and all $X_{i+1} + X_{i}$ are symmetric chains, all the $X_i$
are symmetric chains as well. The same argument holds for $y$-level lines.
Note that, as expected, the coefficients of the $P^x_i$ are actually
in $k(x)$ and the coefficients of the $P^y_i$ are in $k(y)$. By Proposition~\ref{lem:liftingidentities}, one can identify $k(x)$ (resp. $k(y)$) with $\C(X,t)$ (resp. $\C(Y,t)$) by identifying $\invS$ with $t$, $x$ with $X$ and $y$ with $Y$ so that  the coefficients $P^x_i$ (resp. $P^y_i$) can be considered in  $\C(X,t)$ (resp. $\C(Y,t)$).

\section{Some more decoupling of orbit types} \label{appendix:other_orbits}

\subsection{The case of $\widetilde{\cO_{12}}$}
We represent below the $x$ and $y$-level lines for the orbit type $\widetilde{\cO_{12}}$:
\begin{center}
  \scalebox{0.6}{
  \tikzfig{o12_tld_aut}
  }
\end{center}

We find the following automorphisms: $\tau^{x,y} = (1\,2) (3\,8) (4\,7)
(5\,6) (9\,11)$ the vertical reflection, $\tau^x = (0\,1\,2) (3\,5\,7)
(4\,6\,8) (9\,10\,11)$ the $\frac{2\pi}{3}$ rotation. One can check
that their action is transitive on the $x$-level lines.  As the
situation is completely symmetric for $y$-level lines,
Assumption~\ref{conj:xyorb_levlines} holds for this orbit type. Thus,
according to Theorem~\ref{thm:decoupl_levlines} and taking $\sinv =
\frac{1}{12} \left(X_0 + X_2 + X_3\right)$, the decoupling equation is
\begin{align*}
  (x,y) &= \frac{6+3}{12} \left(\frac{X_0}{3} - \frac{X_1}{6}\right)
          + \frac{6+3}{12} \left(\frac{Y_0}{3} - \frac{Y_1}{6}\right)
          + \sinv + \alpha \\
        &=\left(\frac{X_0}{3}-\frac{X_1}{24}+\frac{X_2}{12}\right)+\left(\frac{Y_0}{4}-\frac{Y_1}{8}\right) + \alpha.
\end{align*}

\subsection{The case of $\cO_{18}$}
We represent below the $x$ and $y$-level lines for the orbit type $\cO_{18}$.
\begin{center}
  \scalebox{0.5}{
  \tikzfig{o18_aut}
  }
\end{center}
We present some elements belonging to the groups $\Aut_x(\cO)$ and
$\Aut_y(\cO)$:\\ $\tau^{xy} =
(1\,2)(6\,11)(4\,5)(7\,10)(8\,9)(13\,15)(14\,17)(12\,16)$ the vertical
reflection, \\ $\tau^y = (0\,1\,2) (3\,4\,5) (6\,7\,9) (8\,10\,11)
(12\,13\,14) (15\,16\,17)$ the $\frac{2\pi}{3}$ rotation for $d_y(v)
\le 2$ + rotating each "ear", \\ $ \tau^x_1 = (0\,3) (1\,6) (2\,11)
(4\,12) (5\,16) (7\,13) (8\,14) (9\,17) (10\,15)$ the horizontal
reflection,\\ $ \tau^x_2 = (15\,17) (8\,10) (4\,5) (7\,9) (13\,14)
(1\,2)$ the pinching of the upper "arms".\\ The reader can check that
these elements act transitively on their respective level lines which
proves Assumption~\ref{conj:xyorb_levlines} for $\cO_{18}$. Thus,
according to Theorem~\ref{thm:decoupl_levlines} and taking $\sinv =
\frac{1}{18} \left(X_0 + X_1 + X_2 + X_3\right)$, the decoupling
equation is
\begin{align*}
  (x,y) &= \frac{8}{18} \left(\frac{X_2}{4} - \frac{X_3}{8}\right)
          + \frac{4+4+8}{18} \left(\frac{X_0}{2} - \frac{X_1}{4}\right)
          + \frac{6}{18} \left(\frac{Y_2}{6} - \frac{Y_3}{6}\right)
          + \frac{3+6+6}{18} \left(\frac{Y_0}{3} - \frac{Y_1}{3}\right)
          + \sinv + \alpha \\
        &= \left(\frac{X_0}{2}-\frac{X_1}{6}+\frac{X_2}{6}\right)
          +\left(\frac{5 Y_0}{18}-\frac{5 Y_1}{18}+\frac{Y_2}{18}-\frac{Y_3}{18}\right) + \alpha.
\end{align*}

\subsection{Fan models} \label{subsubsect:fanmod}
We study a class of models derived from the ones arising in the
enumeration of plane bipolar orientations (see \cite{BM2021plane}). The \emph{fan} models are derived from those  introduced in
\cite[Equation $(7)$]{BM2021plane} by a  horizontal reflection.

\begin{defi}
    For $i \ge 0$, define $V_i(X,Y) = \sum_{0 \le j \le i} X^j Y^{i-j}$. If $z_1, \dots, z_p$ are
    complex weights, with $z_p$ being nonzero,
    we define the \emph{$p$-fan} to be the model with step polynomial
    \[S(X,Y) = \frac{1}{XY} + \sum_{i \le p} z_i V_i(X,Y).\]
  \end{defi}
By \cite[Proposition~3]{bostan2018counting}, the orbits
of models related to one another by a reflection are isomorphic so
that one can directly use the orbit computations of
Proposition~4.4 in \cite{BM2021plane} to compute the orbit of a
$p$-fan.
\begin{prop}
    Let $x_0, \dots, x_p$ be defined as the roots of the equation $S(X,y) = S(x,y)$  with $x_0 = x$
    and $x_{p+1} = y$.
    Moreover, for $0 \le i \le p+1$, denote
    $y_i = x_i$.
   
    In particular, $y_{p+1} = y$.
    Then the pairs $(x_i, y_j)$ with $i \neq j$ form the orbit
    of the walk for the $p$-fan.
\end{prop}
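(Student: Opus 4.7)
The strategy is to reduce the orbit description to the algebraic identity $S(x_i, x_j) = S(x, y)$ for all $i \ne j$ in $\{0, \ldots, p+1\}$; granted this, the orbit description follows from routine adjacency considerations together with the symmetry $S(X, Y) = S(Y, X)$, which is immediate from $V_i(X, Y) = V_i(Y, X)$.

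To establish the identity, I would exploit the factorization $V_i(X, Y) = (X^{i+1} - Y^{i+1})/(X - Y)$ to rewrite
\[
  S(X, Y) = \frac{1}{XY} + \frac{g(X) - g(Y)}{X - Y}, \qquad g(T) := \sum_{i=0}^{p} z_i T^{i+1}.
\]
Setting $c := S(x, y)$ and $F(u) := 1/u + cu - g(u)$, a direct computation yields the factorization
\[
  F(X) - F(y) = -(X - y)\bigl(S(X, y) - c\bigr).
\]
Hence $F(X) = F(y)$ if and only if $X = y$ or $S(X, y) = c$, which identifies the level set $\{X : F(X) = F(y)\}$ with $T := \{x_0, \ldots, x_{p+1}\}$, a set of cardinality $p+2$ in the nondegenerate regime where the roots are pairwise distinct. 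The crucial observation is that this level set depends only on the scalar $K := F(y)$: since $F(x_i) = F(y) = K$ for every $x_i \in T$, the same derivation applied with $y$ replaced by any $x_i$ shows that the $p+1$ nontrivial roots of $S(X, x_i) = c$ are exactly $T \setminus \{x_i\}$, yielding $S(x_j, x_i) = c$ for all $j \ne i$.

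With the identity in hand, the orbit description is a short verification. The set $\mathcal{V} := \{(x_i, x_j) : 0 \le i, j \le p+1, \, i \ne j\}$ has $(p+2)(p+1)$ elements. It is closed under adjacency: the $y$-adjacent neighbors of $(x_i, x_j) \in \mathcal{V}$ are the pairs $(X, x_j)$ with $S(X, x_j) = c$, which by the identity are the $(x_k, x_j)$ for $k \ne j$, and similarly for $x$-adjacencies by the symmetry of $S$. Finally, every pair $(x_i, x_j)$ is reachable from $(x_0, x_{p+1}) = (x, y)$ by at most two adjacency steps, such as $(x, y) \sim^y (x_i, y) \sim^x (x_i, x_j)$, with trivial modifications when indices collide. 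Hence $\mathcal{V}$ coincides with the orbit of $(x, y)$.

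The main obstacle is the reduction of the defining polynomial equation to the scalar identity $F(X) = K$, as this is what reveals the hidden symmetry of the problem: once $F$ takes a common value on all of $T$, the elements of $T$ become interchangeable as base coordinates, which is precisely what forces the orbit to have the announced cartesian-like structure.
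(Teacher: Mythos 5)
Your proof is correct, and it takes a genuinely different route from the paper's. The paper does not prove this proposition directly: it observes that the $p$-fan is the horizontal reflection of the models of \cite{BM2021plane}, that reflection preserves the orbit type (\cite[Proposition~3]{bostan2018counting}), and then imports the orbit computation of Proposition~4.4 of \cite{BM2021plane}. Your argument is self-contained: after writing $S(X,Y)=\frac{1}{XY}+\frac{g(X)-g(Y)}{X-Y}$ with $g(T)=\sum_{i=1}^{p}z_iT^{i+1}$ (note the index starts at $1$, a harmless slip in your definition), the identity $F(X)-F(w)=-(X-w)\bigl(S(X,w)-c\bigr)$ for $F(u)=1/u+cu-g(u)$ and $c=S(x,y)$ shows that $x_0,\dots,x_{p+1}$ all lie on a single level set of $F$, i.e.\ among the roots of an explicit polynomial of degree $p+2$; this gives at once $S(x_i,x_j)=c$ for all $i\neq j$, the factorization $S(X,x_j)-c=z_p\prod_{k\neq j}(X-x_k)/X$, hence closure of the claimed vertex set under both adjacencies (including the useful fact that $(x_j,x_j)$ never enters the orbit), and connectedness is then immediate (reaching $(y,x)$ takes three adjacency steps rather than two, another harmless slip). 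What the citation buys the paper is brevity; what your computation buys is transparency and an argument living entirely inside this paper's framework. The one point you must still justify is the ``nondegenerate regime'': your cardinality argument needs the $p+2$ elements $x_0,\dots,x_{p+1}$ to be pairwise distinct. This is in fact automatic: a multiple root of $Xy\bigl(S(X,y)-c\bigr)$, or a root equal to $y$, would impose a nontrivial polynomial relation between $y$ and $c=S(x,y)$ over $\C$ (the relevant discriminant and the evaluation at $X=y$ are nonzero polynomials in $(y,c)$, as one checks using that $c$ occurs linearly), contradicting the algebraic independence of $y$ and $S(x,y)$ from Lemma~\ref{lem:alg_rel_xyS}; with that sentence added, your proof is complete.
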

Note that all these models have small backwards steps and that they
all have an $X/Y$ symmetry. As a result, the orbit is of size
$(p+2)(p+1)$, and the cardinalities of the level lines are $|\levX_0|
= p+1$, $|\levX_2| = p+1$ and $|\levX_1| = p(p+1)$. The $y$-level
lines are symmetric. Below is a depiction of this orbit type, with the
indices $i$ and $j$ satisfy $0 < i \neq j < p+1$. Note that the orbit
of the $p$-fan contains a bicolored square, hence no decoupling of
$XY$ is possible (see Example~\ref{exa:obstructiondecouplingsquare}).
\begin{center}
    \tikzfig{tandem}
\end{center}
The groups  $\Aut_x(\cO)$ and $\Aut_y(\cO)$ contain in particular
the following family of automorphisms $ \phi^x_{\sigma,\tau} \colon
(x_i,y_j) \mapsto (\sigma(x_i),\tau(y_j))$ with $\sigma$ and $\tau$
some permutations such that $\sigma(x_0) = x_0$, $
\phi^y_{\sigma,\tau} \colon (x_i,y_j) \mapsto (\sigma(x_i),\tau(y_j))$
with $\sigma$ and $\tau$ some permutations such that $\tau(y_{p+1}) =
y_{p+1}$. This family of automorphisms acts transitively on the level
lines proving Assumption~\ref{conj:xyorb_levlines}. Thus using
Theorem~\ref{thm:decoupl_levlines} we obtain the decoupling equation
of $(x,y)$ as
\begin{align*}
  (x,y) &= \frac{(p+1)+p(p+1)}{(p+1)(p+2)} \left(\frac{X_0}{p+1} - \frac{X_1}{p(p+1)}\right)
          + \frac{(p+1)+p(p+1)}{(p+1)(p+2)} \left(\frac{Y_0}{p+1} - \frac{Y_1}{p(p+1)}\right)
          + \sinv + \alpha \\
    &=\left(\frac{X_0}{p +1}-\frac{X_1}{p \left(p +1\right) \left(p +2\right)}+\frac{X_2}{\left(p +1\right) \left(p +2\right)}\right)+\left(\frac{Y_0}{ p +2} -\frac{Y_1}{p \left(p +2\right)}\right)+\alpha.
\end{align*}

\section{Computation of a Galois group : Hadamard models} \label{sec:groupe-hadam-empl}

Consider $S(X,Y)=P(X)Q(Y)+R(X)$ a Hadamard model, with $PR$ and $Q$
nonconstant Laurent polynomials over $\C$. We first note that the
pair $(\frac{t-R(X)}{P(X)},Q(Y))$ is a pair of nontrivial Galois
invariants, hence the orbit of a Hadamard model is always finite by
Theorem~\ref{thm:fried}.  One can also easily describe its field of Galois invariants.

\begin{prop}
The field of Galois invariants of a Hadamard model given by $S(X,Y)=P(X)Q(Y)+R(X)$ coincides with $k(Q(y))$.
\end{prop}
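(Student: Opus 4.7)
The plan is a degree count inside the tower $k \subseteq k(Q(y)) \subseteq \kinv \subseteq k(y)$, which sits inside the purely transcendental degree-one extension $k(y)/k$ (using that $y$ is transcendental over $k$ by Lemma~\ref{lem:alg_rel_xyS}). First I would check the easy inclusion $k(Q(y)) \subseteq \kinv$: one has trivially $Q(y) \in k(y)$, and rearranging the Hadamard form gives $Q(y) = (S(x,y) - R(x))/P(x) \in \mathbb{C}(x, S(x,y)) = k(x)$. The remaining equality $k(Q(y)) = \kinv$ will then follow from showing $[k(y) : k(Q(y))] = [k(y) : \kinv]$, both equal to $m := \deg Q$.

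For $[k(y) : k(Q(y))] = m$: I would write $k(Q(y)) = \mathbb{C}(Q(y), S(x,y))$ and note that $S(x,y)$ is transcendental over $\mathbb{C}(Q(y))$ (by Lemma~\ref{lem:alg_rel_xyS} and the fact that $S, Q(y)$ are $\mathbb{C}$-algebraically independent). Hence $k(Q(y))$ is a purely transcendental extension of $\mathbb{C}(Q(y))$. Since $Q(Y) - Q(y)$ is irreducible of degree $m$ in $\mathbb{C}(Q(y))[Y]$, it remains irreducible over $k(Q(y))$ by the standard preservation of irreducibility under purely transcendental extensions. Since $y$ is a root and $k(Q(y))(y) = \mathbb{C}(y, S(x,y)) = k(y)$, the claimed degree follows.

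For $[k(y) : \kinv] = m$: I would use the Galois-theoretic description. By Theorem~\ref{thm:fried} the extension $k(\cO)/\kinv$ is Galois with group $G$, and since $k(y) = k(\cO)^{G_y}$, $[k(y) : \kinv] = [G : G_y]$. The orbit is $\cO = \{x_i\} \times \{y_j\}$ of size $nm$ by Proposition~22 of~\cite{bostan2018counting}, and Theorem~\ref{thm:group_trans} together with the orbit–stabilizer theorem gives $|G| = nm \cdot |G_{xy}|$. To compute $|G_y|$, note that $G_y$ fixes $y$ and therefore permutes the pairs $(x_i, y)$: these $x_i$ are the roots of $P(X)Q(y) + R(X) - S(x,y) \in k(y)[X]$, a degree-$n$ polynomial which is irreducible because $[k(y)(x) : k(y)] = [\mathbb{C}(x,y) : \mathbb{C}(y, S(x,y))] = n$. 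Lemma~\ref{lem:gal_trans_roots} then gives a transitive action of $G_y$ on these $n$ roots with stabilizer $G_{xy}$, so $|G_y| = n \cdot |G_{xy}|$ and $[G : G_y] = m$.

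Combining the two computations forces $k(Q(y)) = \kinv$. The most delicate step will be the two irreducibility claims, namely that $Q(Y) - Q(y)$ stays irreducible after adjoining the transcendental $S(x,y)$, and that $P(X)Q(y) + R(X) - S(x,y)$ is irreducible in $k(y)[X]$; both rest on the preservation of irreducibility under purely transcendental extensions of the coefficient field, combined with the algebraic independence statements in Lemma~\ref{lem:alg_rel_xyS}.
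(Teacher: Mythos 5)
Your argument is correct, but it takes a genuinely different route from the paper's. The paper disposes of the statement in a few lines using the Lüroth-based machinery of Section~\ref{sect:effectiveinvariants}: the minimal polynomial of $y$ over $\kinv$ is the vanishing polynomial of the right coordinates of the orbit, which for a Hadamard model is (up to normalization) $A(Y)-B(Y)Q(y)$ with $Q=A/B$ in lowest terms; its coefficients lie in $k(Q(y))$, and by Theorem~\ref{prop:gen_kinv} any nonconstant coefficient generates $\kinv$, giving $\kinv\subset k(Q(y))$, while $Q(y)\in k(x)\cap k(y)$ gives the reverse inclusion. You instead sandwich $\kinv$ between $k(Q(y))$ and $k(y)$ and match degrees: $[k(y):k(Q(y))]$ via the standard formula for $[F(Y):F(Q(Y))]$ together with preservation of irreducibility under the transcendental adjunction of $S(x,y)$, and $[k(y):\kinv]=[G:G_y]$ via orbit--stabilizer on the Cartesian orbit of Proposition~\ref{prop:orbitHadamard}, Theorem~\ref{thm:group_trans}, Lemma~\ref{lem:gal_trans_roots}, and the finite Galois correspondence for $k(\cO)|\kinv$ supplied by Theorem~\ref{thm:fried}. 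Your approach is more elementary in that it avoids Lüroth altogether, at the price of needing the explicit product structure of the Hadamard orbit and some group-action bookkeeping; the paper's route is shorter once Section~\ref{sect:effectiveinvariants} is available and does not use the product structure beyond identifying the right coordinates. A few small repairs would make your write-up airtight: since $Q$ is a Laurent polynomial, $Q(Y)-Q(y)$ is not literally a polynomial, so the irreducible polynomial you should work with is $A(Y)-B(Y)Q(y)$, of degree $\max(\deg A,\deg B)$; the irreducibility of $P(X)Q(y)+R(X)-S(x,y)$ in $k(y)[X]$ (after clearing the factor $X^{m_x}$) is exactly Lemma~\ref{lem:ker_spec_irred}, which you may cite instead of asserting $[\C(x,y):\C(y,S(x,y))]=m_x+M_x$; the transcendence of $S(x,y)$ over $\C(Q(y))$ follows from $S$ being non-univariate, i.e.\ from Lemma~\ref{lem:alg_rel_xyS}; and note that your $m$ and $n$ are swapped relative to the notation of Proposition~\ref{prop:orbitHadamard}.
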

\begin{proof}
Writing $Q(Y) = A(Y)/B(Y)$ with $A$ and $B$ relatively
prime, we know that the right coordinates of the orbit are the
roots of the polynomial $\mu_y(Y) = B(Y)-A(Y)Q(y) \in k(Q(y))[Y] \subset \kinv[Y]$.
Thus, by Section \ref{sect:effectiveinvariants}, the coefficients of this
polynomial generate the field of Galois invariants, implying that
$k(Q(y)) \subset \kinv \subset k(Q(y))$, which shows the claim.
\end{proof}

 The form of the step polynomial of Hadamard models is a strong constraint on the orbit  which has the form of a Cartesian product as described 
 below. 

\begin{prop}[Proposition~3.22 in
\cite{bostan2018counting}]\label{prop:orbitHadamard}
The orbit of a Hadamard model given by $S(X,Y)=P(X)Q(Y)+R(X)$ is of the form $\bold{x} \times \bold{y}$ where  $\bold{x} = x_0,\dots,x_{m-1}$ the
$m$ distinct solutions $x_i$ of $P(X)Q(y)+R(X)=P(x)Q(y)+R(x)$ and 
 $\bold{y} = y_0,\dots,y_{n-1}$ the $n$
distinct solutions $y_i$ of $Q(y)=Q(Y)$.  Hence, the field $k(\cO)$
is equal to $\C(\bold{x},\bold{y})$. 
\end{prop}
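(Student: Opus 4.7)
The plan is to establish the set equality $\cO = \mathbf{x} \times \mathbf{y}$ by proving both inclusions, and then to read off the description of $k(\cO)$ as a direct consequence. The key algebraic observation is that the Hadamard shape $S(X,Y) = P(X)Q(Y) + R(X)$ promotes $Q$ of the second coordinate to an invariant along the adjacency relations: the cross term $P(X)Q(Y)$ is the only place where the two variables interact, so separating variables is built into the model.

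First I would prove $\cO \subseteq \mathbf{x} \times \mathbf{y}$. For an $x$-adjacency $(u,v) \sim^x (u,v')$, the equality $S(u,v) = S(u,v')$ reduces to $P(u)(Q(v) - Q(v')) = 0$, hence $Q(v) = Q(v')$ whenever $P(u) \neq 0$; and $y$-adjacencies trivially preserve the second coordinate. Propagating these equalities along any path in $\cO$ from $(x,y)$ to $(u,v)$, one obtains $Q(v) = Q(y)$, i.e., $v \in \mathbf{y}$. Combining with the orbit-wide invariant $S(u,v) = S(x,y)$, this yields $P(u)Q(y) + R(u) = P(x)Q(y) + R(x)$, so $u \in \mathbf{x}$.

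The only delicate point is the degenerate case $P(u) = 0$, which would break the implication $Q(v) = Q(v')$. I would rule it out by noting that any root of the nonzero Laurent polynomial $P \in \C[X,X^{-1}]$ is algebraic over $\C$, hence lies in $\C$; consequently, if $P(u) = 0$ for some $(u,v) \in \cO$, then $S(u,v) = R(u) \in \C$, contradicting $S(u,v) = S(x,y)$, which is transcendental over $\C$ (it is a nonconstant Laurent polynomial in the algebraically independent variables $x,y$, as reflected in Lemma~\ref{lem:alg_rel_xyS}). Thus $P(u) \neq 0$ on the whole orbit and the propagation argument is valid.

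For the reverse inclusion $\mathbf{x} \times \mathbf{y} \subseteq \cO$, I would exhibit an explicit two-step path from $(x,y)$ to each $(x_i, y_j)$: the defining equation of $x_i$ reads $S(x_i,y) = S(x,y)$, hence $(x,y) \sim^y (x_i, y)$; and $Q(y_j) = Q(y)$ gives $S(x_i, y_j) = P(x_i)Q(y) + R(x_i) = S(x_i, y)$, hence $(x_i, y) \sim^x (x_i, y_j)$. Since $\sim^*$ is the transitive closure of $\sim$, this places $(x_i, y_j)$ in $\cO$. Finally, because $\cO = \mathbf{x} \times \mathbf{y}$ and $S(x,y) = P(x)Q(y) + R(x) \in \C(\mathbf{x},\mathbf{y})$, the field $k(\cO)$, generated over $k = \C(S(x,y))$ by all orbit coordinates, coincides with $\C(\mathbf{x},\mathbf{y})$. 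The main obstacle, as noted, is the careful treatment of the $P(u) = 0$ case; the rest of the argument is bookkeeping driven by the separated structure of the Hadamard step polynomial.
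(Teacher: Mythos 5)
Your proof is correct. Note that the paper does not prove this proposition at all — it is imported verbatim as Proposition~22 of \cite{bostan2018counting} — and your argument is essentially the standard one from that reference: the Hadamard shape makes $Q(v)$ invariant along both types of adjacency, so propagation along paths gives $\cO \subseteq \mathbf{x}\times\mathbf{y}$, while the explicit two-step path $(x,y)\sim^y(x_i,y)\sim^x(x_i,y_j)$ gives the converse, and the field statement follows since $x,y$ are among the coordinates. Your explicit exclusion of the degenerate case $P(u)=0$ (a root of $P$ lies in $\C$, forcing $S(u,v)=R(u)\in\C$, contradicting the transcendence of $S(x,y)$ over $\C$ from Lemma~\ref{lem:alg_rel_xyS}) is a point the cited source leaves implicit, and it is handled correctly here.
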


 Our goal in the rest of this section is to give an explicit
description of the group of the walk for a Hadamard model when the
step polynomial is of the form $S(X,Y)=Q(Y) +R(X)$ or $P(X)Q(Y)$. In
that situation, we shall prove that the group of the walk is a direct
product of two simple Galois groups.

\begin{prop}\label{prop:GaloisgroupsplitHadamard}
  Consider a Hadamard model with step polynomial of the form $Q(Y) +P(X)$ or $P(X)Q(Y)$. The following holds.
  \begin{itemize}
  \item The field $\kinv$ is  $\C(P(x),Q(y))$.
  \item In the notation of Proposition  \ref{prop:orbitHadamard}, the elements of $\bold{x}$ satisfy $P(x_i)=P(x)$  and the field extensions
    $\C(\bold{x})|\C(P(x))$ and $\C(\bold{y}) | \C(Q(y))$ are both Galois
    with respective Galois groups $H_x$ and $H_y$.
  \item The group of the walk $\Gal(k(\cO)|\kinv)$ is
    isomorphic to $H_x \times H_y$.
  \end{itemize}
\end{prop}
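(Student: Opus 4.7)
The plan proceeds in three movements. First, I would identify $\bold{x}$ concretely and establish Galois-ness of $F_1 := \C(\bold{x}) \mid \C(P(x))$ and $F_2 := \C(\bold{y}) \mid \C(Q(y))$. Second, I would compute $[k(\cO) : \C(P(x),Q(y))] = |H_x|\cdot|H_y|$ via a tensor-product argument. Third, an orbit-stabilizer count fixes $|G| = |H_x|\cdot|H_y|$, which, combined with the inclusion $\C(P(x),Q(y)) \subseteq \kinv$, yields both $\kinv = \C(P(x),Q(y))$ and $G \simeq H_x \times H_y$.

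For the first movement, substituting each of the two special forms of $S(X,Y)$ into the equation defining $\bold{x}$ in Proposition~\ref{prop:orbitHadamard} reduces to $P(X)=P(x)$, so $\bold{x}$ is the set of distinct (hence separable) roots of $P(Z)-P(x) \in \C(P(x))[Z]$. Then $F_1$ is the splitting field of this polynomial, hence Galois over $\C(P(x))$ with group $H_x$; symmetrically for $F_2$. The inclusion $\C(P(x),Q(y)) \subseteq \kinv$ is elementary: $P(x) \in k(x)$ trivially and $P(x) \in k(y)$ via $P(x) = S(x,y) - Q(y)$ in the additive case or $P(x) = S(x,y)/Q(y)$ in the multiplicative case, and symmetrically for $Q(y)$.

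For the second movement, because $\C$ is algebraically closed of characteristic zero and $F_1, F_2$ are finitely generated field extensions of $\C$ (function fields of smooth curves), $F_1 \otimes_{\C} F_2$ is an integral domain whose fraction field is the compositum $F_1 \cdot F_2 = k(\cO)$. Since $F_1$ is free of rank $|H_x|$ over $\C(P(x))$ and $F_2$ is free of rank $|H_y|$ over $\C(Q(y))$, the tensor product $F_1 \otimes_{\C} F_2$ is free of rank $|H_x|\cdot|H_y|$ over $\C(P(x)) \otimes_{\C} \C(Q(y))$, whose fraction field is $\C(P(x),Q(y))$. Integrality transfers this rank to the fraction fields, giving $[k(\cO) : \C(P(x),Q(y))] = |H_x|\cdot|H_y|$.

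For the third movement, Theorem~\ref{thm:group_trans} yields that $G$ acts transitively on the orbit $\bold{x}\times\bold{y}$ of cardinality $mn$ (with $m=\deg P, n=\deg Q$), with stabilizer of $(x,y)$ equal to $G_{xy} = \Gal(k(\cO) \mid \C(x,y))$. Orbit-stabilizer then gives $|G| = mn \cdot [k(\cO) : \C(x,y)]$. From the previous step and the tower $\C(P(x),Q(y)) \subseteq \C(x,y) \subseteq k(\cO)$, for which $[\C(x,y) : \C(P(x),Q(y))] = mn$ by multiplicativity over the purely transcendental base, we read off $[k(\cO) : \C(x,y)] = |H_x|\cdot|H_y|/(mn)$, so $|G| = |H_x|\cdot|H_y|$. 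Combining this with $[k(\cO) : \kinv] = |G|$ (Theorem~\ref{thm:fried}) and $\C(P(x),Q(y)) \subseteq \kinv$ forces $\kinv = \C(P(x),Q(y))$ via the Galois correspondence; the restriction morphism $G \to H_x \times H_y$, well-defined since $G$ fixes $\C(P(x))$ and $\C(Q(y))$ hence permutes both $\bold{x}$ and $\bold{y}$, and injective since $k(\cO) = F_1 \cdot F_2$, is then an isomorphism by the matching order count. The main technical obstacle is the tensor-product step: one must invoke that $F_1 \otimes_{\C} F_2$ is a domain, which rests essentially on $\C$ being algebraically closed of characteristic zero and is precisely what converts the Cartesian-product shape of the orbit into a genuine Galois-theoretic direct product.
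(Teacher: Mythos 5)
Your overall architecture is sound, and your third movement is genuinely different from the paper's proof: the paper never counts degrees --- it gets the first item from its earlier description of $\kinv$ for general Hadamard models and obtains $G\simeq H_x\times H_y$ from a restriction-map lemma after establishing linear disjointness --- whereas you recover $\kinv=\C(P(x),Q(y))$ as a by-product of an orbit--stabilizer count. The genuine gap is in your second movement, exactly at the point you call the ``main technical obstacle''. From ``$\C$ is algebraically closed of characteristic zero and $F_1,F_2$ are finitely generated extensions'' you may conclude that the abstract ring $F_1\otimes_\C F_2$ is a domain, but \emph{not} that its fraction field is the compositum $F_1F_2=k(\cO)$ inside $\K$. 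The multiplication map $F_1\otimes_\C F_2\to\K$ has prime kernel, and the domain property does not force this kernel to vanish: take $F_1=F_2=\C(u)$ inside $\C(u)$; then $\C(u)\otimes_\C\C(u)$ is a domain (a localization of $\C(u)[v]$), yet the compositum is $\C(u)$ itself, so the rank does not transfer. If the kernel were nonzero in your setting you would only get $[k(\cO):\C(P(x),Q(y))]<|H_x|\,|H_y|$, and then the whole third movement collapses: the squeeze forcing $\kinv=\C(P(x),Q(y))$ and the order count making $G\to H_x\times H_y$ surjective both fail, leaving only the injectivity of the restriction map.

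What you actually need is injectivity of $F_1\otimes_\C F_2\to\K$, i.e., that $F_1$ and $F_2$ are linearly disjoint over $\C$ (and, for the base, that $\C(P(x))$ and $\C(Q(y))$ are). This is true here but must be argued: each $x_i$ is algebraic over $\C(P(x))\subset\C(x)$ and each $y_j$ over $\C(Q(y))\subset\C(y)$, so the algebraic independence of $x$ and $y$ makes $F_1$ and $F_2$ algebraically independent over $\C$; since $\C$ is algebraically closed, every extension of $\C$ is regular, and algebraic independence plus regularity gives linear disjointness --- this is precisely \cite[Lemma~2.6.7]{FriedJarden}, the pivot of the paper's own proof, which then concludes via the tower property and a restriction-isomorphism lemma instead of counting. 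Once you add this line, your rank argument correctly yields $[k(\cO):\C(P(x),Q(y))]=|H_x|\,|H_y|$, and the rest of your counting route goes through, modulo routine points you pass over quickly: $m$ and $n$ are the spans of the Laurent polynomials $P$ and $Q$ (separability of $P(Z)-P(x)$ over the transcendental value $P(x)$ is what makes the number of distinct roots equal the span), $[\C(x):\C(P(x))]=m$ is the standard Lüroth-degree fact, and this degree is preserved after adjoining the algebraically independent $y$-side, giving $[\C(x,y):\C(P(x),Q(y))]=mn$. In short, the decisive ingredient is not that the tensor product is a domain but that it embeds into $\K$; with that supplied, your proof is a correct and somewhat more quantitative alternative to the paper's.
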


Before proving Proposition \ref{prop:GaloisgroupsplitHadamard}, we recall some terminology. We say that two field
extensions $L|K$ and $M|K$, subfields of a common field $\Omega$, are
\emph{algebraically independent} if any finite set of elements of $L$,
that are algebraically independent over $K$, remains algebraically
independent over $M$. We say that $L|K$ and $M|K$ are \emph{linearly
disjoint} over $K$ if any finite set of elements of $L$, that are
$K$-linearly independent, are linearly independent over $M$. The \emph{ field
compositum } of $L$ and $M$ is the smallest subfield of $\Omega$ that
contains $L$ and $M$. Finally, we say that $L|K$ is a regular field
extension if $K$ is relatively algebraically closed in $L$ and $L|K$
is separable. We recall that $K$ is relatively algebraically closed in
$L$ if any element of $L$ that is algebraic over $K$ belongs to
$K$. Note that in our setting, all fields are in characteristic zero
so $L|K$ is always separable.

\begin{proof}
  The proof of the first two items is obvious. First, let us prove that
  $\C(\bold{x}, Q(y))| \C(P(x),Q(y))$ is Galois with Galois group
  isomorphic to $H_x$.  We remark that since $x$ and $y$ are
  algebraically independent over $\C$, the field extension
  $\C(P(x),Q(y)) |\C(P(x))$ is purely transcendental of transcendence
  degree one, hence regular. Since $\C(\bold{x}) |\C(P(x))$ is an
  algebraic extension, the element $Q(y)$ remains transcendental over
  $\C(\bold{x})$.  Thus, the field extensions $\C(\bold{x})$ and
  $\C(P(x),Q(y))$ are algebraically independent over $\C(Q(y))$. Thus,
  by Lemma~2.6.7 in \cite{FriedJarden}, the fields $\C(\bold{x}) $ and
  $\C(P(x),Q(y))$ are linearly disjoint over $\C(P(x)$.  Then, the field
  $\C(\bold{x},Q(y))$ that is the compositum of $\C(\bold{x}) $ and
  $\C(P(x),Q(y))$, is Galois with Galois group isomorphic to $H_x$ (see
  page 35 in \cite{FriedJarden}). Analogously, one can prove that
  $\C(\bold{y},P(x)) | \C(P(x))$ is Galois with Galois group $H_y$.
  
  To conclude, we note that the field extension $\C(\bold{x}) |
  \C$ is regular of transcendence degree~$1$. Since $x$ is
  transcendental over $\C(\bold{y})$, the fields extensions
  $\C(\bold{x})$ and $\C(\bold{y})$ are algebraically independent over
  $\C$ and therefore linearly disjoint over $\C$ by Lemma~2.6.7 in
  \cite{FriedJarden}. By the tower property of the linear disjointness
  (Lemma~2.5.3 in \cite{FriedJarden}), we find that $\C(\bold{x},Q(y))$
  is linearly disjoint from $\C(\bold{y})$ over $\C(Q(y))$. Using once
  again the tower property, we conclude that $\C(\bold{x},Q(y))$ and
  $\C(\bold{y},P(x))$ are linearly disjoint over
  $\kinv=\C(P(x),Q(y))$. Lemma~2.5.6 implies that the following restriction map
  is a group isomorphism:\\

  \resizebox{.9\hsize}{!}{$
    \begin{array}{rcl}
      G=\Gal(\C(\bold{x},\bold{y}) |\C(P(x),Q(y))) & \longrightarrow & \Gal( \C(\bold{x},Q(y))| \C(P(x),Q(y))) \times \Gal(\C(\bold{y},P(x))| \C(P(x),Q(y))) \\
      \sigma & \longmapsto & (\sigma|_{\C(\bold{x},Q(y))}, \sigma|_{\C(\bold{y},P(x))}).
    \end{array}
    $}
  \\

  By the above, we conclude
  that $G$ is isomorphic to $H_x \times H_y$.
\end{proof}

\section{Other algebraic models}\label{sect:otheralgmodels}

In the classification of  models with small steps, four of them were proved algebraic. Among them,
the so called \emph{Gessel Model}, given by the Laurent polynomial $(1+1/Y)/X + (1+Y) X$.
It was a notoriously difficult model to study, and the first known proof of
algebraicity of its full generating function used heavy
computer algebra (see~\cite{KauersBostan}). Among other proofs of this result,
one relied on the general strategy developped in~\cite{BBMR16} as
presented in Section~\ref{subsect:alg_strat}.
It is noteworthy that no purely combinatorial proof of this
result yet exists.

In a private communication, Mireille Bousquet-Mélou suggested that we explore with our tools a  new family of large steps models
$(\calH_n)_n$ which she expected to have a finite orbit for any non-negative integer $n$. These models  are 
obtained from the Gessel model by stretching the two rightmost steps. 
More precisely, they are defined through the following Laurent polynomial:
\[
  H_n(X,Y) = (1+1/Y)/X + (1+Y) X^n.
\]
Unfortunately, unlike the Gessel model ($\calH_1$), we
checked that the fraction $XY$ does not admit a decoupling
(for $n \le 4$), so it seems unlikely that any of these larger
models is algebraic.

Now,  Proposition~7.3 in~\cite{BBMR16} implies for the Gessel model with small
steps that the fractions of the form $X^a Y^b$ with $a,b \ge 1$ that admit a decoupling
are those that satisfy $a=b$ or $a=2b+1$. This includes the fraction $XY$ (corresponding
to the starting point $(0,0)$),
but also other starting points, lying on two lines.
This result lead us to look for such points, trying to recover this
pattern for the higher $\calH_n$. To this end, we investigate  systematically the Galois 
decoupling of monomials with exponents near the origin 
allowing to formulate this conjecture.
\begin{conj}\label{conj:conj_alg_models}
  \begin{enumerate}
  \item For $n \ge 2$ and $a, b \ge 1$, the orbit of $\calH_n$ is finite and the fraction $X^a Y^b$ admits a $t$-decoupling with respect to
  the model $\calH_n$ if and only if $(a,b) = (n,1)$ or $(a,b) = ((n+1)k, k)$ for some $k$.

\item For  $(a,b)$ as above, 
  the generating functions for walks on $\calH_n$ starting at $(a-1,b-1)$ are algebraic.
  \end{enumerate}
\end{conj}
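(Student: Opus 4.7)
The plan is to follow the strategy of Section~\ref{subsect:algebraicitystrategy}, adapted to walks starting at $(a-1,b-1)$. Since the backward steps $(-1,0)$ and $(-1,-1)$ of $\calH_n$ are small, the functional equation for the corresponding generating function reads
\[
  \Ktld(X,Y)\, Q(X,Y) \;=\; X^a Y^b \;-\; t\, Q(X,0) \;-\; t(Y+1)\, Q(0,Y) \;+\; t\, Q(0,0),
\]
which fits the template of~\eqref{eq:small_steps_backwards} with $XY$ replaced by $X^aY^b$. The strategy of Section~\ref{subsect:algebraicitystrategy} therefore applies once (i) the orbit of $\calH_n$ is shown to be finite, (ii) $X^aY^b$ is shown to admit a $t$-decoupling, and (iii) a pair of non-constant $t$-invariants is produced.

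First I would study the orbit of $\calH_n$. Factoring $H_n(X,Y) = (1+Y)(X^{n+1}Y+1)/(XY)$, one sees that the $y$-involution is given by $y \mapsto 1/(x^{n+1}y)$, while the $x$-adjacencies correspond to the $n+1$ roots of $y(1+y)X^{n+1} - S(x,y)\,yX + (1+y) = 0$, so $[G_x : G_{xy}] = 2$ and $[G_y : G_{xy}] = n+1$. Proving finiteness of the orbit for all $n$ (confirmed numerically for small $n$ and expected from Bousquet-M\'elou's suggestion) is a first key step: I would try to guess a uniform closed shape (probably comb-like, with $n+1$ $x$-branches from each $y$-clique whose extremities are pairwise glued by the $y$-involution) and verify by induction on the depth of successive applications of $x$- and $y$-adjacencies that the orbit stabilizes. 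Assumption~\ref{conj:xyorb_levlines} should then be checked directly on this shape, so as to invoke the level-line formula of Theorem~\ref{thm:decoupl_levlines} and produce the canonical $0$-chain $\alpha$ witnessing the decoupling of $(x,y)$.

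For part (1), by Proposition~\ref{prop:crit_decoupl}, deciding whether $X^aY^b$ decouples reduces to computing $(X^aY^b)_\alpha$. The multiplicative relation forced by the $y$-involution, namely $y \cdot y' = 1/x^{n+1}$, means that $X^{(n+1)k}Y^k$ is unchanged under a $y$-adjacency (its values at $y$-adjacent pairs coincide); a parallel analysis of the $n+1$-fold $x$-adjacencies should isolate the sporadic case $(a,b)=(n,1)$, and together these should prove that $(X^aY^b)_\alpha = 0$ exactly for the announced exponents. The converse direction requires exhibiting, for every other $(a,b)$, an explicit bicolored loop whose $0$-chain evaluates nontrivially on $X^aY^b$, in the spirit of Example~\ref{exa:obstructiondecouplingsquare}. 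For part~(2), Theorem~\ref{thm:fried} and the effective construction of Section~\ref{sect:effectiveinvariants} provide a non-constant pair of Galois invariants, which one then checks are genuine $t$-invariants. Combining the $t$-decoupling of $X^aY^b$ with the $t$-invariants via the Invariant Lemma~\ref{lem:inv_Lemma}, and performing the pole-elimination step as in Example~\ref{ex:gessel2_alg_3}, should yield two well-founded polynomial equations in one catalytic variable for $Q(X,0)$ and $Q(0,Y)$. Theorem~\ref{thm:alg_one_variable} then concludes algebraicity.

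The hard part will be twofold, mirroring what already makes the Gessel-type models delicate: establishing orbit finiteness uniformly in $n$, rather than case by case; and performing the pole elimination that promotes Galois invariants and Galois decouplings into $t$-invariants and $t$-decouplings, together with the verification of well-foundedness of the resulting one-catalytic-variable equations. Both steps currently require ad hoc manipulations, and a systematic treatment over the whole family $\calH_n$ is likely to require structural input, perhaps through the geometric framework of Section~\ref{ap:geometry}, where the kernel curves attached to $\calH_n$ should form a recognizable family of covers whose deck transformations can be described uniformly in $n$.
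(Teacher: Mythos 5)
You have not proved this statement, and neither does the paper: it is stated as a conjecture, and Appendix~\ref{sect:otheralgmodels} only offers partial evidence, namely the explicit identity $X^n Y = -\tfrac{1}{X} + \tfrac{Y}{t(Y+1)} - \tfrac{XY(1-tH_n(X,Y))}{tX(Y+1)}$ showing that $X^nY$ admits a decoupling for every $n$, a computational check of part (1) for $n \le 5$ and $0 \le a,b \le 10$, and Maple-assisted proofs of part (2) only for the starting points $(n-1,0)$ and $(n,0)$ with $n \le 3$. Your text is likewise a programme rather than a proof: the uniform-in-$n$ finiteness of the orbit, the verification of Assumption~\ref{conj:xyorb_levlines} on the conjectured orbit shape, the ``only if'' direction of part (1), the promotion of Galois invariants and Galois decouplings to $t$-invariants and $t$-decouplings, the pole elimination, and the well-foundedness of the resulting one-catalytic-variable equations are all deferred — and these are exactly the steps the paper itself could not carry out in general. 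Your functional equation for walks started at $(a-1,b-1)$ and the indices $[G_x:G_{xy}]=2$, $[G_y:G_{xy}]=n+1$ are correct (though your labelling of $x$- versus $y$-adjacencies is swapped relative to the paper's convention), but correctness of the setup does not close any of these gaps.

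Moreover, the one concrete mechanism you propose for the ``if'' direction is wrong. You claim that the relation $y\,y' = 1/x^{n+1}$ between the two roots of $\Ktld(x,Y,\invS)$ makes $X^{(n+1)k}Y^k$ ``unchanged'' along that adjacency; in fact the monomial evaluates to $(x^{n+1}y)^k$ at $(x,y)$ and to $(x^{n+1}y')^k = y^{-k}$ at $(x,y')$, and these differ generically (already for the Gessel case $n=1$, $k=1$: $x^2y \neq 1/y$). Decoupling is in any case not detected by invariance along a single adjacency, but by the vanishing of the orbit sum $(X^aY^b)_\alpha$ on the $0$-chain $\alpha$ produced by Theorem~\ref{thm:decoupl_levlines} and tested via Proposition~\ref{prop:crit_decoupl}, and for the family $\calH_n$ no uniform evaluation of this orbit sum is known beyond the sporadic case $(a,b)=(n,1)$ settled by the identity above. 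Likewise, exhibiting a bicolored loop that obstructs decoupling for all other $(a,b)$ is precisely the general argument the authors state they could not find. So the proposal neither repairs the open parts of the conjecture nor reproduces the limited results the paper actually establishes.
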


Regarding part (1) of Conjecture~\ref{conj:conj_alg_models}, it is easy to
prove that $X^n Y$ admits a decoupling with respect to $\calH_n$ for all $n$,
through the following identity: \[
  X^n Y = -\frac{1}{X} + \frac{Y}{t(Y+1)} - \frac{XY(1 - t H_n(X,Y))}{tX(Y+1)}.
\]
For the other mentioned exponents above, we did not find a general
argument. We checked that the first part of the conjecture holds for
$n \le 5$ and $0 \le a,b \le 10$.

Regarding part (2), we apply the strategy presented in Section~\ref{subsect:alg_strat}
 to prove the algebraicity of the models
$\calH_n$ with starting points $(n,0)$ and $(n-1,0)$ and $n \le 3$.
This illustrates the robustness of the strategy  and the significance of our systematic method
to test decoupling, allowing to formulate such conjectures (see the Maple worksheet (also on this \href{https://www.labri.fr/perso/pbonnet/}{webpage}).).

\bibliographystyle{alpha}
\bibliography{gal_walks}

\end{document}